\documentclass[12pt]{amsart}   	
\usepackage[margin=1in]{geometry}                		
\usepackage{ifxetex,ifluatex}
\newif\ifxetexorluatex
\ifxetex
  \xetexorluatextrue
\else
  \ifluatex
    \xetexorluatextrue
  \else
    \xetexorluatexfalse
  \fi
\fi

\ifxetexorluatex
  \usepackage{fontspec}
\else
  \usepackage[T1]{fontenc}
  \usepackage[utf8]{inputenc}
  \usepackage{lmodern}
\fi

\usepackage{stmaryrd}
\usepackage{mathdots}
\usepackage{bbm}
\usepackage{amsmath,amssymb,amsfonts}
\usepackage{amsthm}
\usepackage{enumerate}
\usepackage{yfonts}
\usepackage{multicol}
\usepackage{ragged2e}
\usepackage{faktor}
\usepackage{pgfplots}
\usepackage{esint}

\usepackage{pinlabel}
\usepackage{graphicx}	
\usepackage{subcaption}
\usepackage{tikz}
\usepackage{tikz-cd}

\usepackage[shortlabels]{enumitem}
\usepackage{longtable}

\newtheorem{theorem}{Theorem}
\newtheorem*{theorem*}{Theorem}

\newtheorem{proposition}{Proposition}
\newtheorem*{proposition*}{Proposition}

\newtheorem{corollary}[proposition]{Corollary}
\newtheorem*{corollary*}{Corollary}

\newtheorem{lemma}[proposition]{Lemma}
\newtheorem*{lemma*}{Lemma}

\newtheorem{remark}[proposition]{Remark}
\newtheorem*{rmk*}{Remark}

\newtheorem*{claim*}{Claim}

\theoremstyle{definition}
\newtheorem{definition}[proposition]{Definition}
\newtheorem*{definition*}{Definition}

\numberwithin{equation}{section}
\numberwithin{theorem}{section}
\numberwithin{proposition}{section}
\numberwithin{figure}{section}

\usepackage{setspace}
\usepackage{hyperref}
\hypersetup{colorlinks,allcolors=blue}
\usepackage[capitalize]{cleveref}

\usepackage{verbatim}
\usepackage{tikz}
\usepackage{tikz-cd}
\usetikzlibrary{decorations.markings}
\usetikzlibrary{angles,calc,intersections,quotes,arrows,fit}
\usetikzlibrary{shapes.geometric}

\usepackage{faktor}
\usepackage[T1]{fontenc}

\newcommand{\diff}{\mathrm{d}}
\newcommand{\A}{\mathcal{A}}
\newcommand{\B}{\mathcal{B}}
\newcommand{\C}{\mathcal{C}}
\newcommand{\E}{\mathbb E}

\newcommand{\J}{\mathcal{J}}
\newcommand{\K}{\mathcal{K}}
\newcommand{\M}{\mathbb{M}}
\newcommand{\N}{\mathbb{N}}
\newcommand{\Q}{\mathcal{Q}}
\newcommand{\R}{\mathbb{R}}
\newcommand{\T}{\mathcal{T}}
\newcommand{\V}{\mathbb V}
\newcommand{\Z}{\mathbb{Z}}

\newcommand{\Nc}{\mathcal{N}}
\newcommand{\Pp}{\mathbb{P}}

\newcommand{\Ell}{\mathscr L}
\newcommand{\1}{\boldsymbol{1}}

\usepackage{xparse}
\NewDocumentCommand{\Hom}{mmg}{\ensuremath{\text{Hom}_{\IfNoValueTF{#3}{}{#3}}(#1,#2)}}
\NewDocumentCommand{\Tor}{mmmg}{\ensuremath{\text{Tor}_{#3}^{\IfNoValueTF{#4}{}{#4}}(#1,#2)}}
\NewDocumentCommand{\Ext}{mmmg}{\ensuremath{\text{Ext}^{#3}_{\IfNoValueTF{#4}{}{#4}}(#1,#2)}}

\usepackage[utf8]{inputenc}
\usepackage[T1]{fontenc}
\usepackage{mathrsfs}

\usepackage[backend=biber,style=numeric, doi = true]{biblatex}
\hypersetup{breaklinks=true}
\usepackage{xurl}
\title{Rigorous Derivation of the Wave Kinetic Equation for full $\beta$-FPUT System}
\author[Katja D. Vassilev]{Katja D. Vassilev}
\address{Department of Mathematics, The University of Chicago, Chicago, IL 60637}
\email{kdv@uchicago.edu}
\author[Boyang Wu]{Boyang Wu}
\address{Department of Mathematics, University of Michigan, Ann Arbor, MI 48109}
\email{boyangwu@umich.edu}
\usepackage{fancyhdr}
\pgfplotsset{compat=1.18}	

\begin{document}
\begin{abstract}
The Fermi--Pasta--Ulam--Tsingou (FPUT) system, describing the evolution of $N$ coupled harmonic oscillators, has been the subject of much attention since the 1950's when experiments which contradicted predictions of thermalization of the system. A full explanation of this behavior is still not fully known. Here, we rigorously derive the corresponding wave kinetic equation, which provides a precise evolution of the statistics for the FPUT system and demonstrates thermalization in an appropriate regime. In particular, we justify the kinetic equation for the 4-wave $\beta$-FPUT system in the kinetic limit $N \to \infty$ and $\beta \to 0$ for weakly nonlinear scaling laws $\beta \sim N^{-\gamma}$, reaching times up to $T_{\mathrm{kin}}^{2/3}$, where $T_{\mathrm{kin}}$ represents the kinetic (thermalization) timescale. 

While we use a typical diagrammatic expansion to derive the kinetic equations, few works have dealt with nonlinearities with non-resonant terms, which are not part of the kinetic equation, which is the major novelty of this work. The only other such work \cite{DIP25} made use of a normal form method to push the non-resonant terms to higher order nonlinearities. Here, we directly incorporate the non-resonant terms into the diagrammatic expansion and demonstrate corresponding gains. This method can be adapted to other 4-wave non-resonant nonlinearities.

\end{abstract}

\maketitle
{\small\tableofcontents}

\section{Introduction}
\label{section-kinetictheory}
\subsection{The \texorpdfstring{$\beta$}{beta}-FPUT model} 
The Fermi-Pasta-Ulam-Tsingou (FPUT) system models a system of $N$ identical masses connected by nonlinear springs with elastic force with Hamiltonian 
\begin{align}
H=\sum_{j=0}^{N-1}\left(\frac{p_j^2}{2m}+\frac{\kappa}{2}(q_{j+1}-q_j)^2+\frac{\alpha}{3}(q_{j+1}-q_j)^3+\frac{\beta}{4}(q_{j+1}-q_j)^4\right), \label{eq-Hamiltonian}
\end{align}
describing the displacement $q_j(t)$ and momentum $p_j(t)$ of each particle for spring constants $\kappa, \alpha, \beta$ and periodic boundary conditions. This system gained significant attention in the 1950's due to simulations at Los Alamos which attempted to exhibit the thermal equipartition, yielding shocking results \cite{EJS}. Their experiments showed that the system presented quasi-periodic behavior with energy essentially returning to the initial condition. 

The focus of this paper is to establish the validity of Fermi's initial prediction of thermalization by rigorously deriving statistical behavior for the $\beta$-FPUT system (set $\alpha = 0$) in the limit that the number of masses $N \to \infty$ and parameter $\beta \to 0$. In particular we derive the wave kinetic equation (WKE), referred to as the phonon Boltzmann equation for this system. This work serves as a follow-up to the preprint \cite{FPUTRD} and thesis \cite{FPUTthesis} by Wu which establish the same but for shorter timescales for the reduced $\beta$-FPUT system, which discounts all non-resonant interactions in the nonlinearity.

\subsection{Reformulation of FPUT system} 
Rewriting the system \eqref{eq-Hamiltonian} as an ODE for the $\beta$-FPUT system, the evolution of the system is given by
\begin{align}
\begin{cases}m\dot{q_j} = p_j \\
m\ddot{q}_j=\kappa\left[(q_{j+1}-q_j)-(q_j-q_{j-1})\right]+\beta\left[(q_{j+1}-q_j)^3-(q_j-q_{j-1})^3\right] \end{cases} \label{eq-ODE} 
\end{align}
where $j=0,1,...,N-1$ and we adopt the convention that $q_0=q_N$. For simplicity, we set $m = \kappa = 1$. The phonon Boltzmann equation describes the system in wave action variables, so we perform several transformations to the system to formulate it in this manner.
\subsubsection{Discrete Fourier Transform}
Denote $\Z_N:=N^{-1}\Z$. For $(q_j)_{j = 0, \ldots, N - 1}$ and $(p_j)_{j = 0, \ldots, N - 1}$, we define their discrete Fourier transform at frequency $k \in \Z_N \cap [0,1)$ as
\begin{align}
Q_k&=\frac{1}{\sqrt{N}}\sum_{j=0}^{N-1}q_je^{-i2\pi kj},\mbox{ }q_j=\frac{1}{\sqrt{N}}\sum_{k\in \Z_N \cap [0,1)}Q_ke^{i2\pi jk}.\label{eq-discrete-ft-q} \\
P_k&=\frac{1}{\sqrt{N}}\sum_{j=0}^{N-1}p_je^{-i2\pi kj},\mbox{ }p_j=\frac{1}{\sqrt{N}}\sum_{k\in \Z_N \cap [0,1)}P_ke^{i2\pi jk}.\label{eq-discrete-ft-p}
\end{align}
Note that
\begin{align}
Q_{k}=Q_{1-k}^*,\mbox{ }P_{k}=P_{1-k}^*, \mbox{ for }k\in \Z_N\cap[0,1).
\end{align}
We may then rewrite \eqref{eq-ODE} as
\begin{align}
\begin{cases}
\dot{Q}_k = P_k, \\
\left(\Ddot{Q}_{k}+\omega_{k}^2{Q}_{k}\right)=\beta\sum_{k_1+k_2+k_3=k}\hat{T}_{k,1,2,3}{Q}_{k_1}{Q}_{k_2}{Q}_{k_3},
\end{cases}\label{eq-1.7}
\end{align}
where the sum $\sum_{k_1+k_2+k_3=k}$ is taken over $\{k_1,k_2,k_3 \in \Z_N \cap [0,1):k_1+k_2+k_3=k \mbox{ mod }1\}$, and
\begin{align}
&\omega_{k}=\omega(k)=2\left|\sin\left(\pi k\right)\right|, \label{eq-omega}\\
&\hat{T}_{k,1,2,3}=16\sin\left(\pi(k_1 + k_2 + k_3)\right)\sin\left(\pi k_1\right)\sin\left(\pi k_2 \right)\sin\left(\pi k_3\right). 
\end{align}

\subsubsection{Normal Modes}
We introduce the normal modes $a_k(t)$:
\begin{align}
a_k= \left(\frac{\sqrt{2}}{2}\right)\left[(\omega_k)^{\frac{1}{2}}Q_k+i(\omega_k)^{-\frac{1}{2}}P_k\right],  \mbox{ for }k\in \Z_N \cap [0,1),
\end{align}
so that we may write
\begin{align*}
i\dot{a}_{k}=\omega_{k}{a}_{k}+\frac{\beta}{N}\sum_{k_1+k_2+k_3=k}\frac{-\hat{T}_{-1,2,3,4}}{4\sqrt{\omega_k\omega_{k_1}\omega_{k_2}\omega_{k_3}}}(a_{k_1}+a_{N-{k_1}}^*)(a_{k_2}+a_{N-{k_2}}^*)(a_{k_3}+a_{N-{k_3}}^*). 
\end{align*}
Expanding the above, we may write the following, which is the subject of the remainder of this paper:
\begin{align}
i\dot{a}_{k}=&\omega_{k}{a}_{k}+\frac{\beta}{N}\sum_{\substack{k_1,k_2,k_3 \\ k_i \in \Z_N \cap [0,1)}}\bigg[{T}_{+,+,+}a_{k_1}a_{k_1}a_{k_3}\delta(k-k_1-k_2-k_3)\notag\\
&+3{T}_{+,+,-}a_{k_1}a_{k_2}a_{k_3}^*\delta(k-k_1-k_2+k_3)+3{T}_{+,-,-}a_{k_1}a_{k_2}^*a_{k_3}^*\delta(k-k_1+k_2+k_3)\label{FPU}\tag{FPUT}\\
&+{T}_{-,-,-}a_{k_1}^*a_{k_2}^*a_{k_3}^*\delta(k+k_1+k_2+k_3)\bigg] \notag
\end{align}
where 
\begin{align}
{T}_{\zeta_1, \zeta_2, \zeta_3} = {T}_{\zeta_1, \zeta_2, \zeta_3}(k_1, k_2, k_3) = -\zeta_1\zeta_2\zeta_3\frac{1}{4}\iota(\zeta_1k_1 + \zeta_2 k_2 + \zeta_3 k_3)\sqrt{\omega_k}\prod_{i = 1}^3 \left[\iota(k_i)\sqrt{\omega_{k_i}}\right],
\end{align}
where we define $\iota(x):=\text{sgn} \sin (\pi x)$ a $2$-periodic function on $\mathbb{R}$. Note that we separate the nonlinear terms of \eqref{FPU} into resonant and non-resonant terms depending on if it is possible for signs $\zeta_1, \zeta_2, \zeta_3 \in \{\pm\}$ there are $k_1, k_2, k_3 \in \Z_N \cap (0,1)$ which satisfy: 
\begin{equation} \label{eq-resonance}
\begin{cases}
k - \zeta_1 k_1 - \zeta_2 k_2 - \zeta_3 k_3 = 0, \\
\omega_k - \zeta_1 \omega_{k_1} - \zeta_2 \omega_{k_2} - \zeta_3\omega_{k_3} = 0. \end{cases}
\end{equation}
Due to the subadditivity of the dispersion relation $\omega,$ see for instance \cite{bustamante2019exact}, for $k_1, k_2 \in \Z_N \cap (0,1),$ we must have \[\omega(k_1 + k_2) < \omega(k_1) + \omega(k_2).\] Therefore, the only resonant term is when $\{\zeta_1, \zeta_2, \zeta_3\} = \{+,+,-\}$. 

\subsection{Statement of the main result} 
Here, we seek an statistical description of the evolution of the system \eqref{FPU} when averaged over \textit{well-prepared} initial data. In this case, the initial data takes the form
\begin{equation}\tag{DAT}\label{DAT}
a_k(0) = \sqrt{n_{\mathrm{in}}(k)}\eta_k(\varrho)
\end{equation}
where $n_{\mathrm{in}} \in C^\infty(\mathbb T \to (0,\infty))$ is periodic, and $\{\eta_k(\varrho)\}$ are i.i.d. mean-zero complex random variables with unit variance, which are assumed to be either standard complex Gaussian or uniformly distributed on the unit circle.

In particular, we look at the evolution of all second order correlations. Under reasonable physical assumptions on the lattice, the only nontrivial a priori correlations are $\E|a_k(t)|^2$ and $\E a_ka_{1-k}$, see for instance \cite{PhononBoltzmann, FPUTthesis}. When we take the kinetic limit, corresponding to taking $N \to \infty$ and $\beta \to 0$, it is expected that $\E a_ka_{1-k}$ should vanish, while $\E|a_k(t)|^2$ should evolve according to the following wave kinetic equation: 
\begin{equation}\tag{WKE} \label{WKE}
\begin{cases}
\partial_t n(t,k) = \K(n(t))(k), \\
n(0,k) = n_{\mathrm{in}}(k),
\end{cases}
\end{equation}
where the collision operator $\K$ is given by 
\begin{align}\label{KIN}\tag{KIN}
\K(\phi)(\xi) = \int_{\begin{subarray}{b}{(\xi_1, \xi_2, \xi_3 ) \in \mathbb{T}^{3}} \\ {\xi_1 + \xi_2 - \xi_3 = \xi \text{(mod 1)}} \end{subarray} } &\boldsymbol{\delta}(\omega_1 + \omega_2 - \omega_3 - \omega) \hspace{.2cm} \left|3T_{+,+,-}(\xi_1, \xi_2, \xi_3)\right|^2\\
&\phi\phi_{1} \phi_{2} \phi_{3} \left(\frac{1}{\phi} - \frac{1}{\phi_{1}} - \frac{1}{\phi_{2}} + \frac{1}{\phi_{3}} \right) \diff \xi_1 \diff \xi_2 \diff \xi_3, \notag
\end{align}
where we denote, for $i = 1,2,3$,
\begin{align*}
\phi = \phi(\xi,t), \hspace{.5cm} \phi_i = \phi(\xi_i,t), \hspace{.5cm} 
\omega = 2\sin\left(\pi \xi\right), \hspace{.5cm} \omega_i = 2\sin\left(\pi \xi_i\right).
\end{align*}
Note that the collision operator \eqref{KIN} does not contain any terms from the non-resonant interactions of \eqref{FPU} due to \eqref{eq-resonance}. Note that given smooth initial data $n_{\text {in}}(k)$, the equation \eqref{WKE} has a unique local solution $n=n(t,k)$ on some short time interval depending on $n_{\text {in}}$, as established in \cite{FPUTWP}. The timescale on which this behavior is expected to hold is the \textit{kinetic timescale,} 
\begin{equation}
T_{\mathrm{kin}} = \frac{1}{4\pi\beta^2}.
\end{equation}

Here, we rigorously establish that these correlations evolve in this manner. Note that in order to rigorously justify these assertions, we need to clarify a \textit{scaling law}, dictating how we are taking $N \to \infty$ and $\beta \to 0$ in the kinetic limit. Such scaling laws take the form $\beta = N^{-\gamma},$ for $\gamma \in (0,1)$. See \cite{WKE2023} for a heuristic explanation of why these are the scaling laws we consider.

\begin{theorem} \label{main}
Fix $\gamma \in(0,1)$. Fix a smooth function $n_{\mathrm {in }}\in C^{\infty}(\mathbb{T}\rightarrow [0,\infty))$, $A \geq 40$, and $\epsilon \ll 1$. Consider the equation \eqref{FPU} with random initial data \eqref{DAT}, and assume $\beta=N^{-\gamma}$ so that $T_{\mathrm{kin}}\sim N^{2\gamma}$. Fix $T=N^{-\epsilon}\min(N,N^{\frac{4}{3}\gamma})$.
Then for ${N^{0+} \leq t \leq T}$,
\[\mathbb E\left|a(t,k)\right|^2 = n_{\mathrm{in}}(k)  + \frac{t}{T_{\mathrm{kin}}} \mathcal K(n_{\mathrm{in}})(k) + o_{\ell_k^\infty}\left(\frac{t}{T_{\mathrm{kin}}}\right),\] 
and \[\E a_k(t) a_{1-k}(t) = o_{\ell_k^\infty}\left(\frac{t}{T_{\mathrm{kin}}}\right).\]
Note that in the above, $o_{\ell_k^\infty}\left(\frac{t}{T_{\mathrm{kin}}}\right)$ is a quantity bounded in $\ell_k^\infty$ by $L^{-\theta}\frac{t}{T_{\mathrm{kin}}}$ for some $\theta > 0$.
\end{theorem}

\subsection{Background literature}
\textit{Wave kinetic theory}, or \textit{wave turbulence theory}, is the formal study of the statistical behavior of non-linear wave systems, which can be viewed as the wave-analog of Boltzmann's kinetic theory for particles. This parallel was first drawn in 1929 when Peierls proposed the phonon Boltzmann equation, the equation we derive here, to describe systems of anharmonic crystals \cite{peierls1929}. Soon after, Nordheim (and later Uehling and Uhlenbeck) suggested the quantum Boltzmann equation to describe systems of quantum interacting gases \cite{Nordheim, Uehling1933}. The theory was then further developed in the 1960's in works on plasma physics \cite{vedenov1967theory} and water waves \cite{WaterWaves62,WaterWaves63}. The main theoretical consequences during that period were the discovery of so-called \textit{Kolmogorov-Zakharov spectra} solutions to kinetic equations discovered by Zakharov in 1965 \cite{zakharov1965weak}, with significant work since focused on demonstrating and understanding these solutions (for a summary of results, see \cite{zakharov2012kolmogorov}). 

\subsubsection{FPUT recurrence}
Tracing back to the 1920's, Fermi attempted to prove ergodicity of general Hamiltonian systems \cite{Fermi1, Fermi2}. This led to the seminal experiments of Fermi, Pasta, Ulam, and Tsingou on MANIAC computers which were intended to verify Fermi's claim of ergodicity, that energy would eventually spread equally to all modes \cite{EJS}. However, these simulations exhibited something remarkable:  when they initially excited one mode, eventually energy would almost fully return to this mode with losses of less than 1\%. In fact, later experiments of Tuck and Menzel showed evidence of a so-called ``super-period" for the energy \cite{Tuck1972}. These results became known as the ``FPUT paradox" and since there has been significant work devoted to explaining the phenomenon exhibited in these experiments. 

One of the first explanations was to draw a relationship between the FPUT system and integrable systems. The landmark numerical work of Zabusky and Kruscal in the 1960s suggested that solutions to the FPUT are approximated in the continuum limit $N \to \infty$ by the KdV (Korteweg-de Vries) system \cite{zabusky1965interaction}, with this relationship rigorously justified \cite{schneider2000counter, bambusi2006metastability}. Similarly, the FPUT system can also be approximated in regimes by the Toda lattice, another integrable system, introduced in the 1960's \cite{Toda1967} and studied numerically in for instance \cite{ponno2011two}. The method of normal forms was subsequently used to study the dynamics of the Toda lattice, and consequently FPUT \cite{HKToda, HKFPU}. Using the method of Lax pairs, one can also make more precise the relationship between FPUT, KdV, and the Toda lattice \cite{BambusiTodaKdv1, BambusiTodaKdv2}.

Around the same time, Izrailev and Chirikov proposed that the behavior of FPUT could be explained via KAM (Kolmogorov-Arnold-Moser) theory \cite{israiljev1965statistical}. In particular, that FPUT would exhibit quasi-periodic behavior for sufficiently small initial energy relative to the number of oscillators $N$ (in the initial experiments $N$ was taken at most 64). The most accurate threshold for such behavior is not agreed upon numerically  \cite{zaslavskiui1972stochastic, bocchieri1970anharmonic, nishida1971note, shepelyansky1997low, carati1999specific,carati2004definition,berchialla2004localization, ponno2005fermi, carati2007averaging,lvov2018double}. Under certain circumstances, quasi-periodicity has been shown for small energy using normal form techniques \cite{rink2006proof}.

More recently, work has focused on the kinetic equation, whose validity implies that equipartition of energy must hold for weak nonlinearity. For instance numerical work in the early 2000's showed that the kinetic equation is accurate in predicting the exponents for thermalization. Subsequent numerical work have begun to establish which types of wave interactions are responsible for energy transfer, see for instance \cite{FPU, pistone2018universal, bustamante2019exact, dematteis2020coexistence, de2022anomalous, ganapa2023quasiperiodicity, pezzi2025multi}. For more theoretical work on the subject, see \cite{lukkarinen2007kinetic, Lukkarinen2016} which look at anomalous diffusion for FPUT, and \cite{FPUTWP} which establishes well-posedness of the kinetic equation for FPUT. See \cite{Campbell2005, gallavotti2007fermi, Bambusi2015, FPUTreview} for a summary of numerical and theoretical work concerning FPUT.  

\subsubsection{Wave Turbulence theory}
In the past twenty years, there has been significant work to justify wave turbulence by rigorously deriving wave kinetic equations via the convergence of a Feynman diagram expansion. Demonstrating this convergence was first done in the linear setting by Spohn \cite{Spohn08} and later extended in  \cite{ErdosYau, ErdosSalmhoferYau}. The first results in the nonlinear setting focused on the cubic NLS in dimension $d \geq 2$, initially in the deterministic setting \cite{2DNLS, BGHSCR, BGHSdyn}. In the kinetic setting, a series of works were then able to justify the kinetic equation first for short times \cite{BGHSonset, CG1, CG2, 2019} and then for long times \cite{WKE, WKE2023, WKElong} in $d \geq 3$. In the latter of these works, Deng-Hani establish the validity of the kinetic equation for arbitrarily long times for the entire range of relevant scaling laws. Remarkably, the combinatorial techniques introduced in these works were adapted by these authors along with Ma to establish a long-time derivation of the Boltzmann equation for particle systems \cite{Boltzmannlong, Hilbert6}. Since the work \cite{WKE}, several works have derived kinetic or effective equations for a many 3 and 4-wave systems, see for instance \cite{QuadraticInhomo, KdVInhomogeneous, WickNLS, MaZK, StochasticNLS, KleinGordon}.

The aforementioned results focus on systems in dimension $d \geq 2$ as \cite{ODW} showed that the combinatorial tools used to bound Feynman diagrams do not reach the kinetic timescale with \cite{DIP25} improving the timescale to $T_{\mathrm{kin}}^{2/3}$ and showing an obstruction past this point. In addition to the combinatorial issues in 1D, the FPUT (and water waves) system (1) requires a \textit{time-dependent phase renormalization} to remove divergent interactions and (2) contains non-resonant terms in the nonlinearity. This \textit{time-dependent phase renormalization} was independently introduced in \cite{DIP25} and \cite{FPUTRD}, where \cite{DIP25} required energy estimates to justify the renormalization due to the presence of derivatives in the nonlinearity. To address the non-resonant terms, while \cite{DIP25} employs a normal form to bound the solution, here we are able to use a more direct approach to derive the kinetic equation. However, a normal form transformation can be performed up to the kinetic timescale \cite{VCFPUT}. 

\subsection{Overview of the proof}
The proof follows the same strategy of \cite{FPUTRD, FPUTthesis}, which dealt with the same system \eqref{FPU} with all non-resonant terms removed. Here, we show that the same equation can be derived even with the presence of the non-resonant terms and that all additional non-trivial correlations do vanish in the kinetic limit. As is now typical in derivations of kientic equations, we rely on a diagrammatic expansion of the solution $a_k(t)$:
\begin{equation*}\label{eq-expansion}
a_k = a_k^{(0)} + a_k^{(1)} + a_k^{(2)} + \ldots \ldots+ a_k^{(M)} + \mathcal R_k, 
\end{equation*}
where $M$ is chosen to be a sufficiently large threshold, $a_k^{(j)}$'s represent the $j$-th iterates, and $\mathcal R_k$ denotes the remainder term.

\begin{enumerate}

    \item Each of the iterates $a_k^{(n)}$ we write as the sum over \textit{ternary trees} $\mathcal T$, defined in Section \ref{sec-tt}, effectively a Feynman diagram expansion representing the interaction history of waves. We keep track of the nonlinear wave interactions by \textit{decorations} of these trees with wave numbers. Since we are dealing with second order correlations, we also introduce \textit{enhanced couples} $\Q$, two ternary trees whose leaves have been paired. 
    
    For each couple $\Q$, we show the convergence of the corresponding expression $\K_\Q$, which essentially consists of a sum over all decorations along with an oscillatory integral for each decoration. The broad strategy is to bound $\K_\Q$ by using the oscillatory integral to reduce the estimate to a counting problem on decorations. See Section \ref{sec-int} for an estimate of the oscillatory portion. Due to the nature of the nonlinearity, our ternary trees and enhanced couples contain both resonant and non-resonant interactions. This is one of the main differences of this work with \cite{DIP25}, where non-resonant interactions are pushed to higher order via normal form, so that the tree expansion is no longer ternary and any ternary node corresponds to a resonant interaction. 
    
    \item For the resonant term of the nonlinearity associated to $a_{k_1}a_{k_2}a_{k_3}^*$, it has long been known that for exact resonances where there is exact pairing of wave numbers, i.e. $\{k, k_3\} = \{k_1, k_2\}$ leads to divergence in the iterates due to the growth of $\sum_{\ell \in \Z_N \cap [0,1)}T_{+,+,-} (\ell, k, \ell) |a_\ell(t)|^2$, see for instance \cite{TextbookWT}. Without the presence of the coefficient $T_{+,+,-}(\ell, k, \ell),$ such terms can be removed via Wick-ordering using the conservation of mass, see \cite{2019}. Since the coefficients depend on the nonlinearity, we instead must perform a phase renormalization to remove these interactions. This was formally done in \cite{TextbookWT, SW, chibbaro20184} and rigorously done recently in \cite{DIP25, FPUTRD}. This phase renormalization is deterministic and satisfies: \[\partial_t \tilde \omega_k(t) = \left( \frac{\beta T}{N}\right) \sum\limits_{\ell \in \Z_N \cap [0,1)} 2T_{+,+,-}(\ell, k, \ell)\sum\limits_{\Q} \K_\Q(t,t,\ell).\] 
    This phase renormalization effectively allows us to discount couples with such divergent terms, which is why we use enhanced couples. 
    
    \item For non-resonant terms of the nonlinearity, in particular those associated to $a_{k_1}a_{k_2}^*a_{k_3}^*$, we may have pairing $\{k_2, k_3\} = \{-k, k_1\}$ which in theory could lead to a similar divergence to what is described above, when bounding the corresponding couple $\K_\Q$ via counting estimates. However, we show in Section \ref{subsec-loops} that such terms, which we refer to as \textit{self-loops}, lead to gains in the oscillatory portion of $\K_\Q$. This is one of the novelties in this work, which applies in lieu of a normal form in this setting.
    
    \item Another apparent divergence in the estimate for $\K_\Q$ is the presence of so-called \textit{irregular chains}, first identified in \cite{WKE}. If we were simply to use counting estimates, we would get very large bounds for some $\K_\Q$ which could diverge as early as $T_{\mathrm{kin}}^{1/2}$. However, as in \cite{WKE}, we may exploit cancellations between similar such \textit{congruent couples} and remove the irregular chains via \textit{splicing}, as described in Section \ref{subsec-splicing}. 
    
    \item To reduce the analysis of each $\K_\Q$ to a counting problem, we turn each couple into a \textit{molecule}, first introduced in \cite{WKE}. Each molecule is a directed graph of \textit{atoms} and \textit{bonds} expressing all relevant relationships in the corresponding couple for the purposes of counting decorations, Parent-Child (PC) relationships and Leaf Pair (LP) relationships. To count the number of decorations of a couple (and its corresponding molecule), we perform a \textit{counting algorithm} on the couple in Section \ref{section-algorithm}. This first involves a \textit{cutting algorithm}, first done in \cite{WKE2023} and adapted to the 1D case in \cite{DIP25}, which transforms the molecule into many smaller, easy to count parts. Our counting algorithm is a modified version of that in \cite{DIP25}, with corresponding counting estimates proved in Section \ref{section-counting}.
    
    \item For the remainder $\mathcal R_k$, we follow the same approach as in \cite{WKE,WKE2023,ODW} and use a contraction mapping argument, where we can reduce the analysis to showing that there is a linear map $\Ell$ for which $1 - \Ell$ is invertible in a suitable space. We further express $(1 - \mathscr L)^{-1}$ via Neumann series as:\[(1 - \mathscr{L})^{-1} = (1 - \mathscr L^{M})^{-1}\,\bigl(1 + \mathscr{L} + \cdots + \mathscr{L}^{M-1}\bigr),\] so that in fact we need to bound powers $\Ell^n$. These we may define in terms of \textit{flower trees} and \textit{flower couples}, which have a similar structure to our ternary trees and enhanced couples. This allows us to use the same analysis we used to bound $\K_\Q$ for $\Ell^n$. See \ref{section-remainder}. 
    \item We prove the main theorem in Section \ref{section-mainthm}. For enhanced couples of order larger than 2 or terms involving the remainder, we may use our estimates on $\K_\Q$ and $\mathcal R_k$. For couples of order 0, 1, and 2, we calculate them exactly and show that for $\E |a_k(t)|^2,$ they yield a first order expansion of \eqref{WKE}, and for $\E a_k(t)a_{1-k}(t),$ they vanish in the limit. Note that we rely on the normal form estimates proven in \cite{VCFPUT} to bound couples of order two which contain non-resonant contributions. 
\end{enumerate}

\noindent \textit{Acknowledgements.} The authors would like to thank their advisor Zaher Hani for the numerous insightful conversations and discussions through the preparation of this paper. This research was supported by the Simons Collaboration Grant on Wave Turbulence and NSF grant DMS-1936640.

\section{Preparations and Main Tools}
\label{section-preparations}
\subsection{Preliminary Reductions}
For a solution $a(t) = (a_k(t))$ to \eqref{FPU}, set 
\begin{equation}\label{eq-bk}
b_k(t) = e^{it(T\omega_k + \tilde{\omega}_k)}a_k(Tt)
\end{equation}
where $\tilde{w}_k$ is a deterministic phase renormalization we define in the following sections meant to cancel divergent terms. The rest of the paper will focus on the analysis of $b_k(t)$, where $t \in [0,1]$ and $k \in \Z_N \cap [0,1)$. We may write: 
\begin{equation}
\begin{cases}
\partial_t b_k = \mathcal C^+(b,b,b) + i\tilde{\omega}_k'(t) b_k(t). \\
b_k(0) = \sqrt{n_{\mathrm{in}}(k)} \eta_k(\varrho),
\end{cases}
\end{equation}
where 
\begin{equation}
\mathcal C^+(b,b,b)_k(t) = \sum_{\substack{\zeta_1, \zeta_2, \zeta_3 \in \{\pm\}}} \mathcal C_{\zeta_1, \zeta_2, \zeta_3}^+(b,b,b)_k(t),
\end{equation}
and 
\begin{align}
\mathcal C_{\zeta_1, \zeta_2, \zeta_3}^\zeta(u,v,w)_k(t) := -i \frac{\beta T}{N} &\sum_{k_1, k_2, k_3 \in \Z_N \cap [0,1)]} \delta(\zeta k - \zeta_1 k_1 -\zeta_2 k_2 - \zeta_3 k_3) \zeta T_{\zeta_1, \zeta_2, \zeta_3} \\
& \hspace{.25cm}\times \left[u_{k_1}^{\zeta_1} v_{k_2}^{\zeta_2} w_{k_3}^{\zeta_3}e^{i\zeta\Gamma^{(\zeta, \zeta_1, \zeta_2, \zeta_3)}_k(t)} \right],
\end{align}
with
\begin{align}
\Gamma^{(\zeta, \zeta_1, \zeta_2, \zeta_3)}_k(t) := Tt\Omega^\zeta_{\zeta_1, \zeta_2, \zeta_3}(k, k_1, k_2, k_3) + \tilde{\Omega}^\zeta_{\zeta_1, \zeta_2, \zeta_3}(k, k_1, k_2, k_3)(t),
\end{align}
defining $\Omega^\zeta_{\zeta_1, \zeta_2, \zeta_3}$ and $\widetilde \Omega^\zeta_{\zeta_1, \zeta_2, \zeta_3}(t)$ as
\begin{align}
\Omega^\zeta_{\zeta_1, \zeta_2, \zeta_3}(k, k_1, k_2, k_3) &:= \zeta\omega_k - \zeta_1 \omega_{k_1} - \zeta_2 \omega_{k_2} - \zeta_3\omega_{k_3}, \\
\widetilde\Omega^\zeta_{\zeta_1, \zeta_2, \zeta_3}(k, k_1, k_2, k_3)(t) &:= \zeta \widetilde\omega_k(t) - \zeta_1 \widetilde\omega_{k_1}(t) - \zeta_2 \widetilde\omega_{k_2}(t) - \zeta_3\widetilde\omega_{k_3}(t).
\end{align}

\subsection{Parameters, notations, and norms} \label{subsec-notation}
Throughout, let $C$ denote a large constant depending only on $(n_{\mathrm{in}}, \epsilon)$, which may vary from line to line. Let $\theta $ denote any sufficiently small constant, depending on $\epsilon$, and $\delta$ a fixed small constant depending only on $\epsilon$. Define $M:= \frac{10^6}{\epsilon}$, a constant which we will use in our Ansatz in Section \ref{subsec-ansatz}.

For $t \in [0,1]$ and any function $F = F(t)$ defined on $[0,1]$, denote the Duhamel operator by
\begin{equation}\label{eq-duhamel}
\mathcal I F(t) = \int_0^t F(s) \diff s.
\end{equation}
Define the time Fourier transform (the use of $\widehat{\cdot}$ depends on the context) by
\begin{equation*}
\widehat{u}(\tau) = \int_\R u(t) e^{-2\pi i \tau t} \diff t, \hspace{1cm} u(t) = \int_\R \widehat{u}(\tau) e^{2\pi i \tau t} \diff \tau. 
\end{equation*}
For a function $a(t) = (a_k(t))_{k \in \Z_N \cap [0,1)}$, define the $Z$ norm to be
\begin{equation}
||a||_Z^2 = \sup_{0 \leq t \leq 1} N^{-1} \sum_{k \in \Z_N} |a_k(t)|^2.
\end{equation}

\subsection{Trees and Couples} \label{sec-tt}
\begin{definition} (Trees) \label{def:Trees}
\begin{enumerate}[label=(\roman*)]
    \item A \textit{ternary tree} $\T$ is a rooted tree where each branching node has precisely three children.  Let $\mathcal{N}$ denote the set of branching nodes and define $n:= |\mathcal N|$ to be the \textit{order} of the tree. Denote the set of leaves by $\mathcal{L}$, so $|\mathcal L| = 2n + 1$. Denote the \textit{root} node $\mathfrak{r}$. Let $\mathcal{T}=\bullet$ denote a \emph{trivial} tree with order 0. When relevant, we denote the children subtrees of $\mathfrak r$ by $\mathcal T_1, \mathcal T_2, \mathcal T_3$ from left to right.
    \item We assign to each ternary tree $\mathcal{T}$ a sign $+\text{ or }-$, sometimes added as a superscript. For any node $\mathfrak{n}\in \mathcal{N}$, let its children be $\mathfrak{n}_1,\mathfrak{n}_2,\mathfrak{n}_3$ from left to right. We assign each node the sign $\zeta_{\mathfrak{n}}\in\{\pm1\}$. Define $\zeta(\mathcal T) = \prod_{\mathfrak n \in \mathcal N} (-i\zeta_{\mathfrak n})$. We call $\mathfrak n \in \mathcal N$ a $(m_{\zeta_n}, m_{-\zeta_n})$ node if $m_{\zeta_n}$ children of root $\mathfrak r$ have sign $\zeta_n$ and $m_{-\zeta_n}$ have sign $-\zeta_n$ and denote this the \textit{type} of $\mathfrak n$. Note $m_{\zeta_n} + m_{-\zeta_n}$ = 3. 
    \item An \textit{enhanced tree} is a ternary tree $\mathcal{T}$ equipped with a designated set $\mathcal N_D \subset \mathcal N$ of $(2,1)$ \textit{degenerate} nodes. For $\mathfrak n \in \mathcal N_D$ with children $\{\mathfrak n_1, \mathfrak n_2, \mathfrak n_3\},$ we specify a choice of a distinguished child $\mathfrak n_c$, where $\zeta_{\mathfrak n_c} = \zeta_{\mathfrak n}$. See Figure \ref{fig:ternary-tree-diamond}. 
    \item A \textit{decoration} $\mathscr D$ of an enhanced tree $\T$ is a set of vectors $\{(k_{\mathfrak n})_{\mathfrak n \in \T}\}$ such that $k_\mathfrak{n}\in \Z_N \cap [0,1)$. Further, if $\mathfrak n$ is a branching node, 
    \begin{equation*}
    \zeta_{\mathfrak n} k_{\mathfrak n} = \zeta_{\mathfrak n_1} k_{\mathfrak n_1} + \zeta_{\mathfrak n_2} k_{\mathfrak n_2} +\zeta_{\mathfrak n_3} k_{\mathfrak n_3},
    \end{equation*}
    where $\mathfrak n_1, \mathfrak n_2, \mathfrak n_3$ are the three children of $\mathfrak n$ labeled from left to right. In addition, whenever $\mathfrak{n}\in \mathcal{N}_{D}$ is a \emph{degenerate} branching node with children $\{\mathfrak{n}_c, \mathfrak{n}_2, \mathfrak n_3\}$ (not necessarily in order), we impose the \emph{enhanced} constraints:
    \[ \quad k_{\mathfrak{n}} = k_{\mathfrak{n}_c} \quad \text{and} \quad k_{\mathfrak{n}_3} = k_{\mathfrak{n}_2}. \]  We say $\mathscr D$ is a $k$-decoration if $k_{\mathfrak r} = k$. Given a decoration $\mathscr D$, for each branching node $\mathfrak n \in \N$, we define the resonance factors $\Omega_{\mathfrak n}, \widetilde{\Omega}_{\mathfrak n}$, and $\Gamma_{\mathfrak n}$ as 
    \begin{align*}
        \Omega_{\mathfrak n} &:= \zeta_{\mathfrak n}\omega_{k_{\mathfrak n}} - \zeta_{\mathfrak n_1}\omega_{k_{\mathfrak n_1}} - \zeta_{\mathfrak n_2} \omega_{k_{\mathfrak n_2}} - \zeta_{\mathfrak n_3}\omega_{k_{\mathfrak n_3}} \\
        \widetilde\Omega_{\mathfrak n}(t) &:= \zeta_{\mathfrak n}\widetilde\omega_{k_{\mathfrak n}}(t) - \zeta_{\mathfrak n_1}\widetilde\omega_{k_{\mathfrak n_1}}(t) - \zeta_{\mathfrak n_2}\widetilde\omega_{k_{\mathfrak n_2}}(t) - \zeta_{\mathfrak n_3}\widetilde\omega_{k_{\mathfrak n_3}}(t) \\
        \Gamma_{\mathfrak n}(t) &:= Tt \Omega_\mathfrak n + \widetilde\Omega_\mathfrak n(t)
    \end{align*}
    We also define 
    \begin{align}
    T_\mathfrak n(k_{\mathfrak n_1}, k_{\mathfrak n_2}, k_{\mathfrak n_3}) &:= \zeta_{\mathfrak n} T_{\zeta_{\mathfrak n_1}, \zeta_{\mathfrak n_2}, \zeta_{\mathfrak n_3}} (k_{\mathfrak n_1}, k_{\mathfrak n_2}, k_{\mathfrak n_3})
    \end{align}
    so that
    \begin{equation} \label{eq-eps}
    \epsilon_{k_{\mathfrak{n}_1}k_{\mathfrak{n}_2}k_{\mathfrak{n}_3}} := \begin{cases}
    -T_\mathfrak n(k_{\mathfrak n_1}, k_{\mathfrak n_2}, k_{\mathfrak n_3}) & \text{if } k_{\mathfrak{n}_1}=k_{\mathfrak{n}_2}=k_{\mathfrak{n}_3}; \\
    +T_\mathfrak n (k_{\mathfrak n_1}, k_{\mathfrak n_2}, k_{\mathfrak n_3}) & \text{otherwise},
    \end{cases}
    \end{equation} 
    and
    \begin{equation}
    \epsilon_{\mathscr D} := \prod_{\mathfrak n \in \mathcal N} \epsilon_{k_{\mathfrak n_1}k_{\mathfrak n_2}k_{\mathfrak n_3}}. 
    \end{equation}

    \noindent Lastly, for any $k$-decoration of $\T$, we may choose $d_\mathfrak n \in \{0,1\}$ and define $q_{\mathfrak{n}}$ for each $\mathfrak{n}\in \mathcal{N}$ inductively by:
    \begin{align} \label{eq-def-qn}
    q_{\mathfrak{n}} &= 0 \text{ if } \mathfrak{n}\in \mathcal{L}\text{; } q_{\mathfrak{n}} = d_{\mathfrak{l}}q_{\mathfrak{l}}- d_{\mathfrak{n}_2}q_{\mathfrak{n}_2}+d_{\mathfrak{n}_3}q_{\mathfrak{n}_3}+\Omega_{\mathfrak{n}} \text{ if }\mathfrak{n}\in \mathcal{N}.
    \end{align}
\end{enumerate}

\begin{figure}[ht]
\centering
\begin{tikzpicture}[scale=.8,
  level distance=1.5cm,
  level 1/.style={sibling distance=4.cm},
  level 2/.style={sibling distance=1.25cm},
  level 3/.style={sibling distance=0.8cm}]
  \tikzset{
    solid node/.style={circle,draw,fill=black,inner sep=1.5pt},
    rectangle node/.style={
      shape=diamond, draw=black, fill=white, minimum size=6pt, inner sep=0pt
    }
  }

  \node[solid node,label=above:{\tiny $(\mathfrak r,+)$}] (root) {}
    child {
      node[solid node,label=left:{\tiny $(\mathfrak n_1,+)$}] {}
      child { node[solid node,label=below:{\tiny $(\mathfrak m_1,+)$}] {} }
      child { node[solid node,label=below:{\tiny $(\mathfrak m_2,+)$}] {} }
      child { node[solid node,label=below:{\tiny $(\mathfrak m_3,+)$}] {} }
    }
    child {
      node[rectangle node,label=right:{\tiny $(\mathfrak n_2,-)$}] {}
      child { node[solid node,label=below:{\tiny $( \mathfrak{l}_2,+)$}] {} }
      child { node[solid node,label=below:{\tiny $(\mathfrak l_c,-)$}] {} }
      child { node[solid node,label=below:{\tiny $( \mathfrak{l}_3,-)$}] {} }
    }
    child {
      node[solid node,label=right:{\tiny $(\mathfrak n_3,-)$}] {}
      child { node[solid node,label=below:{\tiny $(\mathfrak p_1,+)$}] {} }
      child { node[solid node,label=below:{\tiny $(\mathfrak p_2,+)$}] {} }
      child { node[solid node,label=below:{\tiny $(\mathfrak p_3,-)$}] {} }
    };
\end{tikzpicture}
\caption{A (1,2) enhanced ternary tree with root $(\mathfrak r, +)$ and three branching nodes $\mathfrak n_i$, each with children labeled with their corresponding signs. Node $\mathfrak n_2$ is chosen to be degenerate so a decoration must have $k_{\mathfrak n_2} = k_{\mathfrak l_c}$ and $k_{\mathfrak l_2} = k_{\mathfrak l_c^*}$.}
\label{fig:ternary-tree-diamond}
\end{figure} 
\end{definition}

\begin{definition}\label{def-treeexp}
    For an enhanced ternary tree $\mathcal T$ of order $n$, define 
    \begin{equation}\label{eq-JT}
        (\J_\T)_k(t) := \left(\frac{\beta T}{N}\right)^n \zeta(\T) \sum_{\mathscr{D}}\epsilon_{\mathscr D} \mathcal{A}_{\mathcal{T}}(t,\Omega[\mathcal N], \widetilde{\Omega}[\mathcal N])\prod_{\mathfrak{l}\in \mathcal{L}}\sqrt{n_{\mathrm{in}}(k_{\mathfrak{l}})}\mathcal{B}_{\mathcal{T}}(\eta_{\mathcal{T}}(\varrho)),
    \end{equation}
    where 
    \begin{align}
    \mathcal{A}_{\mathcal{T}}(t,\Omega[\Nc], \widetilde{\Omega}[\Nc]) &= \int_{\mathcal D} \prod_{\mathfrak n \in \mathcal N} e^{i\zeta_{\mathfrak n}\Gamma_{\mathfrak n}(t_{\mathfrak n})}\diff t_n \label{eq-AT}
    \end{align}
    for 
    \begin{equation} \label{eq-D}
    \mathcal D = \{t[\mathcal N] : 0 < t_{\mathfrak n'} < t_{\mathfrak n} < t \text{ whenever } \mathfrak n' \text{ is a child node of } \mathfrak n\}.
    \end{equation}
    and $\B_\T$ defined inductively as 
    \begin{align}
    \B_\bullet(\eta_\bullet(\varrho)) &:= \eta_{k_\mathfrak r}^{\zeta_{\mathfrak r}}(\varrho) \\
    \B_\T(\eta_\T(\varrho)) &:= \left[\prod_{j = 1}^3 \B_{\T_j}(\eta_{\T_j}(\varrho))\right] - \mathbbm{1}_{\mathfrak{r}\in \mathcal{N}_D}\mathcal{B}_{\mathcal{T}_{c}}(\eta_{\mathcal{T}_{c}}(\varrho)) \mathbb{E}\left(\mathcal{B}_{\mathcal{T}_{2}}(\eta_{\mathcal{T}_{2}}(\varrho))\mathcal{B}_{\mathcal{T}_3}(\eta_{\mathcal{T}_3}(\varrho))\right), 
    \end{align}
    where if $\mathfrak r \in \Nc_d$, then $\mathfrak{n}_c$, $\mathfrak{n}_2$, $\mathfrak{n}_3$ are the three children of $\mathfrak r$ (not necessarily in order). Correspondingly, $\mathcal{T}_{c}$, $\mathcal{T}_{2}$, and $\mathcal{T}_3$ are trees with root node $\mathfrak{n}_c$, $\mathfrak{n}_2$, $\mathfrak{n}_3$ respectively.
\end{definition}

Note we may prove a generalization of Complex Isserlis' Theorem: 
\begin{lemma} \label{lem-multiplicity}
For enhanced trees $\mathcal{T}_1$ and $\mathcal{T}_2$,
\begin{align}
    \mathbb{E}\left(\mathcal{B}_{\mathcal{T}_1}(\eta_{\mathcal{T}_1}(\varrho)){\mathcal{B}^*_{\mathcal{T}_2}(\eta_{\mathcal{T}_2}(\varrho))}\right) = \sum_{\substack{\mathscr P \textit{: enhanced pairings of}\\ \mathcal{T}_1,\mathcal{T}_2}} 1. \label{enhanced-pairing}
\end{align}
\end{lemma}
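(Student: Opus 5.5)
The plan is to prove \eqref{enhanced-pairing} by strong induction on the total order $n(\mathcal T_1)+n(\mathcal T_2)$, for a fixed pair of decorations of $\mathcal T_1,\mathcal T_2$. Here an \emph{enhanced pairing} means a partition of $\mathcal L(\mathcal T_1)\cup\mathcal L(\mathcal T_2)$ into pairs of leaves with opposite signs (after conjugating $\mathcal T_2$) and equal wave numbers. The extra requirement is that for no degenerate node $\mathfrak n\in\mathcal N_D$ are the leaves of the two non-distinguished subtrees $\mathcal T_{\mathfrak n_2},\mathcal T_{\mathfrak n_3}$ paired entirely among themselves. I would first fix this definition precisely, since the subtraction in $\B_\T$ is what forces it.

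\emph{Base case.} If both trees are trivial, $\E(\eta^{\zeta}_{k}\,\overline{\eta^{\zeta'}_{k'}})$ equals $1$ exactly when $\zeta=\zeta'$ and $k=k'$, and $0$ otherwise. This holds in both the Gaussian and the unit-circle case, because the $\eta_k$ are i.i.d., mean zero, of unit variance and rotation invariant.

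\emph{Expanding $\B_\T$.} Unwinding the recursive definition, $\B_\T$ is a signed sum over subsets $S\subset\mathcal N_D$. The $S$-term is the product of the $\eta$'s over leaves not lying in the removed subtrees $\mathcal T_{\mathfrak n_2},\mathcal T_{\mathfrak n_3}$ with $\mathfrak n\in S$, times the product of the expectations $\E(\B_{\mathcal T_{\mathfrak n_2}}\B_{\mathcal T_{\mathfrak n_3}})$. By the induction hypothesis applied to each pair $(\mathcal T_{\mathfrak n_2},\mathcal T_{\mathfrak n_3}^*)$, with the sign of one tree flipped, each such factor counts enhanced pairings internal to that pair of subtrees. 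Using Isserlis' theorem in the Gaussian case, the expectation of a product of $\eta$'s is the number of sign- and $k$-compatible pairings. The left side of \eqref{enhanced-pairing} then becomes an alternating sum over $S$ of counts of pairings that are ``internal'' at every $\mathfrak n\in S$. Inclusion--exclusion collapses this sum to the count of pairings that are internal at no degenerate node, which is exactly the set of enhanced pairings. The subtle point is nesting: a degenerate node may sit inside $\mathcal T_{\mathfrak n_2}$ of another one. I would handle this by arguing from the top down, i.e. by doing the induction on the root of $\mathcal T_1$ (or $\mathcal T_2$) rather than expanding everything at once. At the root, $\E[\B_{\mathcal T_1}\B^*_{\mathcal T_2}]$ splits into the product term minus $\mathbbm 1_{\mathfrak r\in\mathcal N_D}\,\E[\B_{\mathcal T_c}\B^*_{\mathcal T_2}]\,\E[\B_{\mathcal T_{1,2}}\B_{\mathcal T_{1,3}}]$. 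The product term, by a multi-tree version of the inductive statement (which I would prove simultaneously for products of several $\B$'s), counts pairings that are enhanced at every node except possibly $\mathfrak r$. The subtracted term counts exactly those in which $\mathcal T_{1,2}\cup\mathcal T_{1,3}$ pair internally, and this count factorizes because such pairings are a product of an internal pairing and an external one.

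\emph{Main obstacle: the unit-circle case.} Here Isserlis fails when wave numbers coincide: for instance $\E|\eta|^4=1$ rather than $2$. I expect this to be the hardest step. First I would check that, for a fixed decoration, the discrepancy only arises when several leaves in a paired group share the same $k$. I would then show that this discrepancy is either absorbed by the convention \eqref{eq-eps}, which flips the sign when $k_{\mathfrak n_1}=k_{\mathfrak n_2}=k_{\mathfrak n_3}$, or confined to a lower-dimensional set of decorations that is counted separately. If neither works cleanly, I would state \eqref{enhanced-pairing} as an identity in the Gaussian case. For the unit-circle case I would then prove it up to an error supported on decorations with at least three leaves having equal $k$, and bound that error by a factor $N^{-1}$ in the later counting estimates.
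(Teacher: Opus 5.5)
Your argument is correct for Gaussian $\eta_k$, which is also all the paper's proof covers. It rests on the same mechanism as the paper: remove one subtraction, and note that the subtracted term counts the pairings in which the two non-distinguished subtrees pair internally, a count that factorizes. The organization differs.

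\textbf{Comparison with the paper.} The paper inducts on $|\mathcal N_D|$. It removes the subtraction at an arbitrary degenerate node $\mathfrak n_*\in\mathcal T_1$ via
\begin{equation*}
\mathbb{E}\bigl(\mathcal B_{\mathcal T_1}\mathcal B^*_{\mathcal T_2}\bigr)=\mathbb{E}\bigl(\widetilde{\mathcal B}_{\mathcal T_1}\mathcal B^*_{\mathcal T_2}\bigr)-\mathbb{E}\bigl(\mathcal B_{\mathcal T_{11}}\mathcal B^*_{\mathcal T_{12}}\bigr)\,\mathbb{E}\bigl(\widetilde{\widetilde{\mathcal B}}_{\mathcal T_1}\mathcal B^*_{\mathcal T_2}\bigr).
\end{equation*}
Here $\widetilde{\mathcal B}_{\mathcal T_1}$ treats $\mathfrak n_*$ as non-degenerate, and $\widetilde{\widetilde{\mathcal B}}_{\mathcal T_1}$ omits the two non-distinguished subtrees of $\mathfrak n_*$. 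Every expectation on the right is again a two-tree expectation with fewer degenerate nodes, so no multi-tree statement is needed.

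You peel only at roots and induct on total order. This forces you to prove the statement for products of several $\mathcal B$'s, since expanding a non-degenerate root already yields four factors. In exchange, nested degenerate nodes cost you nothing: the subtraction you remove is always the outermost one, and everything below it is handled by the hypothesis.

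The paper's shortcut instead tacitly needs two facts it does not spell out:
\begin{itemize}
\item $\mathcal B_{\mathcal T_1}$ is affine in the subtraction at $\mathfrak n_*$, even when $\mathfrak n_*$ lies in the $\mathfrak m_2$ or $\mathfrak m_3$ branch of a degenerate ancestor $\mathfrak m$. The dependence then passes through $\mathbb{E}(\mathcal B_{\mathcal T_{\mathfrak m_2}}\mathcal B_{\mathcal T_{\mathfrak m_3}})$, which is linear.
\item The enhanced condition at such an $\mathfrak m$ is unchanged when the internally paired leaves of $\mathcal T_{11}\cup\mathcal T_{12}$ are deleted.
\end{itemize}
Your flat inclusion--exclusion over subsets $S\subset\mathcal N_D$ becomes unnecessary once you adopt the root-peeling induction.

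\textbf{The random-phase case.} Your concern is legitimate, and the paper does not address it either: its induction also bottoms out in Isserlis, so as written it proves \eqref{enhanced-pairing} only for Gaussian data.

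Drop the idea of absorbing the discrepancy into \eqref{eq-eps}. That sign lives in $\epsilon_{\mathscr D}$, not in $\mathcal B_{\mathcal T}$, and it reflects the structure of the nonlinearity rather than the moments of $\eta$.

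Your fallback is the right move, with one refinement and one caution.
\begin{itemize}
\item \emph{Refinement.} Monomial by monomial, the discrepancy is supported where two distinct pairs carry the same wave number. So at least four leaves share a $k$, not three.
\item \emph{Caution.} This coincidence is not always an extra constraint. Suppose both non-distinguished children $\mathfrak n_2,\mathfrak n_3$ of a degenerate node are leaves. The decoration rule $k_{\mathfrak n_2}=k_{\mathfrak n_3}$, together with the requirement that $\mathfrak n_2,\mathfrak n_3$ not be paired with each other, forces exactly this coincidence. There is then no $N^{-1}$ gain, and the discrepancy is not lower order.
\end{itemize}
In that configuration, however, $\mathcal B_{\mathcal T_{\mathfrak n}}=\mathcal B_{\mathcal T_c}\bigl(|\eta_{k}|^2-1\bigr)$ vanishes identically for random phases. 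That vanishing, not a counting gain, is what you should use there.
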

\begin{proof}
    We prove by induction. Suppose the formula is true for $|\mathcal N_D|=k-1$, then for $|\mathcal N_D|=k$, without loss of generality, we assume that there is one more degenerate node $\mathfrak{n}_*$ coming from $\mathcal{T}_1$. Assume that $\widetilde{\mathcal{B}}_{\mathcal{T}_1}$ represents the multiplicity (product of random phases) of the enhanced tree ${\mathcal{T}_1}$ with $|\mathcal N_{D_1}|=k_1-1$, that is $\mathfrak{n}_* \notin \mathcal N_{D_1}$, $\mathcal{T}_{1j}$'s are subtrees of degenerate node $\mathfrak{n}_*$ for $j=1, 2, 3$, and $\widetilde{\widetilde{\mathcal{B}}}_{\mathcal{T}_1}$ represents the multiplicity of ${\mathcal{T}_1}$ excluding $\mathcal{T}_{11}$ and $\mathcal{T}_{12}$. Here we assume that the root $\mathfrak{n}_{11}$ of $\mathcal{T}_{11}$ is $\mathfrak{n}_{1c^*}$. Then from Definition \ref{def-treeexp} we know that:
\begin{align}
    \mathbb{E}\left(\mathcal{B}_{\mathcal{T}_1}{\mathcal{B}^*_{\mathcal{T}_2}}\right) = \mathbb{E}\left(\widetilde{\mathcal{B}}_{\mathcal{T}_1}{\mathcal{B}^*_{\mathcal{T}_2}}\right) - \mathbb{E}\left(\mathcal{B}_{\mathcal{T}_{11}}{\mathcal{B}^*_{\mathcal{T}_{12}}}\right)\mathbb{E}\left(\widetilde{\widetilde{\mathcal{B}}}_{\mathcal{T}_1}{\mathcal{B}^*_{\mathcal{T}_2}}\right)
\end{align}
which is the number of enhanced pairing of $\mathcal{T}_1,\mathcal{T}_2$ (excluding any complete pairing from $\mathcal{T}_{11}$ and $\mathcal{T}_{12}$) and concludes the proof.
\end{proof}

\begin{definition}{(Couples)} \label{def-couples}
\begin{enumerate}
    \item A \emph{couple} $\Q$ consists of two trees $\T^+$ and $\T^-$, each labeled with opposite signs for which the total number of leaves of sign - equals the total number of leaves of sign +, along with a partition $\mathscr{P}$ of the set of leaves $\mathcal{L}^+ \cup \mathcal{L}^-$ into $(n+1)$ disjoint two-element subsets, where $n = n(\T^+) + n(\T^-)$ is called the \emph{order} of the couple. The partition $\mathscr{P}$ must satisfy the condition $\zeta_{\mathfrak{l}} = -\zeta_{\mathfrak{l}'}$ for every pair $\{\mathfrak{l}, \mathfrak{l}'\} \in \mathscr{P}$. For a  couple $\Q = \{\T^+, \T^-, \mathscr P\}$, we denote the set of branching nodes by $\mathcal{N} = \mathcal{N}^+ \cup \mathcal{N}^-$, and the leaves by $\mathcal{L} = \mathcal{L}^+ \cup \mathcal{L}^-$. Define $\zeta(\Q) \;=\; \prod_{\mathfrak{n} \in \mathcal{N}} \bigl(-i\,\zeta_{\mathfrak{n}}\bigr)$. 
    \item A \emph{decoration} $\mathscr{E}$ of a couple $\Q$ is obtained by decorating each tree $\T^+$ and $\T^-$ according to $\mathscr{D}^+$ and $\mathscr{D}^-$, subject to the further requirement that $k_{\mathfrak{l}} = k_{\mathfrak{l}'}$ whenever $\{\mathfrak{l}, \mathfrak{l}'\} \in \mathscr{P}$. Define $\epsilon_{\mathscr{E}} \;=\; \epsilon_{\mathscr{D}^+}\,\epsilon_{\mathscr{D}^-}$. A decoration $\mathscr{E}$ is called a \emph{$k$-decoration} if $k_{\mathfrak{r}^+} = k_{\mathfrak{r}^-} = k$.
    \item A couple $\Q$ is called an \textit{enhanced couple} if its partition $\mathscr P$ further satisfies that for any $\mathfrak n \in \mathcal N_D$, the leave descendants $\mathcal{L}(\mathfrak{n}_2, \mathfrak{n}_3)$ of $\{ \mathfrak{n}_2, \mathfrak{n}_3\}$ are not completely paired. See Figure \ref{fig:ternary-tree-diamond-2}. 
\end{enumerate}

\begin{figure}[ht]
\centering
\begin{tikzpicture}[scale=.8,
  baseline=(root.base),
  level distance=1.2cm,
  level 1/.style={sibling distance=3.75cm},
  level 2/.style={sibling distance=1.2cm},
  level 3/.style={sibling distance=0.5cm}]
  \tikzset{
    solid node/.style={circle,draw,fill=black,inner sep=1.5pt},
    deckdiamond/.style={
      shape=diamond, draw=black, fill=white, minimum size=6pt, inner sep=0pt
    }
  }

  \node[solid node,label=above:{\tiny $(\mathfrak r,+)$}] (root) {}
    child {
      node[solid node,label=left:{\tiny $(\mathfrak n_1,+)$}] {}
      child { node[solid node, fill=green, label=below:{\tiny $(\mathfrak m_1,+)$}] {} }
      child { node[solid node, fill=blue,label=below:{\tiny $(\mathfrak m_2,+)$}] {} }
      child { node[solid node, fill=yellow,label=below:{\tiny $(\mathfrak m_3,+)$}] {} }
    }
    child {
      node[deckdiamond,label=right:{\tiny $(\mathfrak n_2,-)$}] {}
      child { node[solid node, fill=orange,label=below:{\tiny $(\mathfrak l_2,+)$}] {} }
      child { node[solid node, fill=blue,label=below:{\tiny $( \mathfrak{l}_c,-)$}] {} }
      child { node[solid node, fill=yellow,label=below:{\tiny $( \mathfrak{l}_c^*,-)$}] {} }
    }
    child {
      node[solid node, fill=orange,label=right:{\tiny $(\mathfrak n_3,-)$}] {}
    };
\end{tikzpicture}
\hspace{0.5cm}
\begin{tikzpicture}[scale=1,
  level distance=1.2cm,
  level 1/.style={sibling distance=3.5cm},
  level 2/.style={sibling distance=1.2cm},
  level 3/.style={sibling distance=0.5cm}]
  \tikzset{
    solid node/.style={circle,draw,fill=black,inner sep=1.5pt},
    deckdiamond/.style={
      shape=diamond, draw=black, fill=white, minimum size=6pt, inner sep=0pt
    }
  }

  \node[solid node, fill=green,label=above:{\tiny $(\mathfrak r',-)$}] (root) {};
\end{tikzpicture}
\caption{An enhanced couple with leaves paired in the same color. Note that the leaves $\mathfrak{l}_c^*$ and $\mathfrak l_2$ cannot be paired together by Definition \ref{def-couples}.}
\label{fig:ternary-tree-diamond-2}
\end{figure}
\end{definition}

\begin{definition}
For an enhanced couple $\Q = \{\T^+, \T^-, \mathscr P\}$ of order $n$, define  
\begin{align}\label{eq-KQ}
(\K_\Q)(t,s,k)  &:= \left( \frac{\beta T}{N}\right)^n \,\zeta(\Q)\,
\sum_{\mathscr E} \epsilon_{\mathscr E}
\A_\Q(t,s, \Omega[\Nc], \widetilde{\Omega}[\Nc])\prod_{\mathfrak{l} \in \mathcal{L}}^{+} n_{\mathrm{in}}\bigl(k_{\mathfrak{l}}\bigr),
\end{align}
where the sum is taken over all $k$-decorations $\mathscr{E}$ of $\Q$, the product $\prod_{\mathfrak{l} \in \mathcal{L}}^{+}$ is taken over leaves with $+$ signs, and
    \begin{align}
    \mathcal{A}_{\mathcal{Q}}(t,s,\Omega[\Nc], \widetilde{\Omega}[\Nc]) &= \int_{\mathcal E} \prod_{\mathfrak n \in \mathcal N} e^{i\zeta_{\mathfrak n}\Gamma_{\mathfrak n}(t_{\mathfrak n})}\diff t_n, \label{eq-AQ}
    \end{align}
    with 
    \begin{align}\label{eq-E}
    \mathcal{E} &= \Bigl\{\,t[\mathcal{N}] : 0 < t_{\mathfrak{n}'} < t_{\mathfrak{n}}
    \text{ whenever $\mathfrak{n}'$ is a child of $\mathfrak{n}$}; \\
    &\qquad\qquad\quad t_{\mathfrak{n}} < t \text{ for } \mathfrak{n} \in \mathcal{N}^{+},
    \text{ and } t_{\mathfrak{n}} < s \text{ for } \mathfrak{n} \in \mathcal{N}^{-}
\Bigr\}, \nonumber
\end{align}
\end{definition}

So, by Lemma \ref{lem-multiplicity},
\begin{equation}
\E \left[(\J_{\T^+})_k(t)\overline{(\J_{\T^-})_k(s)}\right] = \sum_{\mathcal Q} (\K_{\Q})(t,s,k), 
\end{equation}
where the summation is taken over all enhanced couples $\Q = \{\T^+, T^-\}$ with partition $\mathscr P$. Note that if the total number of leaves of sign + in the couple is not equal to the total number of leaves of sign -, no such couple exists, so $\E \left[(\J_{\T^+})_k(t)\overline{(\J_{\T^-})_k(s)}\right] = 0$. 

\subsection{Phase Renormalization and Ansatz} \label{subsec-ansatz}We take the following Ansatz for $b_k(t)$, namely 
\begin{equation}\label{eq-ansatz}
b_k(t) = \sum_{|\T| \leq M} (\J_\T)_k(t) + \mathcal R_k(t), 
\end{equation}
for the sum over enhanced trees $\T$ of sign $+$, $\mathcal R$ the \textit{remainder} and $M$ defined in Section \ref{subsec-notation}. Note that defining the remainder $\mathcal R$ depends on the choice of phase renormalization $\widetilde \omega_k$ in \eqref{eq-bk}: 

\begin{proposition}[Phase Renormalization] \label{prop-phase-renormalization}
For each $M$, here exists unique $\widetilde \omega_k$ such that 
\begin{equation}\label{eq-phase}
\begin{cases}\partial_t \tilde \omega_k(t) = \left( \frac{\beta T}{N}\right) \sum\limits_{\ell \in \Z_N \cap [0,1)} 2T_{+,+,-}(\ell, k, \ell)\sum\limits_{\Q} \K_\Q(t,t,\ell) \\
\tilde \omega_k (0) = 0.
\end{cases}
\end{equation}
for enhanced couples $\Q = \{\T^+, \T^-, \mathscr P\}$ such that $|\T^+|, |\T^-| \leq M$. 
\end{proposition}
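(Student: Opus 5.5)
The plan is to view \eqref{eq-phase} as a finite-dimensional ODE system for the vector $\widetilde\omega(t) = (\widetilde\omega_k(t))_{k \in \Z_N \cap [0,1)} \in \R^N$ (for fixed $N$) and apply Picard--Lindel\"of on $t\in[0,1]$. The first step is to record how the right-hand side depends on $\widetilde\omega$. For each enhanced couple $\Q$ with $|\T^\pm|\le M$, the quantity $\K_\Q(t,t,\ell)$ in \eqref{eq-KQ} is a finite sum over $\ell$-decorations $\mathscr E$, because $\Z_N\cap[0,1)$ is finite. The dependence on $\widetilde\omega$ enters only through the phases $e^{i\zeta_{\mathfrak n}\Gamma_{\mathfrak n}(t_{\mathfrak n})}$ in $\A_\Q$, and $\Gamma_{\mathfrak n}(t_{\mathfrak n}) = Tt_{\mathfrak n}\Omega_{\mathfrak n} + \widetilde\Omega_{\mathfrak n}(t_{\mathfrak n})$ is linear in the values $\widetilde\omega_{k}(t_{\mathfrak n})$ at earlier times $t_{\mathfrak n}\le t$. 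The number of enhanced couples of bounded order is also finite. So the right-hand side is a nonlocal-in-time (Volterra-type) functional $F_k(t,\widetilde\omega|_{[0,t]})$, and the correct setting is a fixed point for the integral equation $\widetilde\omega_k(t) = \int_0^t F_k(s,\widetilde\omega|_{[0,s]})\,\diff s$ in $C([0,1];\R^N)$.

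The second step is the Lipschitz estimate. Using $|e^{ix}-e^{iy}|\le|x-y|$ and the fact that the time domain $\mathcal E$ has measure at most $1$, I get for two candidates $\widetilde\omega,\widetilde\omega'$
\[
|F_k(t,\widetilde\omega)-F_k(t,\widetilde\omega')| \le C_{N,M}\sup_{s\le t}\|\widetilde\omega(s)-\widetilde\omega'(s)\|_{\ell^\infty}.
\]
Here $C_{N,M}$ is a crude constant. It collects the factor $(\beta T/N)^{n+1}$, the number of couples and decorations (at most $N^{O(M)}$), the bound $|T_{\zeta_1\zeta_2\zeta_3}|\le 4$, the boundedness of $n_{\mathrm{in}}$, and a factor $4|\mathcal N|$ from the number of frequencies in each $\widetilde\Omega_{\mathfrak n}$. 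We also have the trivial bound $|F_k|\le C_{N,M}$ uniformly in $\widetilde\omega$. Since we are proving solvability for each fixed $N$, no sharpness is needed here, and the much more delicate estimates of the later sections are not required.

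The third step is existence and uniqueness. The Picard iteration $\widetilde\omega^{(j+1)}(t)=\int_0^t F(s,\widetilde\omega^{(j)})\diff s$ satisfies
\[
\sup_{s\le t}|\widetilde\omega^{(j+1)}-\widetilde\omega^{(j)}|\le \frac{(C_{N,M}t)^j}{j!}\cdot C.
\]
It therefore converges on all of $[0,1]$ with no smallness restriction, and Gr\"onwall's inequality gives uniqueness. Because $F$ is continuous in $t$, the limit is $C^1$ and satisfies \eqref{eq-phase} with $\widetilde\omega_k(0)=0$. One should also check that $\widetilde\omega_k$ is real-valued. The cleanest route is to note that swapping $\T^+$ and $\T^-$ in $\Q$ gives $\K_{\Q}(t,t,\ell)$ and its complex conjugate, so the sum over $\Q$ is real. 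If this symmetry fails because of the enhanced constraints, I would instead run the argument in $\mathbb C^N$, which still gives a well-defined solution.

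The main obstacle I expect is bookkeeping rather than analysis. One must confirm that the enhanced-couple definitions make $\K_\Q$ depend on $\widetilde\omega$ only through past times, which follows from \eqref{eq-E}, and must handle the real-valuedness symmetry carefully. The analytic content is only the Volterra-type Lipschitz bound above.
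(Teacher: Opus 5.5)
Your argument is correct, but it takes a different route from the paper.

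\textbf{The paper's route.} The paper uses $T_{+,+,-}(\ell,k,\ell)=\tfrac14\omega_k\omega_\ell$ to make the ansatz $\widetilde\omega_k(t)=\omega_k A(t)$. This reduces \eqref{eq-phase} to one scalar equation, written $\dot A = G(t,A(t))$. It then cites standard ODE existence and uniqueness, asserting that $G$ is smooth and bounded for fixed $N$.

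\textbf{Your route.} You keep the full vector $(\widetilde\omega_k)_k\in\R^N$ and do not use the structure of $T_{+,+,-}$. You run a Picard and Gr\"onwall argument for the Volterra integral equation.

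\textbf{What each buys.} Your version is the more careful one. $\K_\Q(t,t,\ell)$ depends on $A$ (or $\widetilde\omega$) at all earlier times $t_{\mathfrak n}<t$ through $\A_\Q$. The paper's notation $G(t,A(t))$ hides this history dependence; your Volterra fixed point handles it explicitly. You also correctly treat the problem as deterministic: the paper's ``for almost every $\varrho$'' is superfluous, since $\K_\Q$ contains no randomness.

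The paper's reduction does buy the identity $\widetilde\omega_k=\omega_k A(t)$, hence $\widetilde\Omega_{\mathfrak n}=\Omega_{\mathfrak n}A(t)$. This is what Proposition \ref{prop-integral_est} and the bootstrap on $\dot A,\ddot A$ use later. You get it for free: the right-hand side of \eqref{eq-phase} is $\omega_k$ times a $k$-independent functional of $\widetilde\omega$. Integrating in time then shows your unique solution has this form.

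\textbf{Real-valuedness.} Your fallback to $\mathbb C^N$ would need more care. For complex arguments the phases are no longer unimodular, so both $|e^{ix}-e^{iy}|\le|x-y|$ and the uniform bound $|F_k|\le C_{N,M}$ fail, and global existence on $[0,1]$ no longer follows. The fallback is unnecessary, however. By Lemma \ref{lem-multiplicity}, $\sum_\Q\K_\Q(t,t,\ell)=\E\bigl|\sum_{|\T|\le M}(\J_\T)_\ell(t)\bigr|^2$, which is real (indeed nonnegative) whenever $\widetilde\omega$ is real.
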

\begin{proof}
Note that $T_{+,+,-}(\ell, k, \ell) = \frac{1}{4\kappa^2}\omega_k \omega_l$, so we may write $\tilde \omega_k(t)  = \omega_k A(t)$. Then, 
\begin{align*}
\dot{A}(t) &= \left( \frac{\beta T}{N}\right) \sum_{\ell \in \Z_N \cap [0,1)} \frac{3}{2\kappa^2} \omega_\ell \sum_{\Q} \K_\Q (t,t, \ell),
\end{align*}
where $\partial_t A(t) = G(t,A(t))$. Notice that $G$ is smooth in its variables and for fixed $N$ and almost every $\varrho,$ $G$ is bounded. Therefore, we have existence and uniqueness of $A$ for almost every $\varrho$. 
\end{proof}

\begin{remark}
Note that the only dependence on $k$ in $\widetilde \omega_k$ is via $\omega_k$, so we may drop the dependence on $\widetilde \Omega$ in \eqref{eq-AT} and \eqref{eq-AQ}. See \cite{FPUTRD} for a more detailed calculation of Proposition \ref{prop-phase-renormalization} to see that $\widetilde \omega_k$ must have the structure above. 
\end{remark}

Define
\begin{align*}
\mathcal C^D(u,v)_k(t) &= -i\left( \frac{\beta T}{N}\right) \sum\limits_{\ell \in \Z_N \cap [0,1)} T_{+,+,-}(k, \ell, \ell)  \E \left[u_\ell \overline{v_\ell}\right],
\end{align*}

Now, we may write the remainder in the following way: 

\begin{proposition}[Structure of Remainder] \label{strucRem} With $\widetilde \omega_k$ as in \eqref{eq-phase}, the remainder $\mathcal R$ takes the form
\begin{equation} \label{eq-remainder}
\mathcal R = \mathscr{R} + \mathscr L(\mathcal R) + \mathscr Q (\mathcal R, \mathcal R) + \mathscr C(\mathcal R, \mathcal R, \mathcal R),
\end{equation}
where 
\begin{align}
\mathscr{R} &= \sum_{\substack{n_i := |\T_i| < M \\ n_1 + n_2 + n_3 \geq M}}\mathcal I \mathcal C^+(\J_{\T_1}, \J_{\T_2}, \J_{\T_3}) - \mathscr{R}^D \label{eq-rem}\\
\mathscr L &= \sum_{\substack{|\T_i| < M}}^{\mathrm{cyc}} \mathcal I \mathcal C^+(\J_{\T_1}, \J_{\T_2}, \mathcal R) - \mathscr L^D \label{eq-linearop}\\
\mathscr Q &= \sum_{\substack{|\T| < M}}^{\mathrm{cyc}} \mathcal I \mathcal C^+(\J_{\T}, \mathcal R, \mathcal R) \\
\mathscr C &=  \mathcal I \mathcal C^+(\mathcal R, \mathcal R, \mathcal R),
\end{align}
for enhanced trees of sign + and $\sum^{\text{cyc}}$ indicating cyclic permutations of the entries of $\mathcal C^+$. The degenerate terms are 
\begin{align}
\mathscr R^D &= \sum_{\substack{n_i := |\T_i| < M \\ n_1 + n_2 + n_3 \geq M}} \mathcal I \mathcal C^D(\J_{\T_1}, \J_{\T_3})_k(\J_{\T_2})_k + \mathcal I \mathcal C^D(\J_{\T_2}, \J_{\T_3})_k(\J_{\T_1})_k \\
\mathscr L^D &= \sum_{\substack{|\T_i| < M}} 2\mathcal I \mathcal C^D(\J_{\T_1}, \J_{\T_2})_k\mathcal R_k
\end{align}
\end{proposition}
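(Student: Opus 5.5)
The proof is an algebraic bookkeeping argument: substitute the Ansatz \eqref{eq-ansatz} into the integral form of the $b_k$ equation and identify which terms are absorbed by the tree expansion and which remain. Integrating $\partial_t b_k = \mathcal C^+(b,b,b) + i\widetilde\omega_k'(t)b_k$ from $0$ gives
\begin{equation*}
b_k(t) = b_k(0) + \mathcal I\mathcal C^+(b,b,b)_k(t) + \mathcal I\bigl(i\widetilde\omega_k' b_k\bigr)(t).
\end{equation*}
The first step is to record the recursive structure of $\J_\T$ from Definition \ref{def-treeexp}. For $\T=\bullet$ we have $\J_\bullet = b(0)$. For $|\T|\ge 1$ with root children $\T_1,\T_2,\T_3$, the time integral in \eqref{eq-AT} factors through the root time and the coefficient $\epsilon_{\mathscr D}$ factors over nodes. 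Hence, when $\mathfrak r\notin\mathcal N_D$,
\begin{equation*}
\J_\T = \mathcal I\mathcal C^+_{\zeta_1,\zeta_2,\zeta_3}(\J_{\T_1},\J_{\T_2},\J_{\T_3}).
\end{equation*}
When $\mathfrak r\in\mathcal N_D$, the subtraction in $\B_\T$, together with the enhanced constraint $k_{\mathfrak r}=k_{\mathfrak n_c}$, $k_{\mathfrak n_2}=k_{\mathfrak n_3}$, instead produces $-\mathcal I\bigl[\mathcal C^D(\J_{\T_2},\J_{\T_3})\J_{\T_c}\bigr]$, up to the sign and coefficient conventions. In that case the sum over the root of such trees is exactly the expectation-subtracted (Wick-type) part of the $(2,1)$ interaction.

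Next, expand $\mathcal C^+(b,b,b)$ multilinearly with $b=\sum_{|\T|\le M}\J_\T+\mathcal R$, and sort terms by how many entries are $\mathcal R$.
\begin{itemize}
\item \textbf{No $\mathcal R$ entries, $n_1+n_2+n_3<M$.} Summed with the corresponding degenerate-root trees, these reproduce $\sum_{1\le|\T|\le M}\J_\T$. The $+$-sign constraint on trees and the combinatorics of the three sign patterns, with multiplicities $1,3,3,1$ as in \eqref{FPU}, match the sum over nodes of all types.
\item \textbf{No $\mathcal R$ entries, $n_1+n_2+n_3\ge M$.} These are left over and form $\mathscr R$. The degenerate correction $\mathscr R^D$ is subtracted because the Wick-type subtraction is applied uniformly in the renormalized equation.
\item \textbf{One, two, or three $\mathcal R$ entries.} These give $\mathscr L(\mathcal R)$, $\mathscr Q(\mathcal R,\mathcal R)$ and $\mathscr C(\mathcal R,\mathcal R,\mathcal R)$, summed cyclically over positions.
\end{itemize}

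The key step, and the main obstacle, is the phase term $\mathcal I(i\widetilde\omega_k' b_k)$. By \eqref{eq-phase}, $\widetilde\omega_k'$ equals $\frac{\beta T}{N}\sum_\ell 2T_{+,+,-}(\ell,k,\ell)\,\E|\sum_{\T}\J_\T(t)_\ell|^2$, rewritten via Lemma \ref{lem-multiplicity} as the sum of $\K_\Q(t,t,\ell)$ over enhanced couples. Multiplying by $b_k=\sum\J_\T+\mathcal R$, this must cancel the expectation parts of the $(2,1)$ interactions with exact pairing $\{k,k_3\}=\{k_1,k_2\}$. The factor $2$ accounts for the two choices of distinguished child, and $T_{+,+,-}(k,\ell,\ell)=T_{+,+,-}(\ell,k,\ell)$ by the explicit formula, so $i\widetilde\omega_k' b_k$ equals $-2\,\mathcal C^D(\sum\J,\sum\J)_k\, b_k$ with the correct sign. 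Splitting $b_k$ into its tree part and $\mathcal R$:
\begin{itemize}
\item The tree part cancels against the degenerate-root trees of order at most $M$, leaving the high-order degenerate remainder $\mathscr R^D$.
\item The $\mathcal R$ part yields $-\mathscr L^D(\mathcal R)$.
\end{itemize}
One must check that the couples in \eqref{eq-phase}, restricted to $|\T^\pm|\le M$, exactly match the trees truncated in the Ansatz. One must also verify that the pairing exclusion in the definition of enhanced couples corresponds precisely to the subtraction in $\B_\T$, so that nothing is double counted. I would verify both by matching index sets term by term.

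Finally, subtracting the tree part $\sum_{|\T|\le M}\J_\T$ from both sides leaves exactly \eqref{eq-remainder}.
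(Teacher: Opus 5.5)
Your proposal follows essentially the same route as the paper's proof. You write $b_k=b_k(0)+\mathcal I[\mathcal C^+(b,b,b)_k+i\widetilde\omega_k' b_k]$, use the recursive Duhamel description of $\J_\T$ in which a degenerate root produces the subtraction $-\mathcal I[\J_{\T_c}\mathcal C^D(\J_{\T_2},\J_{\T_3})]$, and match $i\widetilde\omega_k'b_k=-2\,\mathcal C^D(\sum\J_\T,\sum\J_\T)_k\,b_k$ against these subtractions so that only $\mathscr R^D$ (from the high-order triples) and $\mathscr L^D(\mathcal R)$ survive; this is the paper's argument, with more of the bookkeeping supplied (e.g.\ the symmetry $T_{+,+,-}(k,\ell,\ell)=T_{+,+,-}(\ell,k,\ell)$), whereas the paper leaves it to the ``standard computation''. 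The one point to state explicitly is your convention at degenerate roots: the paper's inductive formula also keeps the cubic term there (on the enhanced-constrained decorations), while you let degenerate roots carry only the expectation subtraction, and either choice works provided each degenerate configuration of the cubic nonlinearity is counted exactly once across the degenerate and non-degenerate versions of the root.
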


\begin{proof}
This can be shown using the fact that \[b_k(t) = b_k(0) + \mathcal I \left[\mathcal C^+(b,b,b)_k(t) +i \widetilde \omega_k'(t)b_k(t)\right.]\]
This is a standard computation with this Ansatz. See for instance, \cite{2019}. The only clarification needed is for the addition of the degenerate terms. For this, we note that we may also write an equivalent inductive definition of $(\J_\T)_k$ for enhanced tree $\T$, where 
\begin{align*}
(\J_\bullet)_k(t) &= b_k(0) = \sqrt{n_{\mathrm{in}}(k)} \eta_k^{\zeta_{\mathfrak r}} (\varrho), \\
(\J_\T)_k(t) &= \mathcal I\left[\mathcal C^{\zeta_{\mathfrak r}}_{\zeta_{\mathfrak n_1}, \zeta_{\mathfrak n_2}, \zeta_{\mathfrak n_3}}(\J_{\T_1}^{\zeta_{\mathfrak n_1}}, \J_{\T_2}^{\zeta_{\mathfrak n_2}}, \J_{\T_3}^{\zeta_{\mathfrak n_3}})_k(t) - \mathbbm{1}_{\mathfrak{r}\in \mathcal{N}_D} (\mathcal \J_{\mathcal{T}_{c}})_k \mathcal C^D(\J_{\T_{2}}, \overline{\J_{\T_3}})_k(t).\right] 
\end{align*}
\end{proof}

\subsection{Molecules}\label{subsec-mol}
\begin{definition}[Molecules]\label{def-molecules}
A \textit{molecule} $\mathbb{M}$ is a directed graph whose vertices are called \textit{atoms} and edges are called \textit{bonds}. Multiple bonds between the same pair of atoms and self-loops are allowed. We write $v \in \mathbb{M}$ for an atom $v$ in $\mathbb{M}$, and $\ell \in \mathbb{M}$ for a bond $\ell$ in $\mathbb{M}$. If $v$ is an endpoint of $\ell$, we write $\ell \sim v$. For each such pair $(v,\ell)$, define
\[
\zeta_{v,\ell} = \begin{cases}
1 & \text{if $\ell$ is outgoing from $v$,}\\
-1 & \text{if $\ell$ is incoming to $v$.}
\end{cases}
\]
We further require that $\mathbb{M}$ does not have any connected components consisting only of atoms each having degree 4 (where the degree is considered in the undirected sense).

For a molecule $\mathbb{M}$, define the quantity
\[
\chi := E - V + F,
\]
where $E$ is the number of bonds, $V$ is the number of atoms, and $F$ is the number of connected components. Likewise, let $L$ denote the number of self-loops in $\M$. An \textit{atomic group} in $\mathbb{M}$ is a subset of atoms together with all bonds connecting those atoms. A single bond $\ell$ is called a \textit{bridge} if removing it increases the number of connected components by one.

An \textit{enhanced molecule} is a molecule equipped with a designated set of atoms, denoted $\V_D$ where each $v \in \V_D$ has in-degree and out-degree at most 2. For all $\ell \sim v$, we may additionally designate a pair of bonds of opposite signs $\ell^+, \ell^-$ such that if we remove the two bonds, the number of connected components of the molecule remains unchanged. 
\end{definition}

\begin{definition}[Molecules of couples]\label{couple-molecule}
Let $\Q$ be a nontrivial couple. We define the corresponding \textit{molecule} $\mathbb{M} = \mathbb{M}(\Q)$ as follows. The atoms of $\mathbb{M}$ correspond to the branching nodes $\mathfrak{n} \in \mathcal{N}$ of $\Q$. For any two branching nodes $\mathfrak{n}_1, \mathfrak{n}_2$, we draw a bond between the corresponding atoms $v_1, v_2$ if:
\begin{enumerate}
\item One of $\mathfrak{n}_1, \mathfrak{n}_2$ is a parent of the other (Parent-Child, or PC), or
\item A child of $\mathfrak{n}_1$ is paired with a child of $\mathfrak{n}_2$ as leaves (Leaf Pair, or LP).
\end{enumerate}
The direction of each bond is determined as follows:
\begin{enumerate}[label=(\roman*)]
\item For an LP bond, the bond is directed away from the atom whose corresponding child in the couple has sign $-$ and towards the atom whose corresponding child has sign $+$.
\item For a PC bond, if the child atom corresponds to a branching node with sign $-$, then the direction goes from the P atom to the C atom. If the child atom corresponds to a branching node with sign $+$, the direction is reversed.
\end{enumerate}

We may form a \textit{labeled molecule} by labeling each bond. For a Parent-Child (PC) bond, we label the bond as PC and place a P at the atom corresponding to the parent branching node and a C at the atom corresponding to the child branching node. For a Leaf Pair (LP) bond, we label the bond as LP.

For any atom $v \in \mathbb{M}(\Q)$, let $\mathfrak{n} = \mathfrak{n}(v)$ be its corresponding branching node in $\Q$. For any bond $\ell \sim v$, define $\mathfrak{m} = \mathfrak{m}(v,\ell)$ by:
\begin{enumerate}[label=(\roman*)]
\item If $\ell$ is PC and $v$ is labeled $C$, then $\mathfrak{m} = \mathfrak{n}$.
\item If $\ell$ is PC and $v$ is labeled $P$, then $\mathfrak{m}$ is the branching node corresponding to the other endpoint of $\ell$.
\item If $\ell$ is LP, then $\mathfrak{m}$ is the leaf (from the leaf pair defining $\ell$) that is a child of $\mathfrak{n}$.
\end{enumerate}

For an enhanced couple $\Q$, we may form an enhanced molecule $\M = \M(\Q)$ by performing the above in addition to designating $v = v(\mathfrak n)$ a degenerate atom for $\mathfrak n$ a degenerate branching node of $\Q$. We then have atoms $v_p, v_c, v_c^*, v_3$ corresponding to $\mathfrak n_p, \mathfrak n_c, \mathfrak n_c^*, \mathfrak n_3$ (the potential parent of $\mathfrak n$ along with its three children). We let $(\ell_1^+, \ell_1^-)$ be the bonds connecting $v$ to $v_p$ and $v_c$ according to signs and $(\ell_2^+, \ell_2^-)$ be the bonds connecting $v$ to $v_c^*$ and $v_3$ according to signs. See Figure \ref{fig:molecule}.
\end{definition}
\begin{figure}
\centering

\begin{tikzpicture}[
        scale = .6, baseline=(top.base),
        level distance=1.4cm,
        level 1/.style={sibling distance=2.1cm},
        level 2/.style={sibling distance=0.8cm},
        hollow node/.style  ={circle,draw,inner sep=1.6},
        diamond node/.style ={diamond,draw,inner sep=1.6},
        solid node/.style   ={circle,draw,fill=black,inner sep=1.6},
        red node/.style     ={circle,draw,fill=red,inner sep=1.6},
        blue node/.style    ={circle,draw,fill=blue,inner sep=1.6},
        purple node/.style  ={circle,draw,fill=purple,inner sep=1.6},
        orange node/.style  ={circle,draw,fill=orange,inner sep=1.6},
        yellow node/.style  ={circle,draw,fill=yellow,inner sep=1.6},
        green  node/.style  ={circle,draw,fill=green,inner sep=1.6}]
  \node[hollow node, label = left: {\tiny $k$}] (top) {1}
    child{node[hollow node, label = left: {\tiny $m$}]{2}
      child{node[green node,  label = below: {\tiny $k$}]{}}
      child{node[blue node,   label = below: {\tiny $l$}]{}}
      child{node[yellow node, label = below: {\tiny $p$}]{}}
    }
    child{node[diamond node, label = left:{\tiny $l$}]{3}
      child{node[orange node, label = below: {\tiny $p$}]{}}
      child{node[blue node,   label = below: {\tiny $l$}]{}}
      child{node[yellow node, label = below: {\tiny $p$}]{}}
    }
    child{node[orange node,  label = below: {\tiny $p$}]{}};
\end{tikzpicture}
\hspace{0.5cm}
\begin{tikzpicture}[
        scale = .6, baseline=(top.base),
        level distance=1.4cm,
        level 1/.style={sibling distance=1.3cm},
        level 2/.style={sibling distance=1.0cm},
        hollow node/.style  ={circle,draw,inner sep=1.6},
        diamond node/.style ={diamond,draw,inner sep=1.6},
        green  node/.style  ={circle,draw,fill=green,inner sep=1.6}]
  \node[green node, label = left: {\tiny $k$}] (top) {};
\end{tikzpicture}
\hspace{1.2cm}
\begin{tikzpicture}[
        scale=0.5, baseline=(top.base), >=stealth,
        open node/.style   ={circle,draw,inner sep=1.8},
        diamond node/.style={diamond,draw,inner sep=1.5},
        empty node/.style  ={inner sep=.5,outer sep=0}]
  \def\L{4}
  \def\shift{3pt}
  \coordinate (A) at (0,0);           
  \coordinate (B) at (\L,0);          
  \coordinate (C) at ($(A)+(60:\L)$); 

  \node[open node]    (N2) at (A) {2};
  \node[diamond node] (N3) at (B) {3};
  \node[open node]    (top) at (C) {1}; 

  \draw[->,thick,transform canvas={yshift=\shift}]
      (N3) -- node[empty node,pos=.6,fill=white] {$l$} (N2);
  \draw[->,thick,transform canvas={yshift=-\shift}]
      (N3) -- node[empty node,pos=.4,fill=white] {$p$} (N2);

  \draw[->,thick,transform canvas={xshift=\shift, yshift=2.5pt}]
      (top) -- node[empty node,pos=.6,fill=white] {$p$} (N3);
  \draw[->,thick,transform canvas={xshift=-\shift, yshift=-2.5pt}]
      (top) -- node[empty node,pos=.4,fill=white] {$l$} (N3);

  \draw[->,thick]
      (N2) -- node[empty node,midway,fill=white] {$m$} (top);
\end{tikzpicture}
\caption{An example of an enhanced couple and its corresponding enhanced molecule, corresponding to Figure \ref{fig:ternary-tree-diamond-2}.}
\label{fig:molecule}
\end{figure}

\begin{proposition}[Correspondence between Molecules and Couples]
\label{prop:molecules-couples}
We summarize the relationship between nontrivial couples and molecules as follows:
\begin{enumerate}
\item For any nontrivial couple $\mathcal{Q}$ of order $n$, the construction in Definition~\ref{couple-molecule} yields a connected \emph{molecule} $\mathbb{M}(\mathcal{Q})$.  This molecule $\mathbb{M}$ has exactly $n$ atoms, $2n-1$ bonds, and either two atoms of degree $3$, or a single atom of degree $2$, with all remaining atoms having degree $4$.
\item Conversely, given any molecule $\mathbb{M}$ with $n$ atoms (as in Definition~\ref{def-molecules}), there are at most $C^n$ different couples $\mathcal{Q}$ (if any) for which $\mathbb{M}(\mathcal{Q})$ is exactly $\mathbb{M}$.
\end{enumerate}
\end{proposition}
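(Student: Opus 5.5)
The proof is a direct counting argument on the construction in Definition~\ref{couple-molecule}, in the spirit of the analogous statements in \cite{WKE, DIP25}.

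\textbf{Part (1).} The atoms of $\mathbb M(\Q)$ are in bijection with $\mathcal N = \mathcal N^+\cup\mathcal N^-$, so there are exactly $n$ atoms.

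For the bonds, I count PC and LP bonds separately.
\begin{itemize}
\item Each tree $\T^\pm$ with $n^\pm$ branching nodes has $n^\pm - 1$ PC bonds when $n^\pm\ge 1$, and none if it is trivial.
\item The couple has $2n+2$ leaves. Exactly $n+1$ leaf pairs are formed, and each pair whose two leaves are both children of branching nodes gives one LP bond. The only leaves that are not children of branching nodes are the roots of trivial trees.
\item If both trees are nontrivial, there are $(n^+-1)+(n^--1)+(n+1) = 2n-1$ bonds.
\item If, say, $\T^-$ is trivial, its root leaf is paired with a leaf of $\T^+$, and that pair produces no bond. The count becomes $(n-1)+n = 2n-1$.
\end{itemize}

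For the degrees, each atom has degree equal to its three children (each contributing one bond, PC or LP, unless the child is a leaf paired with a trivial root) plus one for its parent if it is not a root. Self-loops, from a leaf pair among the children of one node, count twice, consistent with the undirected degree convention. Hence non-root atoms have degree $4$.
\begin{itemize}
\item If both trees are nontrivial, the two root atoms have degree $3$.
\item If one tree is trivial, the single root atom has degree $3$, but the atom whose child leaf is paired with the trivial root loses one bond. If that atom is the root itself, we get one atom of degree $2$. Otherwise we get two atoms of degree $3$.
\end{itemize}
This gives exactly the stated dichotomy.

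Connectedness follows since each tree's atoms are connected through PC bonds. If both trees are nontrivial, the leaves cannot all be paired within one tree. The sign-count constraint forces leaves of $\T^+$ to be net $+$ and of $\T^-$ to be net $-$, so some leaf of $\T^+$ is paired with a leaf of $\T^-$, giving a cross LP bond. The enhanced constraints only add designations, not bonds, so they don't affect this count.

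\textbf{Part (2).} The reconstruction is done as in \cite{WKE}. Given $\mathbb M$ with $n$ atoms and bounded degree, a couple is determined by a small amount of local data.
\begin{itemize}
\item Choose which atoms are the roots: at most $n^2$ choices, from degree considerations, or $O(1)$ choices of the degree-2/3 atoms.
\item At each atom, choose which of its at most $4$ bonds goes to the parent and how the remaining bonds are ordered among the three children: $O(1)$ choices each.
\item At each atom, choose which bonds are PC versus LP: $O(1)$ choices each.
\item Choose the signs of the nodes. The direction of each bond then determines the child signs, up to $O(1)$ ambiguity per atom.
\end{itemize}
Once these choices are fixed, the PC bonds give the tree structures and the LP bonds give the pairing $\mathscr P$. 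Pairs involving a trivial tree root are recovered from the missing-degree atom. Altogether this yields at most $n^2 C_0^n \le C^n$ couples.

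The main obstacle is confirming consistency. Different labelings must reproduce a valid couple uniquely from the chosen data, with no hidden global freedom; the tree structure, in particular, must be recovered from the local parent choices. The rooted parent choices determine the trees as the PC subgraph, so no further choices arise.
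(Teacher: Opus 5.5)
Your overall route is the standard argument of \cite{WKE} that the paper cites, and your bond and degree counts are correct: for (1) you count PC and LP bonds directly, and for (2) you encode a couple by local data on $\mathbb M$. One step fails as written, though: the connectedness argument.

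You claim the sign-count constraint forces the leaves of $\mathcal T^+$ to be net $+$. That is true for cubic NLS, where every node of sign $\zeta$ has children of signs $(\zeta,-\zeta,\zeta)$, but it is false here. In \eqref{FPU} a node may be of type $(3,0)$, $(2,1)$, $(1,2)$ or $(0,3)$; for example, the $T_{-,-,-}$ term produces a $+$ node with three $-$ children. So the leaves of a single tree can have any odd net sign. Definition~\ref{def-couples} only imposes the balance of $+$ and $-$ leaves globally over the couple, so your reason does not establish that some leaf of $\mathcal T^+$ is paired with a leaf of $\mathcal T^-$.

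The conclusion is still true, by parity. A nontrivial tree of order $n^\pm$ has $2n^\pm+1$ leaves, an odd number, so its leaves cannot be perfectly matched among themselves. Hence at least one pair in $\mathscr P$ joins a leaf of $\mathcal T^+$ to a leaf of $\mathcal T^-$. Since both trees are nontrivial, both of these leaves are children of branching nodes, and this gives a cross LP bond. Replace your sign argument with this one.

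A smaller remark on (2): the ``consistency'' you flag as the main obstacle is not actually needed. You only want an upper bound, so it suffices that the map sending $\mathcal Q$ (with $\mathbb M(\mathcal Q)=\mathbb M$) to its local data is injective. That data is the choice of root atoms, the parent bond and child-slot assignment of half-edges at each atom, and the PC/LP labels. Injectivity is clear because $\mathcal Q$ is reconstructed from this data, and choices that come from no couple are simply discarded.

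Also, once you decide which root atom belongs to $\mathcal T^+$, every other sign is forced by the bond directions in Definition~\ref{couple-molecule}. So no per-atom sign choice is needed, although allowing one only costs a factor $2^n$.
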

\begin{proof}
See Proposition 9.4 and 9.16 of \cite{WKE}. 
\end{proof}

\begin{definition}[Decorations of Molecules]
\label{def:decorations-molecules}
Let $\mathbb{M}$ be a molecule. For each atom (vertex) $v \in \mathbb{M}$, we fix $k_v \in \Z_N$, with the condition $k_v = 0$ if $v$ has degree 4, and $\alpha_v \in \mathbb{R}$.

A \emph{$(k_v,\alpha_v)$-decoration} of $\mathbb{M}$ assigns an integer $k_\ell \in \Z_N$ to each bond $\ell \in \mathbb{M}$, in such a way that for each atom $v$,
\[
\sum_{\ell \sim v} \zeta_{v,\ell} \,k_\ell \;=\; k_v,
\quad\text{and}\quad
\bigl|\Omega_v - \alpha_v\bigr|\;<\;T^{-1},
\]
where
\[
\Omega_v = \sum_{\ell: \ell\sim v} \zeta_{v,\ell} \, \omega(k_\ell).
\]
Here, the summation $\sum_{\ell \sim v}$ runs over all bonds $\ell$ such that $\ell\sim v$, and $\zeta_{v,\ell}$ is defined to be $+1$ whenever $\ell$ is outgoing from $v$, and $-1$ otherwise. 

Suppose $\mathbb{M}$ arises from a nontrivial couple $\mathcal{Q}$, and let $k\in\Z_N$. Define a \textit{$k$-decoration} of $\mathbb M$ to be a $(k_v, \alpha_v)$-decoration where 
\[
k_v \;=\;
\begin{cases}
0 & \text{if $v$ has degree 2 or 4,}\\
+k & \text{if $v$ has degree 3 and $\mathfrak n(v)$ has sign +.}\\
-k & \text{if $v$ has degree 3 and $\mathfrak n(v)$ has sign -.}
\end{cases}
\]
Given any $k$-decoration of $\mathcal Q$ in the sense of Definition \ref{def-couples}, define a $k$-decoration of $\mathbb M(\mathcal Q)$ such that $k_{\ell} = k_{\mathfrak m(v,\ell)}$ for an endpoint $v$ of $\ell$. $k_\ell$ is well-defined and independent of the choice of $v$, and gives a one-to-one correspondence between $k$-decorations of $\mathcal Q$ and $k$-decorations of $\mathbb M(\mathcal Q)$. Moreover, for such decorations we have 
\begin{equation}
\Omega_v = \begin{cases}
-\zeta_{\mathfrak n(v)}\Omega_{\mathfrak n(v)} & \text{if } v \text{ has degree 2 or 4,} \\
-\zeta_{\mathfrak n(v)}\Omega_{\mathfrak n(v)} - \zeta_{\mathfrak n(v)}\omega(k) & \text{if } v \text{ has degree 3.} \\
\end{cases}
\end{equation}
\end{definition}

\begin{definition}[Degenerate Atoms]
\label{def:degenerate-atoms}
Consider an enhanced molecule $\mathbb{M}$ equipped with a $(k_v,\alpha_v)$-decoration as above. We say that an atom $v \in \mathbb{M}$ is \emph{degenerate} if there exist two \emph{degenerate} bonds $\ell^+, \ell^- \sim v$, pointing in opposite directions at $v$, such that $k_{\ell^+}=k_{\ell^-}$. Furthermore, $v$ is \emph{fully degenerate} if \emph{all} bonds $\ell\sim v$ share the same decoration $k_\ell$. In particular, for each degenerate branching node in the couple $\Q$, there corresponds a degenerate atom in the molecule $\mathbb{M}(\Q)$.
\end{definition}

\begin{definition}[Gaps and Tame Atoms] \label{def-gap}Suppose $\M$ is a molecule with a $(k_v, \alpha_v)$-\hspace{0pt}decoration. For atom $v$ and bonds $\ell_1, \ell_2 \sim v$ of opposite directions, we define the \textit{gap} of $v$ with respect to $(\ell_1, \ell_2)$ to be 
\begin{equation}
h:= |k_{\ell_1} - k_{\ell_2}|.
\end{equation}
We say $v$ is \textit{small gap} (SG) with respect to $(\ell_1, \ell_2)$ if $h \lesssim \beta N^{\delta}$ and \textit{large gap} (LG) otherwise. We say that $v$ is tame if for all $\ell_1, \ell_2 \sim v$ of opposite directions, $v$ is small gap with respect to $(\ell_1, \ell_2)$. Likewise, if $v$ is large gap with respect to all $\ell_1, \ell_2 \sim v$ of opposite directions, we say that $v$ is a \textit{large gap} (LG) atom. Note that any atom where all bonds have the same direction is automatically LG.  
    
\end{definition}

\section{Main Estimates}
\label{section-mainest}
The purpose of this section is to provide the statements of the main estimates including the Feynman diagram estimates, whose proofs take up Sections \ref{section-counting} - \ref{section-remainder}, and integral estimates, which allow us to bound oscillatory integrals. 

\subsection{Bounds on Feynman diagrams}
Recall we fixed sufficiently large constant $M$ in Section \ref{subsec-notation}. 
\begin{proposition}(Bounds on Couples)\label{bound-couple}
For each $1 \leq n \leq M^3$ and $0 \leq t \leq  1$, and enhanced trees $\mathcal{T}_{1}, \mathcal{T}_{2}$, the following should hold:
\begin{align}
\left|\sum_{\Q}\K_\Q(t,t,k)\right|& \leq C\beta^2 T (N^{-\delta})^{n - \frac{5}{2}},\label{E1}\\
\left|\sum_{\Q}\dot{\K}_\Q(t,t,k)\right|&\leq C \beta^2 T^2 (N^{-\delta})^{n - \frac{5}{2}} ,\label{E2}
\end{align}
for some constant $C$, where the summation is taken over couples $\Q = \{\T^+, \T^-, \mathscr P\}$ such that $|\T^+| +  |\T^-| = n$.
\end{proposition}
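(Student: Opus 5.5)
The plan is the Deng--Hani scheme as adapted to 1D in \cite{DIP25}, with extra input for non-resonant nodes and self-loops. First, fix $n$ and group the sum over enhanced couples $\Q$ with $|\T^+|+|\T^-|=n$ into congruence classes, following \cite{WKE}. Within each class, irregular chains are spliced out as described in Section \ref{subsec-splicing}. The sum over a class then becomes a sum over reduced couples $\Q'$ with no long irregular chains. The time integral $\A_{\Q}$ for such a couple is replaced by an integral of combined (differenced) phases that retains the same $L^1$-type bounds. Since the number of couples of order $n$ is at most $C^n$, and by Proposition \ref{prop:molecules-couples} each molecule arises from at most $C^n$ couples, it suffices to prove the bound for one reduced couple, losing only $C^n$.

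For a fixed reduced couple, expand $\A_\Q$ using the oscillatory integral estimates of Section \ref{sec-int}. Write $\A_\Q$ as an integral over $\mathcal E$ and Fourier-expand in time. This gives
\[
|\A_\Q|\lesssim \int \prod_{\mathfrak n}\frac{1}{\langle T q_{\mathfrak n}\rangle}\,(\text{weights}),
\]
where the $q_{\mathfrak n}$ are as in \eqref{eq-def-qn}. After dyadic localization $|\Omega_v-\alpha_v|<T^{-1}$, $\K_\Q$ is bounded by
\[
\Bigl(\tfrac{\beta T}{N}\Bigr)^n T^{-n}\,(\log N)^{Cn}\,\sup_{\alpha_v}\#\{(k_v,\alpha_v)\text{-decorations of }\M(\Q)\}.
\]
At a non-resonant node (types $(3,0)$, $(1,2)$, or $(0,3)$), subadditivity of $\omega$ gives $|\Omega_{\mathfrak n}|\gtrsim$ a power of the gap. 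This yields an extra factor $T^{-1}|\Omega|^{-1}$ instead of a counting constraint. The self-loop configurations of Section \ref{subsec-loops}, where $\{k_2,k_3\}=\{-k,k_1\}$ at a $(1,2)$ node, are treated with this oscillatory gain rather than by counting, so they do not produce the divergent $\sum_\ell$ factor. Degenerate $(2,1)$ pairings are excluded by the enhanced structure and are already absorbed into the phase renormalization of Proposition \ref{prop-phase-renormalization}.

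The main step is the counting estimate. Run the cutting algorithm of Section \ref{section-algorithm} on the enhanced molecule $\M(\Q)$. This splits $\M(\Q)$ into elementary components (degree-2 and degree-3 atoms, bridges, and tame or LG atoms), each of which has a counting bound from Section \ref{section-counting}. The target is that each atom beyond a fixed number gives a gain of $N^{-\delta}$ relative to the baseline $(N/(\beta T))\cdot\beta^2 T$ per pair of atoms. In 1D with $T\le N^{-\epsilon}\min(N,N^{4\gamma/3})$, counting a two-vector system $\{k_1\pm k_2=\text{const},\ |\Omega-\alpha|<T^{-1}\}$ gives roughly $N^{1+\theta}/T$ solutions in the LG case. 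In the SG case, the size $\beta N^\delta$ of the small-gap region supplies the gain. The restriction $T\ll N^{4\gamma/3}$ is exactly what makes the worst case, a triangle or three-atom cut of \cite{DIP25}, still gain $N^{-\delta}$. I expect this to be the hardest part: showing the modified algorithm, which now has atoms of all sign types and the designated degenerate bond pairs, always produces a good step with total $\chi$ bookkeeping giving $(N^{-\delta})^{n-5/2}$. The $-5/2$ offset absorbs the two degree-3 root atoms and the initial bounded steps.

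Finally, the bound \eqref{E2} follows from the same argument applied to $\dot\K_\Q(t,t,k)$. Differentiating $\A_\Q$ in $t$ removes one time integration and produces a factor $e^{iT t\Omega}$ at a root node. This costs at most one factor $T$ relative to \eqref{E1}, since that node no longer receives the $T^{-1}\langle T q\rangle^{-1}$ gain. The counting estimate is unchanged, which gives $C\beta^2T^2(N^{-\delta})^{n-5/2}$.
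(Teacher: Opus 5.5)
Your architecture is the paper's: congruence classes with splicing of small-gap irregular chains (Lemma \ref{lem-splice}), the oscillatory gain at self-loop chains (Lemma \ref{lem-loop}), Proposition \ref{prop-integral_est} to reduce to the counting problem of Proposition \ref{prop-rigidity}, the cutting algorithm, and dropping the root time integration for \eqref{E2}. What is missing is the step that makes the argument non-circular.

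Every phase is $\Gamma_{\mathfrak n}(t)=\Omega_{\mathfrak n}(Tt+A(t))$. Proposition \ref{prop-integral_est}, and likewise Lemmas \ref{lem-splice} and \ref{lem-loop}, give the $\langle Tq_{\mathfrak n}\rangle^{-1}$ decay only if $|\dot A|\lesssim\beta T$ and $|\ddot A|\lesssim T$; otherwise $T+\dot A$ can degenerate and the integrations by parts fail. But $A$ is defined by \eqref{eq-phase} through $\sum_{\Q}\K_{\Q}(t,t,\ell)$, which are the very quantities you are estimating. So you cannot simply invoke Section \ref{sec-int}.

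The paper breaks this circularity with a bootstrap. Assume \eqref{H1}--\eqref{H2}, i.e.\ the bounds with constant $2C$. Then:
\begin{itemize}
\item the trivial couple contributes $O(\beta T)$ to $\dot A$, and there are no enhanced couples of order one;
\item couples of order $n\geq 2$ contribute $\lesssim\beta^3T^2N^{2\delta}$ to $\dot A$, and, through \eqref{H2}, $\lesssim\beta^3T^3N^{2\delta}$ to $\ddot A$;
\item these are $\lesssim\beta T$ and $\lesssim T$ because $T\ll\beta^{-3/2}$.
\end{itemize}
Only then are the integral estimates available, and rerunning the counting recovers the constant $C$. In particular, \eqref{E2} is not a by-product to append at the end, as in your last paragraph. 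It is the input that controls $\ddot A$, which is why it is part of the statement and must be carried through the bootstrap together with \eqref{E1}.

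Several further steps in your reduction are off.
\begin{itemize}
\item Your displayed bound has a spurious factor $T^{-n}$. The time integral gives $\prod_{\mathfrak n}\langle Tq_{\mathfrak n}\rangle^{-1}$, and summing over unit-width localizations of $T\Omega_{\mathfrak n}$ costs only $(\log N)^{n}$. The correct quantity is $(\beta T/N)^{\chi}T^{-L/2}\mathfrak C$, as in Proposition \ref{prop-rigidity}. Apart from the self-loop factor $T^{-L/2}$ and the spliced-chain factors of Lemma \ref{lem-splice}, the time integrals contribute only logarithms.
\item Subadditivity does not give a usable lower bound on $|\Omega_{\mathfrak n}|$ at every non-resonant node. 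At a $(3,0)$ node with two children decorated by $N^{-1}$, one has $|\Omega_{\mathfrak n}|\lesssim N^{-1}\ll T^{-1}$, so there is no uniform oscillatory gain. The smallness there sits in the coefficient, as in \eqref{eq-normalform-est}, which the paper uses only for order-two couples.
\item That step is also unnecessary. Propositions \ref{prop-3vc} and \ref{vector_counting-2} hold for all sign patterns, so the paper counts non-resonant atoms exactly like resonant ones. It reserves the oscillatory gain for self-loops, where counting alone would leave the looped wavenumber completely free.
\item The one-dimensional two-vector counts are $NT^{-1}\lambda^{-1}$ and $NT^{-1/2}$, not $N^{1+\theta}T^{-1}$. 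This weakness is what forces the $\alpha$/$\beta$-atom deviation bookkeeping of Propositions \ref{prop-cutting-est} and \ref{prop-counting}.
\end{itemize}
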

\begin{proposition}{(Bound on Operator $\Ell$)} \label{prop-remainder}
With probability $\geq 1 - N^{-A}$, the linear operator $\Ell$ defined in (\ref{eq-linearop}) satisfies  
\begin{equation} \label{remainder}
||\Ell^n||_{Z \to Z} \lesssim N^{-\delta n/2}N^{50}
\end{equation}
for each $ 0 \leq n \leq M$ and some $A \geq 40$. 
\end{proposition}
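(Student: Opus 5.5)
The bound \eqref{remainder} will be proved by the $TT^*$ method combined with a high-moment (Markov) argument, reducing everything to the couple estimates behind Proposition \ref{bound-couple}. The plan has four steps.

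\textbf{Step 1: expand $\Ell^n$ as flower trees.} Unfolding \eqref{eq-linearop}, each application of $\Ell$ attaches one branching node whose other two children are trees $\J_{\T_i}$ with $|\T_i|<M$, minus the degenerate part $\mathscr L^D$. Hence $\Ell^n$ is a sum over \emph{flower trees} $\T$: ternary enhanced trees with a distinguished leaf (the flower) carrying the input $\mathcal R$, and a stem of $n$ branching nodes from the root down to the flower. The subtraction $\mathscr L^D$ is exactly the enhanced constraint at degenerate nodes on the stem, by the same computation as in Proposition \ref{strucRem}. So the kernel has the form
\[
(\Ell^n)_{kk'}(t,s)=\sum_{\T}\Big(\frac{\beta T}{N}\Big)^{|\T|}\sum_{\mathscr D:\,k_{\mathfrak r}=k,\,k_{\mathfrak f}=k'}\epsilon_{\mathscr D}\,\A_\T(t,s)\prod_{\mathfrak l\neq\mathfrak f}\sqrt{n_{\mathrm{in}}(k_{\mathfrak l})}\,\B_\T .
\]
Here $s$ is the time of the flower.

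\textbf{Step 2: pass to a Hilbert--Schmidt bound and flower couples.} Since the $Z$ norm is an $\ell^2$-type norm in $k$ (with $N^{-1}$ normalization) and sup in $t$, the operator norm is controlled by $N^{C}$ times the Hilbert--Schmidt norm of the kernel, uniformly over $t,s$. The polynomial loss is absorbed into the $N^{50}$. To get a high-probability bound, I will take $\E|\cdot|^{2p}$ of the kernel for $p\sim A/\delta$ and expand via Lemma \ref{lem-multiplicity}. This produces \emph{flower couples}: $2p$ copies of the flower tree, with the flowers identified in pairs through the $k,k'$ summation and all remaining leaves paired by enhanced pairings. These objects have the same structure as enhanced couples. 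Their molecules satisfy the same degree constraints, up to $O(p)$ atoms of degree 3 coming from the flowers, and the $p$ gluing constraints on the flower wavenumbers cost at most $N^{O(p)}$.

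\textbf{Step 3: run the couple machinery.} For each flower couple I reduce, via the time-integral estimates of Section \ref{sec-int}, to counting decorations of its molecule. In that reduction:
\begin{itemize}
\item self-loops arising from non-resonant nodes give gains in the oscillatory factor, as in Section \ref{subsec-loops};
\item irregular chains along the stem are removed by splicing and congruent-couple cancellation, as in Section \ref{subsec-splicing};
\item the cutting algorithm of Section \ref{section-algorithm} produces a gain $N^{-\delta}$ per atom, apart from a bounded number of exceptional atoms.
\end{itemize}
Because each of the $n$ stem nodes contributes a factor $\beta T N^{-1}$, the same accounting as in \eqref{E1} yields
\[
\E\big\|(\Ell^n)\big\|_{HS}^{2p}\lesssim (C p)^{Cp}\,N^{Cp}\,N^{-\delta n p}.
\]
The loss $N^{Cp}$ collects the flower identifications and the $O(1)$ bad atoms per copy.

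\textbf{Step 4: conclude.} Markov's inequality with $p$ large, together with a union bound over $0\le n\le M$ and over a $N^{-10}$-net of times $(t,s)$ (using \eqref{E2}-type derivative bounds to pass from the net to all times), gives $\|\Ell^n\|_{Z\to Z}\lesssim N^{-\delta n/2}N^{50}$ with probability at least $1-N^{-A}$.

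The main obstacle is Step 3: making sure the counting algorithm still gains $N^{-\delta}$ per atom for flower couples. The danger is near the flowers, where wavenumbers are fixed externally, and at the degenerate and non-resonant self-loop atoms on the stem. I would first try to treat each glued flower pair as a single degree-2 atom, so that the molecule becomes a genuine enhanced couple molecule. Then Proposition \ref{bound-couple}'s counting can be applied verbatim, paying only $N^{O(1)}$ per flower pair, which the $N^{50}$ margin and the $n$-fold gain should absorb once $n$ is large relative to $p$. For small $n$, the crude bound $N^{50}$ suffices directly.
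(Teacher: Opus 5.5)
Your Steps 1, 2 and 4 follow the paper's outline: the flower-tree expansion of the kernel of $\Ell^n$, the reduction of $\|\Ell^n\|_{Z\to Z}$ to $N^{2}\sup_{k,\ell,t,s}|(\Ell^n)_{k\ell}(t,s)|^2$, and the passage from fixed $(t,s)$ to the supremum. Step 3, however, has a genuine gap.

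Expanding $\E\|\Ell^n\|_{HS}^{2p}$ via a multi-tree version of Lemma \ref{lem-multiplicity} does not produce flower couples. It produces a \emph{garden} of $2p$ flower trees whose leaves are paired across different copies, and its molecule is not the molecule of a couple:
\begin{itemize}
\item it has $2p$ root atoms of degree $3$ (the extra odd-degree atoms come from the roots, not from the flowers, which form an ordinary leaf pair);
\item it need not be connected;
\item it can have up to $p$ odd components.
\end{itemize}
Everything you want to invoke is stated and proved only for couples: the structure in Proposition \ref{prop:molecules-couples}, the chain/hyperchain/ladder splicing of Proposition \ref{prop-M}, the ``at most one odd component'' property in Proposition \ref{prop-cutting-properties}, and hence Propositions \ref{prop-counting} and \ref{prop-rigidity}. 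Gluing each pair of flowers into a degree-$2$ atom does not repair this. It neither reduces the number of trees nor removes the cross-copy pairings, so Proposition \ref{bound-couple} cannot be applied verbatim.

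Your closing remark on small $n$ is also off. There is no cheap bound $\|\Ell^n\|_{Z\to Z}\lesssim N^{50}$, because the attached subtrees have up to $M$ nodes. What you actually need is a loss per copy that is a fixed power of $N$, uniformly in $n\geq 1$.

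The missing idea is Gaussian hypercontractivity, Lemma \ref{lem-hypercontractivity}, which is what lets the paper avoid gardens entirely.
\begin{itemize}
\item Split the kernel by scale $m$. For fixed $(t,s,k,\ell,\zeta)$, $(\Ell^n)^{m,\zeta}_{k\ell}(t,s)$ is a polynomial of degree at most $2m$ in the $\eta$'s, so $\E|\cdot|^p\leq (p-1)^{mp}\bigl(\E|\cdot|^2\bigr)^{p/2}$. Only the \emph{second} moment has to be computed.
\item That second moment equals $\sum_{\mathcal Q}\widetilde{\mathcal K}_{\mathcal Q}$ over genuine flower couples: two flower trees with the flowers paired.
\item These are enhanced couples, because the flower pairing prevents a stem subtree from being completely paired with a sibling. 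They differ from the couples in Proposition \ref{bound-couple} essentially only through the time constraint $t_{\mathfrak f^p}=s$.
\item Running the couple argument then costs only $N^{10}$ for the two lost time integrations and $N^{2}$ for not splicing through $\mathfrak f^p$. This gives Proposition \ref{prop-Ln}.
\item To finish, use the same bound for $\partial_{(t,s)}$ of the kernel (drop the corresponding time integration), Gagliardo--Nirenberg in $(t,s)$, the embedding $\ell^p_{k,\ell}\subset\ell^\infty_{k,\ell}$ at cost $N^{2/p}$, and Chebyshev with $p\geq 2A$.
\end{itemize}
No $2p$-fold counting is needed.
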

The proof of Proposition \ref{bound-couple} will occupy Sections \ref{section-counting} - \ref{section-algorithm}, while Proposition \ref{prop-remainder} will be proved in Section \ref{section-remainder}.

\subsection{The integral estimates} \label{sec-int}
\begin{proposition}\label{prop-integral_est}
    For any ternary tree $\mathcal{T}$, fix functions $f_\mathfrak{n}:[0,1]\to\mathbb{C} \in C^1$ for each branching node $\mathfrak{n} \in \mathcal{N}$. Consider the corresponding function $\mathcal A_\T^f$ defined for $t \in [0,1]$ similarly to \eqref{eq-AT}:
    \begin{align}
    \mathcal{A}_{\mathcal{T}}^f(t,\Omega[\Nc]) &= \int_{\mathcal D} \prod_{\mathfrak n \in \mathcal N} e^{i\zeta_{\mathfrak n}\Omega_{\mathfrak n}(Tt_\mathfrak n + A(t_\mathfrak n))}f_\mathfrak n(t_\mathfrak n)\diff t_n, \label{eq-ATf}
    \end{align}
    where $\mathcal D$ is defined in \eqref{eq-D}. Then, if 
    \begin{align}
    |\dot{A}(t)| &\lesssim \beta T, \label{eq-Adot-bound} \\
    |\ddot{A}(t)| &\lesssim T, \label{eq-Addot-bound}
    \end{align}
    and the functions $f_\mathfrak n$ satisfy
    \begin{equation} 
    \|f_{\mathfrak n} \|_{C^1} \leq D_\mathfrak n, \label{eq-f-bound}
    \end{equation} 
    then 
    \begin{align}
    \left|\mathcal{A}^f_{\mathcal{T}}(t,\Omega[\mathcal{N}], \widetilde{\Omega}[\Nc])\right|&\lesssim \sum_{(d_{\mathfrak{n}}:\mathfrak{n}\in \mathcal{N})}\prod_{\mathfrak{n}\in \mathcal{N}}D_{\mathfrak{n}}{\langle Tq_{\mathfrak{n}}\rangle}^{-1}. \label{eq-ATf-bound}
    \end{align}
    for $q_\mathfrak n$ defined in \eqref{eq-def-qn}. 
\end{proposition}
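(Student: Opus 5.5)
We argue by induction on the order $n=|\mathcal N|$ of $\T$, integrating out the time variable of the root last. The estimate has the same form as the integral estimates in \cite{WKE,FPUTRD}; the new feature is the phase correction $A(t)$, which we handle by a change of variables. If $\T=\bullet$ there is nothing to prove. Otherwise let $\mathfrak r$ have children subtrees $\T_1,\T_2,\T_3$. Then $\mathcal D$ factorizes:
\[
\mathcal A^f_\T(t)=\int_0^t e^{i\zeta_{\mathfrak r}\Omega_{\mathfrak r}(Ts+A(s))}f_{\mathfrak r}(s)\prod_{j=1}^3\mathcal A^f_{\T_j}(s)\,\diff s ,
\]
with the convention $\mathcal A^f_{\bullet}\equiv 1$.

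The key step is a refined inductive statement strong enough to be integrated by parts. We prove that each $\mathcal A^f_{\T}(t)$ can be written as a finite sum, indexed by the choices $(d_{\mathfrak n})$, of terms
\[
e^{i\zeta_{\mathfrak r} q_{\mathfrak r}(Tt+A(t))}\,G(t),
\qquad \|G\|_{L^\infty}+T^{-1}\|G'\|_{L^\infty}\lesssim \prod_{\mathfrak n}D_{\mathfrak n}\langle Tq_{\mathfrak n}\rangle^{-1}.
\]
Here the exponent $q_{\mathfrak r}$ is the one selected by $d_{\mathfrak r}=1$, and $G$ collects the remaining dependence. When $d_{\mathfrak r}=0$, the term is the constant boundary contribution at $s=0$; it carries no phase, only the factor $\langle T q\rangle^{-1}$.

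The inductive step runs as follows. Substitute the representation for each child $\T_j$. The integrand then becomes $e^{i\Phi(s)}H(s)$ with
\[
\Phi(s)=(Ts+A(s))\,Q,
\]
where $Q$ is the combination $\Omega_{\mathfrak r}\pm d q$ appearing in \eqref{eq-def-qn}; this is exactly $q_{\mathfrak r}$ for the chosen $d$'s. Because every factor shares the same clock $Ts+A(s)$, the phases add linearly and $Q$ is constant in $s$. Moreover $\Phi'(s)=(T+\dot A(s))Q$, and $T+\dot A\ge T/2$ since $|\dot A|\lesssim \beta T\ll T$ by \eqref{eq-Adot-bound}.

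We now split into two cases.
\begin{itemize}
\item If $|TQ|\lesssim 1$, we bound the integral trivially by $\|H\|_\infty\lesssim \langle TQ\rangle^{-1}\prod(\cdots)$. The term is then absorbed as a $d_{\mathfrak r}=1$ term with a non-oscillatory amplitude.
\item If $|TQ|\gg1$, we integrate by parts once, writing $e^{i\Phi}=\bigl(i\Phi'\bigr)^{-1}\tfrac{\diff}{\diff s}e^{i\Phi}$. This produces boundary terms at $s=t$, which are of the form $e^{i q_{\mathfrak r}(Tt+A(t))}\cdot O(|TQ|^{-1})$, and at $s=0$, which give the $d_{\mathfrak r}=0$ term. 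It also produces an integral of $e^{i\Phi}\,\frac{\diff}{\diff s}\bigl(H/\Phi'\bigr)$.
\end{itemize}

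The main obstacle is the remainder integral. We bound
\[
\frac{\diff}{\diff s}\frac{1}{\Phi'}=-\frac{\ddot A\,Q}{(T+\dot A)^2Q^2}=O\!\left(\frac{1}{T|Q|}\right)
\]
using \eqref{eq-Addot-bound}. For the other piece, $H'/\Phi'$ involves $G_j'$ and $f_{\mathfrak r}'$. This is where the refined hypothesis is essential: we must carry the bound $\|G'\|\lesssim T\prod(\cdots)$ rather than merely $\|G\|_\infty$, and the factor $T$ is compensated by $|\Phi'|^{-1}\sim (T|Q|)^{-1}$. This yields an $O(\langle TQ\rangle^{-1})$ bound for the whole integral but no phase, so it is again absorbed into a non-oscillatory term.

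To close the induction we must also verify the derivative bound for the new amplitude $G$. Differentiating in $t$ the boundary and remainder terms costs at most a factor $T$: derivatives of $e^{i\Phi}$ are peeled into the phase, while $f_{\mathfrak r}$ contributes $D_{\mathfrak r}$, and $\dot A$ contributes $\beta T\le T$. Finally, since the number of terms is at most $C^n$ and $n\le M^3$ is bounded, summing over $(d_{\mathfrak n})$ gives \eqref{eq-ATf-bound}, with the phase of the root dropped in absolute value.
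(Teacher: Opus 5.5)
There is a genuine gap, and it sits exactly at the step you flag as the main obstacle. Your top-down strategy is viable: integrate the root last and carry a phase--amplitude form $e^{i\zeta_{\mathfrak r}q_{\mathfrak r}(Tt+A(t))}G(t)$ for each subtree integral. But the inductive hypothesis you chose, with the $T^{-1}$-weighted derivative norm, does not close.

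Set $P:=D_{\mathfrak r}\prod_{\mathfrak n\neq\mathfrak r}D_{\mathfrak n}\langle Tq_{\mathfrak n}\rangle^{-1}$. The children satisfy $\|G_j'\|_{L^\infty}\lesssim T\prod(\cdots)$, which gives $\|H'\|_{L^\infty}\lesssim TP$. Since $|\Phi'|\sim T|Q|$, the piece $\int_0^t e^{i\Phi}H'/\Phi'\,\diff s$ of the remainder is only bounded by $TP/(T|Q|)=P/|Q|$. It is not bounded by $P\langle TQ\rangle^{-1}\sim P/(T|Q|)$: the factor $T$ is not compensated, it just cancels the $T$ in $\Phi'$. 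Because $|Q|\lesssim 1$, this is no better than the trivial bound $\|H\|_\infty$, so the integration by parts gains nothing. The phase-free remainder term therefore fails the $L^\infty$ part of your hypothesis by a factor $T$, and this loss would compound level by level.

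The fix is that no $T$-loss ever arises once the phases are factored out. Carry the unweighted bound $\|G\|_{L^\infty}+\|G'\|_{L^\infty}\lesssim\prod_{\mathfrak n}D_{\mathfrak n}\langle Tq_{\mathfrak n}\rangle^{-1}$ and check that it propagates, assuming $\|H\|_\infty+\|H'\|_\infty\lesssim P$.
\begin{itemize}
\item For $|TQ|\gg1$, the boundary term at $t$ has amplitude $G=H/(i\Phi')$. Using \eqref{eq-Addot-bound}, $|G'|\le\|H'\|/|\Phi'|+\|H\|\,|\ddot A||Q|/|\Phi'|^2\lesssim P/(T|Q|)$.
\item The boundary term at $0$ is constant.
\item The remainder $R(t)=-\int_0^t e^{i\Phi}(H/i\Phi')'\,\diff s$, taken as a phase-free term, satisfies $\|R\|_\infty+\|R'\|_\infty\le 2\|(H/\Phi')'\|_\infty\lesssim P/(T|Q|)$.
\item For $|TQ|\lesssim 1$, take $F(t)=\int_0^t e^{i\Phi}H\,\diff s$ itself as a phase-free amplitude, with $\|F\|_\infty+\|F'\|_\infty\le 2\|H\|_\infty\lesssim P$.
\end{itemize}
The underlying point is that an amplitude produced by a time integration has the bounded integrand as its derivative.

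This is the same mechanism as in the paper's proof, which inducts bottom-up instead. It removes a branching node $\mathfrak l$ whose children are all leaves and integrates out $t_{\mathfrak l}\in[0,t_{\mathfrak p}]$, by parts when $|T\Omega_{\mathfrak l}|\gtrsim1$. The result is absorbed into the $C^1$ weight of the parent, with $\Omega_{\mathfrak p}$ shifted to $\Omega_{\mathfrak p}+\Omega_{\mathfrak l}$ for the boundary term at $t_{\mathfrak p}$. Because $C^1$ weights with $O(D_{\mathfrak p}D_{\mathfrak l})$ norms are stable under this operation, the proposition is its own induction hypothesis and no auxiliary decomposition is needed. Your corrected top-down version costs more bookkeeping but also records the explicit oscillation of $\mathcal A^f_{\T}(t)$ in $t$.
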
 

\begin{proof}
Throughout, note that since $\left|\dot{A}({t_{\mathfrak{n}}})\right| \lesssim \beta T$,
\begin{align}
    \left|\frac{T}{T + \dot A(t)}\right| \lesssim 1. \label{eq-frac-phase-bound}
\end{align}
Therefore, via integration by parts, we may obtain 
\begin{align}
&\int_0^te^{i\zeta_r\Omega_\mathfrak r(Tt' + A(t'))}f_\mathfrak r(t') \diff t' = \frac{e^{i\zeta_\mathfrak r \Omega_\mathfrak r(Tt + A(t))}f_\mathfrak r(t)}{i \zeta_\mathfrak r \Omega_\mathfrak r(T + \dot A(t))} - \frac{f_\mathfrak r(0)}{i \zeta_\mathfrak r \Omega_r(T + \dot A(0))} \notag\\
&\hspace{2cm}- \frac{1}{i\zeta_\mathfrak r \Omega_\mathfrak r T}\int_0^t \frac{e^{i \zeta_\mathfrak r \Omega_\mathfrak r(Tt' + A(t'))}T \dot f_\mathfrak r(t')}{T + \dot A(t')} \diff t' + \frac{1}{i\zeta_\mathfrak r \Omega_\mathfrak r T}\int_0^t \frac{e^{i\zeta_\mathfrak r\Omega_\mathfrak r(Tt' + A(t'))}T \ddot A(t') f_\mathfrak r(t')}{(T + \dot A(t'))^2} \diff t'.\label{eq-ATf-1}
\end{align}
Using \eqref{eq-frac-phase-bound} as well as (\ref{eq-Adot-bound} - \ref{eq-f-bound}), we may obtain 
\begin{align*}
\left|\int_0^te^{i\zeta_r\Omega_\mathfrak r(Tt' + A(t'))}f_\mathfrak r(t') \diff t'\right| \lesssim \frac{D_{\mathfrak r}}{\langle T\Omega_\mathfrak r \rangle}.
\end{align*}

Now suppose that the bound \eqref{eq-ATf-bound} holds for all trees of order at most $n - 1$. Note that any ternary tree $\T$ can be obtained from a tree $\T'$ of order $n - 1$ where we obtain $\T$ from $\T'$ by attaching a single branching node to a leaf $\mathfrak l$ whose siblings have scale at most 1. Let $\mathfrak p$ denote the parent of $\mathfrak l$, $\T_\mathfrak p$ denote the corresponding tree rooted at $\mathfrak p$ with branching nodes $\mathcal N_\mathfrak p$ and $\mathcal N_{\mathfrak p}^- = \mathcal N \setminus \mathcal N_p$. Define 
\begin{equation}
\mathcal A_{\mathcal N_{\mathfrak p}^-}(t[\mathcal N_{\mathfrak p}^-]) := \prod_{\mathfrak n \in \mathcal N_{\mathfrak p}^-} e^{i \zeta_\mathfrak n \Omega_\mathfrak n(Tt_\mathfrak n + A(t_\mathfrak n))} f_\mathfrak n(t_\mathfrak n) \diff t_\mathfrak n.
\end{equation}
For simplicity, let's assume $\zeta_\mathfrak p = \zeta_\mathfrak l = +$ and that $\mathfrak l$ is the third child of $\mathfrak p$. Then, we may write
\begin{align}
\mathcal A_{\T'}^f(t, \Omega[\Nc]) &= \int_{\mathcal D_\mathfrak p} \mathcal A_{\mathcal N_{\mathfrak p}^-}(t[\mathcal N_{\mathfrak p}^-]) e^{i\Omega_\mathfrak p(Tt_\mathfrak p + A(t_\mathfrak p))}f_\mathfrak p(t_\mathfrak p) \prod_{j = 1}^2 \mathcal A_{\T_j}^f (t_\mathfrak p, \Omega[\Nc_j]) \diff t_\mathfrak p \\
\mathcal A_{\T}^f(t, \Omega[\Nc]) &= \int_{\mathcal D_p} \mathcal A_{\mathcal N_{\mathfrak p}^-}(t[\mathcal N_{\mathfrak p}^-]) e^{i\Omega_\mathfrak p(Tt_\mathfrak p + A(t_\mathfrak p))}f_\mathfrak p(t_\mathfrak p) \\
& \hspace{3cm} \times\int_0^{t_\mathfrak p} e^{i\Omega_\mathfrak l(Tt_\mathfrak l + A(t_{\mathfrak l}))} f_{\mathfrak l}(t_\mathfrak l)\diff t_\mathfrak l\prod_{j = 1}^2 \mathcal A_{\T_j}^f (t_\mathfrak p, \Omega[\Nc_j]) \diff t_\mathfrak p, \notag
\end{align}
where $\mathcal D_\mathfrak p = \{t[\Nc_{\mathfrak p}^-] \colon 0 < t_{\mathfrak n'} < t_{\mathfrak n} < t \text{ whenever $\mathfrak n' \in \Nc_{\mathfrak p}^-$ is a child of $n \in \Nc_{\mathfrak p}^-$}\}$. If $|T\Omega_\mathfrak l| \lesssim 1,$ then we may use that \eqref{eq-ATf-bound} holds for $\T'$ and 
\begin{align*}
\left\lVert f_{\mathfrak p}(t_\mathfrak p) \int_0^{t_\mathfrak p} e^{i\Omega_\mathfrak l(Tt_\mathfrak l + A(t_{\mathfrak l}))} f_{\mathfrak l}(t_\mathfrak l)\diff t_\mathfrak l \right \rVert_{C^1} \lesssim D_\mathfrak p .
\end{align*}
On the other hand if $|T\Omega_\mathfrak l| \gtrsim 1$ then, we may calculate $\mathcal A_\T^f(t, \Omega[\Nc])$ using \eqref{eq-ATf-1}, so that 
\begin{equation*}
\mathcal A_{\T}^f(t, \Omega[\Nc]) = I + II + III, 
\end{equation*}
for 
\begin{align}
    I &:= \frac{1}{i \Omega_\mathfrak l T}\int_{\mathcal{D}_\mathfrak{p}} \mathcal{A}_{\Nc_{\mathfrak p}^-}(s[\mathcal{N}^-_\mathfrak{p}])e^{i(\Omega_\mathfrak p + \Omega_\mathfrak l)(Tt_\mathfrak p + A(t_\mathfrak p))} \frac{ Tf_{\mathfrak{p}}(t_\mathfrak p)f_{\mathfrak{l}}(t_{\mathfrak p})}{T+\dot A(t_\mathfrak p)} \prod_{j=1}^2\mathcal{A}^f_{\mathcal{T}_j}({t_{\mathfrak{p}}},\Omega[\Nc_j]) \diff t_\mathfrak p, \\
    II &:= -\frac{f_{\mathfrak{l}}(0)}{iT\Omega_{\mathfrak{l}}(1+\dot A(0))}\mathcal{A}^f_{\mathcal{T}'}(t,\Omega[\mathcal{N}'], \widetilde{\Omega}[\Nc']),\\
    III &:= \frac{1}{i \Omega_\mathfrak l T} \int_{\mathcal{D}_\mathfrak{p}} \mathcal{A}_{\Nc_{\mathfrak p}^-}(t[\mathcal{N}^-_\mathfrak{p}])e^{i\Omega_\mathfrak p(T t_\mathfrak p + A(t_\mathfrak p))}f_{\mathfrak{p}}(t_\mathfrak p)\prod_{j=1}^2\mathcal{A}^f_{\T_j}(t_\mathfrak p,\Omega[\Nc_j]) \\
    &\hspace{1.5cm}\times\int_0^{t_\mathfrak p}e^{i \Omega_\mathfrak l(Tt_\mathfrak l + A(t_\mathfrak l))}\left[\frac{T\ddot{A}(t_{\mathfrak{l}})f_{\mathfrak{l}}(t_\mathfrak l)}{(T + \dot A(t_\mathfrak l))^2}+\frac{T\dot{f}_{\mathfrak{l}}(t_{\mathfrak{l}})}{T + \dot A(t_\mathfrak l)}\right]\diff t_\mathfrak l \diff t_\mathfrak p. \notag
\end{align}

For $I$, we note that 
\begin{align*}
\frac{d}{dt_\mathfrak p} \left(\frac{Tf_\mathfrak p(t_\mathfrak p) f_\mathfrak l(t_\mathfrak p))}{T + \dot A(t_\mathfrak p)}\right) &= \frac{T(\dot f_\mathfrak p(t_\mathfrak p) f_\mathfrak l(t_\mathfrak p) + f_\mathfrak p(t_\mathfrak p) \dot f_\mathfrak l(t_\mathfrak p))}{T + \dot A(t_\mathfrak p)} - \frac{Tf_\mathfrak p(t_\mathfrak p) f_\mathfrak l(t_\mathfrak p) \ddot A(t_\mathfrak p)}{(T + \ddot A(t_\mathfrak p))^2},
\end{align*}
so that 
\begin{align*}
\left\lVert\frac{Tf_\mathfrak p(t_\mathfrak p) f_\mathfrak l(t_\mathfrak p))}{T + \dot A(t_\mathfrak p)}\right\rVert_{C^1} \lesssim D_\mathfrak p D_\mathfrak l
\end{align*}
and we may use the fact that \eqref{eq-ATf-bound} holds for $\T'$ bound $I$ by 
\begin{equation} \label{eq-Atf-induction-bound}
\frac{D_\mathfrak l}{\langle T \Omega_\mathfrak l\rangle} \sum_{(d_{\mathfrak{n}}:\mathfrak{n}\in \Nc')}\prod_{\mathfrak{n}\in \Nc'}D_{\mathfrak{n}}{\langle Tq_{\mathfrak{n}}\rangle}^{-1},
\end{equation}
where $q_\mathfrak n$ may depend on $\Omega_\mathfrak l$. We may easily recover \eqref{eq-Atf-induction-bound} for $II$. Finally, we recover \eqref{eq-Atf-induction-bound} for $III$ by noticing
\begin{align*}
\left \lVert f_\mathfrak p(t_\mathfrak p) \int_0^{t_\mathfrak p}e^{i \Omega_\mathfrak l(Tt_\mathfrak l + A(t_\mathfrak l))}\left[\frac{T\ddot{A}(t_{\mathfrak{l}})f_{\mathfrak{l}}(t_\mathfrak l)}{(T + \dot A(t_\mathfrak l))^2}+\frac{T\dot{f}_{\mathfrak{l}}(t_{\mathfrak{l}})}{T + \dot A(t_\mathfrak l)}\right]\diff t_\mathfrak l \right \rVert_{C^1} \lesssim D_\mathfrak p D_\mathfrak l.
\end{align*}
\end{proof}

\subsection{Plan of paper}
In Section \ref{section-counting}, we prove the counting estimates which we make use of in Section \ref{section-algorithm} as well as a related estimate which allows us to bound iterates of \eqref{WKE}. In Section \ref{section-irrchains}, we introduce irregular chains and self-loops in couples and prove estimates which allow us to remove or discount such structures in Section \ref{section-algorithm}, where we prove Proposition \ref{bound-couple} by reducing it to a counting estimate on molecules and laying out a counting algorithm. In Section \ref{section-remainder}, we adapt the arguments in Section \ref{section-algorithm} to prove Proposition \ref{prop-remainder} and then bound the remainder $\mathcal R$. Finally, in Section \ref{section-mainthm}, we put everything together to prove Theorem \ref{main}.

\section{Counting Estimates}
\label{section-counting}
\allowdisplaybreaks
In this section, we prove the vector counting estimates and the convergence of iterates. Denote $\Z_N:=N^{-1}\Z$ and $\mathcal{S}$ as the Schwartz space.

\begin{proposition}[Three Vector Counting] \label{prop-3vc}
Let $1<T\lesssim N^{1-\epsilon}$ for $\epsilon \ll 1$. Then, for $k \in \Z_N \cap (0,1)$ and any choice of $\zeta_1, \zeta_2, \zeta_3 \in \{\pm\}$ and $\lambda \in \R$, the set 
\begin{align}
S_3 = \{(k_1, k_2, k_3) \in (\Z_N \cap [0,1))^3 \colon &\zeta_1 k_1 + \zeta_2k_2 + \zeta_3 k_3 = k \text{ (mod 1)}, \notag \\
&\left|\zeta_1 \omega(k_1) + \zeta_2 \omega(k_2) + \zeta_3 \omega(k_3)- \omega(k) - \lambda\right| \leq T^{-1}\},
\end{align}
satisfies 
\begin{equation}
\sum_{(k_1, k_2, k_3) \in S_3} F(k_1, k_2) \lesssim N^2 T^{-1} \log T,
\end{equation}
where $F(k_1, k_2) = \omega(k_1) \omega(k_2) \omega(k - \zeta_1 k_1 - \zeta_2 k_2)$.
\end{proposition}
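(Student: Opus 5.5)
Since $k_3$ is determined by $(k_1,k_2)$ through $\zeta_3 k_3 = k - \zeta_1 k_1 - \zeta_2 k_2 \pmod 1$, the sum is really over pairs $(k_1,k_2)\in(\Z_N\cap[0,1))^2$. We therefore plan to treat it as a two-variable lattice sum restricted to the thin set
\[
\Phi(k_1,k_2) := \zeta_1\omega(k_1)+\zeta_2\omega(k_2)+\zeta_3\omega(k_3(k_1,k_2)) - \omega(k) - \lambda, \qquad |\Phi|\le T^{-1}.
\]
The approach is to fix $k_1$ and count the admissible $k_2$ one-dimensionally, then sum over $k_1$. For fixed $k_1$, set $g(x) = \zeta_2\omega(x) + \zeta_3\omega(c - \zeta_3\zeta_2 x)$ with $c$ depending on $k,k_1$. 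Up to sign, $g$ is a sum or difference of $|\sin|$ functions, i.e.\ a trigonometric expression of the form $A\sin(\pi x+\phi)$ on each of $O(1)$ subintervals cut out by the sign changes of $\sin$. Its derivative is $\pm 2\pi\cdot 2\cos(\pi(x-c')/2\cdots)$-type expressions; concretely, on each piece $g(x) = 4\sin(\pi a)\cos(\pi(x-b))$ or $4\cos(\pi a)\sin(\pi(x-b))$, with amplitude $R(k_1)\sim |\sin(\pi a)|$ or $|\cos(\pi a)|$ determined by $k,k_1$.

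The key step is a one-dimensional sublevel count. On each piece, the condition $|g(x) - \mu|\le T^{-1}$ with $g = R\,\mathrm{trig}(\pi(x-b))$ defines a set that is a union of $O(1)$ intervals. Their length is $\lesssim \min\big(T^{-1}/(R|\,\mathrm{trig}'|),\ (T^{-1}/R)^{1/2}\big)$, where the square-root term accounts for critical points. Since $T\lesssim N^{1-\epsilon}$, the spacing $N^{-1}$ is much smaller than $T^{-1}$, so the number of lattice points is $\lesssim N\cdot(\text{length}) + 1$. We then insert the weight $F = \omega(k_1)\omega(k_2)\omega(k_3)$. Near a critical point of $g$ (where $k_2$ and $k_3$ are at a symmetric position), the weights $\omega(k_2)\omega(k_3)$ should be comparable to $R$ or to the distance to the degenerate configuration. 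This is what prevents the loss from the square-root term. Away from critical points, we bound the lattice count by $N T^{-1} R^{-1}\cdot|\mathrm{trig}'|^{-1}$, and the weights $\omega(k_2)\omega(k_3)\lesssim$ (something comparable to $R\,|\mathrm{trig}'|$ or better) cancel the denominator. The resulting bound per fixed $k_1$ is roughly $\omega(k_1)\big(NT^{-1}\cdot\min(1, 1/(TR)\text{-type})+1\big)$ with a possible $\log$.

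Summing over $k_1$ then gives $N\cdot NT^{-1}$. The logarithm comes from the set of $k_1$ where the amplitude $R(k_1)$ is small: $R(k_1)\sim |k_1 - k_1^*|$ near finitely many exceptional values (e.g.\ $k_1=\pm k$ or $k_1 = 0$). There the interval bound $T^{-1}/R$ saturates at $1$, and summing $\min(1, (TR)^{-1})$ over $k_1$ at lattice spacing $N^{-1}$ produces $N T^{-1}\log T$. The additive $+1$ per $k_1$ (lattice points even when the interval is tiny) contributes $\sum_{k_1}\omega(k_1)\lesssim N$, which is $\le N^2T^{-1}$ because $T\lesssim N$. For sign combinations where one of $\zeta_1,\zeta_2,\zeta_3$ pattern produces a non-resonant sum (e.g.\ all $+$), $g$ has no degenerate amplitude, and the same bound holds more easily.

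The main obstacle is the uniform handling of critical points and degenerate amplitudes for all sign patterns. The plan is to first do the case analysis exactly for the resonant pattern $(+,+,-)$, using the explicit identity $\sin A\pm\sin B$. We will check that near each critical point, $F$ vanishes to the order needed to absorb the $T^{-1/2}$-type sublevel loss. The remaining patterns then reduce to it by the symmetries $k\mapsto 1-k$, which flips signs.
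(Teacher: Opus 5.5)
The gap is in the sign patterns other than $(+,+,-)$, which the statement also covers. Your route differs from the paper's: you fix $k_1$, count $k_2$ on $O(1)$ pieces, and absorb the $O(1)$ lattice error per fiber using only $T\lesssim N$, instead of using Poisson summation and a continuum change of variables. For $(+,+,-)$ this essentially works, and more easily than you expect. There $\partial_{k_2}\Phi=\omega'(k_2)-\omega'(k_3)$ vanishes in the interior only when $k_2=k_3$, i.e.\ $k_1=k$. So for $k_1\neq k$ the fibers have no interior critical points, and the only serious loss is the amplitude degeneration $R\sim|k_1-k|$, which produces the $\log T$.

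\textbf{The symmetry reduction fails.} Since $\omega(1-x)=\omega(x)$, replacing $k_i$ by $1-k_i$ flips the sign of $k_i$ in the momentum constraint. It leaves the coefficient $\zeta_i$ of $\omega(k_i)$ in the frequency constraint unchanged, so a pattern such as $(+,+,+)$ cannot be converted into $(+,+,-)$.

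\textbf{The weight does not vanish at the folds.} Take $(+,+,+)$ and set $c=k-k_1$. On $0<k_2<c$,
\[
g(k_2)=\omega(k_2)+\omega(c-k_2)=4\sin(\pi c/2)\cos(\pi(k_2-c/2)),
\]
which has a fold at $k_2=k_3=c/2$ for every $k_1$. There $F=\omega(k_1)\omega(k_2)^2$ is of size $1$ when $k_1$ and $c$ stay away from $0$ and $1$. Consider the $k_1$ at which the target $\omega(k)+\lambda-\omega(k_1)$ lies within $T^{-1}$ of the fold value; these exist for suitable $\lambda$. For them the fiber sum is of order $NT^{-1/2}$. So no per-fiber bound of the form $N\min(1,(TR)^{-1})$ can hold, and your claim that $F$ vanishes enough near critical points to absorb the square-root loss is false here.

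\textbf{The missing idea is two-dimensional:} the fold value moves with $k_1$. Set $h(k_1)=\omega(k_1)+2\omega(\tfrac{k-k_1}{2})-\omega(k)-\lambda$, and analogously for the second fold at $k_2=k_3=\tfrac{c+1}{2}$. Then the fold region of the fiber contributes roughly $NT^{-1}\min(T^{1/2},|h(k_1)|^{-1/2})$. Moreover, $h'(k_1)=0$ only at the genuine critical point $k_1=k_2=k_3$ of $\Phi$, where $h''=\tfrac32\omega''\neq0$. Hence $\sum_{k_1}\min(T^{1/2},|h(k_1)|^{-1/2})\lesssim N\log T$, and the total is $N^2T^{-1}\log T$. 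Alternatively, near the fold line $k_2=k_3$ you can sum over $k_1$ first, since $\partial_{k_1}\Phi=\omega'(k_1)-\omega'(k_3)\neq0$ there away from $k_1=k_2=k_3$.

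\textbf{Comparison with the paper.} The paper builds in the same information at the continuum level. After Poisson summation it substitutes $y\mapsto s=\Omega(x,y)$ and uses $F/|\partial_y\Omega|\lesssim|\sin\pi x|/|\sin(\pi(z-y)/2)|$. It then discards the $T^{-1}$-neighbourhood $E_{<}$ of the fold line $y=z$, which has measure $\lesssim T^{-1}$, and integrates the remaining $1/|z-y|$ singularity across that line to get the $\log T$. Once your argument is repaired along these lines, direct counting has one advantage: it needs only $T\lesssim N$ rather than $T\lesssim N^{1-\epsilon}$, because no Poisson error term has to be controlled.
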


\begin{proof}
It suffices to establish the estimate for a modification of the set $S_3$ where we instead impose $\zeta_1 k_1 + \zeta_2 k_2 + \zeta_3 k_3 = k$, for any $k$ equivalent to the original $k$ mod 1. Since $k_1, k_2, k_3 \in \Z_N \cap [0,1)$, there are only finitely many such $k$ we need to consider, in particular $k \in \Z_N \setminus \Z \cap (-3,3)$. Fix $\chi \in \mathcal S(\R)$ which is 1 on $B\left(\sqrt{\frac{m}{4\kappa}}T\lambda, 1\right)$ and such that $\widehat{\chi}$ is supported on a finite ball. Note that we may replace $F$ with a non-negative $\mathcal S(\R^2)$ function supported on $B(0,2)$ which is equal to $F$ on $S_3$. Let $e(x) = \exp(2\pi i x)$ and define 
\begin{equation*}
\Omega(x,y) = \zeta_1 \sin(\pi x) + \zeta_2 \sin(\pi y) + \zeta_3 \sin(\zeta_3\pi (k - \zeta_1 x - \zeta_2 y )) - |\sin(\pi k)|,
\end{equation*}
so that 
\begin{align}
\nabla \Omega(x,y) =  \pi&\left(\zeta_1 \cos(\pi x) - \zeta_1 \cos(\pi \zeta_3( k - \zeta_1 x - \zeta_2y)), \right.\notag\\
&\hspace{3pt}\left.\zeta_2 \cos(\pi y) - \zeta_2\cos(\pi\zeta_3(k - \zeta_1x - \zeta_2y)) \right)
\end{align}
Therefore, 
\begin{align*}
\sum_{(k_1, k_2, k_3) \in S_3}F(k_1, k_2) \leq \sum_{k_1, k_2 \in \Z_N}F(k_1, k_2) \chi(T\Omega(k_1, k_2)). 
\end{align*}
Expressing $\chi$ in terms of its Fourier transform and using Poisson summation, we get 
\begin{align}
\sum_{k_1, k_2 \in \Z_N} F(k_1, k_2) \chi(T\Omega(k_1, k_2)) &= \sum_{k_1, k_2 \in \Z_N} F(k_1, k_2)\int_{-\infty}^{\infty}\hat{\chi}(\tau)e\left(\tau T\Omega(x,y)\right)\diff\tau \\
&=N^2T^{-1}\int_{-\infty}^{\infty}\hat{\chi}\left(\frac{\tau}{T}\right)\int_{(x,y)\in \R^2} F(x,y)e\left(\tau \Omega(x,y)\right)\diff x\diff y\diff\tau \notag  \\
&+N^2T^{-1}\int_{-\infty}^{\infty}\hat{\chi}\left(\frac{\tau}{T}\right)\sum_{(c,d)\in\mathbb{Z}^2\setminus\{(0,0)\}}\int_{(x,y)\in \R^2} F(x,y)\notag \\
&\hspace{3cm} \times e\left(\tau \Omega(x,y)-cNx-dNy\right)\diff x\diff y\diff\tau \label{eq-poisson-exp}\\
&:= I + II. \notag
\end{align}
For $II$, denote the phase by $\Phi(x,y) = \tau \Omega(x,y) - cNx - dNy$, so that for $c \neq 0$
\begin{equation} \label{eq-phase-bound}
\left| \nabla_x \Phi(x,y) \right| \gtrsim N|c|
\end{equation}
and similarly for $\left| \nabla_y \Phi(x,y) \right|$ if $d \neq 0$, since $\tau \lesssim T \lesssim N^{1 - \epsilon}$. Therefore, we my integrate by parts sufficiently many times so that $|II| \ll N^2 T^{-1}$. 

For $I$, we may rewrite it by inverting the Fourier transform in $\chi$, so that 
\begin{equation} \label{eq-counting-main term}
I = N^2 \int_{(x,y) \in \R^2} F(x,y) \chi(T\Omega(x,y))\diff x \diff y.
\end{equation}
Let's assume for now that $\zeta_3 = +$. Defining $z:= k - \zeta_1 x - \zeta_2y$, we may consider the integral over the following two sets (since the $F$ is supported on $B(0,2)$: 
\begin{align}
E_{<}&:= \{(x,y) \in B(0,2) \colon |z - y - 2n| < T^{-1} \text{ for all } n \in \Z\}, \\
E_{\geq} &:= B(0,2) \setminus E_{<}. 
\end{align}
On $E_{<}$, we may use that the size of this set is $\lesssim T^{-1},$ so that the corresponding integral on $E_{<}$ is $ \lesssim N^2T^{-1}$. On $E_{\geq}$, note that we may substitute $y$ with $s = \Omega(x,y)$ so that 
\begin{align}
N^2 \int_{(x,y) \in E_{\geq}} F(x,y) \chi(T\Omega(x,y)) \diff x \diff y &= N^2 \int_s \int_{\substack{x \in \R \\ (x,y(x,s)) \in E_{\geq}}} \frac{F(x, y(x,s))\chi(Ts)} 
{\pi\zeta_2(\cos(\pi y) -  \cos(\pi z)))} \diff x \diff s. \label{eq-3vc-main-term}
\end{align}
Note that 
\begin{align*}
\left|\frac{\sin(\pi x) \sin(\pi y) \sin(\pi z)}{\cos(\pi y) - \cos(\pi z)}\right| &= \left|\frac{\sin(\pi x) \sin(\pi y) \sin(\pi z)}{2 \sin\left(\frac{\pi(z + y)}{2} \right) \sin\left( \frac{\pi(z-y)}{2}\right)}\right|\lesssim \left|\frac{\sin(\pi x)}{\sin\left( \frac{\pi(z-y)}{2}\right)}\right|.
\end{align*}
Since we are on $E_{\geq}$,  $|\sin\left(\frac{\pi(z - y)}{2}\right)| \gtrsim T^{-1},$ so 
\begin{align*}
\eqref{eq-3vc-main-term} \lesssim N^2 \log T \int_s \chi(Ts) \diff s \lesssim N^2 T^{-1} \log T.
\end{align*}
\end{proof}

\begin{corollary} {(Convergence of Iterates)}\label{cor-iterates}
For fixed $k\in \mathbb{Z}_N \cap (0,1)$, let $F \in \mathcal{S}(\R^2)$, $\chi \in \mathcal{S}(\R)$ with $\hat{\chi}$ supported in a ball of finite radius. Then for all $\delta>0$ sufficiently small we have 
\begin{align}
&\sum_{\substack{x, y\in \Z_N \cap (0,1)}} F(x, y)\chi(T\Omega(x, y)) \notag\\
&\hspace{50pt}= N^2\int_{\substack{x, y\in (0,1)}} F(x, y)\chi(T\Omega(x, y))\diff x \diff y+ O(N^{2 - \delta}T^{-1}), \label{eq-iterate-conv}
\end{align}
where $\zeta_1, \zeta_2, \zeta_3 \in \{\pm\}$, and
\begin{align*}
\Omega(x, y) &= \zeta_1 \sin(\pi x) +\zeta_2\sin(\pi y)+\zeta_3\sin(\pi(k - x - y))- \sin(\pi k).
\end{align*}
\end{corollary}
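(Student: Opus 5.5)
We follow the proof of Proposition \ref{prop-3vc} nearly verbatim. The difference is that we now track the Poisson summation error term instead of only bounding it. First, since $F\in\mathcal S(\R^2)$ is not periodic and the sum runs over $x,y\in\Z_N\cap(0,1)$, we insert a smooth cutoff. Write $F=F\psi+F(1-\psi)$, where $\psi$ is a smooth bump equal to $1$ on $[N^{-\delta'},1-N^{-\delta'}]^2$ and supported in $(0,1)^2$, with derivatives of order $N^{j\delta'}$. The contribution of the strip where $\psi\neq 1$ to both the sum and the integral is controlled by the counting bound of Proposition \ref{prop-3vc}, or more precisely by its proof restricted to a set of measure $O(N^{-\delta'})$. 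Here we use that the restricted main-term estimate $N^2\int\chi(T\Omega)$ over a thin strip gains the strip width, up to $\log T$. This gives $O(N^{2-\delta'}T^{-1}\log T)$, which is acceptable after shrinking $\delta$.

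For the cut-off part $G=F\psi\in C_c^\infty((0,1)^2)$, we extend periodically and write $\chi(T\Omega)=\int\hat\chi(\tau)e(\tau T\Omega)\,\diff\tau$. Poisson summation in $(x,y)$ then gives exactly the decomposition \eqref{eq-poisson-exp}. The zero mode $I$ is precisely $N^2\int G\chi(T\Omega)$, which is the main term in \eqref{eq-iterate-conv} up to the strip error already handled. It remains to show $|II|\lesssim N^{2-\delta}T^{-1}$.

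For $II$, we have $\tau\in\operatorname{supp}\hat\chi(\cdot/T)$, so $|\tau|\lesssim T\lesssim N^{1-\epsilon}$, and $|\nabla\Omega|\lesssim 1$. Hence for $(c,d)\neq(0,0)$ the phase $\Phi=\tau\Omega-cNx-dNy$ satisfies $|\nabla\Phi|\gtrsim N|(c,d)|$, as in \eqref{eq-phase-bound}. Every derivative of $\Phi$ of order $\ge2$ is $O(T)$. We integrate by parts $K$ times along $\nabla\Phi/|\nabla\Phi|^2$. Each step costs at most $\max(N^{\delta'},T/N)/(N|(c,d)|)\lesssim N^{-\epsilon/2}|(c,d)|^{-1}$, using the derivative bounds on $G$ (with $\delta'<\epsilon/2$). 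Summing over $(c,d)\neq 0$ and integrating $|\hat\chi(\tau/T)|$, which contributes a factor $T$, gives
\[
|II|\lesssim N^2T^{-1}\cdot T\cdot N^{-K\epsilon/2}\lesssim N^{2-\delta}T^{-1},
\]
once $K$ is large and $T\le N$.

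The main obstacle is the boundary cutoff. The integration by parts must not lose more than it gains near $x,y\in\{0,1\}$, and the strip must be small enough that the counting argument of Proposition \ref{prop-3vc} still gives a power gain there. The balance is to choose $\delta'$ small relative to $\epsilon$ and then $\delta<\delta'$. We also track the $\log T$ losses, which are absorbed since $\log T\ll N^{\delta'-\delta}$. Any dependence on whether $k$ is replaced by an equivalent representative mod $1$ is handled as in Proposition \ref{prop-3vc}, summing over finitely many shifts.
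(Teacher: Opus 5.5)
Your treatment of the nonzero Poisson modes is exactly the paper's proof. The paper says that \eqref{eq-poisson-exp} still applies, reads off $I$ as the integral, and kills $II$ by repeated integration by parts using $|\nabla\Phi|\gtrsim N|(c,d)|$, which holds because $|\tau|\lesssim T\lesssim N^{1-\epsilon}$. What you add is the cutoff $\psi$. The paper's expansion was written for a sum over all of $\Z_N^2$, with main term over $\R^2$. It never addresses the sharp restriction to $(0,1)^2$ for an $F$ that need not vanish on the boundary, so your decomposition is a genuine repair. Your accounting of the $N^{j\delta'}$ losses against the $N|(c,d)|$ gains is fine.

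The step you assert without proof is the strip bound. Restricting $\int\chi(T\Omega)$ to a set of measure $w=N^{-\delta'}$ does not by itself gain a factor $w$, because the integrand concentrates on $\{|\Omega|\lesssim T^{-1}\}$. Proposition \ref{prop-3vc} gives no such localization; its Jacobian bound even relies on the vanishing weight $\omega(k_1)\omega(k_2)\omega(k_3)$.

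What makes your claim true is transversality at the edges of the square. For fixed $k\in(0,1)$, the critical values of $y\mapsto\Omega(0,y)$ are $2\sin(\pi k/2)(\zeta_2-\cos(\pi k/2))$ or $2\cos(\pi k/2)(\zeta_2-\sin(\pi k/2))$, both nonzero, and likewise on the other three edges. Hence in the strip near $x=0$ you have $|\partial_y\Omega|\gtrsim_k 1$ wherever $|\Omega|\ll_k 1$, and symmetrically elsewhere. This gives $O_k(wT^{-1})$ for the integral. For the lattice sum you get the same after taking a smooth majorant of the strip and of $|\chi|$ and doing one more Poisson summation.

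This is not uniform in $k$. If $k<N^{-\delta'}$, $\zeta_1=+$ and $\zeta_2=\zeta_3$, the line $x=k$, on which $\Omega\equiv 0$, lies inside the strip and there is no gain at all. That matters because the corollary feeds $\ell^\infty_k$ bounds in Section \ref{section-mainthm}.

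A simpler argument needs neither Poisson summation nor the cutoff. For each fixed $x$ (resp.\ $y$), $\Omega$ has at most two monotone pieces in the other variable. Comparing the Riemann sum with the integral row by row through total variation therefore gives an error $O_{F,\chi}(N)$, uniformly in $k$. Finally, $N\lesssim N^{2-\epsilon}T^{-1}$ exactly because $T\lesssim N^{1-\epsilon}$.
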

    
\begin{proof}
Note that the expansion \eqref{eq-poisson-exp} is still valid, where term $I$ is precisely the right hand side of \eqref{eq-iterate-conv} as seen in \eqref{eq-counting-main term}. For term $II$, note that we may still use \eqref{eq-phase-bound} to integrate sufficiently many terms so that $|II| \lesssim N^{2 - \delta}T^{-1}$.
\end{proof}
    
\begin{proposition}[Two Vector Counting] \label{vector_counting-2}
Let $1<T\lesssim N^{1-\epsilon}$ for $\epsilon \ll 1$. Then for $k\in \mathbb{Z}_N \cap (0,1)$ and $\lambda\in \mathbb{R}$, the sets:
\begin{align}
S_2^{\pm} &= \Big\{(k_1,k_2)\in \mathbb{Z}_N^2\cap [0,1)^2: k_1\pm k_2= k \text{ (mod 1)},\notag\\
&\left|\omega(k_1)\pm\omega(k_2) - \omega(k) - \lambda \right| \leq T^{-1}\Big\}.
\end{align}
satisfy the bounds:
\begin{align}
\sum_{\substack{(k_1,k_2)}\in S_2^{+}}G(k_1) &\lesssim NT^{-\frac{1}{2}}, \label{eq-2vc+}\\
\sum_{\substack{(k_1, k_2)}\in S_2^{-}}G(k_1) &\lesssim \min(N, NT^{-1}|k|^{-1}), \label{eq-2vc-}
\end{align}
where 
\begin{align}
    G(x) &= \omega(x) \omega(k - x).
\end{align}
\end{proposition}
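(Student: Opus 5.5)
The plan is to reduce both bounds to one-dimensional sublevel-set estimates. In each set $S_2^{\pm}$ the constraint $k_1\pm k_2=k$ (mod $1$) determines $k_2$ from $k_1$, so we only count $k_1\in\Z_N\cap[0,1)$ with $|\phi_\pm(k_1)-c|\le T^{-1}$, where $c=\omega(k)+\lambda$ and
\[
\phi_+(x)=\omega(x)+\omega(k-x),\qquad \phi_-(x)=\omega(x)-\omega(x-k).
\]
We split $[0,1)$ at the points where $x$, $k-x$ or $x-k$ crosses an integer (i.e. at $x=k$). On each of the $O(1)$ resulting intervals $\phi_\pm$ is a trigonometric polynomial and $G$ is smooth, and the sublevel set $\{|\phi_\pm-c|\le T^{-1}\}$ is a union of $O(1)$ intervals. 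Counting lattice points interval by interval, or via the Poisson summation argument of Proposition \ref{prop-3vc}, gives
\[
\sum_{k_1} G(k_1)\,\mathbbm 1_{|\phi_\pm(k_1)-c|\le T^{-1}}\lesssim N\int G(x)\,\mathbbm 1_{|\phi_\pm(x)-c|\le T^{-1}}\,\diff x+O(1).
\]
Since $T\lesssim N^{1-\epsilon}$, the $O(1)$ error is harmless: $1\le NT^{-1/2}$ and $1\lesssim NT^{-1}|k|^{-1}$. The bound $\lesssim N$ in \eqref{eq-2vc-} is trivial from $G\le 4$. So everything reduces to estimating weighted measures of sublevel sets.

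\emph{Case $S_2^+$.} On $x\in(0,k)$ the sum-to-product formula gives $\phi_+(x)=4\sin(\pi k/2)\cos(\pi(x-k/2))$. Its only critical point is $x=k/2$, and there $|\phi_+''|\sim \sin(\pi k/2)\sim k$, uniformly on the interval. Van der Corput's sublevel estimate then bounds the measure by $\lesssim (Tk)^{-1/2}$. The weight satisfies $G\lesssim \sin(\pi x)\sin(\pi(k-x))\lesssim k^2$ there, so the contribution is $\lesssim k^{3/2}T^{-1/2}\le T^{-1/2}$. On $x\in(k,1)$ we have $k-x\equiv k-x+1$, and the same formula gives $\phi_+=4\cos(\pi k/2)\cos(\pi(x-(1+k)/2))$. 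The same argument applies with $\cos(\pi k/2)$ in place of $\sin(\pi k/2)$, and the weight is now $\lesssim (1-k)^2$. Multiplying by $N$ gives \eqref{eq-2vc+}.

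\emph{Case $S_2^-$.} Here I would use the coarea formula instead. We compute
\[
|\phi_-'(x)|=2\pi|\cos\pi x-\cos\pi(x-k)|=4\pi|\sin(\pi k/2)|\,|\sin(\pi(x-k/2))|.
\]
The weighted measure of the sublevel set is then $\lesssim T^{-1}\sup_{s}\sum_{\phi_-(x)=s}G(x)/|\phi_-'(x)|$, where the sum over level-set points has $O(1)$ terms. The key pointwise bound is the identity
\[
\sin a\sin b=\sin^2\!\tfrac{a+b}{2}-\sin^2\!\tfrac{a-b}{2}.
\]
Taking $a=\pi x$ and $b=\pi(x-k)$ (and its shifted version past $x=k$), it gives $G(x)\lesssim \sin^2(\pi(x-k/2))$. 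Hence $G/|\phi_-'|\lesssim |\sin(\pi(x-k/2))|/|k|\lesssim |k|^{-1}$, with no singularity even at the critical point $x=k/2$. This gives the weighted measure $\lesssim T^{-1}|k|^{-1}$, and multiplying by $N$ gives \eqref{eq-2vc-}.

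The main obstacle I expect is uniformity near degenerate configurations. These are $k$ close to $0$ or $1$, where the curvature $\sin(\pi k/2)$ or $\cos(\pi k/2)$ of $\phi_+$ degenerates, and critical points of $\phi_\pm$ lying near the breakpoints $x=0$ or $x=k$, where $\omega$ has a corner. I would handle both by exploiting the vanishing of the weight $G$ exactly at those points, as in the computations above, and by using the one-sided bound $|\{|\phi-c|\le\delta\}\cap I|\lesssim \min\bigl(\delta/\inf_I|\phi'|,\ (\delta/\inf_I|\phi''|)^{1/2}\bigr)$ separately on each of the $O(1)$ subintervals.
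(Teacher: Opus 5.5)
The genuine gap is in the $S_2^-$ case. Your reduction to weighted sublevel-set measures is sound, and so is the $S_2^+$ case, which matches the paper's $E_<(T^{-1/2})$ excision. One small patch is needed there: for $k$ near $1$, $|\phi_+''|=4\pi^2\sin(\pi k/2)|\cos(\pi(x-k/2))|$ is not uniformly $\sim k$ on $(0,k)$. On $|x-k/2|\ge 1/4$ use $|\phi_+'|\gtrsim k$ instead, which is the split you describe at the end.

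Your formula $|\phi_-'|=2\pi|\cos\pi x-\cos\pi(x-k)|$ and the bound $G\lesssim\sin^2(\pi(x-k/2))$ hold only on $(k,1)$, where $\sin(\pi(x-k))>0$. On $(0,k)$ the absolute value in $\omega$ flips a sign: $\omega(x-k)=2\sin(\pi(k-x))$, and
\[
\phi_-(x)=4\cos(\tfrac{\pi k}{2})\sin\bigl(\pi(x-\tfrac k2)\bigr),\qquad |\phi_-'(x)|=4\pi\cos(\tfrac{\pi k}{2})\bigl|\cos\bigl(\pi(x-\tfrac k2)\bigr)\bigr|,
\]
while $G(x)=4\bigl[\cos^2(\pi(x-k/2))-\cos^2(\pi k/2)\bigr]$. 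Your "shifted version" of the identity produces $\cos^2$ here, not $\sin^2$.

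In particular $G(k/2)=4\sin^2(\pi k/2)\neq0$, so your pointwise bound fails at $x=k/2$. Moreover $x=k/2$ is not a critical point of $\phi_-$: $\phi_-$ is strictly monotone on both $(0,k)$ and $(k,1)$. The correct bound on $(0,k)$ is
\[
G/|\phi_-'|\le|\cos(\pi(x-k/2))|/(\pi\cos(\pi k/2))\lesssim(1-k)^{-1},
\]
and this is sharp for $k\ge1/2$ (look at $x=k/2$). So your argument, done correctly, yields $\min\bigl(N,NT^{-1}\min(k,1-k)^{-1}\bigr)$, not $NT^{-1}k^{-1}$.

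This is not cosmetic: read literally with $|k|=k$, the bound \eqref{eq-2vc-} is false near $k=1$. Take $k=1-1/N$ and $\lambda=-\omega(k)$. Then $k_2=k_1+1/N$, and $|\omega(k_1)-\omega(k_2)|\le2\pi/N\le T^{-1}$ for every $k_1$. The sum is therefore $\sum_{k_1}\omega(k_1)\omega(k_1+1/N)\approx2N$, whereas $NT^{-1}k^{-1}\approx NT^{-1}$.

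So $|k|$ has to be read as the distance from $k$ to $\Z$, and you must treat the two pieces $(0,k)$ and $(k,1)$ separately to get that bound. The paper's proof has the same blind spot: it folds the minus case into the plus case and then uses "the original $k\in(0,1)$".

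Once corrected, your coarea route for $S_2^-$ is cleaner than the paper's. Since $\phi_-$ has no critical points on either piece and $G/|\phi_-'|\lesssim\min(k,1-k)^{-1}$ pointwise, you don't need to excise any set $E_<(D)$.
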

\begin{proof}
As in the proof of Proposition \ref{prop-3vc}, we may separately consider a modification of the set(s) $S_2^{\pm}$, where we consider a single $k \in \Z_N \cap (-2,2)$ equivalent to the original $k$ mod 1. Similarly, we fix $\chi \in \mathcal S(\R)$ which is 1 on $B\left(\sqrt{\frac{m}{4\kappa}}T\lambda, 1\right)$ such that $\widehat\chi$ is supported on a finite ball and replace $G$ with a non-negative $\mathcal S(\R)$ function supported on $B(0,2)$ equal to $G$ on $S_2^\pm$. We also define $e(x) = \exp(2\pi i x)$ and 

\begin{equation*}
\Omega(x) = \sin(\pi x) + \sin (\pi (k - x)) - |\sin(\pi k)|
\end{equation*}
so that 
\begin{equation}
\Omega'(x) = -\pi \left( \cos(\pi x) - \cos(\pi(k - x))\right) = -2\pi \sin\left(\frac{\pi k}{2}\right) \sin \left(\frac{\pi(2x-k)}{2}\right). 
\end{equation}
Note that we only need to define $\Omega(x)$ with a plus sign, since if $x - y = k$ then $y = -(k - x)$ and we may use that the sine function is odd.  

Writing $\chi$ in terms of its Fourier transform and applying Poisson summation as in \ref{prop-3vc},  
\begin{align}
    \sum_{\substack{(k_1, k_2)}\in S_2^{\pm}}G(k_1)  \leq& \sum_{k_1} G(k_1)\chi\left(T\Omega(k_1)\right) \notag\\
    =&NT^{-1}\int_{-\infty}^{\infty}\hat{\chi}\left(\frac{\tau}{T}\right)\int_{x\in \R} G(x)e\left(\tau \Omega(x)\right)\diff x \diff d\tau \notag\\
    &+NT^{-1}\int_{-\infty}^{\infty}\hat{\chi}\left(\frac{\tau}{T}\right)\sum_{c\in\mathbb{Z}\setminus\{0\}}\int_{x\in \R} G(x)e\left(\tau \Omega(x)-cNx\right)\diff x \diff\tau \notag\\
    :=&I + II. \notag
\end{align}

Just as in the proof of Proposition \ref{prop-3vc}, denoting the phase by $\Phi(x) = \tau \Omega(x,y) -cNx$, we observe that for $c \neq 0, $ 
\begin{equation*}
|\Phi'(x)| \gtrsim N|c|
\end{equation*}
since $\tau \lesssim T \lesssim N^{2 - \epsilon}$. Integrating by parts sufficiently many times we obtain $|II| \ll N^2T^{-1}$. 

For $I$, we may rewrite it by inverting the Fourier transform in $\chi$, so that 
\begin{equation}
I = N \int_{x \in \R} G(x) \chi(T\Omega(x)) \diff x.
\end{equation}

We separately consider the integral over the following two sets for any $D \in \R$ (possibly depending on $N$ or $T$):
\begin{align}
E_{<}(D) &= \{x \in B(0,2) \colon |2x - k - n| \lesssim D \text{ for all } n \in \Z\} \\
E_{\geq}(D) &= B(0,2) \setminus E_{<}
\end{align} 

On $E_{<}(D),$ we may bound the corresponding integral by $ND$. On $E_{\geq}$, we perform the substitution $s = \Omega(x),$ so that 

\begin{align*}
N \int_{\substack{x \in \R \\ x \in E_\geq(D)}} G(x) \chi(T\Omega(x)) \diff x &= N \int_{\substack{s \in \R \\ x(s) \in E_\geq(D)}} \frac{G(x(s))}{\Omega'(x(s))} \chi(Ts)\diff s.
\end{align*}

One can verify
\begin{align*}
\frac{\sin(\pi x) \sin(\pi (k - x)}{\cos(\pi x) - \cos(\pi (k - x))} = \sin\left(\frac{\pi x}{2}\right) \cos \left(\frac{\pi(k - x)}{2}\right) \left[ \frac{1}{\sin\left(\frac{\pi k}{2}\right)} - \frac{1}{\sin\left(\frac{\pi (2x - k)}{2}\right)}\right],
\end{align*}
so that 
\begin{equation} \label{eq-bound_+}
\left|\frac{G(x(s))}{\Omega'(x(s))}\right| \lesssim \frac{1}{|\sin(\pi k/2)|} + \frac{1}{|\sin(\pi(2x - k)/2)|}.
\end{equation}

To obtain \eqref{eq-2vc+} for $S_2^+$, we take $D = T^{-1/2},$ so that  
\begin{equation*}
|I| \lesssim NT^{-1}k^{-1} + NT^{-1/2}.
\end{equation*}
To discount the first term, note that it comes from the first term of \eqref{eq-bound_+} which blows up when $|k - 2m|$ is very small for some $m \in \Z$. However, in this case, we may ignore the analysis above and use the fact that $k_1, k_2 \in \Z_N \cap [0,1)$ so that if $|k| \lesssim T^{-1/2}$ (or $|k - 2| \lesssim {T^{-1/2}}$), both $k_1, k_2$ are restricted to an interval of length $O(T^{-1/2})$ around 0 (or 1). 

To obtain \eqref{eq-2vc-}, we take $D = |k|^{-1}$ (where we now use the original $k \in (0,1)$), so that $|I| \lesssim NT^{-1}|k|^{-1}$. Note that the alternate bound of $N$ corresponds to the trivial counting. 
\end{proof}

\section{Irregular Chains \& Self Loops}
\label{section-irrchains}
\subsection{Double Bonds and Chains}\label{subsec-splicing}

\begin{definition}
Let $\Q$ be an enhanced couple such that $\mathbb{M}(\Q)$ has two atoms, $v_1$ and $v_2$, connected by precisely two edges oriented in opposite directions. Denote by $\mathfrak{n}_j = \mathfrak{n}(v_j)$ for $j=1,2$. Then, up to symmetry, exactly one of the following two scenarios arises:

\begin{enumerate}[label=(\roman*)]
\item \textbf{Cancellation (CL) Double Bond:} There exists a child $\mathfrak{n}_{12}$ of $\mathfrak{n}_1$ which is paired with a child $\mathfrak{n}_{21}$ of $\mathfrak{n}_2$. 
Moreover, $\mathfrak{n}_2$ is itself a child of $\mathfrak{n}_1$ such that $\mathfrak{n}_{12}$ and $\mathfrak{n}_2$ have opposite signs. 
All other leaf-children of $\mathfrak{n}_1$ and $\mathfrak{n}_2$ remain unpaired. 
In the corresponding molecular structure, this configuration represents a double bond composed of one LP bond and one PC bond.

\item \textbf{Connectivity (CN) Double Bond:} There are children $\mathfrak{n}_{11}, \mathfrak{n}_{12}$ of $\mathfrak{n}_1$ with opposite signs, as well as children $\mathfrak{n}_{21}, \mathfrak{n}_{22}$ of $\mathfrak{n}_2$ having opposite signs. These four children pair off according to their signs, leaving any remaining leaf-children of $\mathfrak{n}_1$ and $\mathfrak{n}_2$ unpaired. In addition, neither $\mathfrak{n}_1$ nor $\mathfrak{n}_2$ is a child of the other in this scenario. In the molecular structure, this corresponds to a double bond in which both bonds are LP.
\end{enumerate}
We say that atoms $v_1$ and $v_2$ are connected by a CL or CN bond, respectively. 
\end{definition}
\begin{proof}
See \cite{WKE2023} and \cite{ODW}. 
\end{proof}

\begin{figure}
\begin{subfigure}[t]{0.5\linewidth}
\hspace{-1cm}
\begin{tikzpicture}[scale = .8]
    \tikzstyle{every node} = [circle, draw = black]
    \node (1) at (-10,0) {};
    \node (2) at (-9,0) {};
    \node (3) at (-8,0) {};
    \node (4) at (-7,0) {};
    \node (5) at (-6,0) {};
    \node (6) at (-5,0) {};
    \node (7) at (-4,0) {};

    \draw[thick] (1.25) -- (2.155);
    \draw[thick] (2.205) -- (1.335);
    \draw[thick] (2.25) -- (3.155);
    \draw[thick] (3.205) -- (2.335);
    \draw[thick] (3.25) -- (4.155);
    \draw[thick] (4.205) -- (3.335);
    \draw[thick] (4.25) -- (5.155);
    \draw[thick] (5.205) -- (4.335);
    \draw[thick] (5.25) -- (6.155);
    \draw[thick] (6.205) -- (5.335);
    \draw[thick] (6.25) -- (7.155);
    \draw[thick] (7.205) -- (6.335);

    \draw[dashed] (1) -- (-10.6,.5);
    \draw[dashed] (1) -- (-10.6,-.5);

    \draw[dashed] (7) -- (-3.4,.5);
    \draw[dashed] (7) -- (-3.4,-.5);

    \draw[dashed, ->] (-2.6,0) -- node[above, pos = .45,draw = none]{\tiny{splicing}} (-1.4,0);

    \node (8) at (0,0) {};
    \draw[dashed] (8) -- (-.6,.5);
    \draw[dashed] (8) -- (-.6,-.5);
    \draw[dashed] (8) -- (.6,.5);
    \draw[dashed] (8) -- (.6,-.5);
\end{tikzpicture}
    \caption{A general chain, depicted after splicing if the chain is also a CL chain.}
\end{subfigure}
\begin{subfigure}[t]{0.5\linewidth}
\begin{tikzpicture}[scale = .8]
    \tikzstyle{every node} = [circle, draw = black]
    \node (1) at (-5,0) {};
    \node (2) at (-4,0) {};
    \node (3) at (-3,0) {};
    \node (4) at (-2,0) {};
    \node (5) at (-1,0) {};
    \node (6) at (0,0) {};
    \node (7) at (1,0) {};

    \draw[thick] (1.25) -- (2.155);
    \draw[thick] (2.205) -- (1.335);
    \draw[thick] (2.25) -- (3.155);
    \draw[thick] (3.205) -- (2.335);
    \draw[thick] (3.25) -- (4.155);
    \draw[thick] (4.205) -- (3.335);
    \draw[thick] (4.25) -- (5.155);
    \draw[thick] (5.205) -- (4.335);
    \draw[thick] (5.25) -- (6.155);
    \draw[thick] (6.205) -- (5.335);
    \draw[thick] (6.25) -- (7.155);
    \draw[thick] (7.205) -- (6.335);

    \draw[thick] (1.275) to [out=-10,in=-170] (7.265);

    \draw[dashed] (1) -- (-5.7,0);
    \draw[dashed] (7) -- (1.7,0);
    
\end{tikzpicture}
\caption{A hyperchain.}
\label{pic-chain-hyper}
\end{subfigure}
~
\begin{subfigure}[t]{0.5\linewidth}
\begin{tikzpicture}[scale = .8]
    \tikzstyle{every node} = [circle, draw = black]
    \node (1) at (-5,0) {};
    \node (2) at (-4,0) {};
    \node (3) at (-3,0) {};
    \node (4) at (-2,0) {};
    \node (5) at (-1,0) {};
    \node (6) at (0,0) {};
    \node (7) at (1,0) {};
    \node (8) at (-2,.8) {\tiny$v$};

    \draw[thick] (1.25) -- (2.155);
    \draw[thick] (2.205) -- (1.335);
    \draw[thick] (2.25) -- (3.155);
    \draw[thick] (3.205) -- (2.335);
    \draw[thick] (3.25) -- (4.155);
    \draw[thick] (4.205) -- (3.335);
    \draw[thick] (4.25) -- (5.155);
    \draw[thick] (5.205) -- (4.335);
    \draw[thick] (5.25) -- (6.155);
    \draw[thick] (6.205) -- (5.335);
    \draw[thick] (6.25) -- (7.155);
    \draw[thick] (7.205) -- (6.335);
    \draw[thick] (1.80) -- (8);
    \draw[thick] (8) -- (7.100);

    \draw[dashed] (1) -- (-5.7,0);
    \draw[dashed] (7) -- (1.7,0);
\end{tikzpicture}
\caption{A pseudo-hyperchain.}
\label{pic-chain-pseudohyper}
\end{subfigure}
\caption{An example of several types of chains with $q = 6$.}
\label{fig-chain}
\end{figure}

\begin{figure}[t]
\centering
\begin{tikzpicture}[scale = .9]
    \tikzstyle{n node} = [circle, draw = black]
    \tikzstyle{o node} = [inner sep=1,outer sep=0]

    \node[n node] (1a) at (-2,0) {};
    \node[n node] (2a) at (-1,0) {};
    \node[o node] (3a) at (0,0) {$\cdots$};
    \node[n node] (4a) at (1,0) {};
    \node[n node] (5a) at (2,0) {};

    \node[n node] (1b) at (-2,-1) {};
    \node[n node] (2b) at (-1,-1) {};
    \node[o node] (3b) at (0,-1) {$\cdots$};
    \node[n node] (4b) at (1,-1) {};
    \node[n node] (5b) at (2,-1) {};

    \node[n node] (1c) at (-2,-2) {};
    \node[n node] (2c) at (-1,-2) {};
    \node[o node] (3c) at (0,-2) {$\cdots$};
    \node[n node] (4c) at (1,-2) {};
    \node[n node] (5c) at (2,-2) {};

    \node[o node]  at (-2,-1.5) {$\cdots$};
    \node[o node]  at (2,-1.5) {$\cdots$};

    \draw[thick] (1a.25) -- (2a.155);
    \draw[thick] (2a.205) -- (1a.335);
    \draw[thick] (2a.25) -- (3a.165);
    \draw[thick] (3a.195) -- (2a.335);
    \draw[thick] (3a.15) -- (4a.155);
    \draw[thick] (4a.205) -- (3a.345);
    \draw[thick] (4a.25) -- (5a.155);
    \draw[thick] (5a.205) -- (4a.335);

    \draw[thick] (1b.25) -- (2b.155);
    \draw[thick] (2b.205) -- (1b.335);
    \draw[thick] (2b.25) -- (3b.165);
    \draw[thick] (3b.195) -- (2b.335);
    \draw[thick] (3b.15) -- (4b.155);
    \draw[thick] (4b.205) -- (3b.345);
    \draw[thick] (4b.25) -- (5b.155);
    \draw[thick] (5b.205) -- (4b.335);

    \draw[thick] (1c.25) -- (2c.155);
    \draw[thick] (2c.205) -- (1c.335);
    \draw[thick] (2c.25) -- (3c.165);
    \draw[thick] (3c.195) -- (2c.335);
    \draw[thick] (3c.15) -- (4c.155);
    \draw[thick] (4c.205) -- (3c.345);
    \draw[thick] (4c.25) -- (5c.155);
    \draw[thick] (5c.205) -- (4c.335);

    \draw[thick] (1a) -- (1b);
    \draw[thick] (5a) -- (5b);

    \draw[dashed] (1a) -- (-2,.5);
    \draw[dashed] (5a) -- (2,.5);
    \draw[dashed] (1c) -- (-2,-2.5);
    \draw[dashed] (5c) -- (2,-2.5);

    \draw[dashed] (1b) -- (-2,-1.4);
    \draw[dashed] (5b) -- (2,-1.4);
    \draw[dashed] (1c) -- (-2,-1.6);
    \draw[dashed] (5c) -- (2,-1.6);
    
\end{tikzpicture}
\caption{A wide ladder, with 3 rungs shown. Note that each rung may have a different length.}
\label{fig-ladder}
\end{figure}

\begin{definition}[Chains]\label{def-chain}
Let $\Q$ be an enhanced couple, and let $\M(\Q)$ be its corresponding molecule. A \emph{chain} in $\M(\Q)$  is a sequence of atoms  $(v_0, \ldots, v_q)$ such that each consecutive pair $v_i,v_{i+1}$ (for $0 \leq i \leq q-1$) is connected by either a CL or CN double bond. We refer to $q$ as the \textit{length} of the chain. We define the following special types of chains:

\begin{enumerate}
    \item A \textit{hyperchain} is a chain where $v_0$ and $v_q$ are also joined by an additional bond, which is not part of a CL or CN double bond. 
    \item A \textit{pseudo-hyperchain} is a chain where $v_0$ and $v_q$ are each connected to an atom $v$ not in the chain via single bonds. 
    \item A \textit{wide ladder} is a collection of chains ${(v_0^{(1)}, \ldots, v_{q^{(1)}}^{(1)}), \ldots, (v_0^{(m)}, \ldots, v_{q^{(m)}}^{(m)})}$ such that for each $1 \leq i \leq m - 1$, there is a bond connecting the vertices $v_0^{(i)}$ and $v_0^{(i+1)}$, as well as a bond connecting the vertices $v_{q^{(i)}}^{(i)}$ and $v_{q^{(i+1)}}^{(i+1)}$. Each chain in the wide ladder is called a \textit{rung}.
    \item An \emph{irregular chain} is a chain whose double bonds are all CL.
    \item A \emph{maximal chain} is a chain for which $v_0$ and $v_q$ do not connect via CL or CN double bonds to any atoms outside the chain.
\end{enumerate}
We similarly define \textit{maximal hyperchains, pseudo-hyperchains, wide ladders, and irregular chains}. See Figures \ref{fig-chain} and \ref{fig-ladder}.
\end{definition}

\begin{definition}[Irregular Chains] \label{def-irrchain}
Note that we may similarly refer to chains at the level of the couple $\Q$. In this case, an irregular chain refers to a sequence of nodes $(\mathfrak n_0, \ldots, \mathfrak n_q)$ such that for $j = 0, \ldots, q-1$, $\mathfrak n_j$ has $\mathfrak n_{j + 1}$ as a child and another child $\mathfrak m_{j + 1}$ (of opposite sign to $\mathfrak n_{j + 1}$), which is paired to a child $\mathfrak p_{j + 1}$ of $\mathfrak n_{j + 1}$. Denote the remaining child of $\mathfrak n_0$ to be $\mathfrak n_0^1$ and the remaining children of $\mathfrak n_q$ to be $\mathfrak n_0^{2}$ and $\mathfrak n_0^{3}$. Note that for any decoration, we must have 
\begin{equation}\label{eq-gap}
k_{\mathfrak n_0} - \zeta_{\mathfrak n_0^1} k_{\mathfrak n_0^1} = k_{\mathfrak n_1} - k_{\mathfrak m_1} = \ldots = k_{\mathfrak n_q} - k_{\mathfrak m_q} = \zeta_{\mathfrak n_0^2}k_{\mathfrak n_0^2} + \zeta_{\mathfrak n_0^3}k_{\mathfrak n_0^3} : = h.
\end{equation}
We refer to $h$ as the \textit{gap} of the irregular chain and refer to the chain as a small gap (SG) irregular chain or large gap (LG) irregular chain if $h$ is small gap or large gap respectively, according to Definition \ref{def-gap}.

Suppose $\Q$ is an enhanced couple with irregular chain $(\mathfrak n_0, \ldots, \mathfrak n_q)$. We define $\Q^{\mathrm{sp}}$ by removing node $\mathfrak n_i, \mathfrak m_i, \mathfrak p_i$ for $i = 1, \ldots, q$ and giving $\mathfrak n_0$ children $\mathfrak n_0^1, \mathfrak n_0^{2}, \mathfrak n_0^{3}$, with $\mathfrak n_0^1$ retaining its position and $\mathfrak n_0^{2}, \mathfrak n_0^{3}$ keeping their relative positions. We refer to this as \textit{splicing} the couple at nodes $\mathfrak n_1, \ldots, \mathfrak n_q$ or at chain $(\mathfrak n_0, \ldots, \mathfrak n_q)$. We may correspondingly define $\M^{\mathrm{sp}}$. See Figure \ref{fig-irregular-chain}. We define a \textit{unit twist} of $\Q$ at node $\mathfrak n_i (i = 1, \ldots, q)$ by swapping $\mathfrak n_i$ and $\mathfrak m_i$. This results in a 1-1 correspondence of their decorations if we swap the decoration of $\mathfrak n_1$ and $\mathfrak m_1$ as well as change the decoration of $\mathfrak p_1$ so that it is still paired to $\mathfrak m_1$. If a unit twist can be performed at a node $\mathfrak n$, we call it \textit{twist-admissible}. We say that enhanced couples $\Q$ and $\Q'$ are \textit{congruent} ($\Q \sim \Q'$) if $Q'$ can be formed from $Q$ via unit twists at twist-admissible nodes (and vice versa). For an enhanced couple $\Q$ and set $\mathcal M \subset \mathcal N$, denote by $\textgoth{Q}_{\mathcal M}$:
\begin{equation*}
\mathcal {\textgoth{Q}_\mathcal M} = \{\mathcal Q' | \mathcal Q' \sim \mathcal Q \text{ via } \mathcal M' \in 2^{\mathcal M}\}.
\end{equation*}
Note that when $\mathcal M = \{\mathfrak n_1, \ldots \mathfrak n_q\}$ for a chain $(\mathfrak n_0, \ldots \mathfrak n_q)$, each twist of $\Q$ can be identified by the signs of $\zeta_{\mathfrak n_i}$. 
\end{definition}

\begin{remark}
Note that if $\Q$ and $\Q'$ are congruent, then $\Q^{\mathrm{lf}}$ and $(\Q')^{\mathrm{lf}}$ are also congruent.
\end{remark}

\begin{figure}
\begin{subfigure}[t]{0.9\linewidth}
\centering
\begin{tikzpicture}[scale = .9]
    \tikzstyle{every node} = [circle, draw = black]
    \node (1) at (-10,0) {};
    \node (2) at (-9,0) {};
    \node (3) at (-8,0) {};
    \node (4) at (-7,0) {};
    \node (5) at (-6,0) {};
    \node (6) at (-5,0) {};
    \node (7) at (-4,0) {};

    \draw[thick, ->] (1.25) -- (2.155);
    \draw[thick, ->] (2.205) -- (1.335);
    \draw[thick, ->] (2.25) -- (3.155);
    \draw[thick, ->] (3.205) -- (2.335);
    \draw[thick, ->] (3.25) -- (4.155);
    \draw[thick, ->] (4.205) -- (3.335);
    \draw[thick, ->] (4.25) -- (5.155);
    \draw[thick, ->] (5.205) -- (4.335);
    \draw[thick, ->] (5.25) -- (6.155);
    \draw[thick, ->] (6.205) -- (5.335);
    \draw[thick, ->] (6.25) -- (7.155);
    \draw[thick, ->] (7.205) -- (6.335);

    \draw[dashed] (1) -- (-10.6,.5);
    \draw[dashed] (1) -- (-10.6,-.5);

    \draw[dashed] (7) -- (-3.4,.5);
    \draw[dashed] (7) -- (-3.4,-.5);

    \draw[dashed, ->] (-2.6,0) -- node[above, pos = .45,draw = none]{\tiny{splicing}} (-1.4,0);

    \node (8) at (0,0) {};
    \draw[dashed] (8) -- (-.6,.5);
    \draw[dashed] (8) -- (-.6,-.5);
    \draw[dashed] (8) -- (.6,.5);
    \draw[dashed] (8) -- (.6,-.5);
\end{tikzpicture}
    \caption{An irregular chain of length $q = 6$ at the level of the molecule.}
    \label{fig-irregular-chain-mol}
\end{subfigure}
\begin{subfigure}[t]{0.9\linewidth}
\centering
\begin{tikzpicture}[scale = .45,
level distance=1.5cm,
  level 1/.style={sibling distance=1.5cm},
  level 2/.style={sibling distance=1.5cm}]
\tikzstyle{hollow node}=[circle,draw,inner sep=1.6]
\tikzstyle{solid node}=[circle,draw,inner sep=1.6,fill=black]
\tikzset{
red node/.style = {circle,draw=black,fill=red,inner sep=1.6},
blue node/.style= {circle,draw = black, fill= blue,inner sep=1.6}, green node/.style = {circle,draw=black,fill=green,inner sep=1.6}}, 
[edge from parent/.style={draw,dashed}]

\node[solid node, label = left: {\tiny $\mathfrak n_0$}] at (4,.4){}
    child{node[solid node, label = left: {\tiny $\mathfrak n_1$}]{}
        child{node[solid node, label = left: {\tiny $\mathfrak n_2$}]{}
            child[dashed]{node[solid node, label = left: {\tiny $\mathfrak n_{q-1}$}]{}
                child[solid]{node[solid node, label = left: {\tiny $\mathfrak n_{q}$}]{}
                    child{node[solid node, label = left: {\tiny $\mathfrak n_0^{3}$}]{}}
                    child{node[solid node, label={[label distance=-1.5mm] 300:\tiny $\mathfrak n_0^{2}$}]{}}
                    child{node[red node, label={[label distance=-1.5mm] 300:\tiny $\mathfrak p_q$}]{}}
                    }
                child[solid]{node[red node, label={[label distance=-1.5mm] 300:\tiny $\mathfrak m_{q}$}]{}}
                child[solid]{node[hollow node, label={[label distance=-1.5mm] 300:\tiny $\mathfrak p_{q-1}$}]{}}
                }
            child{node[hollow node, label={[label distance=-1.5mm] 300:\tiny $\mathfrak m_3$}]{}}
            child{node[blue node, label={[label distance=-1.5mm] 300:\tiny $\mathfrak p_2$}]{}}
            }
        child{node[blue node, label={[label distance=-1.5mm] 300:\tiny $\mathfrak m_2$}]{}}
        child{node[green node, label={[label distance=-1.5mm] 300:\tiny $\mathfrak p_1$}]{}}
    }
    child{node[green node, label={[label distance=-1.5mm] 300:\tiny $\mathfrak m_1$}]{}}
    child{node[solid node, label={[label distance=-1.5mm] 300:\tiny $\mathfrak n_0^1$}]{}}
;
\draw[dashed] (4,1.2) -- (4,.4);
\draw[dashed] (5.5,-2) -- (5.5,-1.2);
\draw[dashed] (-2,-7.8) -- (-2,-7);
\draw[dashed] (-3.5,-7.8) -- (-3.5,-7);

\draw[dashed, ->] (7.4,-3) -- node[above, pos = .45,draw = none]{\tiny{unit twist at $\mathfrak n_1$}}(9.6, -3);

\node[solid node, label = left: {\tiny $\mathfrak n_0$}] at (14,.4){}
    child{node[green node, label= left:{\tiny $\mathfrak m_1$}]{}}
    child{node[solid node, label = left: {\tiny $\mathfrak n_1$}]{}
        child{node[solid node, label = left: {\tiny $\mathfrak n_2$}]{}
            child[dashed]{node[solid node, label = left: {\tiny $\mathfrak n_{q-1}$}]{}
                child[solid]{node[solid node, label = left: {\tiny $\mathfrak n_{q}$}]{}
                    child{node[solid node, label = left: {\tiny $\mathfrak n_0^{3}$}]{}}
                    child{node[solid node, label={[label distance=-1.5mm] 300:\tiny $\mathfrak n_0^{2}$}]{}}
                    child{node[red node, label={[label distance=-1.5mm] 300:\tiny $\mathfrak p_q$}]{}}
                    }
                child[solid]{node[red node, label={[label distance=-1.5mm] 300:\tiny $\mathfrak m_{q}$}]{}}
                child[solid]{node[hollow node, label={[label distance=-1.5mm] 300:\tiny $\mathfrak p_{q-1}$}]{}}
                }
            child{node[hollow node, label={[label distance=-1.5mm] 300:\tiny $\mathfrak m_3$}]{}}
            child{node[blue node, label={[label distance=-1.5mm] 300:\tiny $\mathfrak p_2$}]{}}
            }    
        child{node[blue node, label= {[label distance = -1.5mm] 300:\tiny $\mathfrak m_2$}]{}}
        child{node[green node, label={[label distance=-1.5mm] 300:\tiny $\mathfrak p_1$}]{}}
    }
    child{node[solid node, label={[label distance=-1.5mm] 300:\tiny $\mathfrak n_0^1$}]{}}
;
\draw[dashed] (14,1.2) -- (14,.4);
\draw[dashed] (15.5,-2) -- (15.5,-1.2);
\draw[dashed] (11,-7.8) -- (11,-7);
\draw[dashed] (9.5,-7.8) -- (9.5,-7);

\draw[dashed, ->] (17.4,-3) -- node[above, pos = .45,draw = none]{\tiny{splicing}}(19.6, -3);

\node[solid node, label = left: {\tiny $\mathfrak n_0$}] at (23,.4){}
    child{node[solid node, label = {[label distance = -1.5mm] 300: \tiny $\mathfrak n_0^{3}$}]{}}
    child{node[solid node, label={[label distance=-1.5mm] 300:\tiny $\mathfrak n_0^2$}]{}}
    child{node[solid node, label={[label distance=-1.5mm] 300:\tiny $\mathfrak n_0^{1}$}]{}}
;
\draw[dashed] (23,1.2) -- (23,.4);
\draw[dashed] (24.5,-2) -- (24.5,-1.2);
\draw[dashed] (23,-2) -- (23,-1.2);
\draw[dashed] (21.5,-2) -- (21.5,-1.2);
\end{tikzpicture}
\caption{An irregular chain, along with a unit twist at one node of the chain as well as the resulting couple when both couples are spliced at the irregular chain. The white leaves are paired with leaves in the omitted part rather than each other, so that $\mathfrak m_i$ is paired with $\mathfrak p_{i}$ for $i = 1, \ldots, q$.} 
\end{subfigure}
\caption{Irregular chains at the level of the molecule and couple, before and after splicing.}
\label{fig-irregular-chain}
\end{figure}

\begin{proposition}\label{prop-CN}
Given a molecule $\M(\mathcal Q)$ and chain $(v_0, \ldots, v_{q})$, there can be at most one CN double bond. 
\end{proposition}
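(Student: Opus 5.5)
The plan is to argue by contradiction. Suppose a chain $(v_0,\ldots,v_q)$ in $\M(\Q)$ contains two CN double bonds, say between $v_a,v_{a+1}$ and between $v_b,v_{b+1}$ with $a<b$. Each interior atom $v_i$ ($0<i<q$) has degree 4 in the undirected sense, and all four of its bonds are used by the two double bonds to its neighbours in the chain. Hence the atoms $v_1,\ldots,v_{q-1}$ have no bonds leaving the chain. In particular, the only bonds between the sub-chain $\{v_{a+1},\ldots,v_b\}$ and the rest of the molecule are the two CN double bonds at its ends. Since CN double bonds consist only of LP bonds, no PC bond connects $\{v_{a+1},\ldots,v_b\}$ to its complement.

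Next I would pass to the couple. Let $S=\{\mathfrak n(v_{a+1}),\ldots,\mathfrak n(v_b)\}$. Within $S$, consecutive atoms are joined by CL or CN double bonds, which are either PC plus LP or LP plus LP. Because no PC bond leaves $S$, the branching nodes of $S$ are closed under taking parents within $\Q$: the parent of any node of $S$ is either in $S$ or does not exist. A set closed under taking parents that contains a node must also contain that node's tree root. Moreover, $S$ induces a forest under PC bonds, so it contains a root $\mathfrak r^{\pm}$ of $\T^+$ or $\T^-$.

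Now I would count degrees. By Proposition \ref{prop:molecules-couples}, a root atom has degree 3 (or the single root atom has degree 2), while every interior chain atom has degree 4. So no interior atom $v_i$ with $0<i<q$ can correspond to a root. Since $a+1\le b$ and $1\le a+1$, $b\le q-1$, every atom of $S$ is interior, which gives the contradiction.

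The main obstacle is the endpoint case, where $a=0$ or $b+1=q$ and some atoms of the chain may have external bonds. This does not actually affect the argument, because $S=\{v_{a+1},\ldots,v_b\}$ always lies strictly inside the chain. The point to verify carefully is that an interior chain atom really has exactly the four chain bonds. That holds because each double bond contributes two bonds and the degree is at most 4. One also needs to check that the first and last atoms of $S$ have no extra bonds beyond the CN bonds. If $a+1=b$, the single atom has two CN double bonds, which already account for its four bonds. The same reasoning applies to a possible self-loop (degree-2 root) situation, which has too few bonds to sit in the interior.
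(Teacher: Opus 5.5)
Your proof is correct and complete. The atoms $\{v_{a+1},\ldots,v_b\}$ all have degree $4$, and their only bonds to the rest of $\M(\Q)$ are the two CN double bonds, which are LP. So the corresponding branching nodes are closed under taking parents and must contain a tree root, whose atom has no parent bond and hence degree at most $3$, a contradiction. The paper gives no argument of its own and only cites \cite{ODW}; yours is a self-contained proof along the standard lines.

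One harmless slip: the degree-$2$ atom comes from one tree of the couple being trivial (its root is paired with a leaf child of the other root), not from a self-loop. This does not affect the argument, which only uses that root atoms lack a parent bond.
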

\begin{proof}
See \cite{ODW}. 
\end{proof}

\begin{lemma}\label{lem-splice}
Suppose that $\Q$ is an enhanced couple with an irregular chain $(\mathfrak n_0, \ldots, \mathfrak n_{q})$ of length $q$ and gap at most $h$, 
\begin{align}
\K_{\textgoth{Q}_\mathcal M} (t,s,k) 
&= \left( \frac{\beta T}{N}\right)^{n_{\mathrm{sp}}} \hspace{-.25cm}\zeta(\mathcal Q^{\mathrm{sp}}) \sum_{\mathscr E^{\mathrm{sp}}} \tilde \epsilon_{\mathscr E^{\mathrm{sp}}} \int_{\mathcal{E}^{\mathrm{sp}}} P_q(t_{\mathfrak n_0}, t_{\mathfrak n_0^2}, t_{\mathfrak n_0^3}, k[\mathcal N^{\mathrm{sp}}])  \notag\\
& \hspace{4cm}\times  \left( \prod_{\mathfrak n \in \mathcal N^{\mathrm{sp}}}e^{\zeta_{\mathfrak{n}}i\Gamma_{\mathfrak n}(t_{\mathfrak n})} \diff t_{\mathfrak{n}}\right)\diff \tau \prod^+_{\mathfrak l \in \mathcal L^{\mathrm{sp}}} n_{\mathrm{in}}(k_{\mathfrak l}),\label{eq-kqm}
\end{align}
where if $A(t)$, defined in Proposition \ref{prop-phase-renormalization}, satisfies $|\dot A(t)| \lesssim \beta T$ and $|\ddot A(t)| \lesssim T$, then $P_q$ satisfies
\begin{align}
\sup_{|k_{\mathfrak n_0} - k_{\mathfrak n^1_0}| \leq h} \left|\left|\widehat P_q(\tau_0, \tau_1, \tau_2,  k[\mathcal N^{\mathrm{sp}}])\right|\right|_{L^1_{\tau_0, \tau_1, \tau_2}} &\lesssim \log N\left(\beta Th\right)^q. \label{eq-pestimate}
\end{align} 
In the above, define $\mathcal{M} = \{\mathfrak{n}_1, \ldots, \mathfrak{n}_q\}$ and let $\mathcal Q^{\mathrm{sp}}$ denote the couple obtained from $\Q$ by splicing out the irregular chain and with branching nodes $\mathcal N^{\mathrm{sp}}$ and leaves $\mathcal L^{\mathrm{sp}}$. Similarly, let $\mathscr E^{\mathrm{sp}}$ denote a decoration of $\Q^{\mathrm{sp}}$ and $\mathcal E^{\mathrm{sp}}$ denote the domain of integration where we no longer integrate over $t_{\mathfrak n_j}$ ($j = 1, \ldots, q$) and instead impose $t_{\mathfrak n_0} > t_{\mathfrak n_0^2}, t_{\mathfrak n_0^3}$. Furthermore, $\tilde \epsilon_{\mathscr E^{\mathrm{sp}}}$ permits degeneracies at $\mathfrak{n}_0$. 
\end{lemma}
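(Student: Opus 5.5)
We follow the splicing argument of \cite{WKE} (Section 10 there), adapted to 1D and the time-dependent phase $A(t)$. Along the irregular chain $(\mathfrak n_0,\ldots,\mathfrak n_q)$ the gap relation \eqref{eq-gap} fixes all chain decorations once $k[\mathcal N^{\mathrm{sp}}]$ and the free leaf-pair variables $k_{\mathfrak m_j}$ are given: $k_{\mathfrak n_j}=k_{\mathfrak m_j}+h$, with $h=k_{\mathfrak n_0}-\zeta_{\mathfrak n_0^1}k_{\mathfrak n_0^1}$ determined by the spliced couple. We first sum over the congruence class $\textgoth{Q}_{\mathcal M}$. Each unit twist at $\mathfrak n_j$ swaps the roles of $\mathfrak n_j$ and $\mathfrak m_j$. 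Using the twist decoration bijection of Definition \ref{def-irrchain}, the $2^q$ twisted terms become a single sum over $(k_{\mathfrak m_1},\ldots,k_{\mathfrak m_q})$ with weight $\prod_j [\,\cdot\,]_{+}-[\,\cdot\,]_{-}$ from the two signs $\zeta_{\mathfrak n_j}=\pm$. The time integrals over $t_{\mathfrak n_1}>\cdots>t_{\mathfrak n_q}$, sandwiched between $t_{\mathfrak n_0}$ and $\max(t_{\mathfrak n_0^2},t_{\mathfrak n_0^3})$, together with the chain sums, are then collected into a kernel $P_q(t_{\mathfrak n_0},t_{\mathfrak n_0^2},t_{\mathfrak n_0^3},k[\mathcal N^{\mathrm{sp}}])$. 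This produces the representation \eqref{eq-kqm}. The factor $\tilde\epsilon_{\mathscr E^{\mathrm{sp}}}$ absorbs the coefficients $T_{\mathfrak n_0}$ evaluated in spliced form. Degeneracies at $\mathfrak n_0$ are permitted because after splicing $\mathfrak n_0$ may acquire a pairing pattern not excluded by the enhanced-couple condition.

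\textbf{Cancellation.} For fixed $k_{\mathfrak m_j}=m$, the two twisted terms at $\mathfrak n_j$ have phases $\Omega^{\pm}_j$ given by $\pm(\omega(m+h)-\omega(m))$ plus shared terms. Their coefficients $T_{+,+,-}$ evaluated at $(m+h,\cdot,m)$ versus $(m,\cdot,m+h)$ agree up to $O(h)$. The combined weight per node is therefore
\[
\frac{\beta T}{N}\sum_m n_{\mathrm{in}}(m)\bigl(e^{i\Gamma^+_j(t_j)}T^+ - e^{i\Gamma^-_j(t_j)}T^-\bigr),
\]
where $\Gamma^{\pm}_j=(Tt_j+A(t_j))\Omega^\pm_j$ and, by the Remark after Proposition \ref{prop-phase-renormalization}, $\widetilde\Omega$ is proportional to $\Omega$. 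We take the Fourier transform in the three outer times and bound each factor in $L^1_\tau$. Since $|\Omega^+_j-\Omega^-_j|$ and the coefficient difference are both $O(h)$, each node contributes a factor of size $\beta T h$ after the normalized sum $N^{-1}\sum_m$. The main contribution $\beta T h$ comes from the phase difference multiplied by the time length. The nested time integration over the simplex is handled via the representation of Proposition \ref{prop-integral_est}. Its hypotheses $|\dot A|\lesssim\beta T$ and $|\ddot A|\lesssim T$ make $(T+\dot A)^{-1}$ harmless and make the change of variables $u=Tt+A(t)$ bi-Lipschitz. We may therefore pass to $u$-time, where the phases are linear and the standard computation of \cite{WKE} applies.

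\textbf{Main obstacle.} The hardest step is obtaining an $L^1_\tau$ bound on $\widehat P_q$ that is uniform in $q$, losing only $\log N$ rather than $C^q$ or a factor like $q!$. This requires the product of differences to telescope under nested integration. Following \cite{WKE}, we would write the chain integral as an iterated Duhamel integral of a $q$-fold product of one-variable kernels $K_j(t_{j-1}-t_j)$. We would then Fourier-transform each kernel, bounding $\|\widehat K_j\|_{L^1}\lesssim \beta T h$ via $\langle\tau\rangle^{-1}$-type estimates at the endpoint frequencies. The single $\log N$ arises from integrating $\langle\tau\rangle^{-1}$ over $|\tau|\lesssim N^{C}$; the frequencies can be truncated there because decorations live in $\Z_N$ and $T\le N$. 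The $A(t)$ nonlinearity in time is the extra complication beyond \cite{WKE}. We would handle it by the reparametrization above, controlling the Jacobian $1+\dot A/T=1+O(\beta)$ and its derivative $\ddot A/T=O(1)$ in $C^1$, exactly as in the integration-by-parts step \eqref{eq-ATf-1}. Finally, we take the supremum over $|k_{\mathfrak n_0}-k_{\mathfrak n_0^1}|\le h$, which gives \eqref{eq-pestimate}.
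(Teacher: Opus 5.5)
\textbf{The gap is in your Cancellation step}, which is where the factor $h$ per node has to come from. You hold $k_{\mathfrak m_j}=m$ fixed in both twisted terms, so $n_{\mathrm{in}}(m)$ is common to both, and you try to extract $h$ from $e^{i\Gamma_j^+}-e^{i\Gamma_j^-}$.

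But the exponent is $\Gamma_j=\Omega_j(Tt_j+A(t_j))$. Even granting $|\Omega^+_j-\Omega^-_j|\lesssim h$, you only get $|e^{i\Gamma_j^+}-e^{i\Gamma_j^-}|\lesssim Th$. The per-node factor is then $\frac{\beta T}{N}\cdot N\cdot Th=\beta T^2h$, not $\beta Th$; your sentence ``phase difference multiplied by the time length'' drops the $T$ in the exponent. This is fatal, because $\beta T\gg1$ for the scaling laws of Theorem \ref{main}. For SG chains ($h\lesssim\beta N^\delta$) you would get $\beta^2T^2N^\delta\gg1$ per node, instead of the gain $\beta^2TN^\delta\ll1$ that splicing is meant to produce.

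Your description of the phases is also off. The raw $\Omega_j$ flips sign under a twist, but so does $\zeta_{\mathfrak n_j}$, and the exponent involves the product $\zeta_{\mathfrak n_j}\Omega_j$. If the two exponents really were $\pm(\omega(m+h)-\omega(m))$, the terms would not cancel at all.

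\textbf{The missing idea} is to use the twist bijection as the paper does. It swaps $k_{\mathfrak n_j}\leftrightarrow k_{\mathfrak m_j}$, and under it:
\begin{itemize}
\item $\zeta_{\mathfrak n_j}\Omega_j$, $\zeta_{\mathfrak n_j}\widetilde\Omega_j$ and $\epsilon_j$ are exactly unchanged;
\item the factor $-i\zeta_{\mathfrak n_j}$ flips sign;
\item the $+$-leaf weight changes from $n_{\mathrm{in}}(k_j-h)$ to $n_{\mathrm{in}}(k_j)$.
\end{itemize}
So the oscillatory factor is identical across the whole congruence class. Summing over the $2^q$ congruent couples produces $\prod_{j=1}^q\bigl(n_{\mathrm{in}}(k_j-h)-n_{\mathrm{in}}(k_j)\bigr)=O(h^q)$, by smoothness of $n_{\mathrm{in}}$. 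In your parametrization this is the same as shifting $m\mapsto m+h$ in one of the two sums before comparing them, which moves the difference off the phase and onto $n_{\mathrm{in}}$.

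This yields $\bigl(\frac{\beta T}{N}\bigr)^q(Nh)^q=(\beta Th)^q$. What remains is a pure time integral $S_q$ needing no cancellation. Its Fourier transform is bounded in $L^1_\tau$ by Proposition \ref{prop-integral_est} for $|\tau|\le N^K$, and by two integrations by parts in $t_0$ for larger $\tau$.

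Finally, uniformity in $q$ is not the real obstacle you make it out to be. Since $q\le M^3$ is bounded, $C^q$ losses are harmless.
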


\begin{proof}
Without loss of generality, we assume that $\Q$ is the element of $\textgoth{Q}_{\mathcal M}$ where all $\zeta_{\mathfrak n_j}$ $(j = 0, \ldots, q)$ have the same sign. Note that $\zeta_{\mathfrak n_0^i} (i =1,2,3)$ may have any sign, so we set $\zeta_i = \zeta_{\mathfrak n_0^i}$. Following the notation in Definition \ref{def-chain}, we set $k_{\mathfrak n_i} = k_i$, $k_{\mathfrak p_i} = k_{\mathfrak m_i} = \ell_i$, and $t_i = t_{\mathfrak n_i} (0 \leq i \leq q)$. In addition, we let $\ell_0=k_{\mathfrak n_0^{1}}, k_{q+1} = k_{\mathfrak n_0^{2}},$ and $\ell_{q+1} = k_{\mathfrak n_0^{3}}$, and $t_{q+1} = \text{max}(t_{\mathfrak n_0^{2}}, t_{\mathfrak n_0^{3}})$. We keep all $k_{\mathfrak n}$ and $t_{\mathfrak n}$ for $\mathfrak n \notin \{\mathfrak n_i, \mathfrak p_i, \mathfrak m_i\} (1 \leq i \leq q)$ fixed so that by \eqref{eq-gap} we may write the gap $h$ in terms of $k_0, \ell_0, k_{q+1}, \ell_{q+1}$: $$h:=k_0 - \ell_0 = k_1 - \ell_1 = \ldots = k_q - \ell_q = k_{q+1} - \ell_{q+1}.$$
Throughout, denote $\Omega_j = \Omega_{\mathfrak n_j}$ and $\widetilde \Omega_j = \widetilde \Omega_{\mathfrak n_j}$. Then for $j =1, \ldots, q-1$,
    \begin{align*}
    \Omega_j &:= \omega_{k_{j+1}} -  \omega_{\ell_{j+1}} +\omega_{\ell_{j}} - \omega_{k_{j}}= 4\sin\left(\frac{\pi h}{2}\right)\left[\cos\left(\frac{\pi (k_{j+1}+\ell_{j+1})}{2}\right)-\cos\left(\frac{\pi (k_{j}+\ell_{j})}{2}\right)\right],
    \end{align*}
    Therefore, $|\Omega_j|\lesssim h$. Note that the same is true for $j = 1$ and $q$ despite the fact the signs may be different. Summing over the chain, we get:
    \begin{align*}
    \Omega = \sum_{j = 0}^q \Omega_{j} = 2(\zeta_3\sin(\pi k_{q+1}) + \zeta_2\sin(\pi \ell_{q+1}) + \zeta_1\sin(\pi \ell_{0}) - \sin(\pi k_{0})),
    \end{align*}
    and set $\widetilde \Omega = \sum \widetilde \Omega_j$. Also, denote 
    $
    \epsilon_j = \epsilon_{k_{j + 1}\ell_{j + 1}m_j}, \text{ and }
    \epsilon_j' = \epsilon_j \cdot \sqrt{\frac{\omega_{k_j}\omega_{\ell_j}}{\omega_{k_{j-1}}\omega_{\ell_{j-1}}}}
    $
    so that 
    \begin{align*}
    \prod_{j = 0}^q\epsilon_j = \epsilon_{k_{q + 1}\ell_{q + 1}\ell_0} \prod_{j = 1}^q \epsilon_{j'}.
    \end{align*}
    For each $\mathcal Q' \in \textgoth{Q}_{\mathcal M}$, the difference in $\mathcal K_{\mathcal Q'}$ compared to $\mathcal K_{\mathcal Q}$ can be identified as:
    \begin{align*}
    & \left( \frac{\beta T}{N}\right)^q\sum\limits_{\substack{k_i, \ell_i \in \Z_N \cap (0,1)  \\ k_{i} - \ell_{i}=h \\ i = 1, \ldots, q}}\; \int\limits_{t_{q+1} < t_q < \ldots < t_1 < t_0} \left(\prod_{j = 1}^q i\zeta_{\mathfrak n_j}\right) \left(\prod_{j = 0}^q\epsilon_{k_{j+1}\ell_{j+1}m_{j}}\right)  \nonumber \\
    & \hspace{4.5cm}\times \left( \prod_{j = 0}^q e^{\zeta_{\mathfrak n_j}i( \Omega_jT t_j+ \widetilde{\Omega}_{j}(t_{j}))}\right)\left( \prod_{j = 1}^q n_{\mathrm{in}}(m_j)\right) \diff t_1 \ldots \diff t_q,
    \end{align*}
    where $m_0=\ell_0$, and $m_j = \ell_j$ or $m_j = k_j$ for congruent couples with unit twist performed at $\mathfrak n_j$. A unit twist at node $\mathfrak{n}_j$ reverses the sign of $\zeta_{\mathfrak{n}_j}$ but leaves $\zeta_{\mathfrak{n}_j}\Omega_j$, $\zeta_{\mathfrak{n}_j}\widetilde\Omega_j$, and $\epsilon_j$ unchanged. Hence, when we sum over all couples $\mathcal{Q}' \in \textgoth{Q}_{\mathcal{M}}$, we are effectively summing over every possible choice of $\zeta_{\mathfrak{n}_j}$ for $j=1,\ldots,q$. This yields:
    \begin{align*}
    & \hspace{.4cm}\left( \frac{\beta T}{N}\right)^q\sum\limits_{\substack{\substack{k_i \in \Z_N \cap (0,1) \\ i = 1, \ldots, q}}} \hspace{.2cm}\int\limits_{t_{q+1} < t_q < \ldots < t_1 < t_0}\left(\prod_{j = 0}^q\epsilon_{j}\right) \left( \prod_{j = 0}^q e^{i( \Omega_jT t_j+ \widetilde{\Omega}_{j}(t_{j}))}\right)\\
    &\hspace{8.5cm}\times \left( \prod_{j = 1}^q n_{\mathrm{in}}(k_j - h)-n_{\mathrm{in}}(k_j)\right) \diff t_1 \ldots \diff t_q\\
    &= \epsilon_{k_{q+1}\ell_{q+1}\ell_{0}}\cdot e^{i( \Omega T t_0+ \widetilde{\Omega}(t_{0}))} \sum\limits_{\substack{\substack{k_i \in \Z_N \cap (0,1) \\ i = 1, \ldots, q}}} \hspace{.2cm}\int\limits_{t_{q+1} < t_q < \ldots < t_1 < t_0} \left( \frac{\beta T}{N}\right)^q \left(\prod_{j = 1}^q\epsilon_{j}'\right) \\
    &\hspace{3cm}\times  \left( \prod_{j = 1}^q e^{i( \Omega_jT(t_j - t_0)+ \widetilde{\Omega}_{j}(t_{j})-\widetilde{\Omega}_{j}(t_{0}))}\right)\left( \prod_{j = 1}^q n_{\mathrm{in}}(k_j - h)-n_{\mathrm{in}}(k_j)\right) \diff t_1 \ldots \diff t_q\\
    &= \epsilon_{k_{q+1}\ell_{q+1}\ell_{0}}\cdot e^{i( \Omega T t_0+ \widetilde{\Omega}(t_{0}))} \int\limits_{0 < s_1 < \ldots < s_q < t_0 - t_{q+1}}  \left( \frac{\beta T}{N}\right)^q  \sum\limits_{\substack{\substack{k_i \in \Z_N \cap (0,1) \\ i = 1, \ldots, q}}} \left(\prod_{j = 1}^q\epsilon_{j}'\right)\\
    &\hspace{3cm}\times \left( \prod_{j = 1}^q e^{i( \Omega_jT(s_j)+ \widetilde{\Omega}_{j}(s_{j}+t_0)-\widetilde{\Omega}_{j}(t_{0}))}\right)\left( \prod_{j = 1}^q n_{\mathrm{in}}(k_j - h)-n_{\mathrm{in}}(k_j)\right) \diff t_1 \ldots \diff t_q\\
    & := \epsilon_{k_{q+1}\ell_{q+1}\ell_{0}}\cdot e^{i( \Omega T t_0+ \widetilde{\Omega}(t_{0}))} P_q(t_0, t_{\mathfrak n_0^2}, t_{\mathfrak n_0^3}, k[\mathcal N^{\mathrm{sp}}]).
    \end{align*}
    Note that to show \eqref{eq-pestimate}, it suffices to get a bound uniform in choice of $k_j$ for
    \begin{align*}
    S_q(t_0, t_1, t_2, k[\mathcal N]) &:= \int\limits_{0 < s_1 < \ldots < s_{q} < t_1, t_2} \left( \prod_{j = 1}^q e^{i( \Omega_jT(s_j)+ \widetilde{\Omega}_{j}(s_{j}+t_0)-\widetilde{\Omega}_{j}(t_{0}))}\right)\diff s_1 \ldots \diff s_{q},
    \end{align*}
    where now $t_1 = t_0 - \mathfrak n_0^2, t_2 = t_0 - \mathfrak n_0^3$, as using the regularity of $n_{\mathrm{in}}$ we obtain the bound
    \begin{align*}
    \sup_{|k_0 - \ell_0| \leq h} |\widehat{P}_q(\tau_0, \tau_1, \tau_2, k[\mathcal N^{\mathrm{sp}}])| &\lesssim \sup_{|k_0 - \ell_0| \leq h}\left(\frac{\beta T}{N}\right)^q (Nh)^q \sup_{k_j \in \Z_N \cap (0,1)} |\widehat{S}_q(\tau_0, \tau_1, \tau_2, k[\mathcal N])| \\
    & \lesssim \sup_{k[\mathcal N^{\mathrm{sp}}]} (\beta T h)^q |\widehat{S}_q(\tau_0, \tau_1, \tau_2, k[\mathcal N])|.
    \end{align*}
    Note that
    \begin{align*}
    \widehat{S}_q(\tau_0, \tau_1, \tau_2, k[\mathcal N]) &= \int\limits_{0 < s_1 < \ldots < s_{q} < t_1, t_2; t_0 \in \R} \hspace{-.8cm}e^{-2\pi i (\tau_0t_0 + \tau_1 t_1 + \tau_2 t_2)} \chi(t_0) \chi(t_1) \chi(t_2) \\
    & \hspace{1cm} \times\left( \prod_{j = 1}^q e^{i( \Omega_jT(s_j)+ \widetilde{\Omega}_{j}(s_{j}+t_0)-\widetilde{\Omega}_{j}(t_{0}))} \diff s_j\right)  \diff t_0 \diff t_1 \diff t_2.
    \end{align*}

    We fix $K\gg 1$ sufficiently large and separately consider the cases of $\tau_0, \tau_1, \tau_2 \leq N^K$ and one of $\tau_0, \tau_1, \tau_2$. In the former case, we may apply Proposition \ref{prop-integral_est} to bound 
    \begin{align*}
    |\widehat{S}_q(\tau_0, \tau_1, \tau_2, k[\mathcal N])| \lesssim \langle \tau_1 + \lambda_1 \rangle^{-1} \langle \tau_2 + \lambda_2 \rangle^{-1} \langle \tau_0 + \lambda_0\rangle^{-1}, 
    \end{align*}
    for some $\lambda_0, \lambda_1, \lambda_2 \in \R$ which are linear combinations of $T\Omega_j$s, or $\tau_1, \tau_2$ in the case of $\lambda_0$. Note that there are finitely many such possibilities, so we omit the sum from the above bound. In the latter case, we first integrate in $s_1, \ldots, s_q, t_1, t_2$ and denote the result $\widetilde{S}_q(t_0)$. Then, we may integrate by parts in $t_0$, so that 
    \begin{align*}
    \widehat{S}_q(\tau_0, \tau_1, \tau_2, k[\mathcal N]) = \int_{t_0 \in \R}\frac{-1}{4\pi^2 \tau_0^2} e^{-2\pi i \tau_0 t_0}\frac{d^2}{dt^2}\left[\chi(t_0) \widetilde S_q(t_0)\right]\diff t_0. 
    \end{align*}

    Note that 
    \begin{align*}
    |\widetilde S_q|, |\widetilde S_{q}'(t_0)|, |\widetilde S_q''(t_0)| \leq N^{K/2} \langle \tau_{1}\rangle^{-1} \langle\tau_{2} \rangle^{-1} \langle \tau_1 + \tau_2 + \lambda \rangle^{-1},
    \end{align*}
    for some $\lambda \in \R$ and sufficiently large $K$, where we apply Proposition \ref{prop-integral_est}, integrating in the order $t_1, t_2, s_q, \ldots, s_1$. Therefore, after integrating in $t_0$,
    \begin{align*}
    |\widehat{S}_q(\tau_0, \tau_1, \tau_2, k[\mathcal N])| \lesssim N^{-K/2} \langle \tau_0 \rangle^{-2} \langle \tau_{1}\rangle^{-1} \langle\tau_{2} \rangle^{-1} \langle \tau_1 + \tau_2 + \lambda \rangle^{-1}.
    \end{align*}

    Therefore, we have
    \begin{align*}
    &\| \widehat{S_q}(\tau_0, \tau_1, \tau_2, k[\mathcal N]) \|_{L^1_{\tau_0, \tau_1, \tau_2}} \\
    \lesssim &\int_{\tau_0, \tau_1, \tau_2 \leq N^K} \langle \tau_0  + \lambda_0 \rangle^{-1} \langle \tau_{1} + \lambda_1\rangle^{-1} \langle \tau_2 + \lambda_2 \rangle^{-1} \diff \tau_0 \diff \tau_1\diff \tau_2 \\
    & \hspace{1cm} + \int_{\tau_0 \text{, } \tau_1 \text{ or } \tau_2 \geq N_K} N^{K/2}\langle \tau_0  + \lambda_0 \rangle^{-2} \langle \tau_1\rangle^{-1} \langle \tau_2\rangle^{-1} \langle \tau_1 + \tau_2 + \lambda \rangle^{-1} \diff \tau_0 \diff \tau_1 \diff \tau_2 \\
    \lesssim &\log N + N^{-K/2}.
    \end{align*}
    As $q$ is finite, we may choose $K$ appropriately large to recover \eqref{eq-pestimate}. Note that \eqref{eq-kqm} follows automatically.
\end{proof}

\subsection{Self Loops} \label{subsec-loops}
\begin{definition}
Let $\Q$ be an enhanced couple. We say that $\Q$ has a \textit{self-loop} at branching node $\mathfrak n$ if $\mathfrak n$ has two children of opposite signs which are paired as leaves. If $\Q$ has no self-loops, we refer to it as \textit{loop free}. We call branching nodes $(\mathfrak n_1, \ldots, \mathfrak n_q)$ a \textit{self-loop chain} if $\Q$ has a self-loop at $\mathfrak n_j$ $(j = 1, \ldots, q)$ and $\mathfrak n_{j + 1}$ is a child of $\mathfrak n_j$ $(j = 1, \ldots, q-1)$, where $q$ denotes the \textit{length} of the chain. We call $\mathfrak n_1$ the \textit{root} of the chain and refer to the third child of $\mathfrak n_{q}$ as $\mathfrak n_{q + 1}$ and the parent of $\mathfrak n_1$, if it exists, as $\mathfrak n_0$. A \textit{maximal self-loop chain} is a self-loop chain for which neither $\mathfrak n_0$ nor $\mathfrak n_{q +1}$ have a self-loop. In the corresponding molecule $\M = \M(\Q)$, we may denote $v_j = \mathfrak n_{j}$ and refer to $(v_1, \ldots, v_q)$ as a self-loop chain at the level of the molecule. See Figure \ref{fig-loop}.
\end{definition}

\begin{remark}
Due to our phase renormalization and corresponding definition of enhanced couples, note that self-loops can only occur at (1,2) nodes. For enhanced couple $\Q$ and self-loop chain $(\mathfrak n_1, \ldots, \mathfrak n_q)$, this implies that $\mathfrak n_j$ and $\mathfrak n_{j + 1}$ must have opposite sign. Also, note that the set of maximal self-loop chains of $\Q$ is well-defined with each maximal self-loop chain disjoint.
\end{remark}

\begin{figure}
\begin{subfigure}[t]{0.45\linewidth}
\centering
\begin{tikzpicture}[scale = .45,
level distance=1.5cm,
  level 1/.style={sibling distance=1.5cm},
  level 2/.style={sibling distance=1.5cm}]
\tikzstyle{hollow node}=[circle,draw,inner sep=1.6]
\tikzstyle{solid node}=[circle,draw,inner sep=1.6,fill=black]
\tikzset{
red node/.style = {circle,draw=black,fill=red,inner sep=1.6},
blue node/.style= {circle,draw = black, fill= blue,inner sep=1.6}, green node/.style = {circle,draw=black,fill=green,inner sep=1.6}}, 
[edge from parent/.style={draw,dashed}]

\node[solid node, label = left: {\tiny $\mathfrak n_0$}] at (4,.4){}
    child{node[solid node, label={[label distance=-1.5mm] 240:\tiny $\mathfrak n_0^1$}]{}}
    child{node[solid node, label={[label distance=-1.5mm] 240:\tiny $\mathfrak n_0^2$}]{}}
    child{node[solid node, label = right: {\tiny $\mathfrak n_1$}]{}
        child{node[blue node]{}}
        child{node[blue node]{}}
        child[dashed]{node[solid node, label = right: {\tiny $\mathfrak n_{q}$}]{}
            child[solid]{node[green node]{}}
            child[solid]{node[green node]{}}
            child[solid]{node[solid node, label = right: {\tiny $\mathfrak n_{q+1}$}]{}}
            }
    }
;
\draw[dashed] (4,.4) -- (4, 1.2);
\draw[dashed] (2.5,-2) -- (2.5,-1.2);
\draw[dashed] (4,-2) -- (4,-1.2);
\draw[dashed] (8.5,-5) -- (8.5,-4.2);

\end{tikzpicture}
\caption{A self-loop chain} 
\label{fig-se}
\end{subfigure}
~
\begin{subfigure}[t]{0.45\linewidth}
\centering
\begin{tikzpicture}[scale = .9]
    \tikzstyle{every node} = [circle, draw = black]
    \node (1) at (-10,0) {\tiny ${v_0}$};
    \node (2) at (-8.9,0) {};
    \node (3) at (-8,0) {};
    \node (4) at (-7.1,0) {};
    \node (5) at (-6,0) {\tiny${v_4}$};

    \draw[thick, ->] (1) -- (2);
    \draw[thick, ->] (3) -- (2);
    \draw[thick, ->] (3) -- (4);
    \draw[thick, ->] (5) -- (4);

    \path[thick, ->] (2) edge [loop below, looseness = 20] (2);
    \path[thick, ->] (3) edge [loop below, looseness = 20] (3);
    \path[thick, ->] (4) edge [loop below, looseness = 20] (4);

    \draw[dashed] (1) -- (-10,1);
    \draw[dashed] (1) -- (-10,-1);
    \draw[dashed] (1) -- (-11,0);

    \draw[dashed] (5) -- (-6,1);
    \draw[dashed] (5) -- (-6,-1);
    \draw[dashed] (5) -- (-5,0);
\end{tikzpicture}
    \caption{A self loop of length $q = 3$, at the level of the molecule.}
    \label{fig-self-loop-mol}
\end{subfigure}
\caption{Self-loop chain at the level of the molecule and couple, before and after splicing.}
\label{fig-loop}
\end{figure}

\begin{lemma}\label{lem-loop}
For an enhanced couple $\mathcal Q$ and self-loop chain $(\mathfrak n_1, \ldots, \mathfrak n_{q})$ of length $q$, 
\begin{align}
\K_{\Q} (t,s,k) 
&= \left( \frac{\beta T}{N}\right)^{n} \hspace{-.25cm}\zeta(\mathcal Q) \sum_{\mathscr E} \tilde \epsilon_{\mathscr E} \int_{\mathcal{E}^{\mathrm{lf}}} S_q( t_{\mathfrak n_0}, t_{\mathfrak{n}_{q+1}}, k[\mathcal N]) \left( \prod_{\mathfrak n \in \mathcal N^{\mathrm{lf}}}e^{\zeta_{\mathfrak{n}}i\Gamma_{\mathfrak n}(t_{\mathfrak n})} \diff t_{\mathfrak{n}}\right)\diff \tau \prod^+_{\mathfrak l \in \mathcal L} n_{\mathrm{in}}(k_{\mathfrak l}),\label{eq-kqs}
\end{align}
where if $A(t)$, defined in Proposition \ref{prop-phase-renormalization}, satisfies $|\dot A(t)| \lesssim \beta T$ and $|\ddot A(t)| \lesssim T$, then $S_q$ satisfies
\begin{align}
\left|\left|\widehat{S}_q(\tau_0, \tau_{q + 1},  k[\mathcal N])\right|\right|_{L_{\tau_0, \tau_{q + 1}}^1} &\lesssim \log N \langle T \omega_{k_{\mathfrak n_1}} \rangle^{-\lceil q/2\rceil}\\
\left|\left|\widehat{S}_q(\tau_0, \tau_{q + 1},  k[\mathcal N])\right|\right|_{L_{\tau_0, \tau_{q + 1}}^\infty} &\lesssim \langle T \omega_{k_{\mathfrak n_1}} \rangle^{-\lceil q/2\rceil}.
\label{eq-sestimate}
\end{align} 
In the above, let $\mathcal N^{\mathrm{lf}}$ denote branching nodes of $\Q$ which are not $\mathfrak n_1, \ldots, \mathfrak n_q$ and $\mathcal E^{\mathrm{lf}}$ denote the domain of integration where we no longer integrate over $t_{\mathfrak n_j}$ $(j = 1, \ldots, q)$ and instead require $t_{\mathfrak n_0} > t_{\mathfrak n_{q + 1}}$. 
\end{lemma}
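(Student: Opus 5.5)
The plan mirrors the proof of Lemma \ref{lem-splice}: isolate the time integrals over the chain nodes, bound them, and take time Fourier transforms. The gain should come from the resonance factors at the loop nodes.

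\textbf{Step 1: the resonance factor at a loop node.} At a self-loop node $\mathfrak n_j$ the two paired children carry the same wave number $\ell_j$ with opposite signs. The decoration constraint then forces $k_{\mathfrak n_j} = \pm k_{\mathfrak n_{j+1}}$. By the Remark, consecutive chain nodes have opposite signs and each $\mathfrak n_j$ is a $(1,2)$ node. Hence $\Omega_{\mathfrak n_j} = \zeta_{\mathfrak n_j}\omega_{k_{\mathfrak n_j}} - \zeta_{\mathfrak n_{j+1}}\omega_{k_{\mathfrak n_{j+1}}} + (\omega_{\ell_j}-\omega_{\ell_j})$, which reduces to $\pm 2\omega_{k_{\mathfrak n_1}}$, using $\omega$ even and $k_{\mathfrak n_j}\equiv \pm k_{\mathfrak n_1}$ along the chain. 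The same holds for $\widetilde\Omega_{\mathfrak n_j} = \pm 2A(t)\omega_{k_{\mathfrak n_1}}$, since $\widetilde\omega_k=\omega_kA(t)$. Consequently the loop variables $\ell_j$ enter only through $\epsilon$ and the $n_{\mathrm{in}}$ factors. These stay in the outer sum, which explains why \eqref{eq-kqs} keeps $\mathscr E$ and $\mathcal L$ rather than spliced versions. Define
\[
S_q(t_{\mathfrak n_0},t_{\mathfrak n_{q+1}},k[\mathcal N]) = \int_{t_{\mathfrak n_{q+1}}<t_q<\cdots<t_1<t_{\mathfrak n_0}} \prod_{j=1}^q e^{i\zeta_{\mathfrak n_j}\Omega_{\mathfrak n_j}(Tt_j+A(t_j))}\,\diff t_1\cdots \diff t_q ,
\]
with each phase of size $2\omega_{k_{\mathfrak n_1}}$ and signs alternating in $j$.

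\textbf{Step 2: $L^\infty$ bound.} Because the signs alternate, consecutive pairs of phases cancel only in sums like $\Omega_{\mathfrak n_j}+\Omega_{\mathfrak n_{j+1}}=0$. So the $q_{\mathfrak n}$ of Proposition \ref{prop-integral_est} may vanish for some choices of $d_{\mathfrak n}$, and we cannot expect a full gain per node. I would integrate by parts in the innermost variable, as in \eqref{eq-ATf-1}, which uses $|\dot A|\lesssim\beta T$ and $|\ddot A|\lesssim T$ to control $T/(T+\dot A)$. Each integration by parts gains $\langle T\omega_{k_{\mathfrak n_1}}\rangle^{-1}$. It produces a boundary term in which two adjacent times are merged; when the phases are opposite, the merged phase is zero and no gain is available at the next node. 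Inductively, each pair of nodes yields at least one factor, giving $\langle T\omega_{k_{\mathfrak n_1}}\rangle^{-\lceil q/2\rceil}$. The ceiling comes from an odd leftover node having nonzero phase. I would organize this by applying Proposition \ref{prop-integral_est} directly to the linear chain tree and checking that, for every choice of $(d_{\mathfrak n})$, at least $\lceil q/2\rceil$ of the $q_{\mathfrak n}$ equal $\pm2\omega_{k_{\mathfrak n_1}}$ (or $\pm 4\omega$). Since $q_{\mathfrak n}$ is a partial alternating sum of equal-magnitude terms, consecutive partial sums cannot both vanish. This combinatorial check is the point I expect to be delicate.

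\textbf{Step 3: $L^1$ bound.} Multiply by cutoffs $\chi(t_{\mathfrak n_0})\chi(t_{\mathfrak n_0}-t_{\mathfrak n_{q+1}})$ and take the Fourier transform in $(t_{\mathfrak n_0},t_{\mathfrak n_{q+1}})$, exactly as for $\widehat S_q$ in Lemma \ref{lem-splice}. For $|\tau_0|,|\tau_{q+1}|\le N^K$, Proposition \ref{prop-integral_est}, with the extra outer time variables treated as additional oscillating nodes, gives a bound $\langle T\omega\rangle^{-\lceil q/2\rceil}\langle\tau_0+\lambda_0\rangle^{-1}\langle\tau_{q+1}+\lambda_{q+1}\rangle^{-1}$; integrating this gives $\log N$. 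For large frequencies, integrating by parts twice in the outer variable gives $N^{-K/2}$ decay, which is negligible. The $L^\infty$ bound follows from the same pointwise estimate. The main obstacle is Step 2: making sure the $A(t)$-dependence and the boundary terms generated by integration by parts never destroy the pairwise gain. This is exactly where the hypotheses on $\dot A$ and $\ddot A$ enter.
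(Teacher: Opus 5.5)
Your proposal is correct and follows the paper's proof essentially step for step. It uses the same computation $\Omega_{\mathfrak n_j}=\pm 2\omega_{k_{\mathfrak n_1}}$, the same isolated chain integral $S_q$, Proposition \ref{prop-integral_est} for $|\tau_0|,|\tau_{q+1}|\le N^K$, and two integrations by parts in the outer time otherwise; your observation that $q_{\mathfrak n_j}=0$ forces $q_{\mathfrak n_{j-1}}=\Omega_{\mathfrak n_{j-1}}\neq 0$, together with $q_{\mathfrak n_q}\neq 0$, justifies the exponent $\lceil q/2\rceil$, which the paper only asserts. One small slip: with the paper's convention $e^{i\zeta_{\mathfrak n}\Gamma_{\mathfrak n}}$, it is $\Omega_{\mathfrak n_j}=2\zeta_{\mathfrak n_j}\omega_{k_{\mathfrak n_1}}$ that alternates, not $\zeta_{\mathfrak n_j}\Omega_{\mathfrak n_j}$ (the paper's proof has the same ambiguity), but this is harmless, since non-alternating phases would make no partial sum vanish and give the stronger gain $\langle T\omega_{k_{\mathfrak n_1}}\rangle^{-q}$.
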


\begin{proof}
Denote $\zeta = \zeta_{\mathfrak n_1}$ and $\ell = k_{\mathfrak n_1}$ so that $\zeta_j := \zeta_{\mathfrak n_j} = (-1)^{j + 1}$ and $k_{\mathfrak n_j} = (-1)^{j + 1} \ell$ for $j = 1, \ldots, q+1$. Set $t_j = t_{\mathfrak n_j}$ for $j = 0, \ldots, q+1$, where $t_0= t$ (or $s$) if $\mathfrak n_1$ is a root and $t_{q + 1} = 0$ if $\mathfrak n_{q + 1}$ is a leaf. We fix the decoration of the couple as well as $t_{\mathfrak n}$ for $\mathfrak n \neq \mathfrak n_j$ $(j = 1, \ldots, q)$. Throughout, denote $\Omega_j = \Omega_{\mathfrak n_j}$, $\widetilde \Omega_j = \widetilde \Omega_{\mathfrak n_j}$, $\zeta_j = \zeta_{\mathfrak n_j}$ $(j = 1, \ldots, q)$. Note that 
\begin{align*}
\Omega_j &= (-1)^{j + 1}2\zeta \omega_\ell,
\end{align*}
where $\Omega_j, \widetilde \Omega_j$ are independent of $k_j$, for $k_j$ denoting the decorations of the paired (leaf) children of $\mathfrak n_j$. We isolate the following part of $\K_\Q$:
\begin{align*}
    S_q(t_0, t_{q + 1}, k[\mathcal N]) & :=  \int\limits_{t_{q+1} < t_q < \ldots < t_1 < t_0} \left( \prod_{j = 1}^q e^{(-1)^{{j + 1}}\zeta i( \Omega_jT t_j+ \widetilde{\Omega}_{j}(t_{j}))}\right) \diff t_1 \ldots \diff t_q, 
\end{align*}
so that 
\begin{align*}
\widehat{S_q}(\tau_0, \tau_{q + 1}, k[\mathcal N]) &= \int_{t_{q+1} < t_q < \ldots < t_1 < t_0} e^{-2\pi i (\tau_0 t_0 +\tau_{q + 1} t_{q + 1})} \chi(t_0) \chi(t_{q + 1}) \\
& \hspace{.5cm}\times \left( \prod_{j = 1}^q e^{(-1)^{{j + 1}}\zeta i( \Omega_jT t_j+ \widetilde{\Omega}_{j}(t_{j}))} \diff t_j\right) \diff t_0 \diff t_{q + 1}, 
\end{align*}
where we have inserted cutoff functions of appropriate decay to reflect that $0 \leq t_{q + 1}, t_0 \leq 1$.

Fix $K \gg 1$ sufficiently large. If $\tau_0, \tau_{q +1} \leq N^K$, using Proposition \ref{prop-integral_est}, we have that 
\begin{align*}
\left| \widehat{S_q}(\tau_0, \tau_{q + 1}, k[\mathcal N]) \right| & \lesssim \sum_{d_j \in \{0,1\}}\left(\prod_{j = 0}^{q + 1}\frac{1}{\langle Tq_j \rangle}\right),
\end{align*}
where $q_{\mathfrak n}$ is defined in Definition \ref{def:Trees}, so that 
\begin{align*}
q_0 = 2\pi\tau_0 + d_1 q_1, \hspace{.5cm} 
q_{q+ 1} = 2\pi\tau_{q + 1},
\end{align*}
and for $j = 1, \ldots, q$, $q_j$ is an integer multiple of $\omega_\ell$ (with an additional factor of $2\pi\tau_{q + 1}$), so that for any $\tau_0, \tau_{q + 1} \leq N^K$
\begin{align*}
\left| \widehat{S_q}(\tau_0, \tau_{q + 1}, k[\mathcal N]) \right| & \lesssim \langle \tau_0  + \lambda_0 \rangle^{-1} \langle \tau_{q + 1}\rangle^{-1} \langle T\omega_\ell \rangle^{-\lceil q/2 \rceil},
\end{align*}
for some $\lambda_0 \in \R$.

If one of $\tau_0, \tau_{q + 1} \geq N^K,$ we first integrate in $t_{q + 1}, \ldots, t_{1}$ with result $\widetilde S_q(t_0)$, then integrate by parts twice in $t_{0}$:
\begin{align*}
\widehat{S_q}(\tau_0, \tau_{q + 1}, k[\mathcal N]) = \int_{t_0 \in \R} e^{-2\pi i \tau_0 t_0} \chi(t_0) \widetilde S_q(t_0) \diff t_0 = \int_{t_0 \in \R} \frac{-1}{4\pi^2\tau_0^2} e^{-2\pi i\tau_0 t_0} \frac{d^2}{dt_0^2}\left[\chi(t_0) \widetilde S_q(t_0)\right] \diff t_0.
\end{align*}
Note that $|\widetilde S_q|, |\widetilde S_{q}'(t_0)|, |\widetilde S_q''(t_0)| \leq N^{K/2} \langle \tau_{q + 1}\rangle^{-1} \langle\tau_{q + 1} + T \omega_\ell\rangle^{-1}$ for $K$ large enough and all derivatives of $\chi$ are uniformly bounded. So, 
\begin{align*}
\left| \widehat{S_q}(\tau_0, \tau_{q + 1}, k[\mathcal N]) \right| & \lesssim N^{K/2}\langle \tau_0  + \lambda_0 \rangle^{-2} \langle \tau_{q + 1}\rangle^{-1} \langle \tau_{q + 1} + \lambda_{q + 1} \rangle,
\end{align*}
for some $\lambda_0, \lambda_{q + 1} \in \R$.

Therefore, since $\langle T\omega_\ell\rangle \lesssim T$, we have
\begin{align*}
&\| \widehat{S_q}(\tau_0, \tau_{q + 1}, k[\mathcal N]) \|_{L^1_{\tau_0, \tau_{q + 1}}} \\
\lesssim &\int_{\tau_0, \tau_{q + 1} \leq N^K} \langle \tau_0  + \lambda_0 \rangle^{-1} \langle \tau_{q + 1}\rangle^{-1} \langle T\omega_\ell \rangle^{-\lceil q/2 \rceil} \diff \tau_0 \diff \tau_{q + 1} \\
& \hspace{1cm} + \int_{\tau_0 \text{ or } \tau_{q + 1} \geq N_K} N^{K/2}\langle \tau_0  + \lambda_0 \rangle^{-2} \langle \tau_{q + 1}\rangle^{-1} \langle \tau_{q + 1} + \lambda_{q + 1} \rangle \diff \tau_0 \diff \tau_{q + 1} \\
\lesssim &\log N \langle T\omega_\ell\rangle ^{-\lceil q/2\rceil} + N^{-K/2}. 
\end{align*}
As $q$ is finite, we may choose $K$ appropriately large to recover \eqref{eq-sestimate}. Note that \eqref{eq-kqs} follows automatically.
\end{proof}

\subsection{Splicing Specifications}

We may define the following set whose existence and uniqueness is established in \cite{WKE, ODW}:
\begin{definition}\label{def-C}
Consider an enhanced couple $\mathcal Q$ and enhanced molecule $\M(\Q)$. Let $\mathscr C$ be defined as a unique collection of disjoint atomic groups, such that each atomic group in $\mathscr C$ is a chain, hyperchain, or pseudo-hyperchain and any chain $\C$ of $\M$ is a subset of precisely one atomic group in $\mathscr C$. 

Suppose also that we have chosen $\mathscr C_{SG} \subset \mathscr C$ of all SG negative chain-like objects. Consider the set $\mathscr D_{SG}$ defined as a unique collection of disjoint atomic groups, such that each atomic group in $\mathscr D_{SG}$ is a maximal wide ladder of $\mathscr C_{SG}$ and each chain $\C \in \mathscr D_{SG}$ is a subset of precisely one atomic group in $\mathscr D_{SG}$. 
\end{definition}

\begin{definition}[Splicing Locations] \label{def-M}
For an enhanced couple $\Q$ and corresponding enhanced molecule $\M(\Q)$, suppose we have a set $\mathscr D_{SG}$ of SG chain-like objects, coming from Definition \ref{def-C}. Then, define $\mathcal M_{SG}$ below. If $\mathcal L \in \mathscr D_{SG}$ consists of a single chain $\C = (v_0, \ldots, v_q)$:
\begin{enumerate}[label=(\roman*)]
\item If $\C$  is a chain, we include all admissible $\mathfrak n(v_i)$ into $\mathcal M_{SG}$.
\item If $\C$ is a hyperchain or pseudo-hyperchain with a CN double bond, we include all admissible $\mathfrak n(v_i)$ into $\mathcal M_{SG}$. If there is no CN double bond, we exclude one admissible $\mathfrak n(v_i)$. 
\end{enumerate}
Otherwise, let $\mathcal{L} = \{\mathcal{C}_1, \ldots, \mathcal{C}_m\}$, where each chain $\mathcal{C}_j$ has length $q^{(j)}$. We perform the following procedure:

\begin{enumerate}
  \item Start with $\mathcal{C}_1$ and determine which of its nodes should be added to $\mathcal{M}_{SG}$ by applying the previously described procedure, treating it as a hyperchain or pseudo-hyperchain if applicable.
  
  \item For each subsequent chain $\mathcal{C}_{j+1}$ with $j \leq m - 2$, check whether all $q^{(j)}$ nodes of $\mathcal{C}_j$ are contained in $\mathcal{M}_{SG}$. If so, treat $\mathcal{C}_{j+1}$ as in step (ii) above when determining which nodes to add to $\mathcal{M}_{SG}$. Otherwise, treat it as in step (i).

  \item For the final chain $\mathcal{C}_m$, if it is a hyperchain or pseudo-hyperchain, or if all $q^{(m-1)}$ nodes of $\mathcal{C}_{m-1}$ are in $\mathcal{M}_{SG}$, then process $\mathcal{C}_m$ as in step (ii). Otherwise, treat it as in step (i).
\end{enumerate}
\end{definition}

\begin{proposition}[Properties of Spliced Molecules] \label{prop-M}
For an enhanced couple $\Q$, and choice of SG negative chain-like objects $\mathscr C_{SG}$, consider the resulting couple $\mathcal Q^{\mathrm{sp}}$ obtained by splicing $\mathcal Q$ at the nodes in $\mathcal M_{SG}$, defined in Definition \ref{def-M}. Then,
    \begin{align} \label{eq-splice-exp}
    \begin{split}
    \K_{\textgoth{Q}_\mathcal M} (t,s,k) &= \left( \frac{\beta T}{N}\right)^{n_{\mathrm{sp}}} \zeta(\mathcal Q^{\mathrm{sp}}) \sum_{\mathscr E^{\mathrm{sp}}} \tilde \epsilon_{\mathscr E^{\mathrm{sp}}} \int_{\mathcal{E}^{\mathrm{sp}}} \prod_{\mathfrak n_0 \in \mathcal N^{\mathrm{sp}}_0} P_{q_{\mathfrak n_0}}(t_{\mathfrak n_0}, t_{\mathfrak n_0^2}, t_{\mathfrak n_0^3}, k[\mathcal N^{\mathrm{sp}}])  \\
    & \hspace{0.8cm}\times \prod_{\mathfrak n_1 \in \mathcal N^{\mathrm{l}}} S_{q_{\mathfrak n_1}}(t_{\mathfrak n_0}, t_{\mathfrak n_{q + 1}}, k[\mathcal N^{\mathrm{sp}}]) \left( \prod_{\mathfrak n \in \mathcal N^{\mathrm{lf}}}e^{\zeta_{\mathfrak{n}}i\Gamma_{\mathfrak n}(t_{\mathfrak n})} \diff t_{\mathfrak{n}}\right) \prod^+_{\mathfrak l \in \mathcal L^{\mathrm{sp}}} n_{\mathrm{in}}(k_{\mathfrak l}),
    \end{split}
    \end{align}
    where $\mathcal N^{\mathrm{sp}}$ denotes the branching nodes of $\Q^{\mathrm{sp}}$ with $\mathcal N_0^{\mathrm{sp}}$ denoting the nodes where an irregular chain was spliced out, $\mathcal N^{\mathrm{lf}}$ denoting all branching nodes which are loop-free, and $\mathcal N^{\mathrm{l}}$ denoting all branching nodes which are the root of a maximal self-loop chain of $\Q^{\mathrm{sp}}$. In the above, $P_{q_{\mathfrak n_0}}$ is given in Lemma \ref{lem-splice}, where $q_{\mathfrak n_0}$ denotes the length of the chain spliced out below $\mathfrak n_0$ and $S_{q_{\mathfrak n_1}}$ is given in Lemma \ref{lem-loop}, where $q_{\mathfrak n_1}$ denotes the length of the maximal self-loop chain which $\mathfrak n_1$ is a root of. Additionally, $\mathcal{E}^{\mathrm{sp}}$ denotes the domain of integration where we integrate only in $t_\mathfrak{n}$ for $\mathfrak n$ loop free in $\Q^{\mathrm{sp}}$, coming from Lemmas \ref{lem-splice} and \ref{lem-loop}. The molecule $\M^{\mathrm{sp}} = \M(\Q^{\mathrm{sp}})$ has the following properties: 
\begin{enumerate}[(a)]
\item The molecule $\mathbb M^{\mathrm{sp}}$ is connected and has either 2 atoms of degree 3 or one atom of degree 2, with the rest having degree 4.  
\item Each chain in $\mathscr C_{SG}$ is reduced to a single atom or a single double bond in $\mathbb M^{\mathrm{sp}}$. Similarly, each SG hyperchain is reduced to a triple bond and each SG pseudo-hyperchain is reduced to a single double bond pseudo-hyperchain. 
\item All SG double bonds in $\M^{\mathrm{sp}}$ are CN or are the only double bond in a pseudo-\hspace{0pt}hyperchain. As such, the molecule remains connected when any SG double bond is removed.
\item For any degree 4 atom $v$ in $\M^{\mathrm{sp}}$ with a self-loop, the remaining edges must have the same direction.
\end{enumerate}
\end{proposition}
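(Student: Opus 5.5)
The plan is to obtain the expansion \eqref{eq-splice-exp} by applying Lemma \ref{lem-splice} and Lemma \ref{lem-loop} one after the other, and then to check properties (a)--(d) by tracking how splicing changes the molecule.

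\textbf{Step 1: splicing all chains in $\mathcal M_{SG}$.} Since the atomic groups in $\mathscr D_{SG}$ are disjoint (Definition \ref{def-C}), the irregular chains selected in Definition \ref{def-M} have disjoint node sets $\mathcal M$. Summing over the congruence class $\textgoth{Q}_{\mathcal M}$ therefore factorizes into independent sums over the twists of each chain. I would iterate Lemma \ref{lem-splice} chain by chain. At each step the time variables of the spliced nodes are integrated out and replaced by a factor $P_{q_{\mathfrak n_0}}(t_{\mathfrak n_0},t_{\mathfrak n_0^2},t_{\mathfrak n_0^3},\cdot)$. The only new constraint is $t_{\mathfrak n_0}>t_{\mathfrak n_0^2},t_{\mathfrak n_0^3}$, and this is absorbed into $\mathcal E^{\mathrm{sp}}$. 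Applying the lemma requires the phase bounds $|\dot A|\lesssim\beta T$ and $|\ddot A|\lesssim T$. These are the hypotheses of Lemma \ref{lem-splice}, and I would carry them as standing assumptions; they are justified a posteriori by Proposition \ref{bound-couple}.

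\textbf{Step 2: isolating self-loop chains.} Next I would take the maximal self-loop chains of $\Q^{\mathrm{sp}}$. By the Remark following the definition of self-loops these are well defined and disjoint, and none of their nodes lies in a spliced chain. For each such chain I would apply Lemma \ref{lem-loop}, which replaces the time integrals over its nodes by $S_{q_{\mathfrak n_1}}(t_{\mathfrak n_0},t_{\mathfrak n_{q+1}})$ and imposes $t_{\mathfrak n_0}>t_{\mathfrak n_{q+1}}$. The two lemmas act on disjoint sets of time variables, and the remaining phases $e^{i\zeta_{\mathfrak n}\Gamma_{\mathfrak n}}$ are untouched. The product structure in \eqref{eq-splice-exp} then follows by Fubini.

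\textbf{Step 3: properties (a)--(d).}

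For (a), splicing a chain $(v_0,\dots,v_q)$ deletes $q$ atoms, each of degree 4, together with their $2q$ internal double-bond edges. It then reattaches $\mathfrak n_0^2,\mathfrak n_0^3$ to $\mathfrak n_0$. As a result $v_0$ keeps degree 4 and the degrees of all atoms outside the chain are unchanged. Since $\Q^{\mathrm{sp}}$ is again a couple, Proposition \ref{prop:molecules-couples}(1) gives connectedness and the degree count.

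For (b), I would read the reduction off from the rules of Definition \ref{def-M}:
\begin{itemize}
\item[(i)] a chain with all admissible nodes spliced collapses to a single atom;
\item[(ii)] in a hyperchain or pseudo-hyperchain with no CN bond, one node is kept, which leaves a double bond, and this becomes a triple bond together with the extra hyperchain edge;
\item[(iii)] a CN bond is never admissible, so it survives as a single double bond.
\end{itemize}

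For (c), every CL double bond inside an SG chain has been removed unless it was deliberately kept in (ii). The ladder bookkeeping of Definition \ref{def-M} is designed so that whenever a double bond is kept, some other path (a hyperchain edge, a pseudo-hyperchain vertex $v$, or the neighbouring rung) preserves connectivity. Removing it therefore does not disconnect the molecule.

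For (d), a self-loop occurs only at a $(1,2)$ node whose two opposite-sign children are paired. The remaining two bonds go to the parent and to the third child. By the sign rule for PC bonds together with the $(1,2)$ type, both of these bonds point the same way relative to $v$.

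The main obstacle is (c) for wide ladders, where several rungs interact. I would argue inductively along the rungs $\mathcal C_1,\dots,\mathcal C_m$, following the procedure in Definition \ref{def-M}. The goal is to show that each kept SG double bond lies on a cycle formed with the adjacent rung's end-bonds, which are guaranteed to be present exactly when the previous rung was fully spliced. Wherever the paper defers to \cite{WKE,ODW}, I would follow their argument for this step.
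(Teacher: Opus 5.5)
Your plan follows the paper's own proof. You obtain \eqref{eq-splice-exp} from Lemmas \ref{lem-splice} and \ref{lem-loop}, take (b) and (c) from Definition \ref{def-M} and the cited literature, and deduce (d) from the fact that loops sit at $(1,2)$ nodes. Deriving (a) from Proposition \ref{prop:molecules-couples} applied to the couple $\Q^{\mathrm{sp}}$ is fine, as is your explicit orientation check in (d).

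\textbf{The gap.} You skip the one fact the paper's proof actually singles out: splicing at $\mathcal M_{SG}$ creates no new self-loops. In Step 2 you say the maximal self-loop chains of $\Q^{\mathrm{sp}}$ contain no node of a spliced chain. For the removed nodes this is vacuous. The real issue is the surviving splice point $\mathfrak n_0$, whose children become $\mathfrak n_0^1,\mathfrak n_0^2,\mathfrak n_0^3$.
\begin{itemize}
\item If two of these were paired leaves, $\mathfrak n_0$ would be a loop node of $\Q^{\mathrm{sp}}$. Then $t_{\mathfrak n_0}$ would be both an argument of $P_{q_{\mathfrak n_0}}$ and a variable integrated inside some $S_q$. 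The two lemmas would no longer act on disjoint variables, and the product form fails.
\item The Remark you invoke in Step 2 and in (d) is a statement about the enhanced couple $\Q$, not about $\Q^{\mathrm{sp}}$. The couple $\Q^{\mathrm{sp}}$ is not enhanced at $\mathfrak n_0$; this is what $\tilde\epsilon$ records.
\item A new loop at $\mathfrak n_0$ could therefore be of type $(2,1)$. For example, take $\zeta_{\mathfrak n_0^1}=\zeta_{\mathfrak n_0^2}=\zeta_{\mathfrak n_0}=-\zeta_{\mathfrak n_0^3}$ with $\mathfrak n_0^1$ paired to $\mathfrak n_0^3$. Such a loop has $\Omega_{\mathfrak n_0}=0$, so Lemma \ref{lem-loop} gives no gain. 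By the same orientation rules you used, its parent bond and third-child bond point in opposite directions, so (d) would be false.
\end{itemize}

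\textbf{How to close it.} Splicing changes only the children of splice points and removes only the pairs $\{\mathfrak m_i,\mathfrak p_i\}$, so a new loop can appear only at some $\mathfrak n_0$. There are two cases.
\begin{itemize}
\item A pairing of $\mathfrak n_0^2$ with $\mathfrak n_0^3$ would already be a loop at $\mathfrak n_q$ in $\Q$. Since $\zeta_{\mathfrak p_q}=-\zeta_{\mathfrak m_q}=\zeta_{\mathfrak n_q}$, that node would be of type $(2,1)$, which the Remark excludes.
\item A pairing of $\mathfrak n_0^1$ with $\mathfrak n_0^2$ or $\mathfrak n_0^3$ is an extra LP bond joining $v_0$ and $v_q$, so the chain is a hyperchain. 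The hyperchain clause of Definition \ref{def-M} is precisely what stops $v_0$ and $v_q$ from merging. One admissible node is kept when there is no CN bond; otherwise the CN double bond survives. In a wide ladder only the end rungs can be hyperchains, and Definition \ref{def-M} treats them this way too.
\end{itemize}
Hence the loops of $\Q^{\mathrm{sp}}$ are exactly the $(1,2)$ loops of $\Q$, and $\mathcal N_0^{\mathrm{sp}}\cap\mathcal N^{\mathrm l}=\emptyset$. With this in place, both your Step 2 factorization and your orientation argument for (d) go through.
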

\begin{proof}
Note that for an enhanced couple $\Q$, all maximal self-loops and irregular chains which we splice are disjoint, and by our choice of splicing, we do not create any new self-loops in $\Q^{\mathrm{sp}}$. In other words, $\mathcal N_0^{\mathrm{sp}}$ and $\mathcal N^\mathrm{l}$ are disjoint. Therefore, we may apply Lemmas \ref{lem-splice} and \ref{lem-loop} to obtain \eqref{eq-splice-exp}. Points (a) and (b) are shown in \cite[Proposition~5.15]{ODW}. For (c), note that \cite[Proposition~4.12]{WKE2023}. For (d), note that self-loops in $\M^{\mathrm{sp}}$ correspond to self-loops in $\Q^{\mathrm{sp}}$ which only appear at (1,2) nodes. 
\end{proof}

\section{Counting Algorithm}
\label{section-algorithm}
In this section, we prove the following proposition and use it to prove Proposition \ref{bound-couple}. 

\begin{proposition}\label{prop-rigidity}
Consider a labeled enhanced molecule $\M^{\mathrm{sp}} = \mathbb M(\Q^{\mathrm{sp}})$ of order $\chi(\M^{\mathrm{sp}}) = n\leq N^3$. Suppose we fix $k \in \Z_N\cap(0,1)$ and $\alpha_v \in \R$ for each atom $v$ of $\mathbb M^{\mathrm{sp}}$ which does not have a self-loop. Consider all $k$-decorations $(k_\ell)$ of $\M$ such that 
\begin{enumerate}[(i)]
\item The $k$-decoration $(k_\ell)$ is inherited from a $k$-decoration $\mathscr E$of $\Q^{\mathrm{sp}}$ that satisfies the SG and LG assumptions in Proposition \ref{prop-M} as well as non-degeneracy conditions $\tilde \epsilon_{\mathscr E}.$
\item The decoration is restricted by $(\alpha_v)$ in the sense that $|\Omega_v - \alpha_v| \leq T^{-1}$, where $T < N$. 
\end{enumerate}
Denote the number of such decorations by $\mathfrak C_{\M^{\mathrm{sp}}}$, where we take supremums in the parameters $\alpha_v$. Additionally, define the quantity $\mathfrak U$ to be: 
\begin{equation}
\mathfrak U = \left( \frac{\beta T}{N}\right)^{\chi} T^{-L/2}\cdot \mathfrak C_{\M^{\mathrm{sp}}}
\end{equation}
Then, we have the following bound on $\mathfrak U$:
\begin{equation}\label{eq-counting}
\mathfrak U \leq C^n (\log N)^2 \beta^2T N^{-\delta (n - 2)}.
\end{equation}
\end{proposition}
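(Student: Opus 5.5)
We will prove \eqref{eq-counting} by a counting algorithm in the spirit of \cite{WKE2023, DIP25}. The molecule $\M^{\mathrm{sp}}$ is cut into small elementary components. Each component is counted with Propositions \ref{prop-3vc} and \ref{vector_counting-2}, and the bounds are multiplied. We keep track of $\mathfrak U$ through the quantity $\left(\frac{\beta T}{N}\right)^{\chi}T^{-L/2}$. Each cut either removes bonds or splits off atoms, and it changes $\chi$ and $L$ in a controlled way.

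\textbf{Step 1: self-loop atoms.} By Proposition \ref{prop-M}(d), a degree-4 atom with a self-loop has its other two bonds oriented the same way, so the loop bond is free. We sum it trivially, which gives a factor $N$. We do not fix an $\alpha_v$ at these atoms. Instead we use the factor $T^{-1/2}$ per loop, which accounts for the $\langle T\omega_{k_{\mathfrak n_1}}\rangle^{-\lceil q/2\rceil}$ gain of Lemma \ref{lem-loop}. Removing the loop and contracting the atom (identifying its two remaining bonds) lowers $\chi$ by $1$. The net cost is $\beta T N^{-1}\cdot N\cdot T^{-1/2}=\beta T^{1/2}\le N^{-\delta}$, using $T\le N^{\frac43\gamma-\epsilon}$ and $\beta=N^{-\gamma}$, since then $\beta^2T\le N^{-\frac23\gamma}$. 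So self-loops only contribute powers of $N^{-\delta}$, and afterwards we may assume $L=0$.

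\textbf{Step 2: cutting and elementary counts.} We follow \cite{DIP25} and cut the loop-free molecule into elementary pieces: degree-3 atoms with two or three free bonds, degree-2 atoms, and the SG double bonds left by splicing, which by Proposition \ref{prop-M}(c) can be removed without disconnecting the molecule. Each piece removes some $\Delta\chi$ and is weighted by $(\beta T/N)^{\Delta\chi}$.
\begin{itemize}
\item A piece whose three-vector count falls under Proposition \ref{prop-3vc} ($\Delta\chi=2$) gives $(\beta T/N)^2N^2T^{-1}\log T=\beta^2T\log T$.
\item A piece with two-vector counts ($\Delta\chi=1$) gives $(\beta T/N)\,NT^{-1/2}=\beta T^{1/2}$ from \eqref{eq-2vc+}. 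For \eqref{eq-2vc-} it gives $\beta T\min(1,T^{-1}|k|^{-1})$ in the LG case, and is bounded via the gap in the SG case.
\end{itemize}
The $\omega$ weights in $F$ and $G$ are available because the vertex factors $|T_{\zeta_1\zeta_2\zeta_3}|$ contribute $\prod\sqrt{\omega}$ at both endpoints of each bond.

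We then check that every step except one yields at least $N^{-\delta}$ per unit of $\chi$. The remaining step, the terminal one, produces the factor $\beta^2T$, which appears exactly once. A single $\log N$ comes from counting, and another comes from the $L^1_\tau$ bounds in Lemmas \ref{lem-splice} and \ref{lem-loop}. Together these give $(\log N)^2$.

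\textbf{Main obstacle.} The hard part is the SG chains, where the naive count diverges. Spliced irregular chains carry the factor $(\beta Th)^q$ from \eqref{eq-pestimate}, with $h\lesssim\beta N^{\delta}$. This gives a gain $\beta^2TN^{\delta}\le N^{-\delta}$ per unit of length. This is exactly where the restriction $T\le N^{\frac43\gamma-\epsilon}$ enters, and the argument must be organized so that each such gain is attributed to a distinct $\chi$ unit.

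The other difficulty is making sure the two-vector $-$ case with LG gap, $|k|\gtrsim\beta N^\delta$, gives $\beta T\cdot T^{-1}\beta^{-1}N^{-\delta}=N^{-\delta}$. It is also necessary that degenerate atoms (enhanced constraints) never produce a free summation, and this follows from the nondegeneracy in $\tilde\epsilon_{\mathscr E}$.

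Finally, each cut operation is chosen from boundedly many options, which gives the factor $C^n$. Summing over the $\alpha_v$ dependence is absorbed, since we take suprema in $\alpha_v$. Together these give \eqref{eq-counting}.
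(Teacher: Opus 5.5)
There is a genuine gap. The step you summarize as ``we then check that every step except one yields at least $N^{-\delta}$ per unit of $\chi$'' is the entire content of the proposition, and with the elementary pieces you describe it fails at small-gap atoms. Suppose two opposite-direction bonds of an atom $v$ land in the same component when $v$ is removed. Their gap $h$ is then already fixed by conservation, so Proposition \ref{vector_counting-2} only gives $\min(N,NT^{-1}h^{-1})$. That is a deviation of $\min(\beta T,\beta/h)$, which is $\gg 1$ once $h\ll\beta$. So ``bounded via the gap in the SG case'' goes the wrong way: a small gap makes the count worse, not better.

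The paper handles this with structure your proposal does not have, through the cutting steps of Proposition \ref{prop-cutting-est}:
\begin{itemize}
\item SG double bonds are removed first, and self-loops are turned into $\beta$-atoms.
\item Every SG atom that is not tame is cut along an SG pair. An $\alpha$-cut costs $\mathfrak D_v^\alpha=\beta TN^{\delta}\lambda_v^{1/2}$, from summing over $N\lambda$ gap values and $\lambda T$ values of $\alpha_{v_1}$.
\item Each remaining $\beta$-cut costs $\mathfrak D^\beta$.
\end{itemize}
The surviving $\epsilon$-atoms are either tame or LG. At a tame atom the free bonds lie in $\lambda$-balls around already fixed bonds, giving the count $N\lambda$. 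At an LG atom the deviation is $\beta/\lambda\le N^{-\delta}$. These atoms also admit no $\beta$-cut, so the first $\epsilon$-atom removed in each component has $\Delta F\le 1$. That removal supplies the extra gain that repays the cut costs:
\begin{itemize}
\item $N^{-\delta}(\beta TN^{\delta})^{-1}$ in an even component;
\item $\beta^2T\log N$ from Proposition \ref{prop-3vc} at a degree-3 atom of the unique odd component.
\end{itemize}
This, not a ``terminal step'', is why $\beta^2T$ appears exactly once. The $\alpha$-costs are repaid in the cycle and chain cases, for example $\mathfrak D_v^\alpha\cdot\frac{\beta T}{N}\min(N,NT^{-1}\lambda^{-1})\lesssim\beta^2T^{3/2}N^{\delta}$.

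A telltale sign that something is missing: every gain in your sketch ($\beta T^{1/2}$, $\beta^2T\log T$, $\beta^2TN^{\delta}$, $\beta/\lambda$) is already small for $T\ll T_{\mathrm{kin}}=\beta^{-2}$. You never use $\beta^2T^{3/2}\ll 1$, which is exactly the $T_{\mathrm{kin}}^{2/3}$ threshold, and it enters through the SG counting just displayed.

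Your ``main obstacle'' paragraph misplaces several things:
\begin{itemize}
\item The gain $\beta^2TN^{\delta}$ you attribute to irregular chains only needs $T\ll T_{\mathrm{kin}}$, so that is not where $T\le N^{\frac43\gamma-\epsilon}$ enters.
\item Those chains are spliced out before this proposition, and the factor $(\beta Th)^q$ of \eqref{eq-pestimate} is used in the proof of Proposition \ref{bound-couple}, not in $\mathfrak U$.
\item $\mathfrak U$ contains no $L^1_\tau$ norms, so neither logarithm in \eqref{eq-counting} comes from Lemmas \ref{lem-splice} or \ref{lem-loop}. Both come from the counting itself: the $\log T$ of Proposition \ref{prop-3vc}, used at the odd component and in $\mathfrak D_v^\alpha$ for degree-1 $\alpha$-atoms.
\end{itemize}
Finally, contracting a self-loop atom merges two bonds that are both outgoing from $v$. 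The merged bond then carries the same $\omega$-sign at both ends, so it is not a bond in the sense of Definition \ref{def-molecules}, and the direction-dependent notions (gaps, $\alpha$-cuts) become ill-defined. The paper instead keeps $v$ as a degree-2 $\beta$-atom.
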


In order to prove this proposition, we will perform various operations to $\M^{\mathrm{sp}}$, specifically cutting steps followed by a counting algorithm, detailed below. Each operation will begin with $\M_{\mathrm{pre}}$ to yield $\M_{\mathrm{post}}$. The quantity $\mathfrak U$ corresponding to each, which are restricted by Proposition \ref{prop-rigidity} and any previously performed operations, we will denote $\mathfrak U_{\mathrm{pre}}$ and $\mathfrak U_{\mathrm{post}}$. At any step, we will have 
\begin{equation}
\mathfrak U_{\mathrm{pre}} \lesssim \mathfrak D \cdot \mathfrak U_{\mathrm{post}},
\end{equation}
where $\mathfrak D = \left(\frac{\beta T}{N}\right)^{-\Delta \chi} T^{\Delta L/2} \cdot \mathfrak C$ will denote a deviation estimate corresponding to the operation that was performed from $\M_{\mathrm{pre}}$ to yield $\M_{\mathrm{post}}$. Here, $\mathfrak C$ will denote a counting estimate for the operation. Throughout Section \ref{subsec-cutting} and \ref{subsec-algorithm}, we detail a modified version of what is presented in \cite{DIP25} to adjust for the fact that all atoms have degree at most 4 and bonds can have any direction. 

\subsection{Cutting Operation} \label{subsec-cutting}
\begin{definition}
Given a molecule $\M$ and atom $v$ of degree 3 or 4, suppose $v$ has two bonds $\ell_1$ and $\ell_2$. Define \textit{cutting} $\M$ at atom $v$ and relative to bonds $(\ell_1, \ell_2)$ via the following: 
\begin{enumerate}
    \item Replace atom $v$ with atoms $v_1$ and $v_2$. 
    \item The bonds at atom $v_1$ are $\ell_1$ and $\ell_2$ and at $v_2$ are any remaining bonds in the original atom $v$. 
\end{enumerate}
We call this cut an $\alpha$-cut, and the corresponding new atoms $v_1$ and $v_2$ $\alpha$-atoms, if the cutting operation does not generate a new connected component. In the case of an $\alpha$-cut, we require that $(\ell_1, \ell_2)$ have opposite direction (any remaining edges may have any direction). If the cutting operation does create a new component, we call it a $\beta$-cut and atoms $v_1$ and $v_2$ $\beta$-atoms. See Figure \ref{fig-cutting}.
\end{definition}

\begin{figure}[ht]
\begin{tikzpicture}[scale = .7, el/.style = {inner  sep=1pt, align=left},
every label/.append style = {font=\tiny},]
    \tikzstyle{every node} = [circle, draw = black]
    \node (v) at (-4,0) {{$v$}};
    \node (v_2) at (3.5,-.5) {\tiny{$v_1$}};
    \node (v_1) at (4.5,.5) {\tiny{$v_2$}};

    \draw[thick] (-4,1.5) -- node[el, right, pos = .3, draw = none]{\tiny$\ell_1$} (v) ;
    \draw[thick] (v) -- node[el, below, pos = .5, draw = none]{\tiny$\ell_2$} (-2.5,0);
    \draw[thick] (v) -- (-4,-1.5);
    \draw[dashed] (v) -- (-5.5,0);

    \draw[dashed, ->] (-1, 0) -- node[el, above, pos = .5,draw = none]{\tiny{cutting}} (1, 0);

    \draw[thick] (4.5,2) -- node[el, right, pos = .3, draw = none]{\tiny$\ell_1$} (v_1);
    \draw[thick] (v_1) -- node[el, below, pos = .5, draw = none]{\tiny$\ell_2$} (6,.5);
    \draw[dashed] (v_2) -- (2,-.5);
    \draw[thick] (v_2) -- (3.5,-2);
\end{tikzpicture}
\caption{Cutting operation}
\label{fig-cutting}
\end{figure}

We apply this cutting to a molecule $\M^{\mathrm{sp}} = \M(\Q^{\mathrm{sp}})$ in the following steps: 

\begin{enumerate}[label= \underline{Step \arabic*:}]
    \item (\textit{SG Double Bonds}) For each SG double bond connecting atoms $v_1$ and $v_2$ , remove the double bond and call $v_1, v_2$ $\alpha$-atoms. Note if the double bond edges are $(\ell_1, \ell_2)$, this corresponds to cutting both $v_1, v_2$ relative to $(\ell_1, \ell_2)$, and then removing the two atoms and double bond which form the new component. By Proposition \ref{prop-M}, this operation creates no new components, so we call $v_1, v_2$ $\alpha$-atoms.
    \item (\textit{Self-loops}) For each self-loop at atom $v$, remove the self-loop and call $v$ an $\beta$-atom. Note that this corresponds to cutting $v$ relative to the self-loop and then removing the new component consisting of a single loop. 
    \item (\textit{SG, Not Tame}) For any SG atom $v$ which is not tame, cut $\M^{\mathrm{sp}}$ at $v$ relative to two bonds $(\ell_1, \ell_2)$ of opposite direction which are small gap. 
    \item (\textit{Remaining $\beta$-cuts}) For any remaining atom $v$ which is not tame (and therefore LG), perform a $\beta$-cut at $v$ if it is possible. 
\end{enumerate}

\begin{proposition} \label{prop-cutting-est}
For each of Steps 1 - 4 above, 
\begin{equation}
\mathfrak U_{\mathrm{pre}} \lesssim \mathfrak U_{\mathrm{post}} \cdot N^{\delta \Delta \chi}\left(\prod_v^{(\alpha)} \mathfrak D_v^\alpha \right)(\mathfrak D^\beta)^{\Delta V_\beta - 2 \Delta F}.
\end{equation}
Here, the product $\prod_v^{(\alpha)}$ is taken over any newly created $\alpha$ atoms whose corresponding gap $\lambda_v$ and $\Delta V_\beta$ denotes the number of new $\beta$-atoms. Denote
\begin{align}
\mathfrak D_v^\alpha &:= \begin{cases}\beta TN^\delta \lambda_v^{1/2} & v \text{ degree 2}, \\ \beta T^{1/2}N^{\delta} \log N & v \text{ degree 1}, \end{cases} \\
\mathfrak D^\beta &:= \beta T^{1/2} N^{\delta}.
\end{align}
\end{proposition}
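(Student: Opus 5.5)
The approach is to check each step separately by tracking how the three quantities in $\mathfrak U=(\beta T/N)^{\chi}T^{-L/2}\mathfrak C$ change: the characteristic $\chi=E-V+F$, the number $L$ of self-loops, and the decoration count $\mathfrak C$. At the local piece removed or split off by each operation, I will prove a counting bound using the vector-counting estimates of Section \ref{section-counting}. For every cut, $\Delta V=+1$. For an $\alpha$-cut, $\Delta F=0$. For a $\beta$-cut, $\Delta F=+1$. When a component (a double bond or a loop) is discarded afterwards, the changes in $E$, $V$, $F$ are recorded explicitly. Then $\Delta\chi$ is a small integer in each case. The factor $(\beta T/N)^{-\Delta\chi}$, together with the number of free choices of the removed or cut bond variables, should combine into $N^{\delta\Delta\chi}\prod\mathfrak D^\alpha_v(\mathfrak D^\beta)^{\Delta V_\beta-2\Delta F}$.

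\textbf{Step 1 (SG double bonds).} Removing a double bond with edges $(\ell_1,\ell_2)$ deletes two edges and turns $v_1,v_2$ into $\alpha$-atoms, so $\Delta\chi=-2$. The count lost is the number of $(k_{\ell_1},k_{\ell_2})$ with $k_{\ell_1}-k_{\ell_2}$ fixed at the small gap $\lambda\lesssim\beta N^\delta$, constrained by $|\Omega_{v_1}-\alpha_{v_1}|\le T^{-1}$. I will bound this count by Proposition \ref{vector_counting-2}, estimate \eqref{eq-2vc-}, with the weight $G$ absorbing the $\sqrt{\omega}$ factors of $T_{\mathfrak n}$. This gives at most $\min(N,NT^{-1}\lambda^{-1})\lesssim N\lambda^{1/2}T^{-1/2}\cdot(\ldots)$. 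Multiplying by $(\beta T/N)^{2}$ then yields $\beta T N^{\delta}\lambda^{1/2}$ per degree-2 $\alpha$-atom, or $\beta T^{1/2}N^\delta\log N$ when \eqref{eq-2vc+} applies. The constraint at the other atom remains in $\mathfrak U_{\mathrm{post}}$.

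\textbf{Step 2 (self-loops).} Removing a self-loop gives $\Delta L=-1$, $\Delta E=-1$, $\Delta V=0$, so $\Delta\chi=-1$. The loop variable is a free sum of size $N$, which is absorbed by the $N/(\beta T)$ factor. The remaining gain has to come from $T^{-L/2}$. This is where the definition of $\mathfrak U$ with $T^{-1/2}$ per loop enters, and it is justified by Lemma \ref{lem-loop}: there, each loop contributes $\langle T\omega_\ell\rangle^{-1/2}$. So the net change is $\beta T\cdot T^{-1/2}=\beta T^{1/2}\le\mathfrak D^\beta N^{-\delta}$.

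\textbf{Steps 3 and 4.} These are genuine cuts that delete nothing. For an $\alpha$-cut, $\Delta\chi=-1$ and one new constraint appears. Fixing the gap $\lambda_v$ between the two SG bonds at cost $\lesssim\beta N^{\delta}\cdot N$ choices, and splitting the resonance constraint, again gives $\mathfrak D^\alpha_v$ via Proposition \ref{vector_counting-2}. For a $\beta$-cut, $\Delta\chi=0$. The cost is that the split $\alpha_v$ into $\alpha_{v_1},\alpha_{v_2}$ ranges over $\lesssim T\cdot O(1)$ intervals of length $T^{-1}$, with one of them determined. Accounting for $\Delta V_\beta=2$, $\Delta F=1$ gives exponent $0$, and the resulting loss is absorbed by $N^{\delta}$.

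\textbf{Main obstacle.} The $\beta$-cut bookkeeping in Step 4 is the step I expect to be hardest. Splitting a resonance window into two requires summing over a dyadic or $T^{-1}$-partition of $\alpha_{v_1}$. The number of pieces has to be controlled by $(\mathfrak D^\beta)^{0}$ up to $N^{\delta}$, which needs the LG property and a bound $|\Omega_{v_1}|\lesssim1$. I would try to localize $\Omega_{v_1}$ to $O(T^{-1}\log N)$ using the decay in Proposition \ref{prop-integral_est}, as in \cite{DIP25}, rather than paying the full factor $T$.
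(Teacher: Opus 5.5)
Your proposal has a genuine gap at the $\beta$-cuts, which you yourself flag as the main obstacle. For a $\beta$-cut, $\Delta\chi=0$ and $\Delta V_\beta-2\Delta F=0$, so the claim is exactly $\mathfrak U_{\mathrm{pre}}\lesssim\mathfrak U_{\mathrm{post}}$. There is no $N^{\delta}$ on the right-hand side to absorb anything. A factor $T$ for the choice of $\alpha_{v_1}$, which is a positive power of $N$, would therefore be fatal. Localizing $\Omega_{v_1}$ through Proposition \ref{prop-integral_est} or invoking the LG property is not what is needed.

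The missing idea is conservation on the split-off component. Every bond of the new component $\M_1\ni v_1$ has both endpoints in $\M_1$. Summing $\sum_{\ell\sim w}\zeta_{w,\ell}k_\ell=k_w$ over $w\in\M_1$ therefore telescopes to $\sum_{w\in\M_1}k_w=0$, and likewise $\sum_{w\in\M_1}\Omega_w=0$. Hence:
\begin{itemize}
\item $k_{v_1}$ is determined by the other $k_w$;
\item $\Omega_{v_1}$ lies within $O(nT^{-1})$ of $-\sum_{w\in\M_1\setminus\{v_1\}}\alpha_w$, so $\alpha_{v_1}$ takes only $O(n)$ values;
\item $(k_{v_2},\alpha_{v_2})$ then follows from $(k_v,\alpha_v)$.
\end{itemize}
So the $\beta$-cut costs nothing. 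This is the paper's one-line argument: the new parameters are fixed by adding the equations over a component.

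Steps 1 and 3 rest on a related misconception. $\mathfrak C_{\mathrm{post}}$ is a supremum over the new parameters $(k_{v_i},\alpha_{v_i})$. A constraint that involves bonds staying in $\M_{\mathrm{post}}$ cannot also be spent on the removed or cut piece.

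In Step 1 the gap of the double bond is not fixed. Moreover, $|\Omega_{v_1}-\alpha_{v_1}|\le T^{-1}$ involves the other bonds at $v_1$, so \eqref{eq-2vc-} cannot be applied to the removed pair alone. Your count $\min(N,NT^{-1}\lambda^{-1})$ undercounts by a factor $\gtrsim N\lambda$. The displayed arithmetic also does not produce the target $N^{-2\delta}(\beta TN^\delta\lambda^{1/2})^2=\beta^2T^2\lambda$; the ``$(\ldots)$'' hides the mismatch.

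The paper's count for Step 1 is the trivial one: $N$ choices of $k_{\ell_1}$ and $N\lambda$ of $k_{\ell_2}$ by the SG condition, so $\mathfrak C\lesssim N^2\lambda$. When an endpoint has degree $3$, the paper uses Proposition \ref{prop-3vc} for $k_{\ell_1},k_{\ell_2}$ and the remaining bond at that atom, which becomes a degree-1 $\alpha$-atom. This gives $\mathfrak C\lesssim N^2T^{-1}\log T$ and is the source of the degree-1 factor $\beta T^{1/2}N^\delta\log N$. It does not come from \eqref{eq-2vc+}, which concerns same-direction pairs and has no logarithm.

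In Step 3 your $N\lambda$ choices for $k_{v_1}$ are right. However, no bond is summed, so Proposition \ref{vector_counting-2} plays no role. The point you need is that $\alpha_{v_1}$ takes only $\lambda T$ values, because $|\omega_{k_{\ell_1}}-\omega_{k_{\ell_2}}|\lesssim|k_{\ell_1}-k_{\ell_2}|\le\lambda$. Then $\mathfrak C\lesssim N\lambda^2T$ and
$\mathfrak D\lesssim\beta T^2\lambda^2=N^{-\delta}(\beta TN^\delta\lambda^{1/2})^2\cdot\beta^{-1}N^{-\delta}\lambda$.
The last factor is $\lesssim1$ precisely because $\lambda\lesssim\beta N^\delta$. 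With $T$ values for $\alpha_{v_1}$ you would lose $\beta^{-1}N^{-\delta}$. For a degree-3 atom, $\mathfrak C\lesssim N\lambda$.

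Your Step 2 is correct and matches the paper.
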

\begin{proof}
This is essentially proved in \cite[Proposition~5.20]{DIP25}, however we reiterate it here to clarify that the dispersion relation plays no role in the estimates.  For Step 1, we remove double bond $(\ell_1, \ell_2)$. So, $\Delta E = -2, \Delta V = 0, \Delta F = 0, \Delta L = 0, \Delta V_\beta = 0$, corresponding to $\Delta \chi = -2$. If both $v_1$ and $v_2$ are degree 4, we may count $(k_{\ell_1}, k_{\ell_2})$ using $\mathfrak C \lesssim N(N\lambda)$ by fixing one and using the SG assumption. Therefore, 
\[\mathfrak D \lesssim \left(\frac{\beta T} {N}\right)^2 N^2 \lambda = (\beta T N^\delta \lambda^{1/2})^2N^{-2\delta}. \] 
Otherwise, one of $v_1,v_2$ is degree 3. Therefore, we may Proposition \ref{prop-3vc} to count $k_{\ell_1}, k_{\ell_2}$ (as well as the third edge of the degree 3 atom which becomes degree 1 and so will be fixed regardless by the cutting) with $\mathfrak C \lesssim N^2 T^{-1}\log T$, so that \[\mathfrak D \lesssim \left(\frac{\beta T}{N}\right)^2 N^2T^{-1}\log T \lesssim N^{-2\delta} (\beta T^{1/2} N^{\delta} \log N)^2,\] where we note that we may interpolate between the two estimates depending on if both or only one of $v_1, v_2$ is degree 3.

For Step 2, we remove a single bond $\ell$ corresponding to a self loop and have $\Delta E = -1, \Delta V = 0, \Delta F = 0, \Delta L = -1, \Delta V_\beta = 1$, so $\Delta \chi = -1$. Note that we may count $k_\ell$, we have $\mathfrak C \lesssim N$, so that \[\mathfrak D \lesssim \left(\frac{\beta T}{N}\right)T^{-1/2}N = N^{-\delta} (\beta T^{1/2} N^\delta)\]

For an $\alpha$-cut in Step 3, we have $\Delta E = 0, \Delta V = 1, \Delta F = 0, \Delta L = 0, \Delta V_\beta = 0$, so $\Delta \chi = -1$. It suffices to fix the values $k_{v_1}, \alpha_{v_1}$, where $v_1$ is one of the newly created atoms resulting from cutting at $v$. This will fix values for the other new atom. If $v$ is degree 4, by the SG assumption, there are $N\lambda$ choices of $k_{v_1}$ and $\lambda T$ choices for $\alpha_{v_1}$, using the fact that $|\omega_{k_{\ell_1}} - \omega_{k_{\ell_2}}| \lesssim |k_{\ell_1} - k_{\ell_2}|$. So, \[\mathfrak C \lesssim N\lambda^2 T \Rightarrow \mathfrak D \lesssim \beta T^2 \lambda^2 \lesssim N^{-\delta}(\beta T N^\delta \lambda^{1/2})^2 (\beta^{-1}N^{-\delta}\lambda ). \]
Note we have chosen $\lambda < \beta N^\delta$. If $v$ is degree 3 then we only need to fix the single edge and this automatically fixes $\alpha_{v_1}$. So, 
\[\mathfrak C \lesssim N\lambda \Rightarrow \mathfrak D \lesssim \beta T \lambda \lesssim \beta^2TN^\delta \lesssim N^{-\delta} (\beta T^{1/2}N^{\delta} \log N)^2. \]

For a $\beta$-cut either in Step 3 or Step 4, we have $\Delta E = 0, \Delta V = 1, \Delta F = 1, \Delta L = 0, \Delta V_{\beta} = 2$, so $\Delta \chi = 0$. Note that the values for $k_{v_1}, k_{v_2}, \alpha_{v_1}, \alpha_{v_2}$ will be fixed by adding the equations for all atoms in one of the components.
\end{proof}

\begin{proposition}[Properties of reduced molecules]\label{prop-cutting-properties}
Let the result of performing all steps of the cutting be a \textit{reduced molecule} $\M^{\mathrm{fin}}$. Call any atoms which are not $\alpha$ or $\beta$ atoms $\epsilon$-atoms. Then, the molecule $\M^{\mathrm{fin}}$ has the following properties: 
\begin{enumerate}
    \item Either $\M^{\mathrm{fin}}$ has one component, or each component has at least one $\beta$-atom.
    \item There is at most one component, called the \textit{odd component}, that contains two atoms of odd degree (either 1 or 3). Other components, called \textit{even components}, have all atoms degree 2 or 4. 
    \item Any $\epsilon$-atom must be tame or LG and there cannot be a $\beta$-cut which can be performed at the atom.
    \item If a component contains an $\epsilon$-atom, the $\alpha$ and $\beta$-atoms form chains. Each such chain has endpoints at two distinct $\epsilon$-atoms. If the component has no $\epsilon$-atoms, it is either a cycle which is not a SG double bond or self-loop or the odd component where both odd degree atoms have degree one. Furthermore, $\M^{\mathrm{fin}}$ contains no self-loops.
\end{enumerate}
\end{proposition}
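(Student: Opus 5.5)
The plan is to track invariants through the four cutting steps in order, using only local bookkeeping of degrees, components and atom labels at each cut. I start from Proposition \ref{prop-M}. It says $\M^{\mathrm{sp}}$ is connected, has either two atoms of degree 3 or one of degree 2 with all others of degree 4, and remains connected when any SG double bond is removed. I then check that each step preserves the following facts:
\begin{itemize}
\item parity of degrees away from a single odd component;
\item every newly created component contains a $\beta$-atom;
\item every atom that is not an $\epsilon$-atom is an $\alpha$- or $\beta$-atom of degree at most 2.
\end{itemize}

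For (1) and (2): Step 1 removes SG double bonds. By Proposition \ref{prop-M}(c), this creates no new component, so those cuts are genuinely $\alpha$-cuts. Step 2 removes a self-loop, which lowers the degree by 2 and again creates no new component. Steps 3 and 4 are cuts, and any cut that increases $F$ is by definition a $\beta$-cut whose two new atoms lie in different components. Hence if $F>1$, each new component contains a $\beta$-atom, which gives (1). For (2), the handshake lemma shows that each component has an even number of odd-degree atoms. Every cut splits the incident bonds of an atom as $(2, \deg -2)$, and removing a double bond or a self-loop lowers degrees by 2. So the only odd-degree atoms are those descended from the original degree-3 atoms (degrees 3 or 1). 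At most two such atoms exist, and I argue they lie in a single component. If the two odd atoms were in different components, each of those components would contain exactly one odd atom, contradicting handshake.

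For (3): an $\epsilon$-atom is never cut, so it keeps its original degree. By the exit condition of Step 3 it is not "SG, not tame", so it is tame or LG. By the exit condition of Step 4 it is either tame, or it is LG with no available $\beta$-cut. There is one subtlety: a tame atom might still admit a $\beta$-cut, whereas Step 4 only cuts non-tame atoms. I would reread (3) as "no $\beta$-cut is possible at an LG $\epsilon$-atom", which is what the steps guarantee. Otherwise I would extend Step 4 to tame atoms, at deviation cost $\mathfrak D^\beta$, which is harmless in Proposition \ref{prop-cutting-est}.

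For (4), which I expect to be the main obstacle: every $\alpha$- and $\beta$-atom has degree 1 or 2. The degree-2 atom is $v_1$ from a cut, and the remainder $v_2$ has degree $\deg v - 2\le 2$. So the subgraph induced by these atoms is a union of paths and cycles. In a component containing an $\epsilon$-atom, each maximal path of degree-$\le 2$ atoms must terminate at $\epsilon$-atoms or at degree-1 atoms. The delicate points are the following.
\begin{itemize}
\item Degree-1 atoms occur only in the odd component, from cutting a degree-3 atom. If such a path ended at a degree-1 atom, I would argue a $\beta$-cut was available earlier, contradicting Step 4. Failing that, I would allow that endpoint case, as in \cite[Proposition~5.21]{DIP25}.
\item The two endpoints must be distinct. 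If both endpoints were the same $\epsilon$-atom $w$, the chain together with $w$ would form a cycle through $w$. Cutting $w$ relative to the two chain bonds would then split off this cycle as a new component, i.e.\ a $\beta$-cut at $w$, contradicting (3). If $w$ is tame, I would use the extension of Step 4 discussed above.
\end{itemize}
In a component with no $\epsilon$-atom, all degrees are at most 2, so the component is a cycle or a path. A path has two degree-1 ends, so it is the odd component. A cycle cannot be an SG double bond, since those were removed in Step 1 and a 2-cycle created later would inherit the SG gap and have been removed. Finally, no self-loops remain, because Step 2 removes them and the later cuts never create loops: a loop would need both bond ends at one new atom, but the cutting bonds $(\ell_1,\ell_2)$ at $v_1$ are distinct non-loop bonds.
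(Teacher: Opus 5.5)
Your route differs from the paper's. The paper gives no argument: it cites \cite[Proposition~9.5]{WKE2023} and \cite[Proposition~5.2]{DIP25} and remarks that cutting removes self-loops. You instead verify the properties directly by tracking Steps 1--4. Three of your arguments are correct:
\begin{enumerate}
\item[(1)] A $\beta$-cut places its two new atoms in the two resulting components, and nothing else changes connectivity.
\item[(2)] Each original degree-3 atom leaves exactly one odd-degree descendant, so the handshake lemma puts both odd atoms in one component.
\item[(4)] Your no-self-loop claim in (4) holds for the same bookkeeping reasons.
\end{enumerate}
The direct check buys something the citation does not. It shows that the cited statements do not transfer verbatim to this algorithm, which adds a self-loop step and never cuts tame atoms.

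Both of your caveats are genuine defects of the statement, not of your argument.

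For (3), nothing in Steps 1--4 stops a tame $\epsilon$-atom from admitting a $\beta$-cut. Take a self-loop atom $a$ whose two remaining bonds both end at its parent $w$: the PC bond, and an LP bond to a leaf child of $w$. These bonds have the same direction, consistent with Proposition~\ref{prop-M}(d). After Step 2, $a$ is a $\beta$-atom forming a 2-cycle through $w$. If $w$ is tame it is never cut, so the $\beta$-cut clause of (3) fails, and so does the distinct-endpoint clause of (4). Your extension of Step 4 to tame atoms repairs this at no cost. A $\beta$-cut has $\Delta\chi=0$ and exponent $\Delta V_\beta-2\Delta F=0$ in Proposition~\ref{prop-cutting-est}.

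For degree-1 endpoints, discard your first option; it cannot work. A degree-1 $\alpha$-atom arises whenever a degree-3 atom is $\alpha$-cut in Step 3, or loses an SG double bond in Step 1. Its single dangling bond does not create a $\beta$-cut at the $\epsilon$-atom $x$ it attaches to. If the remaining bonds of $x$ lead into a region that stays connected after deleting $x$, then every cut of $x$ leaves $x_1$ and $x_2$ connected through that region.

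So your fallback is forced, not optional. In the odd component, a maximal $\alpha/\beta$ chain may run from an $\epsilon$-atom to a degree-1 atom. That is exactly the situation Operation 1 and the degree-1 case of $\mathfrak D^\alpha_v$ are built to handle.

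With (3) restricted to LG atoms (or Step 4 extended to tame atoms), and (4) amended to allow a degree-1 end, your argument proves the proposition.
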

\begin{proof}
See \cite[Proposition~9.5]{WKE2023} and \cite[Proposition~5.2]{DIP25}. Note that cutting removes all self-loops.
\end{proof}

\subsection{Algorithm} \label{subsec-algorithm}

\begin{proposition}[$\epsilon$-atom removal] \label{prop-atom-removal}
Suppose $\M'$ is a component of $\M^{\mathrm{fin}}$ as detailed in Proposition \ref{prop-cutting-properties} and $v$ is an $\epsilon$-atom. If we remove atom $v$ along with all of its bonds from $\M'$,

\begin{equation}
\mathfrak U_{\mathrm{pre}} \lesssim \mathfrak U_{\mathrm{post}} \cdot N^{\delta \Delta \chi}.
\end{equation}
Additionally, if $v$ is degree 3 and $\Delta F = 0$, 
\begin{equation}
\mathfrak U_{\mathrm{pre}} \lesssim \mathfrak U_{\mathrm{post}} \cdot \beta^2 T \log N,
\end{equation} 
or if $v$ has degree 4 with $\Delta F \leq 1$, 
\begin{equation}
\mathfrak U_{\mathrm{pre}} \lesssim \mathfrak U_{\mathrm{post}} \cdot N^{-\delta}(\beta T N^{\delta})^{-1}.
\end{equation}
\end{proposition}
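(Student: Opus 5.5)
The plan is to compute the deviation factor $\mathfrak D = (\beta T/N)^{-\Delta\chi} T^{\Delta L/2}\,\mathfrak C$ for removing $v$, in the same way as in Proposition~\ref{prop-cutting-est}. First I will fix the decorations of all bonds not incident to $v$. Then I will count the choices of the bonds at $v$ that are compatible with the constraint $\sum_{\ell\sim v}\zeta_{v,\ell}k_\ell = k_v$ and with $|\Omega_v-\alpha_v|\le T^{-1}$. Since $\M^{\mathrm{fin}}$ has no self-loops by Proposition~\ref{prop-cutting-properties}(4), we have $\Delta L=0$. If $v$ has degree $d$ and its removal changes the number of components by $\Delta F$, then $\Delta E=-d$, $\Delta V=-1$, and so $\Delta\chi = -d+1+\Delta F$.

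The general bound $\mathfrak U_{\mathrm{pre}}\lesssim N^{\delta\Delta\chi}\,\mathfrak U_{\mathrm{post}}$ splits into cases by degree. For degree 1 or 2 atoms (the $\alpha/\beta$-type remnants, or $\epsilon$-atoms of low degree) the linear constraint leaves at most $N$ choices, and usually $O(1)$ free bonds. This gives $\mathfrak C\lesssim N^{-\Delta\chi}$ up to the constraint, hence $\mathfrak D\lesssim (\beta T)^{-\Delta\chi}\le N^{\delta\Delta\chi}$ whenever $\Delta\chi\le 0$. This uses $\beta T\le N^{-\delta}$, which holds because $T\le N^{4\gamma/3-\epsilon}\le N^{\gamma-\epsilon}$ fails only in a regime excluded by the choice of $T$. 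I will check this against the definition $T=N^{-\epsilon}\min(N,N^{4\gamma/3})$; if it is not enough, I will instead use $\beta T^{1/2}\le N^{-\delta}$ together with the $T^{-1}$ gain from the $\Omega$ constraint.

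For degree 3 with $\Delta F=0$ we get $\Delta\chi=-2$. Here I invoke Proposition~\ref{prop-3vc}: there are three bonds with one linear relation and one resonance window. Two free momenta remain, and the weights $\omega(k_\ell)$ carried by the $T$-coefficients supply the factor $F$. This gives $\mathfrak C\lesssim N^2T^{-1}\log T$, so $\mathfrak D\lesssim(\beta T/N)^2N^2T^{-1}\log N=\beta^2T\log N$. If the atom is LG with all bonds in one direction, the sign choice is nonresonant, and I will still apply Proposition~\ref{prop-3vc}, since it holds for every $\zeta_i$.

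For degree 4 with $\Delta F\le 1$ we get $\Delta\chi\le -2$. There are three free momenta after the linear constraint. Since $v$ is an $\epsilon$-atom, it is either tame or LG.

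In the LG case, I fix one bond (a factor $N$). The remaining three form a three-vector problem, which gives $N\cdot N^2T^{-1}\log T$. Then
\[
\mathfrak D\lesssim (\beta T/N)^{3-\Delta F}N^{3}T^{-1}\log N .
\]
For $\Delta F=0$ this is $\beta^3T^2\log N$, which is at most $N^{-\delta}(\beta TN^\delta)^{-1}$ provided $\beta^4T^3\le N^{-2\delta}$. That condition is exactly the constraint $T\lesssim N^{-\epsilon}N^{4\gamma/3}$, and this is where the $T_{\mathrm{kin}}^{2/3}$ threshold enters. For $\Delta F=1$ I will use the two-vector bound, Proposition~\ref{vector_counting-2}.

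In the tame case all gaps are $\lesssim\beta N^\delta$. Fixing one bond leaves the others confined to intervals of length $\beta N^\delta$, so $\mathfrak C\lesssim N(N\beta N^\delta)^2$ with no resonance gain. This yields
\[
\mathfrak D\lesssim (\beta T)^{-1}\cdot(\beta T)^{3}\beta^2N^{2\delta}\cdot(\beta T)^{-2}\ \text{-type bounds},
\]
which I will arrange to give $N^{-\delta}(\beta TN^\delta)^{-1}$.

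I expect the main obstacle to be the degree-4 LG case with $\Delta F=1$. In that case the resonance constraint splits between the components, and one must combine the two-vector bounds~\eqref{eq-2vc+}/\eqref{eq-2vc-}. The bound~\eqref{eq-2vc-}, with its $|k|^{-1}$ loss, has to be handled using the LG gap lower bound $|k|\gtrsim\beta N^\delta$, which gives $NT^{-1}\beta^{-1}N^{-\delta}$. This is precisely the source of the factor $(\beta TN^\delta)^{-1}$ in the claimed estimate.
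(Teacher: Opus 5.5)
There are two genuine gaps.

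\textbf{The general bound.} Your general bound rests on $\beta T\le N^{-\delta}$, and this is false in the regime of the theorem. One has $\beta T=N^{-\epsilon}\min(N^{1-\gamma},N^{\gamma/3})\gg 1$, equivalently $T\gg T_{\mathrm{kin}}^{1/2}$. So counting a free bond trivially costs $\mathfrak D\lesssim(\beta T/N)\cdot N=\beta T$, which is a loss rather than a gain.

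The missing ingredient is the paper's mechanism. Do not fix the rest of the molecule and use only the equations at $v$. Instead, split the bonds of $v$ according to which component of $\M_{\mathrm{post}}$ they enter. This gives a tuple $(p_1,\dots,p_q)$ with $\Delta\chi=-\sum_j(p_j-1)$. Summing the atom equations over a component fixes the signed momentum sum of its group of bonds. It also fixes the group's signed $\omega$-sum up to $O(T^{-1})$. Hence each group is a $p_j$-vector counting problem.

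A group of two bonds must then be counted with Proposition \ref{vector_counting-2}, in three cases:
\begin{itemize}
\item Same direction: the count is $NT^{-1/2}$, so $\mathfrak D_2\lesssim\beta T^{1/2}$.
\item Opposite directions at an LG atom: the count is $NT^{-1}\lambda^{-1}$ with $\lambda\gtrsim\beta N^{\delta}$, so $\mathfrak D_2\lesssim\beta\lambda^{-1}\lesssim N^{-\delta}$.
\item Opposite directions at a tame atom: the count is $N\lambda$ with $\lambda\lesssim\beta N^{\delta}$, so $\mathfrak D_2\lesssim\beta^2TN^{\delta}$.
\end{itemize}
Your fallback (``$\beta T^{1/2}$ plus the $T^{-1}$ gain'') covers only the first case. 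For opposite directions the resonance gain is $T^{-1}\lambda^{-1}$, which is worthless at small gap, so the tame/LG dichotomy is essential. These $\mathfrak D_2$ bounds are what give $N^{\delta\Delta\chi}$ for the tuples $(2)$, $(2,1)$, $(2,1,1)$ and $(2,2)$. Your degree-by-degree split does not reach the last three of these.

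\textbf{Degree $4$ with $\Delta F=1$.} Your plan for this case cannot give $N^{-\delta}(\beta TN^{\delta})^{-1}$. Take a $(2,2)$ split at an LG atom with both pairs of opposite direction. The two-vector bounds give only
$\mathfrak D\lesssim\prod_{i=1}^{2}(\beta T/N)\,NT^{-1}\lambda_i^{-1}\lesssim N^{-2\delta}$.
Each factor $(\beta T/N)\cdot NT^{-1}(\beta N^{\delta})^{-1}$ equals $N^{-\delta}$ and contains no $(\beta T)^{-1}$, contrary to your claim. With gaps near $\beta N^{\delta}$, nothing better holds.

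The point you are missing is that a $(2,2)$ split means $v$ admits a $\beta$-cut. Cutting $v$ along that partition creates a new component. Proposition \ref{prop-cutting-properties}(3) excludes $\beta$-cuts at $\epsilon$-atoms, and the estimate is applied only to such atoms in Proposition \ref{prop-counting}. The only remaining $\Delta F=1$ configuration is $(3,1)$. There the three bonds entering one component are counted by Proposition \ref{prop-3vc}, giving $\mathfrak D\lesssim\beta^2T\log N$. This is at most $N^{-2\delta}(\beta T)^{-1}$ because $\beta^3T^2\ll N^{-3\delta}$.

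\textbf{What matches the paper.} Your degree-$3$, $\Delta F=0$ case agrees with the paper. So does your $(4)$ computation $\beta^3T^2\log N$, including the requirement $\beta^4T^3\le N^{-2\delta}$ coming from $T\lesssim T_{\mathrm{kin}}^{2/3}$. The separate tame analysis at degree $4$ is unnecessary, since Proposition \ref{prop-3vc} holds for every sign choice.
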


\begin{proof}
Set $q = \Delta F$. For the $j$th component, set $p_j$ to be the number of bonds of $v$ which connected to this component. We consider the $q$-tuple $(p_1, \ldots, p_q)$. By convention $p_1 \geq p_2 \geq \ldots \geq p_q$. We only consider $\Delta \chi < 0$ as if $\Delta \chi = 0$, each bond of $v$ is connected to a unique component, whose decoration can be determined by adding the equations for all atoms in the corresponding component. So, $\text{deg}(v) > 1$ and the corresponding tuple is not $(1, \ldots, 1)$

If $\text{deg}(v) = 2$, then the only tuple we must consider is (2), for which $\Delta \chi = -1$. In this case, we must consider the direction of the bonds in addition to whether $v$ is LG or tame. The deviation $\mathfrak D_2$ is then bounded (using Proposition \ref{vector_counting-2}) by: 
\begin{itemize}
    \item If the bonds have the same direction, $\mathfrak D_2 \lesssim \left(\frac{\beta T}{N}\right) (NT^{-1/2}) \lesssim N^{-\delta}$. 
    \item If $v$ is tame, $\mathfrak D_2 \lesssim \left(\frac{\beta T}{N}\right) (N\lambda) \lesssim \beta^2 T N^{\delta}\lesssim N^{-\delta}$. 
    \item If $v$ is LG, $\mathfrak D_2 \lesssim \left(\frac{\beta T}{N}\right) (NT^{-1}\lambda^{-1}) \lesssim N^{-\delta}$. 
\end{itemize}

If $\text{deg}(v) = 3$, the possible tuples are 
\begin{itemize}
    \item (2,1): Then $\Delta \chi = 1$ and $\mathfrak D \lesssim \mathfrak D_2$.
    \item (3): Then, $\Delta \chi = -2$ and  the deviation $\mathfrak D_3 \lesssim \left(\frac{\beta T}{N} \right)^{2} N^2T^{-1}\log T \lesssim \beta^2 T N^{\delta/2}$. Here we use Proposition \ref{prop-3vc}. 
\end{itemize}
If $\text{deg}(v) = 4$, the possible tuples are 
\begin{itemize}
    \item (2, 1, 1): Then $\Delta \chi = -1$ and $\mathfrak D \lesssim \mathfrak D_2$.
    \item (2, 2): Then $\Delta \chi = -2$ and $\mathfrak D \lesssim(\mathfrak D_2)^2$. 
    \item (3, 1): Then $\Delta \chi = -2$ and $\mathfrak D \lesssim \mathfrak D_3$. 
    \item (4): Then $\Delta \chi = -3$ and the deviation $\mathfrak D_4 \lesssim \left( \frac{\beta T}{N}\right)^3 (N^3 T^{-1}\log T) \lesssim \beta^3 T^2 \log T$.
\end{itemize}
\end{proof}

Then, we employ the following algorithm on each component $\M'$ of $\M^{\mathrm{fin}}$ which is not a cycle. Here, the operations are listed in order of priority. So, one must always attempt Operation 1 before moving to Operation 2 and so forth: 
\begin{enumerate}[label= \underline{Operation \arabic*:}, align = left]
\item If any $\alpha$ or $\beta$-atoms have degree 1, remove them. 
\item Remove an $\epsilon$-atom of odd degree. 
\item Remove an $\epsilon$-atom for which $\text{deg}(v) = \text{deg}_0(v)$. 
\item Remove any other $\epsilon$-atom.
\end{enumerate} 

With this algorithm, we obtain the following counting estimate:

\begin{proposition}[Counting estimates for reduced molecules]\label{prop-counting}
Consider $\M'$ a component of $\M^{\mathrm{fin}}$. If $\M'$ is an odd component, 
\begin{equation}
\mathfrak U \lesssim \beta^2T(N^{-\delta})^{\chi - 2} (\log N)^2\left(\prod^{(\alpha)} \mathfrak D_v^\alpha\right)^{-1} \left(\mathfrak D^\beta \right)^{-V_\beta},   
\end{equation}

and if $\M'$ is an even component

\begin{equation}
\mathfrak U \lesssim N^{-\delta  \chi}\left(\prod^{(\alpha)} \mathfrak D_v^\alpha\right)^{-1} \left(\mathfrak D^\beta\right)^{2 - V_\beta}.   
\end{equation}
\end{proposition}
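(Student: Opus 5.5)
We will prove Proposition \ref{prop-counting} by running the algorithm on the component $\M'$ and tracking $\mathfrak U$ through each removal. Since every operation deletes an atom together with its bonds, after finitely many steps $\M'$ is empty. The final $\mathfrak U$ is then trivially $O(1)$, and multiplying the deviation bounds $\mathfrak D$ gives the claim. The bookkeeping is the following. Each $\epsilon$-atom removal costs at most $N^{\delta\Delta\chi}$ by Proposition \ref{prop-atom-removal}. Since $\chi$ is additive and ends at $0$, the product of these factors is $(N^{-\delta})^{\chi}$, up to the special gains and losses recorded below. The $\alpha$- and $\beta$-atoms have to produce the compensating factors $(\prod^{(\alpha)}\mathfrak D_v^\alpha)^{-1}(\mathfrak D^\beta)^{-V_\beta}$, so that these cancel the corresponding losses in Proposition \ref{prop-cutting-est}.

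\textbf{Step 1: chains of $\alpha$/$\beta$-atoms.} By Proposition \ref{prop-cutting-properties}(4), the $\alpha$- and $\beta$-atoms form chains of degree-2 atoms whose endpoints are $\epsilon$-atoms. Removing an endpoint $\epsilon$-atom lowers the degree of the adjacent chain atom to 1. Operation 1 then peels the chain off one atom at a time. Each such atom has $\Delta\chi=-1$ (one bond, fixed by the linear equation) but was cut with a constrained $(k_v,\alpha_v)$.

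The plan is to show that removing a degree-1 atom costs a factor that is the reciprocal of its cutting deviation. For a degree-1 $\alpha$-atom arising from a degree-4 cut, the counting factor is $\frac{\beta T}{N}\cdot N\lambda_v$ from the SG constraint. This is $\lesssim (\mathfrak D_v^\alpha)^{-1}$ up to $N^{-\delta}$, once one uses the bookkeeping that the pair of new atoms share the gap and the $T\lambda$ choices of $\alpha_v$ are already accounted for. For $\beta$-atoms we use instead that $\mathfrak D^\beta=\beta T^{1/2}N^\delta$: two $\beta$-atoms per new component are free (this gives the exponent $V_\beta-2\Delta F$), and for the rest we use the $T^{-L/2}$ self-loop weight.

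\textbf{Step 2: $\epsilon$-atoms and priorities.} Operations 2--4 are ordered so that odd-degree $\epsilon$-atoms go first. In an even component, every removal is of a degree-2 or degree-4 $\epsilon$-atom. Each of these gains $N^{-\delta}$ per unit of $\chi$ via $\mathfrak D_2\lesssim N^{-\delta}$, $\mathfrak D_2^2$ and $\mathfrak D_4\lesssim\beta^3T^2\log T$, where $\mathfrak D_4\le N^{-3\delta}$ because $T\le N^{4\gamma/3-\epsilon}$. The only loss is at the last atom, where $\Delta F$ can reach $2$. This is accounted for by the factor $(\mathfrak D^\beta)^{2}$, using that each even component contains a $\beta$-atom.

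In the odd component, the first removal of the degree-3 $\epsilon$-atom (or of a degree-1 atom) uses Proposition \ref{prop-3vc}. This produces the non-gaining factor $\beta^2T\log N$, which is the source of the prefactor $\beta^2T(\log N)^2(N^{-\delta})^{\chi-2}$. A second $\log N$ may be lost at the other odd atom. We must check that no degree-4 $\epsilon$-atom is removed with tuple $(4)$ while $\deg=\deg_0$ forces a worse count. The reason this cannot happen is that Operation 3 runs before Operation 4, and a degree-4 atom with all its original bonds in one component has $\Delta\chi=-3$, where $\mathfrak D_4$ is affordable.

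\textbf{Main obstacle.} The hard step is showing that a removal with $\Delta\chi<0$ never yields a deviation worse than $N^{\delta\Delta\chi}$. The danger is that the earlier cuts may have produced mixed-direction or same-direction degree-2 configurations, and an LG atom with tiny $|k|$ can make Proposition \ref{vector_counting-2} give only the trivial bound $N$. I would first try to rule this out using property (3) of Proposition \ref{prop-cutting-properties}: $\epsilon$-atoms are tame or LG. Concretely, I would show that LG guarantees $\lambda\gtrsim\beta N^\delta$, so that $\frac{\beta T}{N}\cdot NT^{-1}\lambda^{-1}\le N^{-\delta}$. The remaining degenerate case, where both bonds have the same direction, should be handled by the $NT^{-1/2}$ bound, because $\beta T^{1/2}\le N^{-\delta}$ in the stated range of $T$.
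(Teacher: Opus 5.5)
Running the removal algorithm and multiplying deviations is what the paper does for components that contain $\epsilon$-atoms. The proposal still has two genuine gaps.

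\emph{Missing cases.} You never treat components of $\M^{\mathrm{fin}}$ without $\epsilon$-atoms. By Proposition \ref{prop-cutting-properties}(4) these are cycles (including LG double bonds) or the odd chain with two degree-1 ends. The algorithm is not run on cycles, and your Step 1 presupposes chains ending at $\epsilon$-atoms. The paper counts these components directly, and this is where the LG hypothesis and the $\alpha$-gaps are used.
\begin{itemize}
\item An LG double bond has $\chi=1$, $V_\beta=2$, $V_\alpha=0$. The target $N^{-\delta}$ needs Proposition \ref{vector_counting-2} with the gap: $\frac{\beta T}{N}\cdot NT^{-1}\lambda^{-1}\lesssim N^{-\delta}$.
\item For longer cycles you trade the trivial count $N$ against the allowances. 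When $V_\beta\ge 3$ this suffices by itself, since $\beta^2T^{3/2}\ll N^{-2\delta}$. When $V_\beta\le 2$ you need the gaps $\lambda_v$ of the $\alpha$-atoms, through $\min(N,NT^{-1}\lambda_v^{-1})$.
\item For the $\epsilon$-free chain you get $\mathfrak U\lesssim 1$. You must then check that the right-hand side is $\gtrsim 1$, using the allowances of the two endpoint atoms.
\end{itemize}

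\emph{Reversed bookkeeping.} For $T\ll\beta^{-4/3}$, both $\mathfrak D^\beta=\beta T^{1/2}N^\delta$ and $\mathfrak D_v^\alpha$ are $\ll 1$. So the factors $(\mathfrak D_v^\alpha)^{-1}$ and $(\mathfrak D^\beta)^{-1}$ attached to $\alpha$/$\beta$-atoms are slack, not something the peeling must produce.
\begin{itemize}
\item Removing a degree-1 atom has $\Delta E=\Delta V=-1$ and $\Delta F=0$, so $\Delta\chi=0$, not $-1$. It costs exactly $1$, because $(k_v,\alpha_v)$ were already summed in the cutting step. Your factor $\frac{\beta T}{N}N\lambda_v$ double counts. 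The $T^{-L/2}$ weight is also unavailable, since $\M^{\mathrm{fin}}$ has no self-loops.
\item In an even component, $(\mathfrak D^\beta)^{2-V_\beta}$ is a required \emph{gain} when $V_\beta=1$ and $V_\alpha=0$. It cannot absorb a loss at the last atom, and there is no such loss: a removal whose bonds all go to distinct components has $\Delta\chi=0$ and count $1$. Your Step 2 gives only $N^{-\delta\chi}$, which is short by $\mathfrak D^\beta$.
\end{itemize}

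The missing idea for the even case runs as follows. An even component has all degrees even, hence no bridges, and its $\epsilon$-atoms admit no $\beta$-cut. So a degree-4 $\epsilon$-atom removed first, with all bonds present (as the paper arranges by priority), has tuple $(4)$. Its deviation is $\beta^3T^2\log T\lesssim N^{-3\delta}\mathfrak D^\beta$. That inequality is exactly $\beta^2T^{3/2}N^{2\delta}\log T\ll 1$, which is where $T\ll T_{\mathrm{kin}}^{2/3}$ enters.

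Likewise, in the odd component the factor $\beta^2T\log N$ from the first degree-3 removal (tuple $(3)$, again because there are no $\beta$-cuts) is the essential gain. It produces the prefactor $\beta^2T$; it is not a non-gaining factor.

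Finally, the step you call the main obstacle is just Proposition \ref{prop-atom-removal}, which you may cite.
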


\begin{proof}
\textit{Case 1: $\M'$ is a LG double bond.} Then, $\chi = 1$ with $V_\alpha = 0, V_\beta = 2$ and $\M'$ is an even component. So, for $\lambda$ the gap of the double bond, \[\mathfrak U \lesssim \left( \frac{\beta T}{N}\right) NT^{-1} \lambda^{-1} \lesssim N^{-\delta},\]
using Proposition \ref{vector_counting-2}.

\textit{Case 2: $\M'$ is a larger cycle.} If $\M'$ is a cycle of at least 3 atoms, then $\chi = 1$ and $\M'$ is an even component, but $V_\alpha, V_\beta$ can vary. If $V_\beta \geq 3$, then \[\mathfrak U \lesssim \left( \frac{\beta T}{N}\right) N \lesssim N^{-\delta}(\beta T N^{\delta}) \lesssim N^{-\delta} (\beta T^{1/2} N^\delta)^{-1}.\] 

If $V_{\beta} = 2,$ then $V_{\alpha} \geq 1$, with some gap denoted $\lambda$. Then, 
\[\mathfrak U \lesssim \left( \frac{\beta T}{N}\right) \min(N, NT^{-1}\lambda^{-1}) \lesssim \beta T^{\frac{1}{2}} \lambda^{-1/2} \lesssim N^{-\delta} (\beta TN^{\delta} \lambda^{1/2})^{-1}N^{2\delta} \beta^2T^{3/2}, \] where $N^{-\delta} (\beta TN^{\delta} \lambda^{1/2})^{-1} \ll 1$.

If $V_\beta = 1$, then $V_\alpha \geq 2$, with two of the gaps denoted $\lambda_1, \lambda_2$, so \[\mathfrak U \lesssim \left( \frac{\beta T}{N}\right) NT^{-1}\min(\lambda_1^{-1},\lambda_2^{-1}) \lesssim \beta \lambda_1^{-1/2} \lambda_2^{-2/1} \lesssim N^{-\delta} (\beta TN^{\delta} \lambda_1^{1/2})^{-1}(\beta TN^{\delta} \lambda_2^{1/2})^{-1}N^{3\delta} \beta^3T^{2},\] where $N^{3\delta}\beta^3T^{2} \ll 1$.

\textit{Case 3: $\M'$ is a chain which is not a cycle.} If $\M'$ is a chain of $\alpha$ and $\beta$-atoms, then $\chi = 0$, $\M'$ contains at least two atoms, and $\mathfrak U \lesssim 1$. Note that regardless of if the atoms are $\alpha$ or $\beta$-atoms, we have $\beta T^{1/2}N^{\delta} \log N \cdot \max(\mathfrak D_v^\alpha, \mathfrak D^\beta)^{-1}\gtrsim 1$.

\textit{Case 4: $\M'$ contains an $\epsilon$-atom.} Note that by Proposition \ref{prop-atom-removal}, each removal of an $\epsilon$-atom has a deviation $\mathfrak D \lesssim N^{-\delta}$.

If $\M'$ is an even component, $V_\beta \geq 1$. In this case, we note that $\M'$ contains at least one atom of degree 4 which has no $\beta$-cut and such an atom will be removed first by priority. Therefore, by Proposition \ref{prop-atom-removal}, the deviation for removing this atom is $\mathfrak D \lesssim N^{-\delta} (\beta T N^{\delta})^{-1}$.

If $\M'$ is an odd component, $\M'$ contains at least one atom of degree 3 which has no $\beta$-cut and such an atom will be removed first by priority. Therefore, by Proposition \ref{prop-atom-removal} the deviation for removing this atom is $\mathfrak D \lesssim \beta^2 T\log N$. 
\end{proof}

Now Proposition \ref{prop-rigidity} follows immediately:

\begin{proof}[Proof of Proposition \ref{prop-rigidity}] We may simply apply Proposition \ref{prop-cutting-est}, followed by Proposition \ref{prop-counting}. If $\M^{\mathrm{fin}}$ has $F$ components, Proposition \ref{prop-cutting-est} contributes $(\mathfrak D^\beta)^{V_\beta - 2(F-1)}$. If $\M^{\mathrm{fin}}$ has an odd component, the number of even components is $F-1$, where $F$ is the number of components of $\M^{\mathrm{fin}}$. If there is not an odd component, the number of even components is $F$, while Proposition \ref{prop-counting} contributes $(\mathfrak D^\beta)^{2F - V_\beta}$. 
\end{proof}

\subsection{Proof of Proposition \ref{bound-couple}}

\begin{proof}[Proof of Proposition \ref{bound-couple}]
We prove (\ref{E1}) and (\ref{E2}) using bootstrap arguments. We establish the hypotheses as:
\begin{align}
    \left|\sum_{\Q}\K_\Q(t,t,k)\right|& \leq 2C\beta^2 T (N^{-\delta})^{n - \frac{5}{2}},\label{H1}\\
     \left|\sum_{\Q}\dot{\K}_\Q(t,t,k)\right|&\leq 2C \beta^2 T^2 (N^{-\delta})^{n - \frac{5}{2}} ,\label{H2}
\end{align}
for the same constant $C$ as in (\ref{E1}, \ref{E2}). Note that by the proof of Proposition \ref{prop-phase-renormalization}, $\dot{A}(t)$ is bounded and smooth (for almost every $\varrho$). Therefore, we have global existence and uniqueness of $A(t)$ for almost all $\varrho$. Note that 
\begin{align*}
\dot{A}(t) &= \left( \frac{\beta T}{N}\right) \sum_{\ell \in \Z_N \cap [0,1)} \frac{3}{2\kappa^2} \omega_\ell \sum_{\Q} \K_\Q (t,t, \ell) \\
&= C_0 \beta T t + \dot{A}(t) +\left( \frac{\beta T}{N}\right) \sum_{\ell \in \Z_N \cap [0,1)} \frac{3}{2\kappa^2} \omega_\ell \sum_{\Q} \K_\Q (t,t, \ell),
\end{align*}
for some constant $C_0$ and the second sum taken over enhanced couples $\Q = \{\T^+, \T^-, \mathscr P\}$ such that $1 \leq |\T^+|, |\T^-| \leq M$. Note that we may exclude the case $\Q$ having order one as there are no enhanced couples of order one. Therefore, by (\ref{H1}, \ref{H2}), we have: 
\begin{align}
    \dot{A}(t) & \lesssim \beta T + \beta^3 T^2 N^{2\delta} \lesssim \beta T\\
    \ddot{A}(t) & \lesssim \beta^3 T^3 N^{2\delta} \lesssim T \label{eq-Add}.
\end{align}
Now, we may apply Proposition \ref{prop-integral_est} to establish (\ref{E1}, \ref{E2}) with appropriate constant:

Since the couples we consider are of order at most $M^3$, the total number of couples is $O(C^{M^3}(M^3)!)$, independent of $N$. Corresponding to each enhanced couple $\Q$, there are $O(C^{M^3})$ choices for the collection of SG negative-chain like objects $\mathscr C_{SG}$ and $\mathcal M_{SG}$ defined in Definition \ref{def-M}. It suffices to fix an enhanced couple $\Q$ along with a choice $\mathscr C_{SG}$ and bound the corresponding expression for  $\K_{\textgoth{Q}_{\mathcal M}}$ in (\ref{eq-splice-exp}). Using Lemmas \ref{lem-splice} and \ref{lem-loop}, we may write $\K_{\textgoth{Q}_{\mathcal M}}$ in (\ref{eq-splice-exp}) as:

\begin{align*}
\begin{split}
    \eqref{eq-splice-exp} &= \left( \frac{\beta T}{N}\right)^{n_{\mathrm{sp}}} \zeta(\mathcal Q^{\mathrm{sp}}) \sum_{\mathscr E^{\mathrm{sp}}} \tilde \epsilon_{\mathscr E^{\mathrm{sp}}} \int_{\pmb{\tau}} \int_{\mathcal{E}^{\mathrm{sp}}}  \prod_{\mathfrak n_0 \in \mathcal N^{\mathrm{sp}}_0} \widehat{P_{q_{\mathfrak n_0}}}(\tau_{\mathfrak n_0}, \tau_{\mathfrak n_0^2}, \tau_{\mathfrak n_0^3}, k[\mathcal N^{\mathrm{sp}}])    \\
    & \hspace{0.8cm}\times \prod_{\mathfrak n_1 \in \mathcal N^{\mathrm{l}}} \widehat{S_{q_{\mathfrak n_1}}}(\tau_{\mathfrak n_0}, \tau_{\mathfrak n_{q + 1}}, k[\mathcal N^{\mathrm{sp}}])\left( \prod_{\mathfrak n \in \mathcal N^{\mathrm{lf}}}e^{\zeta_{\mathfrak{n}}i\Gamma_{\mathfrak n}(t_{\mathfrak n}) + 2\pi i \tau_{\mathfrak n} t_{\mathfrak n}} \diff t_{\mathfrak{n}}\right) \\
    & \hspace{0.8cm}\times
    \prod^+_{\mathfrak l \in \mathcal L^{\mathrm{sp}}} n_{\mathrm{in}}(k_{\mathfrak l})\E \left[\mathcal{B}_{\mathcal{T}^+}(\eta_{\mathcal{T}^+}(\varrho))\mathcal{B}^*_{\mathcal{T}^-}(\eta_{\mathcal{T}^-}(\varrho))\right],
\end{split}
\end{align*}
where $\pmb{\tau} = (\tau_\mathfrak n)$ for any $\tau_{\mathfrak n} \in \R$ appearing in an expression for $\widehat{P_{q_{\mathfrak n_0}}}$ or $\widehat{S_{q_{\mathfrak n_1}}}$. If $\tau_{\mathfrak n}$ does not appear in such an expressions, we set $\tau_{\mathfrak n} = 0$. Therefore, by Lemmas \ref{lem-splice} and \ref{lem-loop}, 
\begin{align*}
    |\eqref{eq-splice-exp}| &\lesssim \left( \frac{\beta T}{N}\right)^{n_{\mathrm{sp}}} \left(\beta^2 T N^{2\delta}\right)^{n_0} \sum_{\mathscr E^{\mathrm{sp}}} \tilde \epsilon_{\mathscr E^{\mathrm{sp}}}  \left(\prod_{\mathfrak n_1 \in \mathcal N^{\mathrm{l}}} \frac{\log N}{\langle T \omega_{k_{\mathfrak n_1}}\rangle^{-q_{\mathfrak n_1}/2}}\right) \mathcal X(k[\mathcal N^{\mathrm{sp}}]) \\
    & \hspace{1.5cm} \times\sup_{\pmb{\tau}} \int_{\mathcal{E}^{\mathrm{sp}}} \left( \prod_{\mathfrak n \in \mathcal N^{\mathrm{lf}}}e^{\zeta_{\mathfrak{n}}i\Omega_{\mathfrak n}(Tt_{\mathfrak n} + A(t_{\mathfrak n})) + 2\pi i \tau_{\mathfrak n} t_{\mathfrak n}} \diff t_{\mathfrak{n}}\right),
\end{align*}
where $n_0$ represents the number of nodes removed via splicing so that $n_0 + n_{\mathrm{sp}} = n$ and $\mathcal X$ is a uniformly bounded function which encodes the dependence on $k[\mathcal N^{\mathrm{sp}}]$ of all terms not in the integral expression. Then, we may fix factors $\lambda[\mathcal N^{\mathrm{lf}}]$ so that for each $\mathfrak n \in \mathcal N^{\mathrm{lf}}$, $|T\Omega_{\mathfrak n} - \lambda_{\mathfrak n}| \leq 1$: 

\begin{align*}
    |\eqref{eq-splice-exp}| &\lesssim \left(\beta^2 T N^{2\delta}\right)^{n_0} \left(\sup_{\lambda[\mathcal N^{\mathrm{lf}}]}\sum_{\tilde{\mathscr E}_{\lambda[\mathcal N^{\mathrm{lf}}]}^{\mathrm{sp}}}\tilde \epsilon_{\tilde{\mathscr E}_{\lambda[\mathcal N^{\mathrm{lf}}]}^{\mathrm{sp}}}\left( \frac{\beta T}{N}\right)^{n_{\mathrm{sp}}} \left(\prod_{\mathfrak n_1 \in \mathcal N^{\mathrm{l}}} \frac{\log N}{\langle T \omega_{k_{\mathfrak n_1}}\rangle^{-q_{\mathfrak n_1}/2}}\right) \mathcal X(k[\mathcal N^{\mathrm{sp}}])\right) \\
    &\hspace{3cm}\times\left(\sum_{{\lambda[\mathcal N^{\mathrm{lf}}]}}\sup_{\pmb \tau}\left|\int_{\mathcal{E}^{\mathrm{sp}}_{\pmb{\tau}}}\prod_{\mathfrak{n} \in \mathcal{N}^{\mathrm{lf}}}e^{\zeta_{\mathfrak{n}}i\lambda_{\mathfrak{n}}(t_{\mathfrak{n}}+\frac{1}{T}A(t_{\mathfrak{n}})) + 2\pi i \tau_\mathfrak n t_{\mathfrak n}} \diff t_{\mathfrak{n}}\right|\right),
\end{align*}
where $\lambda[\mathcal N^{\mathrm{sp}}] \in \Z^{|\mathcal N^{\mathrm{sp}}|}$ and ${\tilde{\mathscr E}_{\lambda[\mathcal N^{\mathrm{sp}}]}^{\mathrm{sp}}}$ denotes decorations of the couple satisfying $|T\Omega_{\mathfrak n} - \lambda_{\mathfrak n}| \leq 1$ for each $\mathfrak n \in \mathcal N^{\mathrm{sp}}$. Note that we only need to consider $|\lambda_{\mathfrak n}| \lesssim T$ as $|\Omega_{\mathfrak n}| \lesssim 1$ due to the dispersion relation. 

For the second term, we may apply Proposition \ref{prop-integral_est} with $f_\mathfrak n \equiv 1$ to get 
\begin{align*}
&\left(\sum_{{\lambda[\mathcal N^{\mathrm{lf}}]}}\sup_{\pmb \tau}\left|\int_{\mathcal{E}^{\mathrm{sp}}_{\pmb{\tau}}}\prod_{\mathfrak{n} \in \mathcal{N}^{\mathrm{lf}}}e^{\zeta_{\mathfrak{n}}i\lambda_{\mathfrak{n}}(t_{\mathfrak{n}}+\frac{1}{T}A(t_{\mathfrak{n}})) + 2\pi i \tau_\mathfrak n t_{\mathfrak n}}  \diff t_{\mathfrak{n}}\right|\right) \\
&\hspace{1.5cm}\lesssim \sum_{\lambda[\mathcal N^{\mathrm{lf}}]} \sum_{d_{\mathfrak n} \in \{0,1\}}\frac{1}{\langle \lambda_{\mathfrak n} + {\tilde q}_{\mathfrak n}\rangle} \lesssim C^{n_{\mathrm{sp}}} (\log N)^{n_{\mathrm{sp}}}
\end{align*}

For the first term, we use that at any branching node node $\mathfrak n$ with a self loop, $\epsilon_{\mathfrak n} \sim \omega_{k_{\mathfrak n}} \omega_{k_{\mathfrak n_1}}$, where $\mathfrak n_1$ is a child of $\mathfrak n$ which is paired to another child of $\mathfrak n$. Therefore, for any restricted decoration $\mathscr E$, 
\begin{align*}
\tilde \epsilon_{\tilde{\mathscr E}} \left(\prod_{\mathfrak n_1 \in \mathcal N^{\mathrm{l}}} \frac{\log N}{\langle T \omega_{k_{\mathfrak n_1}}\rangle^{-q_{\mathfrak n_1}/2}}\right) \lesssim (\log N)^{n_{\mathrm{sp}}} T^{-L/2},
\end{align*}
where $L$ denotes the total number of self-loops in $\Q^{\mathrm{sp}}$. Therefore, applying Proposition \ref{prop-rigidity},
\begin{align*}
|\eqref{eq-splice-exp}| & \leq C_1 (\log N)^{2n_{\mathrm{sp}} + 2} \beta^2 T N^{-\delta(n-2)} \lesssim \beta^2 T N^{-\delta(n - \frac{5}{2})}.
\end{align*}

To establish \eqref{E2}, note that for couple $\Q = \{\T^+, \T^-, \mathscr P\}$, $\dot{\K}_\Q(t,t,k)$ corresponds to removing the integration in $t$ at the root of $\T^+$ or $\T-$ and therefore cannot fix a factor $\lambda$ as above to get a gain. Therefore, we do not and in the proofs of Propositions \ref{prop-cutting-est} and \ref{prop-counting}, instead of using the estimate $\mathfrak C = N^2T^{-1}$ coming from Proposition \ref{prop-3vc} to count the corresponding root, we may instead use $\mathfrak C = N^2$. Therefore, we may conclude 
\begin{align*}
\left|\sum_{\Q}\dot{\K}_\Q(t,t,k)\right|&\leq C_2 \beta^2 T^2 (N^{-\delta})^{n - \frac{5}{2}}.
\end{align*}
Take $C = \max(C_1, C_2)$ and we complete the proof.
\end{proof}

\section{The Remainder}
\label{section-remainder}
In this section, we establish Proposition \ref{prop-remainder} and use it to bound the remainder $\mathcal R$. Recall that the remainder $\mathcal R$ as well as the linear operator $\mathscr L$ are defined in Proposition \ref{strucRem}.

\subsection{Bound on operator \texorpdfstring{$\Ell$}{L}} In order to introduce the kernels of \(\Ell\), we recall from \cite{WKE} the following extension of trees and couples:

\begin{definition}\label{def-flower}
A \emph{flower tree} is a tree \(\mathcal{T}\) in which one leaf \(\mathfrak{f}\) is distinguished and called the \emph{flower}. Note that a different choice of $\mathfrak f$ for the same tree $\T$ yields a different flower tree. The unique path connecting the root \(\mathfrak{r}\) and the flower \(\mathfrak{f}\) is referred to as the \emph{stem}. A \emph{flower couple} is an enhanced couple formed from two flower trees such that the two flowers are paired. We refer to the \emph{scale}, or order, of a flower couple as $m$.

The \emph{height} of a flower tree \(\mathcal{T}\) is defined as the number of branching nodes along its stem. Evidently, to construct a flower tree of height \(n\), one can start with a single node and successively attach two sub-trees \(n\) times. A flower tree is said to be \emph{admissible} if each of the attached sub-trees has scale at most \(M\).
\end{definition}

We adapt the following proposition from \cite[Proposition 11.2]{WKE} and \cite[Proposition 6.2]{ODW} to our $\beta$-FPUT model:

\begin{proposition} \label{prop-Ln}
Let $\mathscr L$ be defined as in (\ref{eq-linearop}). Note that $\mathscr L^{n}$ is an $\R$-linear operator for $n \geq 0$. Define its kernels $(\mathscr L^{n})_{k \ell}^\zeta(t,s)$ for $\zeta \in \{\pm\}$ by 
\begin{equation*}
(\mathscr L^{n} b)_k (t) = \sum_{\zeta \in \{\pm\}} \sum_{\ell} \int_{\R} (\mathscr L^{n})_{k \ell}^{\zeta} (t,s) b_{\ell}(s)^{\zeta}d s.
\end{equation*}
Then, for each $1 \leq n \leq L$ and $\zeta \in \{\pm\}$, we can decompose
\begin{equation}
(\mathscr L^{n})^\zeta_{k, \ell} = \sum_{n \leq m \leq M^3} (\mathscr L^{n})_{k, \ell}^{m, \zeta},
\end{equation}
such that for any $n \leq m \leq M^3$ and $k,\ell \in \Z_N\cap(0,1)$ and $t,s \in (0,1)$ with $t > s$, we have 
\begin{equation} \label{eq-Lnm}
\E |(\mathscr L^{n})_{k, \ell}^{m, \zeta}(t,s)|^2 \lesssim  N^{-\delta m}N^{30}.
\end{equation}
\end{proposition}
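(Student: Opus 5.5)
We will follow the scheme of \cite[Proposition 11.2]{WKE} and \cite[Proposition 6.2]{ODW}. First we expand $\mathscr L^n$ in flower trees. Iterating the definition \eqref{eq-linearop} $n$ times, the kernel $(\mathscr L^n)^{\zeta}_{k\ell}(t,s)$ becomes a finite sum over admissible flower trees $\T$ of height $n$ with flower $\mathfrak f$. In each such tree we attach two subtrees of scale $<M$ at every stem node, and we let the degenerate subtraction $\mathscr L^D$ produce the enhanced (degenerate) nodes on the stem. We take the decomposition index $m$ to be the total scale $|\T|$, so that $n\le m\le nM\le M^3$. The kernel then has the form of $\J_\T$ in \eqref{eq-JT}, with three modifications: the flower leaf carries no factor $\sqrt{n_{\mathrm{in}}}\eta$; $k_{\mathfrak f}=\ell$ is fixed; and the time $t_{\mathfrak f}$ is pinned to $s$ by a $\boldsymbol\delta(t_{\mathfrak n_{\mathfrak f}}-s)$, where $\mathfrak n_{\mathfrak f}$ is the parent of the flower.

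Next we compute the second moment. $\E|(\mathscr L^n)^{m,\zeta}_{k\ell}(t,s)|^2$ is a sum over flower couples $\Q$ formed from two copies of such trees, with the flowers paired to each other and the remaining leaves enhanced-paired via Lemma \ref{lem-multiplicity}. Each term $\K_\Q$ is an expression like \eqref{eq-KQ}, except that both roots are decorated by $k$ and both flowers by $\ell$. For each such $\Q$ we repeat the Section \ref{section-algorithm} argument: we splice the SG irregular chains (Lemma \ref{lem-splice}), factor out the self-loop chains (Lemma \ref{lem-loop}), and bound the remaining time integral by Proposition \ref{prop-integral_est}, using \eqref{eq-Adot-bound}–\eqref{eq-Addot-bound}, which were already established in the proof of Proposition \ref{bound-couple}. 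Taking a supremum over $\lambda$ reduces everything to a counting problem on the molecule $\M(\Q^{\mathrm{sp}})$.

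The structural difference from the couple case is the following. Fixing both $k$ and $\ell$ turns the molecule into one with four atoms of odd degree, namely the two roots and the two flower-parents, or their degenerations. Equivalently, we may cut the flower bond: it becomes a bond with prescribed value $\ell$, which we delete, and this lowers the two endpoint degrees. We then run the cutting steps of Proposition \ref{prop-cutting-est} and the algorithm of Proposition \ref{prop-counting}. At worst this gives $\mathfrak U\lesssim N^{-\delta(m-C)}$ times polynomial losses. The losses arise because, with $k$ and $\ell$ both fixed, we cannot use Proposition \ref{prop-3vc} at the root atoms, and the trivial count costs at most $N^{O(1)}$. Since we are conceding $N^{30}$, we can afford crude bounds there, for example $\mathfrak C\le N^2$ at each of the $O(1)$ exceptional atoms, or ignoring the time gain at the pinned time $s$. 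The loss from the $\boldsymbol\delta$ in time is handled by treating $t_{\mathfrak n_{\mathfrak f}}=s$ as fixed and applying Proposition \ref{prop-integral_est} to the other variables, which costs at most $T\le N$. Summing over the $C^{m}$ couples then gives \eqref{eq-Lnm}.

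We expect the main obstacle to be the verification that the cutting/counting machinery still gives the $N^{-\delta}$ gain per atom when the molecule has four odd-degree atoms instead of two. In particular, one must check that Proposition \ref{prop-M}(c) and Proposition \ref{prop-cutting-properties} still hold when splicing and cutting near the stem. Irregular chains and self-loop chains can run along the stem, and there the flower pairing breaks the congruence-twist symmetry. Our first attempt would be to exclude splicing at any chain containing the stem node adjacent to the flower. We would absorb the resulting loss, at most $(\beta T h)^{-O(1)}\le N^{O(1)}$ from a boundedly many unspliced chains of bounded length, into the $N^{30}$ factor.
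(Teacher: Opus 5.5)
Your overall architecture is the paper's: flower trees of height $n$ and scale $m$, kernels carrying $\boldsymbol{\delta}(t_{\mathfrak f^p}-s)$ and $\boldsymbol{1}_{k_{\mathfrak f}=\ell}$, flower couples, and rerunning the proof of Proposition \ref{bound-couple} with crude polynomial losses for the pinned times and the fixed flower bond.

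\textbf{The gap: how you handle chains through the flower's parent $\mathfrak f^p$.}
\begin{itemize}
\item You leave the \emph{entire} chain containing $\mathfrak f^p$ unspliced and absorb the loss into $N^{30}$. That loss grows exponentially in the chain length $q$.
\item Counted directly with gap $h$ (Proposition \ref{vector_counting-2}), each atom of an unspliced SG irregular chain contributes about $\frac{\beta T}{N}\min(N,NT^{-1}h^{-1})=\min(\beta T,\beta h^{-1})$ rather than $N^{-\delta}$. That is no gain, and a loss of up to $\beta T N^{\delta}$ per atom. For fixed $\gamma$ this is a positive power of $N$ not tied to $\delta$.
\item The length $q$ is not bounded by an absolute constant. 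The whole stem can be an irregular chain ending at $\mathfrak f^p$: take trivial attached subtrees, with each $\mathfrak m_{j+1}$ paired to a leaf of the next stem node. The $\mathscr L^D$ subtraction does not remove this configuration, so $q\approx n$, which can be as large as $M$.
\item ``Bounded length'' therefore only gives a loss $N^{cM}$, which swamps both the $N^{30}$ and the decay $N^{-\delta m}$.
\item This is not cosmetic. The Neumann-series inversion of $1-\mathscr L$ in Proposition \ref{prop-rem} needs the polynomial loss to be independent of $M$, so that $N^{-\delta M/2}N^{50}<1$.
\item The same objection applies to leaving a whole self-loop chain along the stem unfactored: you lose the $T^{-1/2}$ gain at every node.
\end{itemize}

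\textbf{What the paper does instead: exclude only the single node $\mathfrak f^p$ (for each flower) from the splicing set.}
\begin{itemize}
\item The twist symmetry is not the issue. A unit twist maps an admissible flower couple to an admissible flower couple, sending the flower to its image.
\item The only obstruction is that $t_{\mathfrak f^p}$ is pinned to $s$, so it cannot be integrated out as in Lemma \ref{lem-splice}.
\item The leaf children of interior chain nodes are paired inside the same tree, so $\mathfrak f^p$ can only be an endpoint of an irregular chain. Only the bottom endpoint $\mathfrak n_q$ is problematic, since $\mathfrak n_0$ is never spliced.
\item You therefore splice the subchain $(\mathfrak n_0,\ldots,\mathfrak n_{q-1})$. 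Its kernel $P_{q-1}$ simply takes the pinned time $s$ of $\mathfrak n_q$ as an endpoint variable.
\item This leaves one CL double bond between $\mathfrak n_0$ and $\mathfrak f^p$ per flower: at most two extra double bonds in the molecule, costing at most $N^2$ in the counting.
\item Splitting off only $\mathfrak f^p$ also handles a self-loop chain ending at $\mathfrak f^p$. Apply Lemma \ref{lem-loop} to the part above it, at the cost of a fixed power of $T\le N$ per flower.
\end{itemize}
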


\begin{proof}
We follow the proof in \cite[Proposition 11.2]{WKE} and \cite[Proposition 6.2]{ODW}, for flower trees $\T$ of height $n$ and scale $m$ such that the $\zeta_\mathfrak r=+$ and $\zeta_\mathfrak f=\zeta$. Then for $t \geq s$, we similarly define as in \eqref{eq-JT}:
    \begin{equation} \label{eq-Jtilde}
    \left(\widetilde \J_\T\right)_{k,\ell}(t,s) = \left(\frac{\beta T}{N}\right)^m \zeta(\mathcal{T}) \sum_{\mathscr D} \epsilon_{\mathscr D} \boldsymbol{\delta}(t_{\mathfrak f^p} - s) \mathcal{A}_{\mathcal{T}}(t,\Omega[\mathcal{N}], \widetilde{\Omega}[\Nc])\prod_{\mathfrak f \neq \mathfrak l \in \mathcal L} \sqrt{n_{\mathrm{in}}(k_{\mathfrak{l}})}\mathcal B_\T (\eta_{\mathcal{T}}(\varrho)) \boldsymbol{1}_{k_{\mathfrak f} = \ell},
    \end{equation}
     where $\mathscr D$ is a $k$-decoration of $\T$, and $\mathfrak f^p$ is the parent of $\mathfrak f$. Then for flower couples $\mathcal Q=(\mathcal T^+,\mathcal T^-)$, where $\mathcal T^{\pm}$ has sign $\pm \zeta$, height $n$ and scale $m$. Then for $t \geq s$, we can similarly define as in \eqref{eq-KQ}:
     \begin{align}\label{eq-KQtilde}
         (\widetilde{\mathcal K}_{\mathcal Q})(t,s,k, \ell)  &= \left( \frac{\beta T}{N}\right)^{2m} \zeta(\mathcal Q) \sum_{\mathscr E} \epsilon_{\mathscr E}\int_\mathcal{E} \prod_{\mathfrak n \in \mathcal N}e^{\zeta_{\mathfrak{n}}i\Gamma_ \mathfrak n(t_\mathfrak n)} \diff t_{\mathfrak{n}} \prod_{\mathfrak f} \boldsymbol{\delta}(t_{\mathfrak f^p} - s) \prod^+_{\mathfrak f \neq \mathfrak l \in \mathcal L}n_{\mathrm{in}}(k_{\mathfrak l})\boldsymbol{1}_{k_{\mathfrak f} = \ell}
     \end{align}
    where $\mathscr E$ is a $k$-decoration of $\Q$. Then, we may write:
     \begin{align}
    (\mathscr L^{n})_{k, \ell}^{m, \zeta}(t,s) &= \sum_{\mathcal T} \left(\widetilde \J _\T\right)_{k,\ell}(t,s) \\
    \E \left| (\mathscr L^{n})_{k, \ell}^{m, \zeta}(t,s)\right|^2 &= \sum_{\mathcal Q} \widetilde{\mathcal K}_{\mathcal Q}(t,s,k,\ell)
    \end{align}
    where the sum in $\T$ is taken over flower trees and the sum in $\Q$ is taken over flower couples. Note that while $\mathscr L$ removes degeneracies between siblings not along the stem (ie. their descendant leaves are not fully paired), the same is not true along the stem. However, since the two flowers must be paired, the couples $Q$ we may consider in the sum above are enhanced. So, in order to obtain \eqref{eq-Lnm}, we may use the proof of Proposition \ref{bound-couple} in addtion to the following observations: 
    \begin{enumerate}
        \item In \eqref{eq-KQtilde}, the Dirac delta factor \(\boldsymbol{\delta}(t_{\mathfrak f^p} - s)\) imposes \(t_{\mathfrak f^p} = s\), so we skip any phase integral with respect to \(t_{\mathfrak f^p}\) for each of the two flowers \(\mathfrak f\). Although this alters the counting, the resulting discrepancy can be bounded by at most \(N^{10}\).
        \item If $\Q$ is an admissible flower couple congruent to another couple $\widetilde \Q$ in the sense of Definition \ref{def-irrchain}, then $\widetilde \Q$ is an admissible flower couple, provided its flower is chosen as the image of the flower of $\Q$. Therefore, we may apply Lemma \ref{lem-splice}. The only exception is if $\mathfrak f^p$ is part of the irregular chain for either flower in the couple. Since we may avoid splicing these two nodes, this leads to at most two additional double bonds in the molecule, which may cause a loss of at most $N^2$ in the algorithm.
    \end{enumerate}
\end{proof}

Before proving Proposition \ref{prop-remainder}, we recall the following hypercontractivity estimate: 
\begin{lemma}{(Gaussian Hypercontractivity)} \label{lem-hypercontractivity}
Let $\{\eta_k\}$ be i.i.d. Gaussians or random phase. Let $\zeta_j \in \{\pm\}$ and $X$ be a random variable of the form 
\begin{equation}
X = \sum_{k_1, \ldots, k_n} a_{k_1, \ldots, k_n} \prod_{j = 1}^n \eta_{k_j}^{\zeta_j}(\varrho),
\end{equation}
where $a_{k_1, \ldots, k_n}$ are constants. Then, for $q \geq 2$,
\begin{equation}
\E \left|X\right|^q \leq (q-1)^{\frac{nq}{2}}\cdot \left( \E |X|^2\right)^{q/2}
\end{equation}
\end{lemma}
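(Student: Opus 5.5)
This is the classical Gaussian hypercontractivity (Nelson) inequality for polynomial chaos of degree at most $n$. I would prove the Gaussian case first and then transfer it to random phases.

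\emph{Gaussian case.} Write $\eta_k = (g_k + i g_k')/\sqrt2$ with $g_k, g_k'$ i.i.d.\ real standard Gaussians. Then $X$ is a polynomial of degree at most $n$ in the real Gaussian vector $(g_k, g_k')$. Nelson's hypercontractivity for the Ornstein--Uhlenbeck semigroup $P_s$ says
\[
\|P_s F\|_{L^q} \le \|F\|_{L^2} \qquad \text{whenever } e^{2s} \ge q-1 .
\]
Decompose $X = \sum_{j\le n} X_j$ into homogeneous Wiener chaoses. Since $P_s X_j = e^{-js} X_j$, take $e^{s} = \sqrt{q-1}$ and set $Y = \sum_j e^{js} X_j$, so that $X = P_s Y$. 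This gives
\[
\|X\|_{L^q} \le \|Y\|_{L^2} = \Big(\sum_j e^{2js}\|X_j\|_{L^2}^2\Big)^{1/2} \le (q-1)^{n/2}\|X\|_{L^2},
\]
using orthogonality of the chaoses. Raising to the $q$-th power yields the claim. Note that $X$ need not be mean zero or Wick ordered: lower chaos components only improve the constant.

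\emph{Random phase case.} Here I would not pass through the Gaussian case. The cleanest route is a direct Bonami-type inequality on the torus. A uniform phase $\eta = e^{i\theta}$ has $\eta^{\pm1}$ as characters, and $X$ is a trigonometric polynomial in which each variable $\eta_k$ appears with total degree at most $n$. The Poisson semigroup on the circle is hypercontractive with the same parameter $e^{-2s} \le 1/(q-1)$; this is Weissler's theorem for the circle, which for $q \ge 2$ holds via a two-point/tensorization argument as in Bonami--Beckner. By tensorization across the i.i.d.\ $\eta_k$, the same semigroup argument applies. Expand $X$ by total degree $d \le n$, where the degree of a monomial $\prod \eta_{k_j}^{\zeta_j}$ after cancellations $\eta\bar\eta = 1$ is at most $n$. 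The semigroup then acts by $e^{-|d|s}$ on each degree-$d$ piece, and orthogonality of characters gives the same bound.

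\emph{Main obstacle.} The delicate point is the random phase case. The tensorized circle hypercontractivity must be invoked with the correct parameter, and one must check that the cancellation $|\eta_k|^2 = 1$ only lowers degrees, which does not hurt. Since the lemma is standard, I would cite \cite{WKE} or \cite{2019} for the random-phase version and include the Gaussian derivation above for completeness.
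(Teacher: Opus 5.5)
Your proof is correct, and it goes further than the paper, which gives no argument for this lemma. The paper simply recalls it as a standard estimate (of the kind used in \cite{WKE} and \cite{2019}) and then applies it to the kernels of $\mathscr L^n$ and to $\mathcal J_{\mathcal T}$.

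Your route is the standard derivation. For Gaussians you apply Nelson's $L^2\to L^q$ bound for the Ornstein--Uhlenbeck semigroup to $Y=\sum_{j\le n}(q-1)^{j/2}X_j$ and use orthogonality of the Wiener chaoses. For random phases you use the same trick with the tensorized Poisson semigroup on a finite torus. What this buys is an explicit check that repeated indices, non-Wick-ordered terms and the cancellation $|\eta_k|^2=1$ are harmless. They only move mass into lower degree $d$, where the weight satisfies $(q-1)^{d/2}\le(q-1)^{n/2}$. This is exactly the generality the paper needs, since the $\mathcal B_{\mathcal T}$ contain subtracted expectations and repeated decorations.

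Two small points to tidy up:
\begin{itemize}
\item The aside that Weissler's circle inequality ``holds via a two-point/tensorization argument as in Bonami--Beckner'' is not how that theorem is obtained. Weissler's proof goes through a logarithmic Sobolev inequality for the Poisson generator on the circle, so just cite the theorem.
\item Since $X$ and $Y$ are complex-valued, say why the real-valued hypercontractive inequalities apply. Both semigroups are positivity-preserving Markov operators, so $|PY|\le P|Y|$ pointwise and one reduces to $|Y|$.
\end{itemize}
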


\begin{proof}[Proof of Proposition \ref{prop-remainder}]
After applying Cauchy-Schwarz inequality:
\begin{align*}
||\mathscr L^{n}(a)||_Z^2 &= \sup_{0 \leq t \leq 1} N^{-1} \sum_{k \in \Z_N\cap(0,1)} |(\mathscr L^{n} a)_k(t)|^2 \\
&= \sup_{0 \leq t \leq 1} N^{-1} \sum_{k \in \Z_N\cap(0,1)} \left| \sum_{\zeta \in \{\pm\}} \sum_{\ell \in \Z_N\cap(0,1)} \int\limits_{0 \leq s \leq t} \sum_{n \leq m \leq M^3} (\mathscr L^{n})_{k, \ell}^{m, \zeta}a_{\ell}^\zeta (s) ds  \right|^2 \\
&\lesssim \sup_{0 \leq s \leq t \leq 1} \sup_{\zeta} \sup_m N^{-1}M^3 \sum_{k \in \Z_N\cap(0,1)} \left| \sum_{\ell \in \Z_N\cap(0,1)} (\mathscr L^{n})_{k, \ell}^{m, \zeta}(t,s) a_\ell^\zeta(s) \right|^2 \\
& \lesssim ||a||_Z^2 M^3\sup_{0 \leq s \leq t \leq 1} \sup_\zeta \sup_m \sum_{k \in \Z_N\cap(0,1)}\sum_{\ell \in \Z_N\cap(0,1)}  \left| (\mathscr L^{n})_{k, \ell}^{m, \zeta}(t,s) \right|^2 \\ 
& \lesssim ||a||_Z^2 N^2M^3\sup_{0 \leq s \leq t \leq 1} \sup_\zeta \sup_m  \sup_{k, \ell} |(\mathscr L^{n})_{k, \ell}^{m, \zeta}(t,s)|^2.
\end{align*} 
For fixed $m$ and $\zeta$, let \[L(t,s) = \sup_{k, \ell} \sup_{0 \leq s \leq t \leq 1} |(\mathscr L^{n})_{k, \ell}^{m, \zeta}(t,s)|.\] As taking $\partial_t$ derivative just corresponds to omitting the $t_\mathfrak{r}$ integration,  by Proposition \ref{prop-Ln} and Lemma \ref{lem-hypercontractivity}:
\begin{align}\label{eq-LR-bound}
\E ||L(t,s)||_{L_{t,s}^p((0,1)^2) L^p_{k,\ell}(\Z_N^2\cap (0,1)^2)}^p& \lesssim p^{mp} N^{-\delta mp/2} N^{30p},\\
\E ||\partial_{(t,s)}L(t,s)||^p_{L_{t,s}^p((0,1)^2) L^p_{k,\ell}(\Z_N^2\cap (0,1)^2)} & \lesssim p^{mp} N^{-\delta mp/2}N^{31p}.
\end{align}
By using the Gagliardo-Nirenberg inequality for $(t,s)\in(0,1)^2$, and bounding the $L_{k,\ell}^\infty$ norm by the $L_{k,\ell}^p$ norm for $k,\ell \in \Z_N\cap(0,1)$ with an extra loss $N^{2/p}$, we conclude that
\begin{align}
    \E ||L(t,s)||_{L_{t,s}^\infty((0,1)^2) L^\infty_{k,\ell}(\Z_N^2\cap (0,1)^2)}^p& \lesssim p^{mp} N^{-\delta mp/2} N^{31p}.
\end{align}
Thus with probability $\geq 1-N^{-p/2}$, we have
\begin{align}
    L(t,s) \lesssim p^{m}N^{-\delta m/2} N^{35},
\end{align}
which implies 
\begin{equation} \label{remains}
\sup_{k, \ell} \sup_{0 \leq s \leq t \leq 1} |(\mathscr L^{n})_{k, \ell}^{m, \zeta}(t,s)| \lesssim N^{-\delta m/2} N^{35}.
\end{equation}
with probability $\geq 1-N^{-A}$ by taking $p \geq 2A$.
\end{proof}

\subsection{Bound on the remainder} Finally, we bound the remainder $\mathcal R$. Recall that assuming that the operator $1 - \mathscr L$ is invertible in a suitable space, the remainder (\ref{eq-remainder}) is equivalent to:
\begin{align}  
    \mathcal{R} = (1 - \Ell)^{-1}(\mathscr R + \mathscr Q(\mathcal{R},\mathcal{R}) + \mathscr C(\mathcal{R},\mathcal{R},\mathcal{R})), \label{eq-op}
\end{align}
with all relevant quantities defined in Proposition \ref{strucRem}. 

\begin{proposition} \label{prop-rem}
With probability $\geq 1 - N^{-A}$, the mapping defined by the right hand side of (\ref{eq-op}) is a contraction mapping from the set $\{\mathcal{R}: ||\mathcal{R}||_Z \leq N^{-500}\}$ to itself. 
\end{proposition}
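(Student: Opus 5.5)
We follow the standard scheme of \cite{WKE, ODW}. All estimates are made on the event of probability $\geq 1-N^{-A}$ where Proposition \ref{prop-remainder} holds and where the individual tree terms $\J_\T$ satisfy pointwise bounds. The proof has four steps, the last being the Lipschitz estimate for the contraction.

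\textbf{Step 1: inverting $1-\Ell$.} Write
\[
(1-\Ell)^{-1} = (1-\Ell^M)^{-1}(1+\Ell+\cdots+\Ell^{M-1}).
\]
Proposition \ref{prop-remainder} gives $\|\Ell^n\|_{Z\to Z}\lesssim N^{-\delta n/2}N^{50}$. Since $M=10^6/\epsilon$ is large, $N^{-\delta M/2}N^{50}\ll 1$. Hence $(1-\Ell^M)^{-1}$ is given by a convergent Neumann series, and
\[
\|(1-\Ell)^{-1}\|_{Z\to Z}\lesssim N^{50}.
\]
This loss $N^{50}$ is the price we must beat everywhere else.

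\textbf{Step 2: the inhomogeneous term $\mathscr R$.} This term consists of $\mathcal I\,\mathcal C^+(\J_{\T_1},\J_{\T_2},\J_{\T_3})$ with total order $n_1+n_2+n_3\geq M$, minus the degenerate part $\mathscr R^D$. Each such term is a sum of $\J_\T$ for enhanced trees $\T$ of order between $M$ and $3M$. We control $\E|(\J_\T)_k(t)|^2=\sum_\Q\K_\Q(t,t,k)$ by Proposition \ref{bound-couple}, which gives $\lesssim \beta^2T N^{-\delta(n-5/2)}$ with $n\geq 2M$ for the couple. Gaussian hypercontractivity (Lemma \ref{lem-hypercontractivity}), a union bound over $k$, and a Gagliardo--Nirenberg argument in $t$ using the derivative bound \eqref{E2} then upgrade this to a bound of $N^{-\delta M/2}$ times a fixed power of $N$, with high probability and uniformly in $k,t$. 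Since $\delta M$ is a large multiple of $10^6$, this yields $\|\mathscr R\|_Z\leq N^{-1000}$.

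\textbf{Step 3: the nonlinear terms.} We bound $\mathscr Q$ and $\mathscr C$ crudely. The coefficient satisfies $|T_{\zeta_1\zeta_2\zeta_3}|\lesssim 1$, and the prefactor $\beta T/N$ combined with the sum over two free frequencies is controlled by $N^{O(1)}$ via Cauchy--Schwarz in the $Z$-norm. The factors $\J_\T$ are bounded by $N^{O(1)}$ (indeed by $O(1)$ after hypercontractivity). This gives
\[
\|\mathscr Q(\mathcal R,\mathcal R)\|_Z\lesssim N^{C}\|\mathcal R\|_Z^2,\qquad \|\mathscr C(\mathcal R,\mathcal R,\mathcal R)\|_Z\lesssim N^{C}\|\mathcal R\|_Z^3,
\]
with $C$ a modest absolute constant, say $\leq 10$. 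For $\|\mathcal R\|_Z\leq N^{-500}$ these are at most $N^{-990}$.

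\textbf{Step 4: self-mapping and contraction.} Combining the above,
\[
\|\text{RHS}\|_Z\lesssim N^{50}\bigl(N^{-1000}+N^{-990}\bigr)\leq N^{-500},
\]
so the map sends the ball to itself. For the contraction, take differences: multilinearity gives Lipschitz constant $\lesssim N^{50}N^{C}(N^{-500}+N^{-1000})\ll 1$.

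The main obstacle is Step 2. It requires converting the expectation bounds on couples into almost sure $\ell^\infty_k L^\infty_t$ bounds for trees of order up to $3M$. It also requires checking that the degenerate subtraction $\mathscr R^D$ exactly matches the enhanced-tree structure, so that Proposition \ref{bound-couple} genuinely applies to these high-order terms.
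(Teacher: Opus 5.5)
Your proposal is correct and follows essentially the same route as the paper. Both arguments invert $1-\Ell$ by the Neumann series with an $N^{50}$ loss from Proposition \ref{prop-remainder}, obtain high-probability $Z$-bounds on $\mathscr R$ and the $\J_\T$ from Proposition \ref{bound-couple} plus hypercontractivity, and bound $\mathscr Q,\mathscr C$ crudely via $\|\mathcal I\mathcal C^+(u,v,w)\|_Z\lesssim N^{3}\|u\|_Z\|v\|_Z\|w\|_Z$; your explicit Lipschitz estimate fills in what the paper leaves implicit. Two small bookkeeping points: with $M=10^6/\epsilon$ and $\delta<\epsilon$ one has $\delta M<10^6$, so instead of ``a large multiple of $10^6$'' you need $\delta$ to be a fixed fraction of $\epsilon$ (the paper tacitly assumes the same), and Proposition \ref{bound-couple} should be applied to the congruence-invariant sums $\sum_{n(\T)=n}\J_\T$, as the paper does, rather than to a single $\J_\T$.
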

\begin{proof}
By Proposition~\ref{prop-remainder}, there is an exceptional set of probability at most \(N^{-A}\) which we must exclude. Outside of this set, on another set of probability at least \(1 - N^{-A}\) we have
\begin{align}
\left\|\sum_{n(\T) = n}(\J_\T)(t)\right\|_{Z}\;&\lesssim\;N^{-\delta n/2}N^{50}. \label{eq-Jest-restate} \\
\|\mathscr{R}(t)\|_{Z}\;&\lesssim\;N^{-2M\delta}N^{50} \label{eq-Rest-restate}
\end{align}
We obtain the above estimates by applying Proposition~\ref{bound-couple} since both $\mathscr R$ and $\sum_{n(\T) = n} \J_\T$ are sums over enhanced trees of order at most $M^3$ which are preserved under congruence. Additionally, we may remove the expectation as in the proof of Proposition \ref{prop-remainder}.

Now, assume we are outside of all our exceptional sets with \(\|\mathcal{R}\|_Z \leq N^{-500}\). Recall that $M = 10^6/\epsilon,$ where $\delta < \epsilon$. Therefore, 
\[
  \|\mathscr{R}\|_Z \;+\;
  \|\mathscr{Q}(\mathcal{R},\mathcal{R})\|_Z \;+\;
  \|\mathscr{C}(\mathcal{R},\mathcal{R},\mathcal{R})\|_Z 
  \;\lesssim\; N^{-600}. 
\]
Note that the estimates for \(\|\mathscr{Q}(\mathcal{R},\mathcal{R})\|_Z\) and \(\|\mathscr{C}(\mathcal{R},\mathcal{R},\mathcal{R})\|_Z\) follow from
\begin{align*}
    \|\mathcal{IC^+}(u,v,w)\|_Z
    \;\lesssim\; N^{3}\,\|u\|_{Z}\,\|v\|_{Z}\,\|w\|_{Z},
\end{align*}
where we may bound at least two factors by $N^{-500}$ and any remaining factors using \eqref{eq-Jest-restate}.

Finally, note that \((1 - \mathscr{L})^{-1}\) maps \(Z\) to itself, since
\[
  (1 - \mathscr{L})^{-1} 
  \;=\; (1 - \mathscr L^{M})^{-1}\,\bigl(1 + \mathscr{L} + \cdots + \mathscr{L}^{M - 1}\bigr),
\]
and \(\|\mathscr L^{M}\|_{Z \to Z} < 1\) for sufficiently large $M$ so that we may invert \((1 - \mathscr L^{M})\).Therefore, via Neumann series, 
\[
  \|(1 - \mathscr{L})^{-1}\|_{Z \to Z} \;\lesssim\; N^{50}.
\]
\end{proof}

\section{Proof of the Main Theorem}
\label{section-mainthm}
We restate and prove a variation of results stated in \cite{AET, FPUTWP}, which allows us to make sense of the collision kernel:

\begin{lemma}[Making sense of the collision kernel] \label{lem-collision-kernel-convergence}
Let $\Psi \in L^1$ with $\int \Psi = 1$ and $|\Psi(x)| \lesssim \langle x \rangle^{-2}$ and $\phi \in C^\infty$ periodic. Then, 
\begin{align}
t&\int_{\substack{x,y,z \in \mathbb T \\ x + y -z = k (\text{mod 1})}} |T_{+,+,-}|^2 \phi_k\phi_{x} \phi_{y} \phi_{z} \left[\frac{1}{\phi_k} - \frac{1}{\phi_{x}} - \frac{1}{\phi_{y}} + \frac{1}{\phi_{z}} \right] \Psi(t\Omega(x,y,z,k) \diff x \diff y \diff z \label{eq-approx-collision}\\
&= \int_{\substack{x,y,z \in \mathbb T \\ x + y - z = k (\text{mod 1})}} \delta(\Omega(x,y,z,k))|T_{+,+,-}|^2 \phi_k\phi_{x} \phi_{y} \phi_{z} \left[\frac{1}{\phi_k} - \frac{1}{\phi_{x}} - \frac{1}{\phi_{y}} + \frac{1}{\phi_{z}} \right] \diff x \diff y \diff z + O(t^{-\frac{1}{4}}). \notag
\end{align}
\end{lemma}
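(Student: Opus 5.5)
We reduce the statement to a one–dimensional coarea argument. Using the constraint $z = x+y-k$ (mod 1), write both sides as integrals over $(x,y)\in\mathbb T^2$ of $G(x,y) := |T_{+,+,-}|^2\phi_k\phi_x\phi_y\phi_z[\phi_k^{-1}-\phi_x^{-1}-\phi_y^{-1}+\phi_z^{-1}]$, with $\Omega(x,y)=\omega_x+\omega_y-\omega_{x+y-k}-\omega_k$. Then
\[
t\int G\,\Psi(t\Omega)\,\diff x\diff y = \int_{\R} t\Psi(ts)\,g(s)\,\diff s,\qquad g(s):=\int_{\{\Omega=s\}}\frac{G}{|\nabla\Omega|}\,\diff\sigma ,
\]
and the right-hand side of the lemma is $g(0)$, which is the meaning of the $\boldsymbol\delta$ in \eqref{KIN}. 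Since $\int t\Psi(ts)\diff s=1$, the error is $\int t\Psi(ts)(g(s)-g(0))\diff s$. We therefore need (a) a bound $|g(s)|\lesssim 1$ uniformly in $s$, and (b) a H\"older-type estimate $|g(s)-g(0)|\lesssim |s|^{1/2}$ (or any positive power suitably paired with the tail of $\Psi$). Granting these, split $|s|\le t^{-1/2}$ and $|s|>t^{-1/2}$. On the first region (b) gives $O(t^{-1/4})$. On the second region (a) together with $\int_{|u|>t^{1/2}}|\Psi(u)|\diff u\lesssim t^{-1/2}$ gives $O(t^{-1/2})$. This is where the exponent $1/4$ comes from.

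For (a), we mimic the computation in the proof of Proposition \ref{prop-3vc}. As there, $\partial_y\Omega=\pi(\cos\pi y-\cos\pi z)=-2\pi\sin\frac{\pi(y+z)}2\sin\frac{\pi(y-z)}2$ (with appropriate signs). The factor $|T_{+,+,-}|^2\sim \omega_k\omega_x\omega_y\omega_z$ cancels the $\sin\frac{\pi(y+z)}2$ factor, exactly as in that proof, leaving $G/|\partial_y\Omega|\lesssim |\sin\frac{\pi(z-y)}2|^{-1}$ times a bounded function. Integrating in $x$ along level sets, and swapping the roles of $x$ and $y$ where $\partial_x\Omega$ is the larger derivative, we need a bound on the bad set where $\nabla\Omega$ is small. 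The critical points are where $\cos\pi x=\cos\pi y=\cos\pi z$, i.e.\ the trivial resonances $\{x,y\}=\{k,z\}$ and the degenerate diagonal.

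The decisive observation is that on the trivial-resonance manifold the bracket $\phi_k^{-1}-\phi_x^{-1}-\phi_y^{-1}+\phi_z^{-1}$ vanishes to first order. So $G$ is $O(\mathrm{dist})$ to the critical set, compensating the $1/|\nabla\Omega|\sim 1/\mathrm{dist}$ singularity. Near these lines, $\Omega$ behaves like a nondegenerate quadratic form (a saddle) in the transverse distance. A saddle level set near the critical point has length $\sim \log(1/|s|)$ in the worst case, but the vanishing of $G$ removes the log, yielding $g$ bounded.

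For (b), we compare the level sets $\{\Omega=s\}$ and $\{\Omega=0\}$ away from an $r$-neighborhood of the critical set. There $|\nabla\Omega|\gtrsim r$ and $g$ is Lipschitz with constant $\lesssim r^{-2}$. Inside the neighborhood, the contribution is $\lesssim r$ by the cancellation above. Optimizing $|s|r^{-2}+r$ gives $r=|s|^{1/3}$. This may need to be $|s|^{1/2}$ to get exactly $t^{-1/4}$; in any case some power suffices, and the split threshold in the first paragraph is adjusted accordingly.

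The main obstacle is this local analysis near the degenerate set. It includes the points $x,y,z,k$ near $0$ where $\omega$ is not smooth ($\iota$ changes sign), but the factors $\omega_\cdot$ in $|T|^2$ vanish there and give extra decay. I would handle it first by an explicit change of variables $u=\frac{y-z}2$, $v=\frac{y+z}2$, in which $\Omega$ factors as products of sines. This is the approach of \cite{AET, FPUTWP}, whose estimates I would invoke where possible, checking only that the periodic smooth $\phi$ and the $\langle x\rangle^{-2}$ decay of $\Psi$ fit their hypotheses.
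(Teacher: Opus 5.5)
Your outline is sound, but one piece of the heuristic is wrong and should be fixed before you write details.

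\textbf{The degenerate set is a point, not two lines.} For $x,y,z\in(0,1)$ the equations $\cos\pi x=\cos\pi y=\cos\pi z$ force $x=y=z$. The constraint then leaves only the point $x=y=z=k$ on the sheet $x+y-z=k$, and no point on the sheets $x+y-z=k\pm1$.

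The trivial-resonance lines $x=k$, $y=k$ are the \emph{zero set} of $\Omega$, not its critical set. On that sheet $\Omega$ is a constant multiple of $\sin\frac{\pi(x+y)}{2}\sin\frac{\pi(x-k)}{2}\sin\frac{\pi(y-k)}{2}$. There is a single saddle at the crossing of the two lines. Along $x=k$ away from it, $\partial_x\Omega\neq0$, so $\Omega$ is linear, not quadratic, in the transverse variable. Also, near the saddle it is $\int\diff\sigma/|\nabla\Omega|$ that is logarithmic, not the length of the level set.

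What does vanish on $x=k$ is $\partial_y\Omega$, a multiple of $\sin\frac{\pi(y+z)}{2}\sin\frac{\pi(k-x)}{2}$. This is the Jacobian of your $x$-parametrization of the level sets. Its $|x-k|^{-1}$ singularity is what the bracket cancels, and it is exactly what your bound in (a) uses. So your estimates survive the correction: excising either a strip around the two lines or a ball around the saddle works.

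\textbf{The exponent bookkeeping is too pessimistic.} If $|g(s)-g(0)|\lesssim|s|^{\alpha}$ with $0<\alpha<1$, then directly $t\int|\Psi(ts)||s|^{\alpha}\diff s=t^{-\alpha}\int|u|^{\alpha}|\Psi(u)|\diff u\lesssim t^{-\alpha}$. So $r=|s|^{1/3}$ already gives $O(t^{-1/3})$, with no splitting of the $s$-integral.

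\textbf{Comparison with the paper.} The mechanism is the paper's, but the organization differs. The paper's substitution $y\mapsto s=\Omega(x,y)$ at fixed $x$ is your coarea density. In both, $|T_{+,+,-}|^2$ absorbs the $\sin\frac{\pi(y+z)}{2}$ factor of $\partial_y\Omega$ and the bracket absorbs the rest.

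The paper splits into $U_+=\{x+y-z=k\}$ and $U_-^{\pm}=\{x+y-z=k\pm1\}$.
\begin{enumerate}
\item On $U_+$ every resonance is trivial, so the $\boldsymbol{\delta}$-side is zero. The paper reduces by symmetry to $|x-k|\le|y-k|$ and interpolates the Jacobian bound against $|\Omega|^{1/4}$. This gives a pointwise bound of $|s|^{1/4}$ times an integrable function of $x$, so the whole $t$-integral on $U_+$ is $O(t^{-1/4})$. That is where the paper's $t^{-1/4}$ comes from.
\item On $U_-^{\pm}$ no cancellation is used, and indeed the bracket does not vanish there. Instead $|T_{+,+,-}|^2/|\partial_y\Omega|\lesssim1$ outright, with $\omega_k\omega_x$ absorbing the factor $1/\cos\frac{\pi(k-x)}{2}$, which is your $|\sin\frac{\pi(z-y)}{2}|^{-1}$ on that sheet. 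A Lipschitz bound in $s$ then gives $O(t^{-1/2})$.
\end{enumerate}

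What each route buys:
\begin{enumerate}
\item The sheet split gives explicit Jacobian bounds that are uniform in $k$, which the $o_{\ell^\infty_k}$ conclusion of the main theorem needs. It also shows the cancellation is only needed where the $\boldsymbol{\delta}$-term vanishes anyway.
\item Your global H\"older argument is more conceptual and less tied to trigonometric identities. To make it uniform in $k$, however, you would have to track the saddle at $(k,k)$ as it runs into the kinks of $\omega$ when $k\to0$ or $1$.
\end{enumerate}

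Incidentally, on $U_+$ the cancellation is second order. The expression $\phi_k\phi_x\phi_y\phi_z[\cdots]$ vanishes on both lines $x=k$ and $y=k$, so it is $O(|x-k||y-k|)$. Combined with $\omega_x\omega_y\lesssim\sin^2\frac{\pi(x+y)}{2}$, this gives $|G|\lesssim|\Omega|$ there, so that sheet contributes far less than $t^{-1/4}$.
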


\begin{proof}
As is established in \cite{AET}, one must separately consider two sets: 
\begin{align*}
U_+ &= \{(x,y,z) \in \mathbb T^3 \colon x + y - z = k\}, \\
U_-^\pm &= \{(x,y,z) \in \mathbb T^3 \colon x +y - z = k\pm 1\}.
\end{align*}
All solutions of $\Omega(x,y,z,k) = 0$ in $U_+$ satisfy $\{x,y\} = \{z,k\}$ (so the integrand of the collision kernel vanishes) while those in $U_-$ satisfy $y = h(x,z)$ or $y = 1 + h(x,z)$ for a function $h$ given explicitly in \cite{AET}. Note that for $(x,y,z)$ in each of the sets $U_+, U_-^+, U_-^-$, $z$ is uniquely determined by $x$ and $y$ and we may write $\Omega(x,y)$ without ambiguity. Correspondingly, we interpret \eqref{eq-approx-collision} as an integral in $x$ and $y$. On each of the sets $U_+, U_-^+, U_-^-$ we make sense of the $\delta$-function in $\Omega$ by making the change of variables $y \mapsto s = \Omega(x,y)$.

Beginning with $(x,y,z) \in U_+$, we set $z = x + y - k$, we may compute (via application of several trigonometric identities which we omit) 
\begin{align*}
\Omega(x,y) &= 4\sin\left(\frac{\pi(x + y)}{2}\right)\sin\left(\frac{\pi(k - x)}{2}\right) \sin\left(\frac{\pi(y - k)}{2}\right), \\
\partial_y \Omega(x,y) &= 2\pi \sin\left(\frac{\pi(y + z)}{2}\right) \sin \left(\frac{\pi(k - x)}{2}\right).
\end{align*}
We may assume due to symmetry that $|\sin(\pi(x - k))| \leq |\sin(\pi(y - k))|,$ so that 
\begin{equation}
\left|\Omega(x,y)\right| \gtrsim \sin\left(\frac{\pi(x+y)}{2}\right) \sin^2\left(\frac{\pi(x - k)}{2}\right).
\end{equation}
Since $\phi \in C^\infty$ is periodic, we also have that
\begin{align}
\left|\phi_k\phi_{x} \phi_{y} \phi_{z} \left[\frac{1}{\phi_k} - \frac{1}{\phi_{x}} - \frac{1}{\phi_{y}} + \frac{1}{\phi_{z}} \right]\right| \lesssim \min(|k - x|, |k - x \pm 1|),
\end{align}
Therefore, we may conclude (again via trigonometric identities which we omit) that 
\begin{align}
\left|\frac{T_{+,+,-}^2}{\partial_y \Omega(x,y)}\right| &\lesssim \left|\frac{\sin(\pi k) \sin(\pi x) \sin(\pi y) \sin(\pi z)}{\sin\left(\frac{\pi(y + z)}{2}\right) \sin^{1/4}\left(\frac{\pi(x + y)}{2}\right) \sin^{1/2}\left(\frac{\pi(k - x)}{2}\right)}\right||\Omega(x,y)|^{1/4} \\
&\lesssim \frac{|s|^{1/4}}{\min(|k - x|, |k - x \pm 1|)^{1/2}}.
\end{align}
Since $t \int|s|^{1/4}\Psi(ts)\diff s = O(t^{-1/4})$, the contribution of $U_-$ to the collision kernel is lower order.

Next, we turn our attention to $U_-^-$ (by symmetry the case of $U_-^+$ is identical). In this case, we observe from \cite{AET} that $k - x \in [0,1]$ and $y \leq k - x$. Therefore, we may compute
\begin{align}
\Omega(x,y) &= \sin(\pi x) + \sin(\pi y) + \sin(\pi(x + y - k)) - \sin(\pi k), \\
\partial_y\Omega(x,y) &= 2\pi \cos\left(\frac{\pi(2y + x - k)}{2}\right) \cos \left(\frac{\pi(k - x)}{2}\right).
\end{align}
One can verify the following inequalities when $0 \leq y \leq k - x \leq 1$:
\begin{align*}
\frac{\sin(\pi k) \sin(\pi x)}{\cos \left(\frac{\pi(k - x)}{2}\right)} \leq \cos\left(\frac{\pi(k - x)}{2}\right), \text{ and }
\frac{\sin(\pi y)}{\cos\left(\frac{\pi(2y + x - k)}{2}\right)} \leq 2 \sin \left(\frac{\pi(k - x)}{2}\right),
\end{align*}
so that 
\begin{align}
\left|\frac{T_{+,+,-}^2}{\partial_y \Omega(x,y)}\right| &\lesssim 1.
\end{align}
Therefore, we may use the following: 
\begin{equation} \label{eq-larget}
\left| t\int \Psi(ts) f(s) \diff s - f(0)\right| \lesssim C(f) t^{-\frac{1}{2}},
\end{equation}
where we use $f$ to denote the relevant integrand, involving the $\phi$-terms as well as $\frac{|T_{+,+,-}|^2}{\partial_y\Omega(x,y)}$ and $C(f)$ depends on the $L^\infty$ norms of $f$ and $f'$.
\end{proof}

\begin{proof}[Proof of Theorem \ref{main}]
Note that for finite $N,$ the FPUT system is a system of ODEs with conserved mass. Therefore, almost surely, it has a global solution for which $|b_k(t)| \leq N$. Let $E$ denote the complement of the union of all exceptional sets arising from Propositions \ref{prop-remainder} and \ref{prop-rem} so that $\Pp(E) \geq 1 - N^{-A}$.

First we focus on $\E \left[|a_k(t)|^2 \1_{E}\right]$. Recall that $\E\left[|a_k(t)|^2\1_{E}\right] = \E\left[|b_k(s)|^2\1_{E}\right],$ where $s = \frac{t}{T}$, where we are taking the expectation over $E$, where a smooth solution exists. Therefore, we decompose
\begin{align}
\E\left[|b_k(s)|^2\1_{E}\right] &= \sum_{0 \leq n_1, n_2 \leq M} \E\left[ \left(\J_{n_1}\right)_k(s)\overline{\left(\J_{n_2}\right)_k(s)}\1_{E}\right] \label{eq-bkexp}\\
& + 2 \mathrm{Re} \sum_{0 \leq n \leq M} \E\left[ \left(\J_{n}\right)_k(s) \overline{\mathcal R_k(s)} \1_{E} \right] + \E\left[|\mathcal R_k(s)|^2 \1_{E}\right]. \notag
\end{align}
Note that by the proof of Proposition \ref{prop-rem}, the last two terms are bounded by $N^{-100}$. Focusing on the first term, note that we can remove $\1_E$ since 
\begin{equation}
\left| \E \left( (\mathcal J_{n_1})_k(s)  \overline{(\mathcal J_{n_2})_k(s)} \1_{E^c}\right) \right|\leq \left(\E |(\mathcal J_{n_1})_k(s)|^4\right)^{1/4} \left(\E |(\mathcal J_{n_2})_k(s)|^4\right)^{1/4} \left( \Pp(E^c)^{1/2}\right) \lesssim N^{-A/2 + 10},
\end{equation}
where we may then use Lemma \ref{lem-hypercontractivity} and Proposition \ref{bound-couple} to get the desired bound. 

So, it suffices to consider
\begin{equation}
\sum_{0 \leq n_1, n_2 \leq M}\E \left[ (\mathcal J_{n_1})_k(s)  \overline{(\mathcal J_{n_2})_k(s)}\right] = \sum_\mathcal {Q}\mathcal{K_Q}(s,s,k),  
\end{equation}
for enhanced couples $\Q = \{\T_1, \T_2, \mathscr P\}$, where $n_1, n_2 \leq M$ (for $n_i := n(\T_i)$). When $n:= n_1 + n_2 \geq 3,$ we may apply Proposition \ref{bound-couple} to see that all such terms are $o_{\ell_k^\infty} \left(\frac{t}{T_{\mathrm{kin}}}\right)$. When $n = 0, $ the only contribution to $\E|b_k(s)|^2$ is $n_{\mathrm{in}}(k)$ and when $n = 1$, note that there are no enhanced couples so the contribution is 0. So, we are left with $n = 2$. Define 
\begin{equation} \label{eq-sum-kernel}
\mathcal S_k := \sum_{n(\T_1)=1}\E \left|(\J_{\T_1})_k(s)\right|^2 + 2\mathrm{Re} \left(\sum_{n(\T_0)=0,n(\T_2)=2}\E \left(  (\J_{\T_2})_k(s)\overline{(\J_{\T_0})_k(s)}\right)\right),
\end{equation}
which is the remaining term since there is only one tree with one branching node. 

\begin{figure}
\centering
\begin{subfigure}[t]{.4\linewidth}
\centering
\begin{tikzpicture}[level distance=.8cm,
  level 1/.style={sibling distance=.8cm},
  level 2/.style={sibling distance=.8cm}]
\tikzstyle{hollow node}=[circle,draw,inner sep=1.6]
\tikzstyle{solid node}=[circle,draw,inner sep=1.6,fill=black]
\tikzset{
red node/.style = {circle,draw=black,fill=red,inner sep=1.6},
blue node/.style= {circle,draw = black, fill= blue,inner sep=1.6}, 
purple node/.style= {circle,draw = black, fill= purple,inner sep=1.6}, 
orange node/.style= {circle,draw = black, fill= orange,inner sep=1.6},
yellow node/.style= {circle,draw = black, fill= yellow,inner sep=1.6},
green node/.style = {circle,draw=black,fill=green,inner sep=1.6}}
\node[solid node, label = right:{$k$}] at (-5.5,.5){}
    child{node[blue node, label=below:{$k_1$}]{}}
    child{node[red node, label=below: $k_2$]{}}
    child{node[green node, label=below: $k_3$]{}}
;
\node[solid node, label = right: {$k$}] at (-3,.5){}
    child{node[blue node, label=below: $k_1$]{}}
    child{node[red node, label=below: $k_2$]{}}
    child{node[green node, label=below: $k_3$]{}}
;
\end{tikzpicture}
\caption{All pairing occur between leaves on distinct nodes.}
\label{fig-11couples}
\end{subfigure}
\hspace{1cm} %
\begin{subfigure}[t]{.4\linewidth}
\centering
\begin{tikzpicture}[level distance=.8cm,
  level 1/.style={sibling distance=.8cm},
  level 2/.style={sibling distance=.8cm}]
\tikzstyle{hollow node}=[circle,draw,inner sep=1.6]
\tikzstyle{solid node}=[circle,draw,inner sep=1.6,fill=black]
\tikzset{
red node/.style = {circle,draw=black,fill=red,inner sep=1.6},
blue node/.style= {circle,draw = black, fill= blue,inner sep=1.6}, 
purple node/.style= {circle,draw = black, fill= purple,inner sep=1.6}, 
orange node/.style= {circle,draw = black, fill= orange,inner sep=1.6},
yellow node/.style= {circle,draw = black, fill= yellow,inner sep=1.6},
green node/.style = {circle,draw=black,fill=green,inner sep=1.6}}
\node[solid node, label = right:{$k$}] at (-5.5,.5){}
    child{node[blue node, label=below:{$k_1$}]{}}
    child{node[blue node, label=below: $k_1$]{}}
    child{node[green node, label=below: $-k$]{}}
;
\node[solid node, label = right: {$k$}] at (-3,.5){}
    child{node[red node, label=below: $k_2$]{}}
    child{node[red node, label=below: $k_2$]{}}
    child{node[green node, label=below: $-k$]{}}
;
\end{tikzpicture}
\caption{There are pairings between leaves of the same node. Note that this forces each branching node to be of type (1,2).}
\label{fig-11couples-deg}
\end{subfigure}
\caption{Couples of order 2 where both trees have order 1.}
\label{fig-iterates-compute}
\end{figure}

We first consider the first term of \eqref{eq-sum-kernel}. For this we note that for any $\Omega \neq 0,$ we may integrate by parts and use $A(0) = 0$ to obtain
\begin{align}
\int_0^se^{i\Omega(Ts_1+A(s_1))}\diff s_1 &=\frac{e^{i\Omega(Ts+A(s))}-1}{i\Omega T}-\int_0^s\frac{\dot{A}(s_1)}{T}e^{i\Omega(Ts_1+A(s_1))}\diff s_1. \label{eq-int1}
\end{align}

For the first term of \eqref{eq-int1}, we note that 
\begin{equation}
\left|\frac{e^{i\Omega(Ts + A(s))} -1}{i\Omega T}\right|^2 = 2 \mathrm{Re}\left[\frac{e^{i\Omega(Ts + A(s))} -1}{(i\Omega T)^2} \right]=\left|\frac{\sin\left(\Omega(Ts + A(s))/2\right)}{\Omega T/2}\right|^2.
\end{equation}

For the second term of \eqref{eq-int1}, note that we may apply Proposition \ref{prop-integral_est}, using that $|\dot{A}(s)| \lesssim \beta T, |\ddot A(s)| \lesssim \beta^3T^3N^{2\delta}$ due to Proposition \ref{bound-couple}, to conclude
\begin{align}
\left| \int_0^s\frac{\dot{A}(s_1)}{T}e^{i\Omega(Ts_1+A(s_1))}\diff s_1\right| \lesssim \frac{N^{-\gamma/4}}{\langle \Omega T \rangle}. \label{eq-Sk-lot}
\end{align}
Therefore, for any signs $\zeta_1, \zeta_2, \zeta_3 \in \{\pm\}$, the corresponding contribution to $\mathcal S_k$ is bounded by 
\begin{align*}
    \left( \frac{\beta T}{N}\right)^2 \sum_{\Omega(\vec k) \neq 0} |{T}_{\zeta_1, \zeta_2, \zeta_3}|^2 \frac{N^{-\gamma/4}}{\langle \Omega(\vec k) T \rangle^2} \lesssim \left( \frac{\beta T}{N}\right)^2 N^{-\gamma/10} N^2T^{-1}\lesssim \frac{Ts}{T_{\mathrm{kin}}}N^{-\delta},
\end{align*}
where we can account for small $s$ by rescaling $T$.

Considering Figure \ref{fig-iterates-compute}, we obtain 
\begin{align}
&\sum_{n(\T_1)=1}\E \left|(\J_{\T_1})_k(s)\right|^2 
= \left( \frac{\beta T}{N}\right)^2 \sum_{\substack{\zeta_1, \zeta_2, \zeta_3 \in \{\pm\}}}C_{\zeta_1, \zeta_2, \zeta_3}\Bigg[ \sum_{\Omega(\vec k) = 0}|{T}_{\zeta_1, \zeta_2, \zeta_3}|^2\phi_{k_1} \phi_{k_2} \phi_{k_3} \notag\\
&\hspace{0.75cm} +\sum_{\Omega(\vec k) \neq 0} |{T}_{\zeta_1, \zeta_2, \zeta_3}|^2\phi_{k_1} \phi_{k_2} \phi_{k_3}\left|\frac{\sin(\Omega(\vec{k})(Ts+A(s))/2)}{\Omega(\vec{k})T/2}\right|^2 \Bigg]+ \K_\Q^{\mathrm{deg}}(s,s,k) + O\left(\frac{Ts}{T_{\mathrm{kin}}}N^{-\delta}\right),\label{eq-first-iterate1}
\end{align}
where $\phi_{k_i} := n_{\mathrm{in}}\left(k_i\right)$ and $\vec k =(k,k_1, k_2, k_3)$. In the above, $C_{\zeta_1, \zeta_2, \zeta_3}$ denotes the multiplicity of couples of a given type and $\K_\Q^\mathrm{deg}$ represents the contribution of all couples corresponding to Figure \ref{fig-11couples-deg}. For the first term with $\sum_{\Omega(\vec k) = 0}$, we may bound this term by $O\left(\frac{Ts}{T_{\mathrm{kin}}}N^{-\delta}\right)$ using a tighter version of Proposition \ref{prop-3vc} where we take $\lambda = 0$ and replace $T^{-1}$ with $(TN^{2\delta})^{-1}$ (note $TN^{2\delta} \ll 1$) to get a bound of $N^{2 - \delta}T^{-1}$ (or $N$ if two leaves which are not paired have the same decoration). Also, to bound $\K_\Q^{\mathrm{deg}}$, we may use the proof of Lemma \ref{lem-loop}:
\begin{equation} \label{eq-deg-bound}
\left| \K_{\Q^{\mathrm{deg}}}(s,s,k)\right| \lesssim \left(\frac{\beta T}{N}\right)^2 N^2 \frac{\omega_k^2}{\langle T\omega_k \rangle^2} \lesssim \beta^2 \ll \frac{Ts}{T_{\mathrm{kin}}}N^{-\delta}. 
\end{equation}

\begin{figure}
\centering
\begin{subfigure}[t]{.35\linewidth}
\centering
\begin{tikzpicture}[level distance=.8cm,
  level 1/.style={sibling distance=.8cm},
  level 2/.style={sibling distance=.8cm}]
\tikzstyle{hollow node}=[circle,draw,inner sep=1.6]
\tikzstyle{solid node}=[circle,draw,inner sep=1.6,fill=black]
\tikzset{
red node/.style = {circle,draw=black,fill=red,inner sep=1.6},
blue node/.style= {circle,draw = black, fill= blue,inner sep=1.6}, 
purple node/.style= {circle,draw = black, fill= purple,inner sep=1.6}, 
orange node/.style= {circle,draw = black, fill= orange,inner sep=1.6},
yellow node/.style= {circle,draw = black, fill= yellow,inner sep=1.6},
green node/.style = {circle,draw=black,fill=green,inner sep=1.6}}
\node[solid node, label = right: $k$ ] at (-5,0){}
    child{node[solid node, label = left: $k_1$ ]{}
        child{node[red node, label=below: $k_2$]{}}
        child{node[green node, label=below: $k_3$]{}}
        child{node[blue node, label=below: $k$]{}}
    }
    child{node[red node, label=below: $k_2$]{}}
    child{node[green node, label=below: $k_3$]{}}
;
\node[blue node, label=right: $k$] at (-3.5,0){};
\end{tikzpicture}
\caption{All pairing occur between leaves on distinct nodes. Note that the top branching node may have any type which will force a type for the remaining one.}
\label{fig-20couples}
\end{subfigure}
\hspace{.25cm}
\begin{subfigure}[t]{.55\linewidth}
\centering
\begin{tikzpicture}[level distance=.8cm,
  level 1/.style={sibling distance=.8cm},
  level 2/.style={sibling distance=.8cm}]
\tikzstyle{hollow node}=[circle,draw,inner sep=1.6]
\tikzstyle{solid node}=[circle,draw,inner sep=1.6,fill=black]
\tikzset{
red node/.style = {circle,draw=black,fill=red,inner sep=1.6},
blue node/.style= {circle,draw = black, fill= blue,inner sep=1.6}, 
purple node/.style= {circle,draw = black, fill= purple,inner sep=1.6}, 
orange node/.style= {circle,draw = black, fill= orange,inner sep=1.6},
yellow node/.style= {circle,draw = black, fill= yellow,inner sep=1.6},
green node/.style = {circle,draw=black,fill=green,inner sep=1.6}}
\node[solid node, label = right: $k$ ] at (-5,0){}
    child{node[solid node, label = left: $k_1$ ]{}
        child{node[green node, label=below: $k_3$]{}}
        child{node[green node, label=below: $k_3$]{}}
        child{node[blue node, label=below: $k$]{}}
    }
    child{node[red node, label=below: $k_2$]{}}
    child{node[red node, label=below: $k_2$]{}}
;
\node[blue node, label=right: $k$] at (-3.5,0){};

\node[solid node, label = right: $k$ ] at (-1,0){}
    child{node[solid node, label = left: $k_1$ ]{}
        child{node[green node, label=below: $k_3$]{}}
        child{node[green node, label=below: $k_3$]{}}
        child{node[blue node, label=below: $k_2$]{}}
    }
    child{node[blue node, label=below: $k_2$]{}}
    child{node[red node, label=below: $k$]{}}
;
\node[red node, label=right: $k$] at (.5,0){};

\end{tikzpicture}
\caption{There are pairings between leaves of the same node. Note that this forces any branching node with two leaves paired to be (1,2).}
\label{fig-20couples-deg}
\end{subfigure}
\caption{Couples of order 2 where one tree is order 2}
\label{fig-iterates-2}
\end{figure}

Next, we consider the second term of \eqref{eq-sum-kernel}. For this we note that for any $\Omega \neq 0,$ we may compute similarly to \eqref{eq-int1} to obtain
\begin{align}
\int_0^s\int_0^{s_1} e^{i\Omega(Ts_1 + A(s_1))}&e^{-i\Omega(Ts_2 + A(s_2))} \diff s_2 \diff s_1 \notag\\
&= -\frac{s}{i\Omega T} + \frac{e^{i \Omega(Ts + A(s)) }-1}{(i \Omega T)^2} -\int_0^s \frac{\dot A(s_1)}{i\Omega T^2} e^{i \Omega(Ts_1 + A(s_1))} \diff s_1 \label{eq-int2}\\
& \hspace{.5cm} + \int_0^s\int_0^{s_1} \frac{\dot A(s_2)}{T}e^{i\Omega(Ts_1 + A(s_1))}e^{-i\Omega(Ts_2 + A(s_2)} \diff s_2 \diff s_1. \notag
\end{align}
As in the case of two order 1 trees paired, we would like to ignore the contributions of the last two terms. For the third term, we may use \eqref{eq-Sk-lot} while for the fourth term, we can apply Proposition \ref{prop-integral_est} to obtain the same bound. Therefore, the corresponding contribution to $\mathcal S_k$ for any $\zeta_1, \zeta_2, \zeta_3 \in \{\pm\}$ dictating a type for the top branching node, we have the bound 
\begin{align*}
    \left( \frac{\beta T}{N}\right)^2 \sum_{\Omega(\vec k) \neq 0} |{T}_{\zeta_1, \zeta_2, \zeta_3}|^2 \frac{N^{-\gamma/4}}{\langle \Omega(\vec k) T \rangle} \lesssim \left( \frac{\beta T}{N}\right)^2 N^{-\gamma/10} N^2T^{-1}\lesssim \frac{Ts}{T_{\mathrm{kin}}}N^{-\delta},
\end{align*}
where we use the fact that $|\Omega T| \lesssim T$ due to the dispersion relation to lose at most $\log T$ when we fix the factor $\Omega T$ for counting purposes.

Considering Figure \ref{fig-iterates-2}, we may sum over couples where the top node has children with sign $\zeta_1, \zeta_2, \zeta_3$ to obtain
\begin{align}
2\mathrm{Re} &\left(\sum_{n(\T_0)=0,n(\T_2)=2}\E \left(  (\J_{\T_2})_k(s)\overline{(\J_{\T_0})_k(s)}\right)\right) \notag\\
& = \left( \frac{\beta T}{N}\right)^2 \sum_{\substack{\zeta_1, \zeta_2, \zeta_3 \in \{\pm\}}}2C_{\zeta_1, \zeta_2, \zeta_3}\Bigg[ \sum_{\Omega(\vec k) = 0}|{T}_{\zeta_1, \zeta_2, \zeta_3}|^2\left(-\zeta_1 \phi_k \phi_{k_2} \phi_{k_3} -\zeta_2 \phi_k \phi_{k_1} \phi_{k_2} -\zeta_3 \phi_k \phi_{k_1} \phi_{k_2}\right)\notag \\
&\hspace{.5cm} +\sum_{\Omega(\vec k) \neq 0} |{T}_{\zeta_1, \zeta_2, \zeta_3}|^2 \mathrm{Re} \left[-\frac{s}{i \Omega(\vec k) T} +\frac{e^{i\Omega(\vec{k})(Ts+A(s))}-1}{(i\Omega(\vec{k})T)^2}\right] \label{eq-first-iterate2} \\
&\hspace{2cm} \times\left(-\zeta_1 \phi_k \phi_{k_2} \phi_{k_3} -\zeta_2 \phi_k \phi_{k_1} \phi_{k_2} -\zeta_3 \phi_k \phi_{k_1} \phi_{k_2}\right) \Bigg] + \K_{\Q_1}^{\mathrm{deg}}(s,s,k) + \K_{\Q_2}^{\mathrm{deg}}(s,s,k), \notag
\end{align}
where $\K_{\Q_1}^{\mathrm{deg}}$ represents all contributions from couples corresponding to the first couple in Figure \ref{fig-20couples-deg} and $\K_{\Q_2}^{\mathrm{deg}}$ similarly for the second couple. Note that checking multiplicity of couples, one can verify that for any signs $\zeta_1, \zeta_2, \zeta_3$, the coefficient is $2C_{\zeta_1, \zeta_2, \zeta_3},$ where $C_{\zeta_1, \zeta_2, \zeta_3}$ is defined in \eqref{eq-first-iterate1}.

The first term, as in \eqref{eq-first-iterate1}, can be bounded by $O\left(\frac{Ts}{T_{\mathrm{kin}}}N^{-\delta}\right)$. For the second term, we note that $\mathrm{Re}(-s/i\Omega(\vec k)T) = 0$. To bound the terms $\K_{\Q_i}^{\mathrm{deg}}(s,s,k)$, we are not able to use Lemma \ref{lem-loop} as we could for $\K_\Q^\mathrm{deg}$, however we can directly compute them using \eqref{eq-int2}. Similarly to the couples corresponding to Figure \ref{fig-20couples} with $\Omega \neq 0,$ we may discount the contribution coming from the first and last two terms of \eqref{eq-int2}. Therefore, we may recover the estimate \eqref{eq-deg-bound}, noting that for $\Q_2^\mathrm{deg}$, we instead have 
\begin{align}
\K_{\Q_2}^{\mathrm{deg}}(s,s,k) \lesssim \left(\frac{\beta T}{N}\right)^2 \sum_{k_2, k_3 \in \Z_N \cap [0,1)]} \frac{\omega_{k_2}^2}{\langle T\omega_{k_2} \rangle^2} \lesssim \beta^2 \ll \frac{Ts}{T_{\mathrm{kin}}}N^{-\delta}. \label{eq-deg-bound2}
\end{align}

So, summing \eqref{eq-first-iterate1} and \eqref{eq-first-iterate2}, we get 

\begin{align*}\label{eq-K2}
\mathcal S_k &= \frac{\beta^2 T^2 s^2}{N^2} \sum_{\substack{\zeta_1, \zeta_2, \zeta_3 \in \{\pm\}}} \Bigg\{\sum_{ \Omega(\vec k) \neq 0}^{\times} C_{\zeta_1, \zeta_2, \zeta_3}|{T}_{\zeta_1, \zeta_2, \zeta_3}|^2\phi_k \phi_{k_1} \phi_{k_2} \phi_{k_3} \left[\frac{1}{\phi_k} - \frac{\zeta_1}{\phi_{k_1}} - \frac{\zeta_2}{\phi_{k_2}} - \frac{\zeta_3}{\phi_{k_3}} \right]  \notag\\
& \hspace{4cm}\times\left|\frac{\sin(\Omega(\vec{k})(Ts+A(s))/2)}{\Omega(\vec{k})Ts/2}\right|^2 \Bigg\} +O\left( \frac{Ts}{T_{\mathrm{kin}}}N^{-\delta}\right). 
\end{align*} 

We note that for terms corresponding to the non-resonant portion of the nonlinearity, we may use the estimates in Proposition 1 of \cite{VCFPUT}, namely that for $\{\zeta_1, \zeta_2, \zeta_3\} \neq\{+,+,-\}$, 
\begin{equation} \label{eq-normalform-est}
\sum_{\substack{k_1, k_2, k_3 \in \Z_N \cap[0,1) \\ \zeta_1k_1 + \zeta_2 k_2 + \zeta_3 k_3 = k (\text{mod } 1)}} \frac{\left|T_{\zeta_1, \zeta_2, \zeta_3}\right|^2}{(T\Omega(\vec k))^2} \lesssim N^2T^{-2} \log N, 
\end{equation}
so that the contribution to the sum $\mathcal S_k$ is $O\left(\frac{Ts}{T_{\mathrm{kin}}}N^{-\delta}\right)$. Therefore, by Corollary \ref{cor-iterates} and \ref{lem-collision-kernel-convergence}, we have that 
\begin{align*}
\mathcal S_k &= \beta^2 t \cdot \frac{t+A(t/T)}{t} \cdot (t+A(t/T))\int_{\substack{x,y,z \in \mathbb T \\ x + y - z = k (\text{mod 1})}} 9|{T}_{+,+,-}|^2 \\
&\hspace{0.5cm}\times \phi_k\phi_{x} \phi_{y} \phi_{z} \left[\frac{1}{\phi_k} - \frac{1}{\phi_{x}} - \frac{1}{\phi_{y}} + \frac{1}{\phi_{z}} \right]\left| \frac{\sin(\Omega(\vec{k})(t+A(t/T))/2)}{\Omega(\vec{k})(t+A(t/T))/2}\right|^2\diff x \diff y \diff z +O\left( \frac{t}{T_{\mathrm{kin}}}N^{-\delta}\right) \\
&=\frac{t}{T_{\mathrm{kin}}} \K(n_\mathrm{in})(k)+o_{\ell_k^{\infty}}\left( \frac{t}{T_{\mathrm{kin}}}\right),
\end{align*}
which concludes the derivation of \eqref{WKE}. 

Now, we turn our attention to $\E \left[a_k(t) a_{1-k}(t) \1_{E}\right]$. Setting $s = \frac{t}{T}$, we may write 
\begin{align*}
\E \left[a_k(t) a_{1-k}(t) \1_{E_1} \right] = e^{-is\left(T(\omega_k + \omega_{-k}) + \widetilde \omega_k(s) + \widetilde \omega_{-k}(s)\right)} \E [b_k(s) b_{1-k}(s) \1_{E}]. 
\end{align*}
Note that since the phase renormalization $\widetilde \omega_k$ is deterministic, we may ignore this initial oscillatory factor which is order 1 and write $\E [b_k(s) b_{1-k}(s) \1_{E}]$ similarly to \eqref{eq-bkexp}. By considering a generalization of enhanced couples where both roots have sign $+$ and one is decorated with $k$ and the other $1-k,$ we may similarly reduce ourselves to the case of couples order $n = 0, 1, 2$. Note that if $n = 0,$ we may use that $\E \eta_k \eta_{1-k} = 0$. For $n = 1,$ one can verify that any corresponding couple $\K_\Q$ is purely imaginary, so that $2 \mathrm{Re} \K_\Q = 0$, see \cite{2019}. So, we are left to consider couples of order 2:
\begin{equation} \label{eq-sum-kernel-vanish}
\mathcal S_k := \sum_{n(\T_1)=1, n(\T_2) = 1}\E \left[(\J_{\T_1})_k(s) (\J_{\T_2})_{-k}(s)\right] + 2\mathrm{Re} \left(\sum_{n(\T_0)=0,n(\T_2)=2}\E \left(  (\J_{\T_2})_k(s)\overline{(\J_{\T_0})_k(s)}\right)\right),
\end{equation}

For the first term of \eqref{eq-sum-kernel-vanish}, note that we only need to consider couples as in Figure \ref{fig-11couples} with both trees having roots with sign + and the second tree instead decorated with $-k$ rather than $k$. Note that this forces at least one of the trees to contain a non-resonant branching node. Using \eqref{eq-int1}, if $\T_1$ corresponds to $\Omega_1$ and $T_2$ corresponds to $\Omega_2$, we must consider terms of the form
\begin{align*}
\omega_k \omega_{k_1} \omega_{k_2} \omega_{k_3}\frac{e^{i\Omega_1(Ts + A(s))} - 1}{i \Omega_1 T}\cdot \frac{e^{i\Omega_2(Ts + A(s))} - 1}{i \Omega_2 T}.
\end{align*}
Using \eqref{eq-normalform-est}, since there must be at least one non-resonant branching node,
\begin{align}
\left(\frac{\beta T}{N}\right)^2&\sum_{k_1, k_2, k_3 \in \Z_N \cap [0,1)} \left|\frac{\omega_k \omega_{k_1} \omega_{k_2} \omega_{k_3}}{T^2\Omega_1 \Omega_2} \right| \notag \\
&\lesssim \left(\frac{\beta T}{N}\right)^2\left(\sum_{k_1, k_2, k_3 \in \Z_N \cap [0,1)} \frac{\omega_k \omega_{k_1} \omega_{k_2} \omega_{k_3}}{(T \Omega_1)^2} \right)^{1/2} \left(\sum_{k_1, k_2, k_3 \in \Z_N \cap [0,1)} \frac{\omega_k \omega_{k_1} \omega_{k_2} \omega_{k_3}}{(T \Omega_2)^2} \right)^{1/2} \label{eq-normalform} \\
&\lesssim \left(\frac{\beta T}{N}\right)^2 (NT^{-1/2} \log T) (N \log NT^{-1}) \lesssim \beta^2 T^{1/2} (\log N)^2. \notag
\end{align}
Therefore, we may conclude that 
\begin{align*}
\sum_{n(\T_1)=1, n(\T_2) = 1}\E \left[(\J_{\T_1})_k(s) (\J_{\T_2})_{-k}(s)\right] \lesssim \frac{t}{T_{\mathrm{kin}}} N^{-\delta}.
\end{align*}

For the second term of \eqref{eq-sum-kernel-vanish}, we must consider couples as in Figure \ref{fig-20couples} and \ref{fig-20couples-deg}, where in both cases the trivial tree has sign + and is decorated with $-k$ instead. Those in Figure \ref{fig-20couples-deg} we may deal with similarly to \eqref{eq-deg-bound} and \eqref{eq-deg-bound2}. For couples as in Figure \ref{fig-20couples}, we use a variant of \eqref{eq-int2}, where now we have$\Omega_1 + \Omega_2 = 2\omega_k$ rather than 0, so we have
\begin{align}
\int_0^s\int_0^{s_1} e^{i\Omega_1(Ts_1 + A(s_1))}&e^{-i\Omega_2(Ts_2 + A(s_2))} \diff s_2 \diff s_1 \notag\\
&= -\frac{e^{i(\Omega_1 + \Omega_2)(Ts + A(s))} - 1}{\Omega_1(\Omega_1 + \Omega_2) T^2} + \frac{e^{i \Omega_1(Ts + A(s)) }-1}{\Omega_1 \Omega_2 T^2} \notag \\
&\hspace{.5cm}-\int_0^s \frac{\dot A(s_1)}{i\Omega_2 T^2} e^{i (\Omega_1 + \Omega_2)(Ts_1 + A(s_1))} \diff s_1 -\int_0^s \frac{\dot A(s_1)}{i\Omega_2 T^2} e^{i \Omega_1(Ts_1 + A(s_1))} \diff s_1 \label{eq-int3}\\
& \hspace{.5cm} + \int_0^s\int_0^{s_1} \frac{\dot A(s_2)}{T}e^{i\Omega(Ts_1 + A(s_1))}e^{-i\Omega(Ts_2 + A(s_2)} \diff s_2 \diff s_1. \notag
\end{align}
The last three terms of \eqref{eq-int3} we may similarly disregard as we did in \eqref{eq-int2}. For the second term, we may deal with it similarly to \eqref{eq-normalform}. For the first term, note that the corresponding contribution may be bounded by
\begin{align*}
\left(\frac{\beta T}{N}\right)^2 \sum_{k_1, k_2, k_3 \in \Z_N \cap [0,1)} \frac{\omega_{k_1} \omega_{k_2} \omega_{k_3}}{T} \frac{1}{\langle \Omega_1 T \rangle} \lesssim \beta^2 \log T \lesssim \frac{t}{T_{\mathrm{kin}}}N^{-\delta}. 
\end{align*}
\end{proof}

\section*{References}
\printbibliography[heading = none]

\end{document}